%                           Harmonic almost contact  structures  revisited
%
%                                     by
%
%           Francisco Martin Cabrera
%

\documentclass[11pt,british,reqno]{amsart}

%\newif\ifpdf
%\ifx\pdfoutput\undefined\pdffalse\else\pdfoutput=1\pdftrue\fi\newif\pdf
%\ifpdf\relax\else
\usepackage[T1]{fontenc}
%\fi

\usepackage{babel,eucal,url,amssymb,stmaryrd,booktabs,enumerate,amscd,paralist,cite,color}

\pagestyle{myheadings}
\markleft{\sc Francisco Martín Cabrera}
\markright{\sc harmonic almost contact metric structures $\dots$}

%\rightmark
%\leftmark

\textwidth=15,5cm
\oddsidemargin=-0.4cm \evensidemargin=-0.4cm

\theoremstyle{plain}
\newtheorem{lemma}{Lemma}[section]

\newtheorem{proposition}[lemma]{Proposition}
\newtheorem{corollary}[lemma]{Corollary}
\newtheorem{theorem}[lemma]{Theorem}
\newtheorem{remark}[lemma]{Remark}
\newtheorem{example}[lemma]{Example}

\DeclareMathOperator{\Ric}{Ric}

\newcommand{\Lie}[1]{\operatorname{\textsl{#1}}}
\newcommand{\lie}[1]{\operatorname{\mathfrak{#1}}}

\newcommand{\SO}{\Lie{SO}}

\newcommand{\Un}{\Lie{U}}
\newcommand{\un}{\lie{u}}
\newcommand{\Gtwo}{\ifmmode{{\rm G}_2}\else{${\rm G}_2$}\fi}

 \newcommand{\cyclic}{\mathop{\kern0.9ex{{+}\kern-2.2ex\raise-.28ex\hbox{\Large\hbox
 {$\circlearrowright$}}}}}

\newcommand{\real}[1]{\left\llbracket #1 \right\rrbracket}

\newcommand{\lcf}{\lbrack\!\lbrack}
\newcommand{\rcf}{\rbrack\!\rbrack}
\newcommand{\talt}{\tilde{\mathbf a}}

\newcommand{\Ricac}{\Ric^{*}} %\textup{ac}

%***************************************
\def\sideremark#1{\ifvmode\leavevmode\fi\vadjust{\vbox to0pt{\vss
% the remark
 \hbox to 0pt{\hskip\hsize\hskip1em
% will appear only
 \vbox{\hsize2.5cm\tiny\raggedright\pretolerance10000
% on the side
 \noindent #1\hfill}\hss}\vbox to8pt{\vfil}\vss}}}%
%    in 3cm   %          wide box   %

%**************************************

\newfont{\eusm}{eusm10 scaled \magstep1}
\newfont{\eusmiii}{eusm10 scaled \magstep3}

\newcommand{\comp}{\makebox[7pt]{\raisebox{1.5pt}{\tiny $\circ$}}}

%\pagestyle{myheadings}

%\markboth{Harmonic almost contact metric structures $\dots$}{Harmonic almost contact metric structures %$\dots$}

\title{Harmonic almost contact metric structures revisited}

%\author{J.~C.~Gonz{\'a}lez-D{\'a}vila}
%\address[J.~C.~Gonz{\'a}lez-D{\'a}vila]{Department of Fundamental Mathematics\\
%  University of La Laguna\\ 38200 La Laguna, Tenerife, Spain}
%\email{jcgonza@ull.es}

\author{Francisco~Mart\'\i n~Cabrera}
\address[Francisco~Mart\'\i n~Cabrera]{Departamento de Matemáticas, Estadística e Investigación   Operativa \\
  Universidad de La Laguna\\ 38200 La Laguna, Tenerife, Spain}
\email{fmartin@ull.edu.es}

%\date{\today}
\begin{document}

\maketitle

\markboth{\protect\small \sc  francisco martín cabrera}
         {\protect\small \sc harmonic almost contact metric structures revisited}

\begin{abstract}{\indent}
The study of harmonicity for almost contact metric
structures was  initiated by Vergara-Díaz and Wood in \cite{VergaraWood} and continued by González-Dávila and the present author in \cite{GDMC4}. By using the
intrinsic torsion and some restriction on the type of almost contact metric structure, in \cite{GDMC4}  harmonic structures are characterised by  showing conditions relating
harmonicity and classes of almost contact metric structures. Here we do this in a more general context. 
Moreover, the harmonicity of almost contact metric
structures  as a map is also  done in such a general context. Finally, some remarks on the classification of almost contact metric structures are exposed.

\vspace{2mm}

 \noindent {\footnotesize \emph{Keywords and phrases:} $G$-structure,
  intrinsic torsion, minimal connection, almost
contact metric structure, harmonic structure, harmonic unit vector field, 
harmonic map} \vspace{2mm}

\noindent {\footnotesize \emph{2000 MSC}: 53C10, 53C15,  53C25 }
\end{abstract}

%\tableofcontents

\section{Introduction}{\indent} \setcounter{equation}{0}
For an oriented Riemannian manifold $M$ of dimension $n$, given
 a Lie subgroup $G$ of $\SO(n)$, $M$ is said to  be equipped with a
\emph{$G$-structure}, if there exists a subbundle $\mathcal{G}(M)$
of the oriented orthonormal frame bundle $\mathcal{SO}(M)$ with
structural group $G$. For a fixed $G$, a natural question arises,
'which are the best $G$-structures on M?'. An approach to answer
this question is based on the notion of the energy  of a
$G$-structure which is a particular case of the energy of a map
between Riemannian manifolds. Such a functional has been widely
studied by diverse authors \cite{EeLe1,EeLe2,Ur}. The corresponding
critical points are called \emph{harmonic maps} and have been
characterised by Eells and Sampson \cite{EeSa}.

For principal $G$-bundles $Q \to M$ over a Riemannian manifold,
Wood in \cite{Wood2} considers global sections $\sigma :  M
\to Q/H$ of the quotient bundle $\pi : Q/H \to M$, where $H$ is a
Lie subgroup of $G$ such that $G/H$ is reductive. Such 
sections are in one-to-one correspondence with the $H$-reductions
of the $G$-bundle $Q \to M$. Likewise, a connection on $Q \to M$
and a $G$-invariant metric on $G/H$ are fixed. Thus, $Q/H$ can be
equipped in a natural way with a metric defined by using the
metrics on $M$ and $G/H$.  In such conditions, Wood regards
{\it harmonic sections} as generalisations of  harmonic maps from $M$
into $Q/H$, deriving the corresponding  {\rm harmonic sections equations}.

The situation described in the previous paragraph arises when the
Riemannian manifold $M$ is equipped with some $G$-structure. Thus, in \cite{GDMC} it is  considered $G$-structures defined on an oriented Riemannian
manifold $M$ of dimension $n$, where $\Lie{G}$ is  a closed and
connected subgroup of $\SO(n)$. Since the existence of a
$G$-structure on $M$ is equivalent to the existence of a global
section $\sigma : M \to \mathcal{SO}(M)/G$ of the quotient bundle,
the energy  of a $G$-structure is defined as the energy of the
corresponding map $\sigma$.  Such an energy functional is essentially
determined by $\tfrac12 \int_{M}\| \xi^G\|^{2} \, dv
$, where  $\xi^G$ denotes the intrinsic torsion
of the $G$-structure. As a consequence, the notion of \emph{harmonic $G$-structure},
introduced by Wood in \cite{Wood2}, is given in terms of the
intrinsic torsion in \cite{GDMC}. This analysis has made  possible to go further in
the study of relations between harmonicity and classes of
$G$-structures. Thus, the  study of harmonic almost Hermitian
structures initiated in \cite{Wood1,Wood2} is enriched in
\cite{GDMC} with additional results.

%\leftmark

Our purpose in the present work is going on the study of
harmonicity for almost contact metric structures initiated by
Vergara-Díaz and Wood in  \cite{VergaraWood} and continued by González-Dávila and the present author in \cite{GDMC4}.  Almost contact
metric structures can be seen as $\Lie{U}(n)$-structures defined
on  manifolds of dimension $2n+1$. In \cite{GDMC4},  by using the
intrinsic torsion and some restriction on the type of almost contact metric structure, type $\mathcal{C}_1 \oplus \dots \oplus \mathcal{C}_{10}$,   harmonic structures are characterised by  showing conditions relating
harmonicity and classes of almost contact metric structures. Here we do this for   the general  type  $\mathcal{C}_1 \oplus \dots \oplus \mathcal{C}_{12}$. 
Moreover, the harmonicity of the
structure  as a map is analyzed   in such a general context.

In Section \ref{harmonicalmostcontact}, some characterization of  harmonic almost contact metric structures  is firstly recalled in Theorem \ref{characharmalmcontact}. Then  we
show conditions relating harmonicity and   Chinea and
González-Dávila's  classes \cite{ChineaGonzalezDavila} of almost
contact metric structures.  The harmonicity  of such structures as a map of $M$ into $\mathcal{SO}(M)/(\Lie{U}(n)
\times 1)$ is also studied in Subsection \ref{subsfiveone}. Section \ref{harmonicalmostcontact} ends by describing situations where the Reeb vector field is harmonic as unit vector field.
%Finally, some remarks on the classification of almost contact metric structures are exposed.

As a relevant remark, we point out the  rôle played by the
identities given in Section \ref{foursection}.
They are consequences of the trivial identities $d^2 F=0$ and $d^2 \eta=0$, where $F$
is the fundamental two-form of the almost contact metric structure and $\eta$ is the one-form metrically equivalent to the Reeb vector field $\zeta$
(see Section \ref{sect:almcont}). Such identities are deduced by
firstly expressing $d^2 F$ and $d^2 \eta$ in terms of the intrinsic torsion and
the minimal connection, and then extracting certain
$\Lie{U}(n)$-components. Analog identities for almost Hermitian
structures were deduced  in \cite{FMCAS,FMC7}. In the proofs of some theorems in Section
\ref{harmonicalmostcontact}, the use of these
identities beside the harmonicity criteria is fundamental.

Finally, as another application of the identities in Section \ref{foursection}, we will derived  some results relative to  the   classification of almost contact metric structures. In \cite{FMC6}, results in such a direction have been already displayed. Here we   derive another results with  the same regards.  In fact, the non-existence in a proper way  of certain  classes is proved.

\section{Preliminaries}{\indent} \setcounter{equation}{0}
On an $n$-dimensional oriented Riemannian manifold $(M ,\langle
\cdot , \cdot \rangle),$ we consider the bundle
$\pi_{\Lie{SO}(n)} : \mathcal{SO}(M) \to M$ of the oriented
orthonormal frames with respect to the metric $\langle \cdot,
\cdot \rangle.$ Given a closed and connected subgroup $G$ of
$\SO(n),$ a {\it $G$-structure} on $(M ,\langle \cdot , \cdot \rangle)$
is a reduction $\mathcal{G}(M)\subset \mathcal{SO}(M)$ to $G.$ In
the present Section we briefly recall some notions relative to
$G$-structures (see \cite{GDMC, GDMC4, Wood2} for  more details).

Let ${\mathcal S\mathcal O}(M)/G$ be the orbit space under the
action of $G$ on ${\mathcal S\mathcal O}(M)$ on the right. Then  $\pi_{G}:{\mathcal
S\mathcal O}(M)\to {\mathcal S\mathcal O}(M)/G$ is a principal
$G$-bundle and  $\pi_{SO(n)} = \pi\comp \pi_{G},$ where
$\pi: {\mathcal S\mathcal O}(M)/G\to M$ is a bundle with
fibre $SO(n)/G$.
% which is naturally isomorphic to the associated
%bundle ${\mathcal S\mathcal O}(M)\times_{SO(n)}SO(n)/G.$
 The map
$\sigma:M\to {\mathcal S\mathcal O}(M)/G$ given by $\sigma(m) =
\pi_{G}(p),$ for all $p\in {\mathcal G}(M)$ with $\pi_{SO(n)}(p) =
m,$ is a smooth section. Thus    one has  a one-to-one
correspondence between the totally of $G$-structures and the
manifold $\Gamma^{\infty}({\mathcal S\mathcal O}(M)/G)$ of all
global sections of ${\mathcal S\mathcal O}(M)/G.$ Hence 
we will also denote by $\sigma$ the $G$-structure determined by
a section $\sigma$.

The reduced subbundle $\mathcal{G}(M)$ gives rise  to
express the bundle of endomorphisms $\mbox{End}(\mbox{T} M)$ 
on the tangent bundle  as the associated vector
bundle $\mathcal{G}(M) \times_{G} \mbox{End}(\mathbb{R}^n)$. We
restrict our attention on the subbundle $\lie{so}(M)$ of $\mbox{End}
(\mbox{T} M )$ of skew-symmetric endomorphisms $\varphi_m$, for all
$m \in M$, i.e. $\langle \varphi_m X , Y \rangle= -\langle \varphi_m
Y , X \rangle$. Note that this subbundle $\lie{so}(M)$ is expressed
as $ \lie{so}(M) = \mathcal{SO}(M) \times_{\Lie{SO}(n)} \lie{so}(n)
= \mathcal{G}(M) \times_{G} \lie{so}(n) $. Furthermore, because
$\lie{so}(n)$ is decomposed into the $G$-modules $\lie{g},$ the Lie
algebra of $G,$ and its  orthogonal complement $\lie{m}$  with respect to the natural extension  to  endomorphisms of the  Euclidean metric on $\mathbb{R}^n$, 
the bundle $\lie{so}(M)$ is also decomposed into $\lie{so}(M) =
\lie{g}_{\sigma} \oplus \lie{m}_{\sigma}$, where $\lie{g}_{\sigma} =
\mathcal{G}(M) \times_G \lie{g}$ and $\lie{m}_{\sigma} =
\mathcal{G}(M) \times_G \lie{m}$.

Under the conditions above fixed, if $M$ is equipped with a
$G$-structure, then there always  exists a $G$-connection
$\widetilde{\nabla}$ defined on $M$. Doing the difference
$\widetilde{\xi}_X = \widetilde{\nabla}_X - \nabla_X$, where
$\nabla_X$ is the Levi-Civita connection of $\langle \cdot , \cdot
\rangle$, a tensor $\widetilde{\xi}_X \in \lie{so}(M)$ is
obtained.  Decomposing $\widetilde{\xi}_X = ( \widetilde{\xi}_X
)_{\lie{g}_{\sigma}} + ( \widetilde{\xi}_X )_{\lie{m}_{\sigma}}$,
$( \widetilde{\xi}_X )_{\lie{g}_{\sigma}} \in \lie{g}_{\sigma}$
and $( \widetilde{\xi}_X )_{\lie{m}_{\sigma}} \in
\lie{m}_{\sigma}$, a new $G$-connection $\nabla^G$, defined by
$\nabla^G_X = \widetilde{\nabla}_X - (\tilde{\xi}_X
)_{\lie{g}_{\sigma}}$, can be considered. Because the difference
between two $G$-connections must be in $\lie{g}_{\sigma}$,
$\nabla^G$ is the unique $G$-connection on $M$ such that its
torsion $\xi^G_X = ( \widetilde{\xi}_X
)_{\lie{m}_{\sigma}} = \nabla^{G}_X - \nabla_X$ is in
$\lie{m}_{\sigma}$. $\nabla^G$ is called the {\it minimal
connection} and $\xi^G$ is referred to as the {\it intrinsic
torsion} of the $G$-structure $\sigma$
\cite{CleytonSwann:torsion}. 

%\rightmark

%\begin{remark} \label{clasif:struc}
{\rm  A natural way of classifying $G$-structures arises by
decomposing the space $\mathcal W = \mbox{T}^* M \otimes
\lie{m}_{\sigma}$ of possible intrinsic torsion into irreducible
$G$-modules. This was initiated by Gray and Hervella
\cite{Gray-H:16} for almost Hermitian structures. In this
particular case, $G = \Lie{U}(n)$, the dimension of the manifold is $2n$ and the space $\mathcal W$ is
decomposed into four irreducible $\Lie{U}(n)$-modules. Therefore,
$2^4=16$ classes of almost Hermitian structures were obtained. }
%\end{remark}
\vspace{1mm}

Along  the present paper, we will consider the natural extension
of the metric $\langle \cdot , \cdot \rangle$ to $(r,s)$-tensors
on $M$ defined by
\begin{equation} \label{extendedmetric}
\langle \Psi,\Phi \rangle = \Psi^{i_{1}\dots i_{r}}_{j_{1}\dots
j_{s}} \Phi^{i_{1}\dots
 i_{r}}_{j_{1}\dots j_{s}},
  \end{equation}
where the summation convention is used and  $\Psi^{i_{1}\dots
i_{r}}_{j_{1}\dots j_{s}}$ and $\Phi^{i_{1}\dots i_{r}}_{j_{1}\dots
j_{s}}$ are the components of the $(r,s)$-tensors  $\Psi,\Phi\in
\mbox{T}_{s\, m}^{r} M$, with respect to an orthonormal frame over
$m \in M$. Likewise, we will make reiterated use of the {\it musical
isomorphisms} $\flat : \mbox{\rm T}M \to \mbox{\rm T}^* M$ and
$\sharp : \mbox{\rm T}^* M \to \mbox{\rm T} M$, induced by the
metric $\langle \cdot , \cdot \rangle$ on $M$, defined respectively
by $X^{\flat} = \langle X , \cdot \rangle$ and $\langle
\theta^{\sharp} , \cdot \rangle = \theta $.

 \vspace{1mm}

If $\omega$ is  the connection one-form associated to $\nabla$,
then $\mbox{T} \mathcal{SO}(M) = \ker \pi_{\Lie{SO}(n)\ast} \oplus
\ker \omega$. Now considering  the projection $\pi_{G} \colon
\mathcal{SO}(M) \to \mathcal{SO}(M)/G$,   the tangent bundle of
$\mathcal{SO}(M)/G$ is decomposed into $\mbox{T} \mathcal{SO}(M)/G
= \mathcal{V} \oplus \mathcal{H}$, where $\mathcal{V} =
\pi_{G\ast} (\ker \pi_{\Lie{SO}(n)\ast} )$ and $\mathcal{H} =
\pi_{G\ast} (\ker \omega )$.
 For the projection
$\pi  :  \mathcal{SO}(M)/G \to M$, $\pi(pG) =
\pi_{\Lie{SO}(n)} (p)$,  the {\it vertical} and {\it horizontal}
distributions $\mathcal{V}$ and $\mathcal{H}$ are such that
$\pi_{\ast} \mathcal{V} =0$ and $\pi_{\ast} \mathcal{H} =
\mbox{\rm T}M$. Moreover, it is  considered
the bundle $\pi^* \lie{so}(M)$ on $\mathcal{SO}(M)/G$  consisting
of those pairs $(pG, \breve{\varphi}_m)$, where $\pi(pG)=m$ and
 $\breve{\varphi}_m \in \lie{so}(M)_m$.
  Alternatively, $\pi^* \lie{so}(M)$ is described as the bundle
$\pi^* \lie{so}(M)  = \mathcal{SO}(M) \times_G \lie{so}(n)=
\lie{g}_{\mathcal{SO}(M)} \oplus \lie{m}_{\mathcal{SO}(M)}$, where
$\lie{g}_{\mathcal{SO}(M)}= \mathcal{SO}(M) \times_G \lie{g} $ and
 $\lie{m}_{\mathcal{SO}(M)}= \mathcal{SO}(M) \times_G \lie{m}$.
A metric on each fibre in $\pi^* \lie{so}(M)$ is defined by
$
\langle (pG , \breve{\varphi}_m) , (pG , \breve{\psi}_m) \rangle =
\langle \breve{\varphi}_m , \breve{\psi}_m \rangle,
$
where $\langle \cdot , \cdot \rangle$ in the right side is the
metric on $(1,1)$-tensors  on $M$ given by
\eqref{extendedmetric}. With respect to this metric,  the
decomposition $\pi^* \lie{so}(M) = \lie{g}_{\mathcal{SO}(M)}
\oplus \lie{m}_{\mathcal{SO}(M)}$ is orthogonal.

There is a canonical isomorphism between $\mathcal{V}$ and  $\lie{m}_{\mathcal{SO}(M) }$. In fact,  elements in
$\lie{m}_{\mathcal{SO}(M)}$ can be seen as pairs $(pG ,
\breve{\varphi}_m)$ such that if $\breve{\varphi}_m$ is expressed
with respect to  $p$, then  it is obtained a matrix $( a_{ji}) \in
\lie{m}$.  For all $a \in \lie{m}$, we have the fundamental vector
field $a^*$ on $\mathcal{SO}(M)$ given by
$
a^*_p = \tfrac{d}{dt}_{|t=0} p . \exp t a \in \ker
\pi_{\Lie{SO}(n)*p} \subseteq \mbox{T}_p \mathcal{SO}(M).
$
Any vector in $\mathcal{V}_{pG}$ is given by $\pi_{G*p} (a^*_p)$,
for some $a =(a_{ji}) \in \lie{m}$. The isomorphism $\phi_{|
\mathcal{V}_{pG}} \colon \mathcal{V}_{pG} \to \left(
\lie{m}_{\mathcal{SO}(M)} \right)_{pG}$  is defined by
$
\phi_{| \mathcal{V}_{pG}} ( \pi_{G*p} (a^*_p)) = (pG, a_{ji} \,
p(u_i)^{\flat} \otimes p(u_j)).
$
Next it is extended the map $\phi_{|\mathcal{V}} : \mathcal{V} \to
\lie{m}_{\mathcal{SO}(M)}$ to $\phi : \mbox{T} \,
\mathcal{SO}(M)/G \to \lie{m}_{\mathcal{SO}(M)}$ by saying that
$\phi (A) =0$, for all $A \in \mathcal{H}$,  and $\phi (V) =
\phi_{|\mathcal{V}}(V)$, for all $V \in \mathcal{V}$. This is used
to define a metric $\langle \cdot , \cdot
\rangle_{\mathcal{SO}(M)/G}$ on $\mathcal{SO}(M)/G$ by
\begin{equation} \label{metricquo}
\langle A , B \rangle_{\mathcal{SO}(M)/G} = \langle \pi_{\ast} A ,
\pi_{\ast} B \rangle + \langle \phi (A) , \phi (B) \rangle.
\end{equation}
For this metric,  $\pi \, : \, \mathcal{SO}(M)/G \to
M$ is a Riemannian submersion with totally geodesic fibres (see
\cite{Vilms} and \cite[page 249]{Besse:Einstein}).

\vspace{1mm}

Now, we consider the set of all possible $G$-structures on a
closed and oriented Riemannian manifold $M$ which are compatible
with the metric $\langle \cdot , \cdot \rangle$. Such a set is
identified with the manifold $\Gamma^{\infty}(\mathcal{SO}(M)/G)$ of
all possible global sections $\sigma  :  M \to
\mathcal{SO}(M)/G$. With   respect to the metrics $\langle \cdot ,
\cdot \rangle$ and  $ \langle \cdot , \cdot
\rangle_{\mathcal{SO}(M)/G}$, the {\it energy} of $\sigma$ is the
integral
\begin{equation}\label{primera1}
 {\mathcal E}(\sigma)=\frac{\textstyle 1}{\textstyle2}\int_{M}\|\sigma_{\ast}\|^{2}dv,
\end{equation}
where $\|\sigma_{\ast}\|^{2}$ is the norm of the differential
$\sigma_{\ast}$ of $\sigma$ and $dv$ denotes the volume form on
$(M,\langle \cdot , \cdot \rangle)$. On the domain of a local
orthonormal frame field $\{e_1, \dots , e_n \}$ on $M$,
$\|\sigma_{*}\|^{2}$ can be locally expressed as
$\|\sigma_{*}\|^{2} = \langle
\sigma_{*}e_{i},\sigma_{*}e_{i}\rangle_{\mathcal{SO}(M)/G}$.
Furthermore,  using \eqref{metricquo}, from \eqref{primera1}  it is
obtained that the energy $\mathcal{E}(\sigma)$ of $\sigma$ is
given by
$
{\mathcal E}(\sigma) = \frac{n}{2} {\rm
Vol}(M) + \frac{1}{2}\int_{M}\|  \phi \,
\sigma_*\|^{2}dv.
$
The relevant part of this formula $B(\sigma)=\frac{
1}{ 2}\int_{M}\| \phi \, \sigma_*\|^{2}dv$  is called
the {\it total bending} of the $G$-structure $\sigma$. In
\cite{GDMC}, it was shown that $\phi \, \sigma_{\ast} = - \xi^G$.
Therefore,
$
B(\sigma)=\tfrac{1}{2}\int_{M}\| \xi^G\|^{2}
\, dv.
$
\vspace{1mm}

 To study critical points
  of the functional $\mathcal{E}$ on
$\Gamma^{\infty}(\mathcal{SO}(M)/G)$,   smooth variations
$\sigma_t \in \Gamma^{\infty} (\mathcal{SO}(M)/G)$ of
$\sigma=\sigma_0$ are considered. The corresponding {\it variation
fields} $m
 \to \varphi(m) = \frac{d}{dt}_{|t=0} \sigma_t(m)$  are sections of the
induced bundle $\sigma^* \mathcal{V}$ on $M$.
  Furthermore, by using $\phi$, we will have $\phi
  \mbox{pr}_2^{\sigma} \sigma^* \mathcal{V}   \cong \sigma^* \lie{m}_{\mathcal{SO}(M)}
\cong \lie{m}_{\sigma}$. Thus, the tangent space
$\mbox{T}_{\sigma} \Gamma^{\infty} (\mathcal{SO}(M)/G)$ is firstly
identified with the space $\Gamma^{\infty} (\sigma^* \mathcal V)$
of global sections of $\sigma^* \mathcal V$ \cite{Ur}. A second
identification is $\Gamma^{\infty} (\sigma^* \mathcal V ) \cong
\Gamma^{\infty} (\lie{m}_{\sigma})$ as global sections of
$\lie{m}_{\sigma}$.

In next theorem it is   considered the coderivative $d^*
\xi^G$ of the intrinsic torsion $\xi^G$, which is given by
$
d^* \xi^G_m = - (\nabla_{e_i} \xi^G)_{e_i}  = - (\nabla^{G}_{e_i}
\xi^G)_{e_i} - \xi^G_{\xi^G_{e_i} e_i} \in \lie{m}_{\sigma \, m} ,
$
where $\{ e_1, \dots , e_n \}$ is any orthonormal frame on $m\in
M$. 
%Therefore,  $d^* \xi^G$ is a global section of
%$\lie{m}_{\sigma}$.

\begin{theorem}[\cite{GDMC}]
If  $\,G$ is a closed and connected subgroup of $\Lie{SO}(n)$,
$(M,\langle \cdot , \cdot\rangle)$ a closed and oriented
Riemannian manifold and $\sigma$ a global section of
$\mathcal{SO}(M)/G$, then:
\begin{enumerate}
\item[{\rm (i)}] $($The first variation formula$)$. For the energy
functional $\mathcal{E}: \Gamma^{\infty}(\mathcal{SO}(M)/G) \to
\mathbb{R}$ and for all $\varphi \in \Gamma^{\infty}
(\lie{m}_{\sigma}) \cong \mbox{\rm T}_{\sigma} \Gamma^{\infty}
(\mathcal{SO}(M)/G)$, we have
$$
d \mathcal{E}_{\sigma} (\varphi) = - \int_M \langle \xi^G , \nabla
\varphi \rangle dv =  - \int_M \langle d^* \xi^G , \varphi \rangle
dv.
$$
\item[{\rm (ii)}] $($The second variation formula$)$.  The Hessian
form $({\rm Hess}\;{\mathcal E})_{\sigma}$ on $
\Gamma^{\infty}(\lie{m}_{\sigma} )$ is given by
\begin{eqnarray*}
({\rm Hess}\; {\mathcal E})_{\sigma}\varphi & = & \int_{M} \left(
\|\nabla  \varphi \|^{2}   - \tfrac12 \| [ \xi^G ,
\varphi]_{\lie{m}_{\sigma}}
  \|^2   +   \langle \nabla
\varphi , 2[ \xi^G , \varphi] -  [ \xi^G ,
\varphi]_{\lie{m}_{\sigma}} \rangle  \right) dv.
\end{eqnarray*}
\end{enumerate}
\end{theorem}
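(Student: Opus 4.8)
\medskip
\noindent\emph{Plan of proof.}\quad The idea is to reduce everything to a variational analysis of the total bending. By the identity $\mathcal{E}(\sigma)=\tfrac{n}{2}\mathrm{Vol}(M)+\tfrac12\int_M\|\xi^G\|^{2}\,dv$ recalled above --- a consequence of $\phi\,\sigma_\ast=-\xi^G$ --- it is enough to differentiate in $t$ the function $t\mapsto\tfrac12\int_M\|\xi^G_{\sigma_t}\|^{2}\,dv$, for a smooth variation $\sigma_t\in\Gamma^{\infty}(\mathcal{SO}(M)/G)$ with $\sigma_0=\sigma$ and variation field $\varphi\in\Gamma^{\infty}(\lie{m}_{\sigma})\cong\mathrm{T}_{\sigma}\Gamma^{\infty}(\mathcal{SO}(M)/G)$. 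As $\xi^G_{\sigma_t}$ is, for each $t$, a section of the fixed bundle $\mathrm{T}^{*}M\otimes\lie{so}(M)$, these are ordinary $t$-derivatives, and the whole proof rests on a linearisation lemma for $\xi^G$.

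To get the lemma I would work in local $G$-adapted frames. Fix a local section $p_0$ of $\mathcal{G}(M)$; the nearby $G$-structures $\sigma_t$ are represented by frames $p_t=p_0\,g_t$ with $g_t\colon U\to\SO(n)$, $g_0=e$, where $\dot g_0$ corresponds, under $\mathcal{G}(M)\times_G\lie{m}\cong\lie{m}_{\sigma}$, to $\varphi$. The key point is that in such a $G_t$-frame $\xi^G_{\sigma_t}$ is represented by minus the $\lie{m}$-part of the Levi-Civita connection matrix $\theta_t=p_t^{*}\omega$: demanding that $\nabla$ minus a $G_t$-connection be $\lie{m}_{\sigma_t}$-valued pins that connection down as the one with matrix $(\theta_t)_{\lie{g}}$, whence $\xi^G_{\sigma_t}\leftrightarrow-(\theta_t)_{\lie{m}}$. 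Differentiating the gauge law $\theta_t=g_t^{-1}\theta_0 g_t+g_t^{-1}dg_t$ at $t=0$, projecting onto $\lie{m}$, and reading matrices as sections via $p_0$ (so $(\theta_0)_{\lie{m}}\leftrightarrow-\xi^G$ and $\tfrac{d}{dt}|_0\theta_t\leftrightarrow\nabla\varphi$) yields
\[
\frac{d}{dt}\Big|_{t=0}\xi^G_{\sigma_t}\;=\;-\,(\nabla\varphi)_{\lie{m}_{\sigma}}\;-\;[\,\xi^G,\varphi\,],
\]
where $[\xi^G,\varphi]$ is the $\lie{so}(M)$-valued $1$-form $X\mapsto[\xi^G_X,\varphi]$. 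Two elementary facts will then be used repeatedly: first, $(\nabla_X\varphi)_{\lie{g}_{\sigma}}=-[\xi^G_X,\varphi]_{\lie{g}_{\sigma}}$, because $\nabla^G=\nabla+\xi^G$ is a $G$-connection and hence preserves $\lie{so}(M)=\lie{g}_{\sigma}\oplus\lie{m}_{\sigma}$ and maps $\varphi\in\lie{m}_{\sigma}$ into $\lie{m}_{\sigma}$; second, $\langle\xi^G_X,[\xi^G_X,\psi]\rangle=0$ pointwise for every $\psi$, by cyclicity of the trace in $\langle A,B\rangle=-\tr(AB)$.

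Part (i) is now immediate: since $\xi^G\in\lie{m}_{\sigma}$, in the pairing with $\xi^G$ the $\lie{m}_{\sigma}$-projection is invisible and the bracket term dies pointwise by the second fact, so $\langle\xi^G,\tfrac{d}{dt}|_0\xi^G_{\sigma_t}\rangle=-\langle\xi^G,\nabla\varphi\rangle$ and $d\mathcal{E}_{\sigma}(\varphi)=-\int_M\langle\xi^G,\nabla\varphi\rangle\,dv$; integrating by parts on the closed manifold $M$ (the formal adjoint of $\nabla$ being $d^{*}$, with $d^{*}\xi^G\in\lie{m}_{\sigma}$ as recalled before the theorem) gives $-\int_M\langle d^{*}\xi^G,\varphi\rangle\,dv$. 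For part (ii) one differentiates once more, using $\tfrac{d^{2}}{dt^{2}}\big|_0\mathcal{E}(\sigma_t)=\int_M\big(\|\tfrac{d}{dt}|_0\xi^G_{\sigma_t}\|^{2}+\langle\xi^G,\tfrac{d^{2}}{dt^{2}}|_0\xi^G_{\sigma_t}\rangle\big)\,dv$. The first summand expands from the lemma, with $\|(\nabla\varphi)_{\lie{m}_{\sigma}}\|^{2}=\|\nabla\varphi\|^{2}-\|[\xi^G,\varphi]_{\lie{g}_{\sigma}}\|^{2}$ by the first fact and the cross terms simplified by the second; the second summand needs the second-order gauge term $\tfrac{d^{2}}{dt^{2}}|_0\theta_t$ together with the conjugation corrections, and produces a term linear in the acceleration --- contributing $-\int_M\langle d^{*}\xi^G,\tfrac{d}{dt}|_0\varphi_t\rangle\,dv$, which vanishes at critical points and encodes the connection implicit in $\mathrm{Hess}\,\mathcal{E}$ --- together with a term quadratic in $\varphi$. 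Collecting, with $[\lie{m},\lie{m}]\subseteq\lie{g}\oplus\lie{m}$ and the two facts, one reorganises everything into $\int_M\big(\|\nabla\varphi\|^{2}-\tfrac12\|[\xi^G,\varphi]_{\lie{m}_{\sigma}}\|^{2}+\langle\nabla\varphi,2[\xi^G,\varphi]-[\xi^G,\varphi]_{\lie{m}_{\sigma}}\rangle\big)\,dv$.

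The main obstacle is precisely this second variation: one must (a) compute and simplify $\tfrac{d^{2}}{dt^{2}}|_0\xi^G_{\sigma_t}$, disentangling the $\lie{g}_{\sigma}$- and $\lie{m}_{\sigma}$-parts of the second-order gauge term and of the conjugation corrections and isolating the acceleration contribution; and (b) carry out the somewhat lengthy algebraic reorganisation, where only the relations $(\nabla_X\varphi)_{\lie{g}_{\sigma}}=-[\xi^G_X,\varphi]_{\lie{g}_{\sigma}}$, $\langle\xi^G_X,[\xi^G_X,\cdot]\rangle=0$ and the non-symmetry $[\lie{m},\lie{m}]\subseteq\lie{g}\oplus\lie{m}$ collapse the raw output into the compact stated form --- it is very easy to lose or mislabel a $\lie{g}_{\sigma}$- versus a $\lie{m}_{\sigma}$-term. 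As a cross-check, for a flat $G$-structure ($\xi^G=0$) the lemma gives $\tfrac{d}{dt}|_0\xi^G_{\sigma_t}=-\nabla\varphi$ (by the first fact $(\nabla\varphi)_{\lie{m}_{\sigma}}=\nabla\varphi$) and the Hessian reduces to $\int_M\|\nabla\varphi\|^{2}\,dv$, as it must for a stable critical point. A more synthetic route for part (i) is to apply the general first-variation formula for harmonic maps to $\sigma\colon M\to\mathcal{SO}(M)/G$, restrict the tension field to its vertical component, and use that $\pi$ is a Riemannian submersion with totally geodesic fibres to identify that component with $-d^{*}\xi^G$; the second variation still calls for essentially the computation above.
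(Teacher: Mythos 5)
First, a remark on scope: the paper does not actually prove this theorem here --- it is quoted from \cite{GDMC} --- so your proposal can only be measured against the strategy of that reference, which is indeed the one you follow: linearise the intrinsic torsion along the variation and feed the result into the total bending. Your part (i) is complete and correct. The linearisation lemma $\tfrac{d}{dt}\big|_{t=0}\xi^G_{\sigma_t}=-(\nabla\varphi)_{\lie{m}_{\sigma}}-[\xi^G,\varphi]$ comes out correctly from the gauge transformation law of $\theta_t$ together with the frame change by $\mathrm{Ad}(g_t)$, and the two ``elementary facts'' you isolate --- $(\nabla_X\varphi)_{\lie{g}_{\sigma}}=-[\xi^G_X,\varphi]_{\lie{g}_{\sigma}}$ and $\sum_i\langle\xi^G_{e_i},[\xi^G_{e_i},\psi]\rangle=0$ --- are both valid and do exactly the work you assign them; integration by parts on the closed manifold then yields both expressions for $d\mathcal{E}_{\sigma}(\varphi)$.

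Part (ii), however, is only a plan, and the step you defer is precisely the one carrying the content of the formula. With your own two facts one gets
\[
\Big\|\tfrac{d}{dt}\Big|_{t=0}\xi^G_{\sigma_t}\Big\|^2=\|\nabla\varphi\|^2+2\,\langle(\nabla\varphi)_{\lie{m}_{\sigma}},[\xi^G,\varphi]_{\lie{m}_{\sigma}}\rangle+\|[\xi^G,\varphi]_{\lie{m}_{\sigma}}\|^2,
\]
whereas the stated integrand equals $\|\nabla\varphi\|^2+\langle(\nabla\varphi)_{\lie{m}_{\sigma}},[\xi^G,\varphi]_{\lie{m}_{\sigma}}\rangle-\tfrac12\|[\xi^G,\varphi]_{\lie{m}_{\sigma}}\|^2-2\|[\xi^G,\varphi]_{\lie{g}_{\sigma}}\|^2$. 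Hence the term $\int_M\langle\xi^G,\tfrac{d^2}{dt^2}\big|_{t=0}\xi^G_{\sigma_t}\rangle\,dv$, which you leave uncomputed, must contribute exactly
\[
\int_M\Big(-\langle(\nabla\varphi)_{\lie{m}_{\sigma}},[\xi^G,\varphi]_{\lie{m}_{\sigma}}\rangle-\tfrac32\|[\xi^G,\varphi]_{\lie{m}_{\sigma}}\|^2-2\|[\xi^G,\varphi]_{\lie{g}_{\sigma}}\|^2\Big)\,dv
\]
over and above the acceleration term. Establishing this requires the second-order expansion of the gauge law (the $t$-second derivatives of $\mathrm{Ad}(g_t)$ and of $g_t^{-1}dg_t$, with their $\lie{g}_{\sigma}$/$\lie{m}_{\sigma}$ projections) and a further integration by parts, none of which is exhibited; as written, the final ``collecting, one reorganises everything'' asserts rather than proves the second variation formula. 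Until that computation is carried out and shown to close on precisely the displayed quantity, part (ii) is not established.
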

As a consequence of this Theorem the following notion is
introduced: for general Riemannian manifolds $(M,\langle \cdot
,\cdot \rangle)$ not necessarily closed and oriented,
 a $G$-structure $\sigma$ is said to be  {\em harmonic}, if it satisfies $d^* \xi^G=0$ or, equivalently,
 $(\nabla^G_{e_i} \xi)_{e_i} = -  \xi^G_{\xi^G_{e_i} e_i}$.

Given a $G$-structure $\sigma$ on a closed Riemannian manifold
$(M,\langle\cdot ,\cdot \rangle),$ the map $\sigma:(M, \langle
\cdot, \cdot \rangle) \mapsto (\mathcal{SO}(M)/G,
\langle\cdot,\cdot\rangle_{\mathcal{SO}(M)/G})$ is harmonic, i.e.
$\sigma$ is a critical point for the energy functional on
$\mathcal{C}^{\infty}(M,\mathcal{SO}(M)/G)$ if and only if its {\it
tension field} $\tau(\sigma)= \left( \nabla_{e_i}
\sigma_*\right)(e_i)$ vanishes \cite{Ur}. Here, $\nabla \sigma_{*}$
is defined by $\left( \nabla_X \sigma_* \right)(Y) =
\nabla^q_{\sigma_* X} \sigma_* Y - \sigma_* (\nabla_X Y),$ where
$\nabla^q$ denotes the induced connection by the Levi-Civita
connection $\nabla^q$ of the metric in $\mathcal{SO}(M)/G$.
According with \cite{GDMC,Wood2}, harmonic sections $\sigma$ are
characterised by the vanishing of the vertical component of
$\tau(\sigma)$ and the horizontal component of $\tau(\sigma)$ is
determined by the horizontal lift of the vector field metrically
equivalent to the one-form $\nu_{\sigma},$ defined by
$ \nu_{\sigma}(X) = \langle \xi^G_{e_i} ,
R_{e_i,X}\rangle$.
Hence one has the following
\begin{proposition}  \label{harmmap2} 
The map $\sigma:(M, \langle \cdot, \cdot \rangle) \mapsto
(\mathcal{SO}(M)/G, \langle\cdot,\cdot\rangle_{\mathcal{SO}(M)/G})$
is  a harmonic map if and only if $\sigma$ is a harmonic
$G$-structure such that $\nu_{\sigma}=0$.
\end{proposition}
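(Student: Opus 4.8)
The plan is to extract Proposition~\ref{harmmap2} from the characterisation of harmonic sections already recalled just above its statement, namely the splitting of the tension field $\tau(\sigma)$ into its vertical and horizontal components. First I would recall that, by the theory of harmonic maps, $\sigma$ is a harmonic map precisely when its tension field $\tau(\sigma) = (\nabla_{e_i}\sigma_*)(e_i)$ vanishes identically. Then I would invoke the decomposition $\mbox{T}\,\mathcal{SO}(M)/G = \mathcal{V}\oplus\mathcal{H}$ together with the fact (cited from \cite{GDMC,Wood2}) that, along the section $\sigma$, the vertical component of $\tau(\sigma)$ is, up to the identifications $\phi\,\mbox{pr}_2^{\sigma}\sigma^*\mathcal V \cong \lie{m}_{\sigma}$, nothing but $-d^*\xi^G$, while the horizontal component of $\tau(\sigma)$ is the horizontal lift of the vector field $\nu_{\sigma}^{\sharp}$ metrically equivalent to the one-form $\nu_{\sigma}(X) = \langle \xi^G_{e_i}, R_{e_i,X}\rangle$.

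Given that decomposition, the argument is immediate: $\tau(\sigma)=0$ if and only if both the vertical and the horizontal parts vanish separately (because the sum $\mathcal V\oplus\mathcal H$ is direct, indeed orthogonal with respect to the submersion metric \eqref{metricquo}). The vertical part vanishes exactly when $d^*\xi^G=0$, which by the definition introduced after the first/second variation theorem is precisely the condition that $\sigma$ be a harmonic $G$-structure. The horizontal part vanishes exactly when the horizontal lift of $\nu_{\sigma}^{\sharp}$ is zero; since $\pi_*$ maps $\mathcal H$ isomorphically onto $\mbox{T}M$, the horizontal lift is zero if and only if $\nu_{\sigma}=0$. Combining these two equivalences gives the claim: $\sigma$ is a harmonic map $\iff$ ($\sigma$ is a harmonic $G$-structure and $\nu_{\sigma}=0$).

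Strictly speaking there is little to prove here beyond assembling cited facts; the one point that deserves care is making sure the identifications are applied consistently. In particular, one must note that the vertical component of $\tau(\sigma)$ is a section of $\sigma^*\mathcal V$, and that under the composite isomorphism $\Gamma^{\infty}(\sigma^*\mathcal V)\cong\Gamma^{\infty}(\lie{m}_{\sigma})$ described in the Preliminaries it corresponds to $-d^*\xi^G$, exactly the same tensor appearing in the first variation formula $d\mathcal{E}_{\sigma}(\varphi)=-\int_M\langle d^*\xi^G,\varphi\rangle\,dv$; this is what justifies calling the vanishing of that component ``harmonicity of the $G$-structure.'' The only genuine, if minor, obstacle is bookkeeping: one has to verify that the metric on $\mathcal{SO}(M)/G$ used to form $\nabla^q$ and hence $\tau(\sigma)$ is the one in \eqref{metricquo}, so that the orthogonal splitting $\mathcal V\oplus\mathcal H$ really does decouple the two conditions, and that the horizontal lift operation is injective. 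Once these identifications are in place, the proof is a one-line consequence of the cited structure of $\tau(\sigma)$.
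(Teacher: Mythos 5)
Your proposal is correct and follows exactly the route the paper takes: the paper states the proposition as an immediate consequence (``Hence one has the following'') of the cited fact that the vertical part of $\tau(\sigma)$ vanishes precisely for harmonic sections (equivalently $d^*\xi^G=0$) while the horizontal part is the horizontal lift of $\nu_{\sigma}^{\sharp}$. Your write-up merely makes explicit the orthogonal splitting $\mathcal V\oplus\mathcal H$ and the injectivity of the horizontal lift, which is the same assembly of cited facts.
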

\vspace{1mm}

% If $R_{X,Y\lie{m}_{\sigma}}=0$, the $G$-structure $\sigma$ is referred to as a {\it flat
%$G$-structure}. The intrinsic torsion of a flat $G$-structure has
%not contributions in the $G$-components of $R$ orthogonal to
%$\sym{2} \lie{g}_{\sigma}$. Thus, $R$ is in the space of algebraic
%curvature tensors for manifolds with an integrable $G$-structure.

Relevant types of  $G$-structures are those ones such
that $\xi^G$ is metrically equivalent to a
skew-symmetric three-form, i.e. $\xi^G_X Y = - \xi^G_Y X$. Next
we recall some facts satisfied by  them.

\begin{proposition}[\cite{GDMC}] \label{pro:skew}  For   a $G$-structure $\sigma$  such
that $\xi^G_X Y = - \xi^G_{Y} X$, we have:
\begin{enumerate}
  \item[{\rm (i)}]
 If $[\xi^G_X  , \xi^G_Y] \subseteq \lie{g}_\sigma$, for all $X,Y
 \in \mathfrak X (M)$,
  then
    $\langle R_{X,Y  \lie{m}_{\sigma}} X , Y \rangle  =  2\langle
\xi^G_{X} Y , \xi^G_X Y \rangle$. 
%Therefore, $\sigma$ is
%integrable if and only if  $\sigma$ is flat.
 \item[{\rm (ii)}] If $\sigma$ is a harmonic $G$-structure, then
$\sigma$ is also a harmonic map.
\end{enumerate}
\end{proposition}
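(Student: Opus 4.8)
The plan is to derive both statements from the structural equation that relates the Riemann curvature $R$ of the Levi-Civita connection $\nabla$, the curvature $R^G$ of the minimal connection $\nabla^G=\nabla+\xi^G$, and the covariant derivative $\nabla^G\xi^G$. A remark used throughout: since each $\xi^G_X$ is skew-symmetric, the $(0,3)$-tensor $t(X,Y,Z)=\langle\xi^G_X Y,Z\rangle$ is automatically antisymmetric in its last two arguments, and the standing hypothesis $\xi^G_X Y=-\xi^G_Y X$ makes $t$ a genuine $3$-form; moreover, as any linear connection on $\mathrm{T}M$ preserves total antisymmetry of tensors, the $(0,3)$-tensor $(A,B,C)\mapsto\langle(\nabla^G_W\xi^G)_A B,C\rangle$ is again totally skew for every $W$. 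I will use repeatedly that a $3$-form vanishes on a triple of vectors two of which coincide.

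For part (i), I would expand $R^G_{X,Y}=[\nabla^G_X,\nabla^G_Y]-\nabla^G_{[X,Y]}$ using $\nabla^G=\nabla+\xi^G$ and the torsion-freeness of $\nabla$, then trade the Levi-Civita derivatives $\nabla\xi^G$ for $\nabla^G\xi^G$ by means of $(\nabla_X\xi^G)_Y=(\nabla^G_X\xi^G)_Y-[\xi^G_X,\xi^G_Y]+\xi^G_{\xi^G_X Y}$. This produces a structural identity of the form
\begin{equation*}
R_{X,Y}=R^G_{X,Y}+(\nabla^G_X\xi^G)_Y-(\nabla^G_Y\xi^G)_X-[\xi^G_X,\xi^G_Y]+2\,\xi^G_{\xi^G_X Y}
\end{equation*}
(with the curvature sign convention of \cite{GDMC}), the coefficient $2$ arising exactly from collapsing $\xi^G_{\xi^G_X Y}-\xi^G_{\xi^G_Y X}$ to $2\,\xi^G_{\xi^G_X Y}$ through the skew-symmetry of $\xi^G$. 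Projecting onto $\lie{m}_{\sigma}$: the curvature $R^G_{X,Y}$ of the $G$-connection $\nabla^G$ lies in $\lie{g}_{\sigma}$, by hypothesis $[\xi^G_X,\xi^G_Y]\in\lie{g}_{\sigma}$, while the remaining three terms lie in $\lie{m}_{\sigma}$ (as $\xi^G$ is $\lie{m}_{\sigma}$-valued and $\nabla^G$ preserves $\lie{m}_{\sigma}$). Pairing $R_{X,Y\, \lie{m}_{\sigma}}$ with $X$ then $Y$, the terms $\langle(\nabla^G_X\xi^G)_Y X,Y\rangle$ and $\langle(\nabla^G_Y\xi^G)_X X,Y\rangle$ vanish, being values of a $3$-form on triples with a repeated entry, whereas $\langle\xi^G_{\xi^G_X Y}X,Y\rangle=\langle\xi^G_X Y,\xi^G_X Y\rangle$ by cyclicity of $t$; this yields $\langle R_{X,Y\, \lie{m}_{\sigma}}X,Y\rangle=2\langle\xi^G_X Y,\xi^G_X Y\rangle$.

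For part (ii), I would apply Proposition \ref{harmmap2}, according to which a harmonic $G$-structure is a harmonic map as soon as $\nu_{\sigma}=0$; so it suffices to prove $\nu_{\sigma}=0$. Expanding $\nu_{\sigma}(X)=\sum_i\langle\xi^G_{e_i},R_{e_i,X}\rangle$ in an orthonormal frame via \eqref{extendedmetric} gives $\nu_{\sigma}(X)=\sum_{i,j,k}t(e_i,e_j,e_k)\langle R_{e_i,X}e_j,e_k\rangle$, a contraction of the curvature tensor against the $3$-form $t$. Only the part of $\langle R_{e_i,X}e_j,e_k\rangle$ totally antisymmetric in $(i,j,k)$ survives; by the pair-symmetry $\langle R_{e_i,X}e_j,e_k\rangle=\langle R_{e_j,e_k}e_i,X\rangle$ this is the complete skew-symmetrization of a curvature tensor over three of its slots, which vanishes by the first Bianchi identity (replacing $\langle R_{e_i,e_j}e_k,X\rangle$ by its cyclic average over $(i,j,k)$ is harmless since $t$ is cyclic, and that average is zero). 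Hence $\nu_{\sigma}=0$ and Proposition \ref{harmmap2} gives the conclusion.

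I expect the main obstacle to be the bookkeeping in part (i): pinning down the structural equation with all signs, and in particular ensuring the coefficient in front of $\xi^G_{\xi^G_X Y}$ is $2$ — this is precisely where the hypothesis $\xi^G_X Y=-\xi^G_Y X$ enters, and it is what produces the constant $2$ in the statement rather than $1$. A secondary care is to verify cleanly that $\nabla^G_W\xi^G$ is $3$-form-valued (so that the $\nabla^G\xi^G$ contributions really drop out after pairing), which uses both that $\nabla^G$ is metric and that $\xi^G$ is skew. Part (ii) is then a routine manipulation of the symmetries of the Riemann tensor once the reduction via Proposition \ref{harmmap2} is set up; note that it does not require the extra hypothesis $[\xi^G_X,\xi^G_Y]\subseteq\lie{g}_{\sigma}$.
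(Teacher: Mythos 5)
Your proposal is correct, and it follows essentially the argument of the cited source \cite{GDMC} (the present paper only recalls the proposition without proof): for (i) the comparison of $R$ with $R^{G}$ via $\nabla^{G}=\nabla+\xi^{G}$, projection onto $\lie{m}_{\sigma}$ and the total skewness of $\langle(\nabla^{G}\xi^{G})_{\cdot}\,\cdot,\cdot\rangle$, and for (ii) the vanishing of $\nu_{\sigma}$ by the pair symmetry of $R$ and the first Bianchi identity, combined with Proposition \ref{harmmap2}. Your sign bookkeeping is consistent once the curvature convention of \cite{GDMC} is fixed as you indicate, and you correctly observe that (ii) does not need the hypothesis $[\xi^{G}_{X},\xi^{G}_{Y}]\subseteq\lie{g}_{\sigma}$.
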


In Section \ref{harmonicalmostcontact}, we will  study harmonicity
of almost contact metric structures. Such structures are examples of
$G$-structures defined by means of one or several $(r,s)$-tensor
fields $\Psi$ which are stabilised under the action of $G$, i.e. $g
\cdot \Psi= \Psi$, for all $g \in G$. Moreover, it will be possible to 
characterise the harmonicity of such $G$-structures by conditions
given in terms of those tensors $\Psi$. The \emph{connection
Laplacian} (or {\it rough Laplacian}) $\nabla^* \nabla \Psi$ will
play a relevant rôle  in such conditions. We recall that
$
\nabla^* \nabla \Psi = -  \left( \nabla^2 \Psi \right)_{e_i,e_i},
$
where $\{ e_1, \dots , e_n \}$ is an orthonormal  frame field and
$(\nabla^2\Psi)_{X,Y} = \nabla_X (\nabla_Y \Psi) -
\nabla_{\nabla_XY}\Psi$.  If a Riemannian manifold $(M , \langle
\cdot , \cdot \rangle )$ of dimension $n$ is equipped with a
$G$-structure, where $G \subseteq \Lie{SO}(n)$, and
 $\Psi$ is a $(r,s)$-tensor field on $M$ which is stabilised under the action of $G$, then
\begin{equation}\label{lapstaten}
\nabla^* \nabla \Psi =  \left( \nabla^{G}_{e_i}
\xi^{G}\right)_{e_i} \Psi + \xi^{G}_{\xi^{G}_{e_i}e_i} \Psi -
\xi^{G}_{e_i} (\xi^{G}_{e_i} \Psi).
\end{equation}
As a consequence, if the $G$-structure is harmonic, then  $ \nabla^* \nabla \Psi
= -\xi^{G}_{e_i} (\xi^{G}_{e_i} \Psi). $
\vspace{1mm}

\section{Almost contact metric structures}{\indent}
\label{sect:almcont} \setcounter{equation}{0}
 An {\it  almost contact
metric manifold} is a $2n+1$-dimensional Riemannian manifold
$(M,\langle \cdot, \cdot \rangle)$  equipped with a $(1,1)$-tensor
field $\varphi$ and a unit vector field $\zeta$,  called the {\it
Reeb vector field} of the structure,  such that
\[
\varphi^{2} = -I + \eta\otimes \zeta,\;\;\;\;\; \langle \varphi
X,\varphi Y\rangle = \langle X,Y\rangle - \eta(X)\eta(Y),
\]
where $\eta = \zeta^{\flat}$. Associated with such a structure the two-form $F= \langle \cdot, \varphi
\cdot\rangle$, called the {\em fundamental two-form}, is usually
considered. Using $F$ and $\eta$, $M$ can be oriented by  fixing a
constant multiple of $F^n \wedge \eta = F \wedge
\stackrel{(n)}{\dots}\wedge F \wedge \eta$ as volume form. Likewise,
the presence of an almost contact metric structure is equivalent to
say that $M$ is equipped with a $\Lie{U}(n) \times 1$-structure. It
is well known that $\Lie{U}(n)\times 1$ is a closed and connected
subgroup of $\SO(2n+1)$ and $\SO(2n+1)/ (\Lie{U}(n) \times 1)$ is
reductive. In this case, the cotangent space on each point
$\mbox{T}^{*}_m M$ is not irreducible under the action of the group
$\Lie{U}(n) \times 1$. In fact, $\mbox{T}^* M = \eta^{\perp} \oplus
\mathbb{R} \eta$ and
$
\lie{so}(2n+1) \cong \Lambda^{2} \mbox{T}^* M = \Lambda^2
\eta^{\perp} \oplus \eta^{\perp} \wedge \mathbb R \eta.
$

From now on, we will denote $X_{\zeta^{\perp}} = X - \eta(X) \zeta$,
for all $X  \in \mathfrak{X}(M)$. Since $\Lambda^2 \eta^{\perp} =
\lie{u}(n) \oplus \lie{u}(n)^{\perp}_{|\zeta^{\perp}} $, where
$\lie{u}(n)$ (resp., $\lie{u}(n)^{\perp}_{|\zeta^{\perp}} $)
consists of those two-forms $b$ such that $b(\varphi X , \varphi Y)
= b( X_{\zeta^{\perp}} ,  Y_{\zeta^{\perp}})$ (resp., $b(\varphi X ,
\varphi Y) = - b( X_{\zeta^{\perp}} , Y_{\zeta^{\perp}})$), we have
$
\lie{so}(2n+1) = \lie{u}(n) \oplus \lie{u}(n)^{\perp},\;$ with  $\; \lie{u}(n)^{\perp} =\lie{u}(n)^{\perp}_{|\zeta^{\perp}}
 \oplus \eta^{\perp} \wedge
\mathbb R \eta.
$
Therefore, for the space $\mbox{T}^* M \otimes \lie{u}(n)^{\perp}$
of possible intrinsic $\Lie{U}(n) \times 1$-torsions,  we obtain
$$
\mbox{T}^* M \otimes  \lie{u}(n)^{\perp} = (\eta^{\perp} \otimes
\lie{u}(n)^{\perp}_{|\zeta^{\perp}}) \oplus (\eta \otimes
\lie{u}(n)^{\perp}_{|\zeta^{\perp}}) \oplus ( \eta^{\perp} \otimes
\eta^{\perp} \wedge   \eta  ) \oplus (\eta \otimes
  \eta^{\perp} \wedge   \eta).
$$

In  \cite{ChineaGonzalezDavila}
it is showed that $\mbox{T}^* M \otimes \lie{u}(n)^{\perp}$ is
decomposed into twelve irreducible $\Lie{U}(n)$-modules $\mathcal
C_1, \dots , \mathcal C_{12}$, where
\begin{align*}
&\eta^{\perp} \otimes \lie{u}(n)^{\perp}_{|\zeta^{\perp}}
  = 
\mathcal
 C_1 \oplus \mathcal C_2 \oplus \mathcal C_3 \oplus \mathcal C_4, &
 & \eta^{\perp} \otimes \eta^{\perp} \wedge  \eta  =  \mathcal C_5
\oplus \mathcal C_8 \oplus \mathcal C_9 \oplus \mathcal C_6 \oplus
\mathcal C_7  \oplus \mathcal C_{10}, \\
& \eta \otimes \lie{u}(n)^{\perp}_{|\zeta^{\perp}}
 =  \mathcal C_{11} , & & 
 \eta \otimes \eta^{\perp} \wedge   \eta  =  \mathcal
C_{12}.
\end{align*}
The modules  $\mathcal C_1, \dots , \mathcal C_4$ are isomorphic
to  the Gray and Hervella's $\Lie{U}(n)$-modules above mentioned. Furthermore, note that $\varphi$
restricted to $\zeta^{\perp}$ works as an almost complex structure
and, if one considers the $\Lie{U}(n)$-action on the bilinear
forms $\otimes^2 \eta^{\perp}$,  we have the decomposition
$$
\textstyle \otimes^2 \eta^{\perp} = \mathbb R \langle \cdot ,
\cdot \rangle_{|\zeta^{\perp}} \oplus \lie{su}(n)_s \oplus
\real{\sigma^{2,0}} \oplus \mathbb R F  \oplus \lie{su}(n)_a
\oplus \lie{u}(n)^{\perp}_{|\zeta^{\perp}}.
$$
The modules  $\lie{su}(n)_s$ (resp., $\lie{su}(n)_a$) consists of
Hermitian symmetric (resp., skew-symmetric) bilinear forms
orthogonal to $\langle \cdot , \cdot \rangle_{|\zeta^{\perp}}$
(resp., $F$),
 and  $\real{\sigma^{2,0}}$ (resp., $\lie{u}(n)^{\perp}_{|\zeta^{\perp}}
 $) is the space of  anti-Hermitian
symmetric (resp., skew-symmetric) bilinear forms. In relation with
the modules $\mathcal C_i$, from $\eta^{\perp} \otimes
\eta^{\perp} \wedge  \mathbb R \eta \cong   \otimes^2
\eta^{\perp}$, using the $\Lie{U}(n)$-map $\xi^{\Lie{U}(n)}
\to - \xi^{\Lie{U}(n)} \eta = \nabla \eta$, it is obtained
$$
\mathcal C_5 \cong  \mathbb R \langle \cdot , \cdot
\rangle_{|\zeta^{\perp}} , \quad \mathcal C_8 \cong \lie{su}(n)_s,
\quad \mathcal C_9 \cong \real{\sigma^{2,0}}, \quad \mathcal C_6
\cong \mathbb R F, \quad \mathcal C_7 \cong  \lie{su}(n)_a, \quad
\mathcal C_{10} \cong \lie{u}(n)^{\perp}_{|\zeta^{\perp}}.
$$
In summary,  the space of possible intrinsic torsions
  $\mbox{T}^* M\otimes  \lie{u}(n)^{\perp}$ consists of those tensors
$\xi^{\Lie{U}(n)}$ such that
\begin{equation} \label{inttorcar}
  \varphi \xi^{\Lie{U}(n)}_X Y + \xi^{\Lie{U}(n)}_X \varphi Y =
  \eta (Y) \varphi \xi^{\Lie{U}(n)}_X \zeta +
\eta ( \xi^{\Lie{U}(n)}_X \varphi Y) \zeta
\end{equation}
and, under the action of $U(n)\times 1$, is decomposed into:
\begin{enumerate}
\item if $n=1$, $ \xi^{\Lie{U}(1)} \in \mbox{T}^* M \otimes
\un(1)^\perp = \mathcal C_{5} \oplus \mathcal C_{6} \oplus
\mathcal C_{9} \oplus \mathcal C_{12}$; \item if $n=2$, $
\xi^{\Lie{U}(2)} \in \mbox{T}^* M \otimes \un(2)^\perp = \mathcal
C_{2} \oplus \mathcal C_{4} \oplus \dots \oplus \mathcal C_{12}$;
\item if $n \geqslant 3$, $ \xi^{\Lie{U}(n)} \in \mbox{T}^* M
\otimes \un(n)^\perp =
  \mathcal C_{1}  \oplus \dots  \oplus \mathcal C_{12}$.
\end{enumerate}

Now, we recall how  some of these classes are referred to  by
diverse authors \cite{Bl,ChineaGonzalezDavila}:

 $\{ \xi^{\Lie{U}(n)}=0 \}=$ cosymplectic manifolds,
 $\mathcal C_1=$ nearly-K-cosymplectic manifolds,
 $\mathcal C_5=$ $\alpha$-Kenmotsu manifolds,
 $\mathcal C_6=$ $\alpha$-Sasakian manifolds,
  $\mathcal C_5\oplus \mathcal C_6=$ trans-Sasakian manifolds,
 $\mathcal C_2 \oplus \mathcal C_9=$ almost cosymplectic manifolds,
  $\mathcal C_6 \oplus \mathcal C_7=$ quasi-Sasakian manifolds,
 %$\mathcal C_1 \oplus \mathcal C_5 \oplus \mathcal C_6=$ nearly-trans-Sasakian manifolds,
 $\mathcal C_1 \oplus \mathcal C_2 \oplus \mathcal C_9 \oplus \mathcal C_{10}=$ quasi-K-cosymplectic manifolds,
 $\mathcal C_3 \oplus \mathcal C_4 \oplus \mathcal C_5 \oplus \mathcal C_{6}
  \oplus \mathcal C_7 \oplus \mathcal C_{8}=$ normal manifolds, $\mathcal C_{2}
  \oplus \mathcal C_6 \oplus \mathcal C_{9}=$ almost $a$-Sasakian manifolds,  etc.
  \vspace{1mm}

The minimal $\Un(n)$-connection is given by $\nabla^{\Lie{U}(n)} =
\nabla + \xi^{\Lie{U}(n)}$ with
\begin{equation*} % \label{torsion:xialmcont}
  \xi^{\Lie{U}(n)}_X   =   -  \tfrac12  \varphi \circ \nabla_X \varphi +
\nabla_X \eta \otimes \zeta - \tfrac12 \eta \otimes  \nabla_X \zeta \\
 =  \tfrac12 (\nabla_X \varphi) \circ \varphi   + \tfrac12
\nabla_X \eta \otimes \zeta - \eta \otimes \nabla_X \zeta.
\end{equation*}
%In fact, one can firstly check that $\nabla^{\Lie{U}(n)}$ is metric
%and $\nabla^{\Lie{U}(n)} \varphi = \nabla^{\Lie{U}(n)} \eta=0$.
%Therefore,  $\nabla^{\Lie{U}(n)}$ is a $\Lie{U}(n)$-connection.
%Finally, it is direct to see that $\xi^{\Lie{U}(n)}_X \in
%\un(n)^\perp$. Note that
 %If the almost contact metric structure is
%of type $\mathcal{C}_5 \oplus \dots \oplus \mathcal{C}_{10} \oplus
%\mathcal{C}_{12}$, then the expression for the intrinsic torsion is
%reduced to $
%  \xi^{\Lie{U}(n)}_X   =  \nabla_X \eta \otimes \zeta - \eta \otimes \nabla_X
 % \zeta$ \cite{GDMC4}.
For sake of simplicity, we will write $\xi = \xi^{\Lie{U}(n)}$ in
the sequel. Likewise, $\xi_{(i)}$ will denote the component of
$\xi$  obtained by the $\Lie{U}(n)$-isomorphism  $(\nabla F)_{(i)}
=(-\xi F)_{(i)}
 \in \mathcal C_i \to \xi_{(i)}$. In this way,  classes  or types are referred to as   in
\cite{ChineaGonzalezDavila}.  

Certain $\Lie{U}(n)$-components of the Riemannian
curvature tensor $R$ of an almost contact metric manifold are determined
by  a Ricci type tensor $\Ricac$ associated to the structure, called the {\it $\ast$-Ricci tensor}. Such a tensor is defined by $\Ricac (X,Y) =
\langle R_{ e_i,X} \varphi e_i , \varphi Y \rangle$.
 In general, $\Ricac$ is not symmetric. However, since $\Ricac$  satisfies
 the identities
$
\Ricac(\varphi X, \varphi Y) =
 \Ricac( Y_{\zeta^{\perp}}, X_{\zeta^{\perp}})$, $\; \;
 \Ricac(X, \zeta ) = 0,
$
it can be claimed that
$$
\Ricac \in \mathbb{R} \langle \cdot , \cdot \rangle \oplus
\lie{su}(n)_s \oplus \lie{u}(n)^{\perp}_{|\zeta^{\perp}}
 \oplus
\eta^{\perp}_d \subseteq \mathbb{R} \langle \cdot , \cdot \rangle
\oplus  \lie{su}(n)_s \oplus
 \eta \odot
\eta^{\perp} \oplus \lie{u}(n)^{\perp}_{|\zeta^{\perp}}
 \oplus \eta \wedge \eta^{\perp},
$$
where $ \eta^{\perp}_d= \{ 2 \eta \odot \alpha +  \eta \wedge
\alpha \, | \, \alpha \in \eta^{\perp} \} \cong \eta^{\perp}$ and
we follow the convention $a \odot b = \tfrac12 (a\otimes b + b
\otimes a)$.

The skew-symmetric part $\Ricac_{\rm alt}$ of
 $\Ricac$ will play a special r{\^o}le. Relative to  $\Ricac_{\rm alt}$,  the following result was already given in \cite{GDMC4}. However,  there are some summands missing there.
\begin{lemma}\label{astricciacm1}
  Let $(M,\langle \cdot ,\cdot \rangle, \varphi , \zeta)$ be a $2n+1$-dimensional
  almost contact metric manifold.  Then  the  $*$-Ricci tensor
satisfies  
    \begin{align*}
  \Ricac_{\mbox{\rm \footnotesize alt}}( X_{\zeta^{\perp}} , Y_{\zeta^{\perp}})  =
 &
  \langle  (\nabla^{\Lie{U}(n)}_{e_i} \xi)_{\varphi e_i} \varphi X_{\zeta^{\perp}} , Y_{\zeta^{\perp}} \rangle
  + \langle \xi_{\xi_{e_i} \varphi e_i} \varphi X_{\zeta^{\perp}} , Y_{\zeta^{\perp}}
  \rangle 
  \\
  &
  - (\xi_{e_i} \eta) \wedge (\xi_{\varphi e_i} \eta) \circ \varphi  ( X_{\zeta^{\perp}} , Y_{\zeta^{\perp}}),
   \\
 \Ricac( \zeta) =
 &
 - \varphi  (\nabla^{\Lie{U}(n)}_{e_i} \xi)_{\varphi e_i} \zeta 
  - \varphi    \xi_{\xi_{e_i} \varphi e_i} \zeta + \varphi  \xi_{e_i} \xi_{\varphi e_i} \zeta,
    \end{align*}
for all $X,Y \in \mathfrak{X}(M)$,  and $\langle  \Ricac( \zeta) , X \rangle  = \Ricac( \zeta , X)$. Furthermore,  if $n>1$,  we
have:
\begin{enumerate}
\item[{\rm (i)}] The restriction $\Ricac_{{\rm alt} |
\zeta^\perp}$ of $\Ricac_{\mbox{\rm \footnotesize alt}}$ to the
space $\zeta^\perp$ is in $\lie{u}(n)^{\perp}_{|\zeta^{\perp}}$
and determines a $\Lie{U}(n)$-component of the Weyl curvature
tensor $W$.
  \item[{\rm (ii)}] The vector field  $\Ricac( \zeta )$
   is in $\zeta^\perp$ and determines another
   $\Lie{U}(n)$-component of $W$.
\end{enumerate}
As a consequence, if $M$ is
conformally flat %, i.e. $W=0$, 
and  $n>1$, then $\Ricac_{{\rm alt}
| \zeta^\perp}=0$
 and $\Ricac( \zeta)=0$.
 % or equivalently, $\Ricac_{\mbox{\rm \footnotesize
%alt}}=0$.
\end{lemma}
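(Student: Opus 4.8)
The statement to prove is Lemma \ref{astricciacm1}. The natural strategy is to start from the definition $\Ricac(X,Y) = \langle R_{e_i,X}\varphi e_i, \varphi Y\rangle$ and express everything in terms of the minimal $\Un(n)$-connection $\Nt = \nabla^{\Lie{U}(n)} = \nabla + \xi$ and its intrinsic torsion. Write $\Nt = \nabla + \xi$, so that the curvature $R^{\Nt}$ of the minimal connection is related to the Riemannian curvature $R$ by the usual formula $R^{\Nt}_{X,Y} = R_{X,Y} + (\nabla_X\xi)_Y - (\nabla_Y\xi)_X + [\xi_X,\xi_Y] - \xi_{[X,Y]}$, or more conveniently in the form using the torsion of $\Nt$, namely $T^{\Nt}_{X,Y} = \xi_X Y - \xi_Y X$. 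The key point is that $\Nt$ parallelizes $\varphi$ and $\zeta$ (since $\Un(n)\times 1$ stabilizes them), so $R^{\Nt}_{X,Y}$ commutes with $\varphi$ and annihilates $\zeta$; hence when we contract $\langle R^{\Nt}_{e_i,X}\varphi e_i,\varphi Y\rangle$ the $R^{\Nt}$-part reorganizes into an ordinary Ricci-type contraction that is symmetric in $X,Y$ and therefore drops out of the skew-symmetric part $\Ricac_{\rm alt}$. What survives in $\Ricac_{\rm alt}$ is precisely the contribution of the difference $R - R^{\Nt}$, i.e. the $(\nabla\xi)$ and $\xi\cdot\xi$ terms.

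First I would substitute $R_{e_i,X} = R^{\Nt}_{e_i,X} - (\nabla_{e_i}\xi)_X + (\nabla_X\xi)_{e_i} - [\xi_{e_i},\xi_X] + \xi_{\nabla_{e_i}X - \nabla_X e_i}$ into $\Ricac(X,Y)$, choosing at the point $m$ a local orthonormal frame with $\nabla e_i = 0$ at $m$; this kills the last term. Next, rewrite $(\nabla_{e_i}\xi)_X$ in terms of $(\Nt_{e_i}\xi)_X$ using $\Nt\xi = \nabla\xi + \xi\cdot\xi$ (the action of $\xi$ on the tensor $\xi$), and likewise collect the $[\xi_{e_i},\xi_X]$ piece. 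After contracting with $\varphi e_i$ in the third slot and $\varphi Y$ in the fourth, and using $\Nt\varphi = 0$ to move $\varphi$ through $\Nt$, the $e_i \leftrightarrow \varphi e_i$ summation should let me replace $\sum_i (\cdot)_{e_i}\varphi e_i$ by $\sum_i (\cdot)_{\varphi e_i} e_i$ up to sign — this is where the $(\Nt_{e_i}\xi)_{\varphi e_i}\varphi X$ and $\xi_{\xi_{e_i}\varphi e_i}\varphi X$ shapes in the claimed formula come from. The purely $\xi\cdot\xi$ contributions, after using the intrinsic-torsion identity \eqref{inttorcar} and the $\Un(n)$-equivariant identification $\mathcal C_5\oplus\cdots\oplus\mathcal C_{10}\cong\otimes^2\eta^\perp$ via $\xi\mapsto -\xi\eta = \nabla\eta$, collapse to the wedge term $(\xi_{e_i}\eta)\wedge(\xi_{\varphi e_i}\eta)\circ\varphi$; the formula for $\Ricac(\zeta)$ is the analogous computation with $Y = \zeta$, using $\Ricac(X,\zeta) = 0$ and $R^{\Nt}_{\cdot,\cdot}\zeta = 0$.

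For part (i) I would argue that $\Ricac_{{\rm alt}\,|\zeta^\perp}$ is a skew-symmetric bilinear form on $\zeta^\perp$, and the identity $\Ricac(\varphi X,\varphi Y) = \Ricac(Y_{\zeta^\perp},X_{\zeta^\perp})$ stated just before the lemma forces its alternating part to satisfy $b(\varphi X,\varphi Y) = -b(X_{\zeta^\perp},Y_{\zeta^\perp})$, i.e. to lie in $\lie{u}(n)^\perp_{|\zeta^\perp}$. That this component is ``seen'' by the Weyl tensor $W$ rather than just by $\Ric$ follows because $\Ricac$ is built from $R$, the trace terms that distinguish $R$ from $W$ are symmetric, and $\lie{u}(n)^\perp_{|\zeta^\perp}$ does not occur in the $\Un(n)$-decomposition of the space of symmetric $2$-tensors that parametrizes the $\Ric$-contribution to $R$; for (ii), $\Ricac(\zeta)\in\zeta^\perp$ is immediate from $\Ricac(\zeta,\zeta)=0$, and the same module-theoretic bookkeeping shows it matches a $\zeta^\perp$-type summand of $W$ not present in the Ricci part. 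The final ``consequence'' is then immediate: if $M$ is conformally flat and $n>1$ then $W=0$, so both $\Un(n)$-components of $W$ identified in (i) and (ii) vanish, forcing $\Ricac_{{\rm alt}\,|\zeta^\perp}=0$ and $\Ricac(\zeta)=0$.

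**Main obstacle.** The genuinely delicate part is the curvature-to-torsion bookkeeping in the first displayed formula: tracking all the $(\nabla\xi)$ and $[\xi,\xi]$ terms, converting them to $\Nt$-covariant form, carefully using the $e_i\leftrightarrow\varphi e_i$ symmetrization, and checking — via \eqref{inttorcar} and the explicit module identifications $\mathcal C_5,\dots,\mathcal C_{10}\cong\otimes^2\eta^\perp$ — that the leftover $\xi\cdot\xi$ terms assemble exactly into $-(\xi_{e_i}\eta)\wedge(\xi_{\varphi e_i}\eta)\circ\varphi$ with no stray symmetric or $\mathbb R F$-type pieces. (Indeed the lemma explicitly notes that \cite{GDMC4} dropped some of these summands, so this is precisely the step where care is required.) The module-theoretic claims in (i)–(ii) are comparatively routine once the first formula is in hand, amounting to reading off which $\Un(n)$-irreducibles appear where in the decomposition of the curvature; but one should double-check, using Lemma-level facts about the $\Un(n)$-decomposition of $\otimes^2\eta^\perp$ recalled above, that $\lie{u}(n)^\perp_{|\zeta^\perp}$ and the $\zeta^\perp$-summand really do lie in $W$ and not in the trace part.
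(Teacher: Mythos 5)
Your overall route differs from the paper's: the paper never decomposes $R$ through the curvature of the minimal connection; instead it applies the Ricci formula (commutation of second covariant derivatives) to the fundamental form $F$, identifies $-(R_{e_i,\varphi e_i}F)(X,Y)=4\Ricac_{\rm alt}(X,Y)$ by the first Bianchi identity, and then computes $\talt(\nabla^2F)_{e_i,\varphi e_i}$ in terms of $\xi$ and $\nabla^{\Lie{U}(n)}\xi$, which produces the stated summands directly (the second identity follows by setting $X=\zeta$). Your alternative could in principle be made to work, but as written it contains a genuine gap at its central step.

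The gap is the claim that the contribution $\langle R^{\nabla^{\Lie{U}(n)}}_{e_i,X}\varphi e_i,\varphi Y\rangle=\langle R^{\nabla^{\Lie{U}(n)}}_{e_i,X}e_i,Y\rangle$ is symmetric in $X,Y$ and hence drops out of $\Ricac_{\rm alt}$. The Ricci-type contraction of a metric connection with torsion is \emph{not} symmetric: the pair symmetry of the curvature and the usual first Bianchi identity fail for $\nabla^{\Lie{U}(n)}$, and the contracted first Bianchi identity with torsion expresses the skew part of this contraction precisely through $\nabla^{\Lie{U}(n)}\xi$ and quadratic $\xi$-terms. In other words, the summands $\langle(\nabla^{\Lie{U}(n)}_{e_i}\xi)_{\varphi e_i}\varphi X,Y\rangle$ and $\langle\xi_{\xi_{e_i}\varphi e_i}\varphi X,Y\rangle$ in the lemma come to a large extent from exactly the term you discard, so your bookkeeping cannot reproduce the stated formula. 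Relatedly, your suggestion that an $e_i\leftrightarrow\varphi e_i$ resummation converts contractions of the shape $(\nabla_{e_i}\xi)_X$ and $(\nabla_X\xi)_{e_i}$ into $(\nabla^{\Lie{U}(n)}_{e_i}\xi)_{\varphi e_i}\varphi X$ is not a legitimate manipulation: the vector $X$ sits in the argument slot of $\xi$, not in a contracted slot, and moving it requires precisely a Bianchi-type identity (which is what the paper's use of the Ricci formula together with the first Bianchi identity for the Levi-Civita curvature supplies). To repair your argument you would have to invoke the contracted first Bianchi identity for $\nabla^{\Lie{U}(n)}$ (with torsion $\xi_XY-\xi_YX$) instead of asserting symmetry; the sensitivity of this step is underscored by the fact that the lemma exists to restore summands that were dropped in \cite{GDMC4}. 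Your treatment of (i)--(ii) and the conformally flat consequence is a reasonable sketch (the paper simply cites \cite{GDMC4} for these), but it rests on the first displayed identity, so the proof as proposed is incomplete.
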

\begin{proof} The proof follows in the same way as in \cite{GDMC4}. However, we rewrite it because of  missing summands in identities there. 
The so-called Ricci formula \cite[p.~26]{Besse:Einstein} implies
  $
  - (R_{e_i,\varphi e_i} F)(X,Y) =
    \talt(\nabla^2F)_{e_i,\varphi e_i}(X,Y),
  $
  where $\talt \colon T^* M \otimes T^* M \otimes \Lambda^2 T^* M \to
  \Lambda^2 T^* M \otimes \Lambda^2 T^* M$ is the skewing
  mapping.
  On one hand, by making use of first Bianchi's identity, it is
  straightforward to see 
  $
  - (R_{e_i,\varphi e_i} F)(X,Y) = 4   \Ricac_{\mbox{\footnotesize alt}}( X , Y).
  $
On the other hand, it is relatively direct to check 
\begin{align*}
 \talt(\nabla^2F)_{e_i,\varphi
 e_i}(X,Y)  = &-  2 \langle \varphi (\nabla^{\Lie{U}(n)}_{e_i} \xi)_{\varphi e_i}
 X ,   Y \rangle + 2 \langle (\nabla^{\Lie{U}(n)}_{e_i} \xi)_{\varphi e_i}
\varphi  X ,  Y \rangle \\
& - 2 \langle \varphi \xi_{\xi_{e_i} \varphi e_i}
 X ,
  Y \rangle + 2 \langle \xi_{\xi_{e_i} \varphi e_i}
\varphi  X ,  Y \rangle\\
&
- 2 \langle \xi_{e_i} \xi_{\varphi e_i} X , \varphi Y \rangle 
+ 2 \langle \xi_{e_i} \xi_{\varphi e_i} Y , \varphi X \rangle. 
\end{align*}
In \cite{GDMC4} the last two summands are missing.
Now, using equation \eqref{inttorcar}, we will obtain the following right 
expression for $\Ricac_{\mbox{\rm \footnotesize alt}}( X , Y)$:
\begin{align} 
     \Ricac_{\mbox{\footnotesize alt}}( X , Y)  = &   \langle (\nabla^{\Lie{U}(n)}_{e_i} \xi)_{\varphi e_i}
\varphi  X ,  Y \rangle
  +  \langle \xi_{\xi_{e_i} \varphi e_i} \varphi  X ,  Y \rangle   +  \eta \odot ((\nabla^{\Lie{U}(n)}_{e_i} \xi)_{\varphi e_i} \eta) \circ\varphi (X,Y) \nonumber
  \\
&
   +  \eta \odot (\xi_{\xi_{e_i} \varphi e_i} \eta) \circ\varphi
   (X,Y)  - (\xi_{e_i} \eta) \wedge (\xi_{\varphi e_i} \eta) \circ \varphi (X,Y)   \label{otraricsac1}
    \\
& + \eta \wedge (\eta \circ \xi_{e_i} \circ \xi_{\varphi e_i} \circ \varphi)(X,Y). \nonumber 
\end{align}
 The tensors 
$\nabla^{\Lie{U}(n)}_{e_i} \xi$ and $\xi$ are  of the same
type because $\nabla^{\Lie{U}(n)}$ is a $\Lie{U}(n)$-connection.
 Now, by replacing
$X=X_{\zeta^{\perp}}$ and $Y=Y_{\zeta^{\perp}}$ 
\eqref{otraricsac1}, we will obtain the first required identity.
Likewise, by replacing $X=\zeta $ and $Y=X$ in equation
\eqref{otraricsac1}, the second required identity follows.

For  the proof for the  final assertions in the Lemma, see the one given in \cite{GDMC4}.
\end{proof}

The vector field $\xi_{e_i} \varphi
 e_i$ involved in  $\Ricac$ is given by
$
 \xi_{e_i} \varphi e_i = -\tfrac12 (d^* F)^\sharp   -\tfrac12 d^* F(\zeta) \zeta
 - \varphi \nabla_\zeta \zeta.
$
Thus, this vector field is contributed by the components of $\xi$ in
$\mathcal C_4$ and   $\mathcal C_6$. In fact, 
$$ 
\xi_{{(4)}e_i}
\varphi e_i = - \tfrac12 (d^* F)^{\sharp} -  \varphi \nabla_\zeta \zeta + \tfrac12 d^*
F(\zeta) \zeta, \qquad \xi_{{(6)}e_i} \varphi e_i = - d^* F(\zeta)
\zeta. 
$$
Likewise, the vector field $\xi_{e_i} e_i$ which
is involved in the harmonicity criteria  is given by
$
  \xi_{e_i} e_i  =  - \tfrac12 \varphi (d^* F)^{\sharp} - d^* \eta \;
\zeta  - \tfrac12  \nabla_{\zeta}\zeta.
$
Because  
\begin{equation} \label{leeac}
 \xi_{{(4)}e_i} e_i = -   \tfrac12 \varphi (d^* F)^{\sharp} +   \tfrac12
\nabla_{\zeta}\zeta, \qquad \xi_{{(5)}e_i} e_i = - d^* \eta \; \zeta, \qquad  \xi_{{(12)}e_i} e_i = -
 \nabla_{\zeta}\zeta,
 \end{equation}
  one has that $\xi_{e_i} e_i$ is  contributed by
$\mathcal C_4$, $\mathcal C_5$ and $\mathcal C_{12}$.
\vspace{1mm}

For a $2n$-dimensional  almost Hermitian manifold $(M,J,\langle \cdot, \cdot \rangle)$, where $J$ is the almost complex structure and  $\langle \cdot, \cdot \rangle$ is the metric, the \textit{Lee one-form} $\theta$  is defined by $\theta = - \frac{1}{n-1} J d^* \omega$, where $\omega = \langle \cdot , J \cdot \rangle$ is the Kähler two-form (see \cite{Gray-H:16}). The one-form $\theta$ determines the component usually denoted by  $\xi_{(4)}$ of the intrinsic torsion of the almost Hermitian structure. Such a component is given by
$
4 \xi_{(4)X} = X^\flat \otimes \theta^\sharp - \theta \otimes X-  JX^\flat \otimes J\theta^\sharp +  J \theta  \otimes J X.
$
 Note that $\sum_{i=1}^{2n} \xi_{e_i} e_i = \frac{n-1}{2} \theta^\sharp$.
 \vspace{2mm}
 
 In  the context of almost contact metric geometric, taking \eqref{leeac} into account,   the Lee form is defined by 
 $
(n-1) \theta  =  -   \varphi (d^* F)^{\sharp} +
\nabla_{\zeta}\eta, 
$
where $2n+1$ is the dimension of the almost contact metric  manifold. 
The  component $\xi_{(4)}$ is given by
$$
4 \xi_{(4)X} = X_{\zeta^\perp}^\flat \otimes \theta^\sharp - \theta \otimes X_{\zeta^\perp}-  \varphi X^\flat \otimes \varphi \theta^\sharp +  \varphi \theta  \otimes \varphi X.
$$
%\vspace{1mm}

Likewise, for the vector field $ \varphi  \xi_{e_i} \xi_{\varphi e_i} \zeta$ involved in the expression for   $ \Ricac( \zeta)$  obtained above, we have the results given in next lemma which will be useful later. 
\begin{lemma} \label{xixizeta1}
 Denoting $\mathcal{A} = \mathcal{C}_1 \oplus \mathcal{C}_2$, $\mathcal{B} = \mathcal{C}_3 \oplus \mathcal{C}_4$, $\mathcal{C} = \mathcal{C}_5 \oplus \mathcal{C}_6 \oplus  \mathcal{C}_7 \oplus \mathcal{C}_8$,  $\mathcal{D} = \mathcal{C}_9 \oplus \mathcal{C}_{10}$, $\mathcal{E} = \mathcal{C}_{11} \oplus \mathcal{C}_{12}$ and being $\{e_1, \dots , e_{2n} , e_{2n+1} =\zeta\}$ an orthonormal basis for tangent vectors,  we have:
\begin{enumerate}
\item[$(i)$]
If the almost contact metric structure is of type $\mathcal{A} \oplus \mathcal{B}\oplus \mathcal{C} \oplus \mathcal{E}$, 
%or $\mathcal{B} \oplus \mathcal{C}\oplus \mathcal{D} \oplus \mathcal{E}$, 
then 
$$
 \varphi \xi_{e_i} \xi_{\varphi e_i} \zeta =    \xi_{e_i} \xi_{ e_i} \zeta - \xi_{(11)\zeta} \xi_\zeta \zeta + \| \nabla \zeta \|^2 \zeta .
$$

\item[$(ii)$]
If the almost contact metric structure is of type $\mathcal{A} \oplus \mathcal{B}\oplus \mathcal{D} \oplus \mathcal{E}$, 
%or $\mathcal{A} \oplus \mathcal{C}\oplus \mathcal{D} \oplus \mathcal{E}$
 then 
$$
 \varphi \xi_{e_i} \xi_{\varphi e_i} \zeta = -   \xi_{e_i} \xi_{ e_i} \zeta + \xi_{(11)\zeta} \xi_\zeta \zeta - \| \nabla \zeta \|^2 \zeta .
$$

\item[$(iii)$]
If the almost contact metric structure is of type $\mathcal{A} \oplus \mathcal{C} \oplus \mathcal{E}$ or   $\mathcal{B} \oplus \mathcal{D} \oplus \mathcal{E}$ or $\mathcal{C}_1 \oplus \mathcal{C} \oplus \mathcal{C}_9 \oplus \mathcal{E}$ or  $\mathcal{C}_3 \oplus \mathcal{C}_5 \oplus \mathcal{C}_6 \oplus \mathcal{E}$ or $\mathcal{A} \oplus \mathcal{B}\oplus   \mathcal{E}$,
%or $\mathcal{A} \oplus \mathcal{C}\oplus \mathcal{D} \oplus \mathcal{E}$
 then 
$$
 	\hspace{1cm} \varphi \xi_{e_i} \xi_{\varphi e_i} \zeta =  0,         \qquad    \xi_{e_i} \xi_{ e_i} \zeta = \xi_\zeta \xi_\zeta \zeta =  \xi_{(11)\zeta} \xi_\zeta \zeta - \| \nabla_\zeta  \zeta \|^2 \zeta \quad \mbox{and} \quad
  \| \nabla  \zeta \| =   \| \nabla_\zeta  \zeta \|.  
$$
\end{enumerate}
\end{lemma}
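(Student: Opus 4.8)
The plan is to express everything through the intrinsic torsion $\xi=\xi^{\Un(n)}$, using two inputs: $\nabla\zeta=-\xi\zeta$ (immediate from $\nabla^{\Un(n)}=\nabla+\xi$ being a $\Un(n)$-connection, or from the displayed formula for $\xi_X$), and the algebraic type \eqref{inttorcar}, which each component $\xi_{(j)}$ inherits because the $\mathcal C_j$ are $\Un(n)$-submodules of $\mbox{T}^* M\otimes\lie{u}(n)^{\perp}$. First I would record how the components act. Put $\xi'=\xi_{(1)}+\dots+\xi_{(4)}$ and $\xi''=\xi_{(5)}+\dots+\xi_{(10)}$. For $X\perp\zeta$: $\xi'_X\in\lie{u}(n)^{\perp}_{|\zeta^{\perp}}$, so it preserves $\zeta^{\perp}$, annihilates $\zeta$, and anticommutes with $\varphi$ on $\zeta^{\perp}$; $\xi''_X\in\eta^{\perp}\wedge\mathbb R\eta$, so $\xi''_X\zeta=-\nabla_X\zeta\in\zeta^{\perp}$ and $\xi''_XV=\langle\nabla_X\zeta,V\rangle\zeta$ for $V\perp\zeta$; and $\xi_{(11)X}=\xi_{(12)X}=0$. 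In the $\zeta$-slot, $\xi_{(11)\zeta}$ kills $\zeta$, while $\xi_{(12)\zeta}\zeta=-\nabla_\zeta\zeta$ and $\xi_{(12)\zeta}V=\langle\nabla_\zeta\zeta,V\rangle\zeta$ for $V\perp\zeta$; in particular $\nabla_X\zeta$ receives no contribution from $\xi_{(1)},\dots,\xi_{(4)}$. Throughout I use the adapted frame $\{e_1,\dots,e_{2n},e_{2n+1}=\zeta\}$ together with $\varphi e_{2n+1}=0$ and the fact that both $\{e_i\}_{i\le 2n}$ and $\{\varphi e_i\}_{i\le 2n}$ are orthonormal bases of $\zeta^{\perp}$, which licenses reindexing any $\zeta^{\perp}$-valued contraction along $e_i\mapsto\varphi e_i$.

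The reduction is the crux. Since $\varphi e_i\perp\zeta$, the inner factor $\xi_{\varphi e_i}\zeta=\xi''_{\varphi e_i}\zeta$ lies in $\zeta^{\perp}$; feeding a vector of $\zeta^{\perp}$ into the outer $\xi_{e_i}$, its $\xi''$-part lands in $\mathbb R\zeta$ and is killed by $\varphi$, while its $\xi_{(11)}$-, $\xi_{(12)}$-parts vanish because the first slot is $\perp\zeta$. Hence $\varphi\,\xi_{e_i}\xi_{\varphi e_i}\zeta=\sum_{i=1}^{2n}\varphi\,\xi'_{e_i}\bigl(\xi''_{\varphi e_i}\zeta\bigr)$. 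Now the $\mathcal C=\mathcal C_5\oplus\dots\oplus\mathcal C_8$-part of $(\nabla\zeta)_{|\zeta^{\perp}}$ is complex-linear and the $\mathcal D=\mathcal C_9\oplus\mathcal C_{10}$-part is complex-antilinear, so writing $\xi''=\xi^{\mathcal C}+\xi^{\mathcal D}$ one has $\varphi\,\xi^{\mathcal C}_X\zeta=\xi^{\mathcal C}_{\varphi X}\zeta$ and $\varphi\,\xi^{\mathcal D}_X\zeta=-\xi^{\mathcal D}_{\varphi X}\zeta$ for $X\perp\zeta$. Substituting these, using the anticommutation of $\xi'_X$ with $\varphi$ on $\zeta^{\perp}$, and reindexing by $\varphi e_i$, the sum collapses to $\sum_{i=1}^{2n}\xi'_{e_i}\bigl(\xi^{\mathcal C}_{e_i}\zeta\bigr)-\sum_{i=1}^{2n}\xi'_{e_i}\bigl(\xi^{\mathcal D}_{e_i}\zeta\bigr)$. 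Comparing this with the trace expansion $\xi_{e_i}\xi_{e_i}\zeta=\sum_{i=1}^{2n}\xi'_{e_i}(\xi''_{e_i}\zeta)+\sum_{i=1}^{2n}\xi''_{e_i}(\xi''_{e_i}\zeta)+\xi_\zeta\xi_\zeta\zeta$, together with the elementary evaluations $\sum_{i=1}^{2n}\xi''_{e_i}(\xi''_{e_i}\zeta)=-(\|\nabla\zeta\|^2-\|\nabla_\zeta\zeta\|^2)\,\zeta$ and $\xi_\zeta\xi_\zeta\zeta=\xi_{(11)\zeta}\xi_\zeta\zeta-\|\nabla_\zeta\zeta\|^2\zeta$, yields the formulas of parts (i) and (ii) — the sign being $+$ when the type lies in $\mathcal A\oplus\mathcal B\oplus\mathcal C\oplus\mathcal E$ (so $\xi^{\mathcal D}=0$) and $-$ when it lies in $\mathcal A\oplus\mathcal B\oplus\mathcal D\oplus\mathcal E$ (so $\xi^{\mathcal C}=0$).

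For part (iii) every listed type is a subtype of one of the two above, so the identity of (i)/(ii) applies, and what remains — the main obstacle — is to show the residual contraction $\sum_{i=1}^{2n}\xi'_{e_i}\bigl(\xi''_{e_i}\zeta\bigr)$ vanishes for those types. Writing $\xi''_{e_i}\zeta=-B e_i$ with $B=(\nabla\zeta)_{|\zeta^{\perp}}$ and moving $B$ across by duality, this equals $-\sum_j\xi'_{Ce_j}e_j$ with $C$ the adjoint of $B$. For the components $\xi_{(1)},\xi_{(2)},\xi_{(3)}$ this is quick: each restricts to $\zeta^{\perp}$ as a trace-free endomorphism-valued one-form anticommuting with $\varphi$, so $\sum_j\xi_{(a)e_j}e_j=0$ and $\sum_j\xi_{(a)\varphi e_j}e_j=\varphi\sum_j\xi_{(a)e_j}e_j=0$, and a short $\Un(n)$-weight count then gives $\sum_j\xi_{(a)Ce_j}e_j=0$ for every complex-linear $C$; in the types where the $\mathcal C_5$–$\mathcal C_{10}$-components present sit inside $\mathcal C$, $C$ is indeed complex-linear. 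The remaining pieces — those in which a $\mathcal C_4$-component or a $\mathcal C_9$-component is involved — I would dispose of by the same reindexing combined with the explicit expression $4\xi_{(4)X}=X_{\zeta^{\perp}}^{\flat}\otimes\theta^{\sharp}-\theta\otimes X_{\zeta^{\perp}}-\varphi X^{\flat}\otimes\varphi\theta^{\sharp}+\varphi\theta\otimes\varphi X$ (which makes $\sum_j\xi_{(4)Ce_j}e_j$ reduce to $-B\theta^{\sharp}+B\theta^{\sharp}=0$ when $C$ is complex-antilinear, using $\operatorname{tr}C=\operatorname{tr}(\varphi C)=0$ there), plus a $\Un(n)$-type count showing $\zeta^{\perp}$ does not occur as a summand of $\mathcal C_1\otimes\mathcal C_9$ or $\mathcal C_2\otimes\mathcal C_9$ realisable by this trace. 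Once $\varphi\,\xi_{e_i}\xi_{\varphi e_i}\zeta=0$ is in hand, the remaining identities $\xi_{e_i}\xi_{e_i}\zeta=\xi_\zeta\xi_\zeta\zeta=\xi_{(11)\zeta}\xi_\zeta\zeta-\|\nabla_\zeta\zeta\|^2\zeta$ and $\|\nabla\zeta\|=\|\nabla_\zeta\zeta\|$ come out of the bookkeeping of the second paragraph once the pertinent $\mathcal C_5$–$\mathcal C_{10}$-contractions are seen to drop out.
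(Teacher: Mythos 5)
Parts (i) and (ii) of your argument are correct, and they are essentially the paper's own computation recast in the language of the module decomposition: isolating $\xi'=\xi_{(1)}+\dots+\xi_{(4)}$ and $\xi''=\xi_{(5)}+\dots+\xi_{(10)}$, using the (anti)complex linearity of $X\mapsto\xi_X\zeta$ on $\zeta^{\perp}$, and the evaluations $\sum_{i\le 2n}\xi''_{e_i}\xi''_{e_i}\zeta=-(\|\nabla\zeta\|^2-\|\nabla_\zeta\zeta\|^2)\zeta$ and $\xi_\zeta\xi_\zeta\zeta=\xi_{(11)\zeta}\xi_\zeta\zeta-\|\nabla_\zeta\zeta\|^2\zeta$ reproduces exactly what the paper obtains by applying \eqref{inttorcar} and the Hermitian/skew-Hermitian character of $(\xi_\cdot\eta)\cdot$ on $\zeta^{\perp}$.

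Part (iii) is where there is a genuine gap. A secondary point first: your proof that the residual contraction $\sum_{i\le 2n}\xi'_{e_i}(\xi''_{e_i}\zeta)$ vanishes is only sketched and partly off — the step $\sum_j\xi_{(a)\varphi e_j}e_j=\varphi\sum_j\xi_{(a)e_j}e_j$ is unjustified, the ``complex-linear $C$'' discussion does not cover $\mathcal B\oplus\mathcal D\oplus\mathcal E$ (where $C$ is anti-linear and $\xi_{(3)},\xi_{(4)}$ occur) nor the type $\mathcal C_3\oplus\mathcal C_5\oplus\mathcal C_6\oplus\mathcal E$, where a $\mathcal B$-component meets $\mathcal C$-components; the clean mechanism is the pairwise cancellation over $\{e_i,\varphi e_i\}$ coming from $\xi_{(\mathcal A)\varphi X}\varphi Y=-\xi_{(\mathcal A)X}Y$, $\xi_{(\mathcal B)\varphi X}\varphi Y=\xi_{(\mathcal B)X}Y$ and $\xi_{\varphi X}\zeta=\pm\varphi\xi_X\zeta$, supplemented by an equivariance/trace argument for the mixed pairings such as $\xi_{(1)}$ with $\xi_{(9)}$ or $\xi_{(3)}$ with $\xi_{(5)},\xi_{(6)}$. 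The serious gap is your last sentence: the identities $\xi_{e_i}\xi_{e_i}\zeta=\xi_\zeta\xi_\zeta\zeta$ and $\|\nabla\zeta\|=\|\nabla_\zeta\zeta\|$ do not ``come out of the bookkeeping'' once $\varphi\xi_{e_i}\xi_{\varphi e_i}\zeta=0$ is known. Your own second-paragraph expansion then yields only $\xi_{e_i}\xi_{e_i}\zeta=\xi_{(11)\zeta}\xi_\zeta\zeta-\|\nabla\zeta\|^2\zeta$; upgrading this to the stated identities is equivalent to the extra assertion $\sum_{i\le 2n}\|\xi_{e_i}\zeta\|^2=0$, i.e.\ the vanishing of the horizontal part of $\nabla\zeta$, which your argument nowhere establishes and which your bookkeeping cannot produce, since the $\zeta$-components $-\|\xi_{e_i}\zeta\|^2\zeta$ of $\xi_{e_i}\xi_{e_i}\zeta$ and $\xi_{\varphi e_i}\xi_{\varphi e_i}\zeta$ carry the same sign and accumulate rather than cancel. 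Concretely, for an $\alpha$-Sasakian structure (pure $\mathcal C_6$, hence of the first listed type) $\xi_X\zeta$ is a nonzero multiple of $\varphi X$ for horizontal $X$, so $\|\nabla\zeta\|^2-\|\nabla_\zeta\zeta\|^2=2n\alpha^2>0$ even though $\varphi\xi_{e_i}\xi_{\varphi e_i}\zeta=0$; the extra identities are therefore not consequences of the first one. This is exactly the point where the paper's proof invokes the cancellation $\sum_{i=1}^{n}\xi_{\varphi e_i}\xi_{\varphi e_i}\zeta=-\sum_{i=1}^{n}\xi_{e_i}\xi_{e_i}\zeta$, a step your (correct) reduction does not reproduce; so the hand-waved conclusion of your part (iii) is a real gap, not a matter of routine bookkeeping.
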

\begin{proof}
For (i) (for (ii)),  by using  \eqref{inttorcar}, we have 
\begin{eqnarray*}
 \varphi \xi_{e_i} \xi_{\varphi e_i} \zeta & = & -   \xi_{e_i} \varphi \xi_{\varphi e_i} \zeta
+  \eta ( \xi_{\varphi e_i}\zeta ) \varphi  \xi_{e_i} \zeta +   \eta ( \xi_{e_i} \varphi  \xi_{\varphi e_i}\zeta )  \zeta. 
\end{eqnarray*}
Note that the second summand of the right side is equal to zero and the required  identity follows by considering the properties of the bilinear form $(\xi_\cdot  \eta)\cdot$ acting on $\zeta^\perp$, i.e. in the case  (i) (case (ii)),  it is a Hermitian (skew Hermitian) bilinear form on $\zeta^\perp$.
Finally, note that one has the identity 
$
 \xi_\zeta \xi_\zeta \zeta =  - \| \nabla_\zeta  \zeta \|^2 \zeta + \xi_{(11)\zeta} \xi_\zeta \zeta.
$
\vspace{1mm}

For the remaining particular cases, if the type is $\mathcal{A} \oplus \mathcal{C} \oplus \mathcal{E}$ or  $\mathcal{B} \oplus \mathcal{D} \oplus \mathcal{E}$,  we have $ \sum_{i=1}^{n}  \xi_{\varphi e_i} \xi_{\varphi e_i} \zeta = - \sum_{i=1}^{n}  \xi_{ e_i} \xi_{ e_i} \zeta$. Therefore,
 \begin{gather*}
 \textstyle \sum_{i=1}^{2n+1}  \xi_{e_i} \xi_{ e_i} \zeta =   \textstyle \sum_{i=1}^{n}  \xi_{e_i} \xi_{ e_i} \zeta +  \textstyle \sum_{i=1}^{n}  \xi_{\varphi e_i} \xi_{\varphi e_i} \zeta + \xi_\zeta \xi_\zeta \zeta = \xi_\zeta \xi_\zeta \zeta.
 \end{gather*}
 
For the type $\mathcal{A} \oplus \mathcal{B}\oplus   \mathcal{E}$,   as a consequence of $(i)$ and $(ii)$, it is followed  
$$
 \varphi \xi_{e_i} \xi_{\varphi e_i} \zeta = 0, \qquad  
   \xi_{e_i} \xi_{ e_i} \zeta = \xi_\zeta \xi_\zeta \zeta =  \xi_{(11)\zeta} \xi_\zeta \zeta - \| \nabla \zeta \|^2 \zeta , \quad 
 \mbox{and} \quad
  \| \nabla  \zeta \|^2 =   \| \nabla_\zeta  \zeta \|^2. 
$$
The remaining cases are derived by a similar way  using  properties of the intrinsic torsion. 
\end{proof}

Next it is pointed out  more  relative to notation.
\begin{remark}
  {\rm We will use the following standard notation:  $\lambda_0^{p,q}$ is a complex irreducible $\Lie{U}(n)$-module coming from the $(p,q)$-part of the  
complex exterior algebra, and that its corresponding dominant weight in standard
coordinates is given by $(1, \dots,1,0, \dots,0, -1, \dots , -1)$, where $1$ and $-1$ are repeated $p$ and $q$ times, respectively. By analogy with the exterior algebra, there
are also complex irreducible $\Lie{U}(n)$-modules $\sigma^{p,q}_0$, with dominant weights $(p,0, \dots ,0, -q)$ 
coming from the complex symmetric algebra. The notation $\lcf V \rcf$ stands for the real vector space underlying a complex vector space $V$, and $[W ]$ denotes a real vector space that admits $W$ as its complexification. Thus, being $A$ the irreducible $\Lie{U}(n)$-module with dominant weight $(2,1,0, \dots, 0)$,   for the $\Lie{U}(n)$-modules above mentioned one has
\begin{gather*}
\eta^{\perp} \cong \lcf \lambda^{1,0} \rcf, \quad  \lie{u}(n)  \cong  [\lambda^{1,1}], \quad  \lie{su}(n)_s \cong  \lie{su}(n)_a \cong  [\lambda^{1,1}_0] ,  \quad \lie{u}(n)^{\perp}_{|\zeta^{\perp}} \cong  \lcf \lambda^{2,0} \rcf.
\\
\mathcal C_1 \cong \lcf \lambda^{3,0} \rcf, \quad \mathcal C_2 \cong \lcf A \rcf, \quad
\mathcal C_3 \cong \lcf \lambda^{2,0}_0 \rcf, \quad \mathcal C_4 \cong \lcf \lambda^{1,0} \rcf.
\end{gather*}

The space of two forms  $\Lambda^2 \mathrm{T}^* M$ is decomposed into irreducible  $\Lie{U}(n)$-components as follows:
$$
\Lambda^2 \mathrm{T}^* M =   \mathbb{R} \, F+ [\lambda_0^{1,1}]  + \lcf \lambda^{2,0} \rcf +   \eta \wedge  \lcf \lambda^{1,0} \rcf . 
$$
The components of a two-form $\alpha$ are given by 
\begin{gather*}
2 \alpha_{[\lambda^{1,1}]} (X,Y)  =  \alpha (\varphi^2 X , \varphi^2 Y) +  \alpha (\varphi X , \varphi Y),  \quad
2 \alpha_{\lcf \lambda^{2,0}\rcf} (X,Y)  =  \alpha (\varphi^2 X , \varphi^2 Y) -  \alpha (\varphi X , \varphi Y),  \\
\alpha_{\eta \wedge \lcf \lambda^{1,0} \rcf  }  =  \eta  \wedge (\zeta \lrcorner \alpha), \qquad \alpha_{\mathbb{R}F} = \frac1{2n} \langle \alpha , F \rangle F,
\end{gather*}
where $\lrcorner$ denotes the interior product and it is used  the metric given by \eqref{extendedmetric}.  
 In the sequel, we will consider the orthonormal basis for  vectors   $\{e_1, \dots, e_{2n}, e_{2n+1} = \zeta\}$. Likewise, we will use the summation convention. The repeated indexes will mean that the sum is extended from $i=1$ to $i=2n+1$.  Otherwise, the sum will be explicitly  written. 

   }
  \end{remark}

\vspace{0mm}

\section{Geometric interrelations between components of the intrinsic torsion 
%and non-existence of certain types of almost contact metric structures
}{\indent} \label{foursection}
In this section we will display  several identities relating components of the intrinsic torsion of an almost contact streucture. Such   identities   were already obtained in \cite{FMC6} and  are consequences of the equalities $d^2 F=0$ and $d^2 \eta=0$. They are interesting  on their own and are applied in next section to derive conditions  for  the harmocinity of the structure. Likewise, the identities are also used  to claim the non-existence of certain types of almost contact metric structures. This was already initiated in  \cite{FMC6}. Here we will give further results in such a direction in the final section. The identity in next Lemma is  a consequence of $d^2F=0$. 
\begin{lemma} \label{lambdaunouno}
  For almost contact metric manifolds of dimension $2n+1$, $n>1$, the
following identity is satisfied
{\rm  \small
 \begin{align*}
   0   = &
    \tfrac{n-2}{n-1} \textstyle   \langle \nabla^{\Lie{U}(n)}_{\varphi^2  X} \xi_{(4)e_i} { e_i},    Y \rangle
   -  \tfrac{n-2}{n-1} \textstyle   \langle \nabla^{\Lie{U}(n)}_{\varphi^2 Y} \xi_{(4) e_i} { e_i},    X \rangle
  - \tfrac{2}{n-1} \textstyle  (\nabla^{\Lie{U}(n)}_{e_j}  (\xi_{(4)e_i} { e_i})^\flat)) (\varphi e_j)  F(X,Y)
  \\
  &
   - 2 \textstyle  \langle (\nabla^{\Lie{U}(n)}_{e_i} \xi_{(3)})_{ X} Y,  { e_i} \rangle
    + 2 \textstyle  \langle (\nabla^{\Lie{U}(n)}_{e_i} \xi_{(3)})_{ Y}   X, { e_i} \rangle
  - \tfrac{n-2}{n-1} \textstyle  \langle \nabla^{\Lie{U}(n)}_{\varphi  X} \xi_{(4)e_i} { e_i}, \varphi  Y \rangle
   \\
   &
   +  \tfrac{n-2}{n-1} \textstyle   \langle \nabla^{\Lie{U}(n)}_{\varphi Y} \xi_{(4)e_i} { e_i}, \varphi  X \rangle
- 3 \textstyle  \langle \xi_{(1)X} e_i, \xi_{(2)Y} e_i \rangle 
+ 3  \textstyle  \langle \xi_{(1)Y} e_i, \xi_{(2)X} e_i \rangle     
\\
&
- \tfrac{2}{n^2} d^*\eta d^*F(\zeta) F(X,Y) 
+ \tfrac{4}{n} d^*\eta (\xi_{(7)X} \eta)(Y) 
- \tfrac{4}{n} d^*F(\zeta)    (\xi_{(8)X}  \eta) (\varphi Y) 
\\
&
+ 4 \langle   \xi_{(7)X}  \zeta, \xi_{(8)Y}  \zeta \rangle -  4 \langle   \xi_{(7)Y}  \zeta, \xi_{(8)X}  \zeta \rangle
+ 4 \langle   \xi_{(11)\zeta }  X , \xi_{(10)Y}  \zeta \rangle -  4 \langle   \xi_{(11)\zeta }  Y, \xi_{(10)X}  \zeta \rangle.
\end{align*} }
 \end{lemma}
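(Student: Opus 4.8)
The plan is to derive the identity by computing $d^2 F = 0$ in terms of the intrinsic torsion and the minimal connection, and then extracting the $\Lie{U}(n)$-component that lives in $[\lambda_0^{1,1}]$ (together with its $\mathbb{R}F$ part), exactly as the introduction announces for the identities of Section \ref{foursection}. First I would express $dF$ in terms of $\nabla^{\Lie{U}(n)}$ and $\xi$: since $\nabla^{\Lie{U}(n)}$ is a metric connection with torsion built from $\xi$, one has $dF(X,Y,Z) = \cyclic \bigl( (\nabla^{\Lie{U}(n)}_X F)(Y,Z) - F(\xi_X Y - \xi_Y X, Z) \bigr)$, and because $\nabla^{\Lie{U}(n)}$ is a $\Lie{U}(n)$-connection, $\nabla^{\Lie{U}(n)} F = 0$, so $dF$ is a purely algebraic expression in $\xi$. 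Then I would apply $d$ again, producing $d^2 F$ as an expression involving $\nabla^{\Lie{U}(n)} \xi$ and quadratic terms $\xi * \xi$; setting this to zero and extracting the appropriate irreducible piece (viewing the result as a section of $\Lambda^2 \mathrm T^* M$, and restricting to the $[\lambda_0^{1,1}]$ plus $\mathbb{R}F$ part via the projection formulas recalled in the Remark) yields a linear relation among the tensors $\nabla^{\Lie{U}(n)}_\cdot \xi_{(j)}$ and the bilinear pairings $\langle \xi_{(i)\cdot}, \xi_{(k)\cdot}\rangle$ appearing in the statement.

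The bookkeeping is organized by which modules $\xi_{(j)}$ can contribute. The three-form $dF \in \Lambda^3 \mathrm T^* M$ decomposes into $\Lie{U}(n)$-pieces, and only certain $\mathcal C_j$ feed each piece; when one differentiates again and projects onto $[\lambda_0^{1,1}] \oplus \mathbb{R}F$, the $\nabla^{\Lie{U}(n)}$-linear terms that survive involve $\xi_{(3)}$ and $\xi_{(4)}$ (the $\lcf\lambda^{2,0}_0\rcf$ and $\lcf\lambda^{1,0}\rcf$ parts of the $\eta^\perp\otimes\lie u(n)^\perp_{|\zeta^\perp}$ block), consistent with the first five lines of the statement and with the earlier computation that $\xi_{(4)e_i}e_i$ encodes the Lee form. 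The quadratic terms are governed by Schur-type considerations: the pairing $[\lambda^{3,0}]\otimes[A]$ and $[\lambda^{2,0}]$-valued products contribute the $\langle\xi_{(1)X}e_i,\xi_{(2)Y}e_i\rangle$ antisymmetrized terms, while the mixed $\eta$-direction terms $\xi_{(5)},\dots,\xi_{(12)}$ give the remaining lines involving $d^*\eta$, $d^*F(\zeta)$, and the products $\langle\xi_{(7)}\zeta,\xi_{(8)}\zeta\rangle$, $\langle\xi_{(11)\zeta}\cdot,\xi_{(10)\cdot}\zeta\rangle$. To keep these organized I would use equation \eqref{inttorcar} repeatedly to move $\varphi$ past $\xi$ and to identify the $\zeta$-components, together with the explicit formulas \eqref{leeac} for $\xi_{(4)e_i}e_i$, $\xi_{(5)e_i}e_i$, $\xi_{(12)e_i}e_i$, so that the traces $\xi_{e_i}e_i$ and $\xi_{e_i}\varphi e_i$ appearing in intermediate steps get replaced by the Lee form and $d^*F(\zeta)\zeta$.

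I expect the main obstacle to be the precise determination of the numerical coefficients — the factors $\tfrac{n-2}{n-1}$, $\tfrac{2}{n-1}$, $\tfrac{2}{n^2}$, $\tfrac4n$, and the integer multipliers $2,3,4$ — since these depend on the dimensions of the irreducible $\Lie{U}(n)$-modules and on the exact normalization of the projection operators onto $[\lambda_0^{1,1}]$ and $\mathbb R F$ (hence the restriction $n>1$, so that these modules are genuinely present and nonzero). The cleanest way to pin the constants down is to contract the would-be identity with $F$ itself and with a generic element of $[\lambda_0^{1,1}]$, reducing each unknown coefficient to a trace computation over the orthonormal basis $\{e_1,\dots,e_{2n},\zeta\}$; the appearance of $\tfrac{2}{n-1}$ in front of the $F(X,Y)$ term is exactly what one gets from tracing the Lee-form contribution, and $\tfrac{n-2}{n-1}$ reflects removing the $\mathbb R F$-part from the $\xi_{(4)}$ contribution. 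Since this identity is stated to have been obtained already in \cite{FMC6}, I would present the derivation in the style of Lemma \ref{astricciacm1}'s proof — indicating that $d^2F=0$ combined with the intrinsic-torsion expression for $dF$ gives the result after projecting onto the relevant $\Lie{U}(n)$-component and simplifying via \eqref{inttorcar} and \eqref{leeac} — rather than reproducing every line of the $\xi*\xi$ expansion.
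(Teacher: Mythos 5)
Your plan follows exactly the route the paper itself indicates for this lemma, which is stated without proof as one of the identities imported from \cite{FMC6}: write $dF$ algebraically in the intrinsic torsion (using $\nabla^{\Lie{U}(n)}F=0$ and the torsion $\xi_XY-\xi_YX$ of the minimal connection), differentiate again, and extract a $\Lie{U}(n)$-component of $d^2F=0$; your choice of the $[\lambda^{1,1}]$-part, i.e. $[\lambda_0^{1,1}]\oplus\mathbb{R}F$, is the right one, as Proposition \ref{divergenciaunouno} confirms. The only step to state precisely is that $d^2F$ is a four-form, so the two-form identity of the lemma is obtained after contracting two of its slots against $F$ (summing over $e_i,\varphi e_i$, in the spirit of the proof of Lemma \ref{astricciacm1}) before applying the projection formulas recalled in the Remark.
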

 
 In  previous Lemma, if we use the equality
\begin{gather*}
\textstyle    (\nabla^{\Lie{U}(n)}_X (\xi_{(4)e_i} { e_i})^\flat) (Y) - (\nabla^{\Lie{U}(n)}_Y (\xi_{(4)e_i} { e_i})^\flat) (X)  = 
\textstyle \tfrac{n-1}{2}  \left(d \theta(X,Y) +   (\xi_X \theta)(Y) -   (\xi_Y \theta)(X)\right).    
\end{gather*}
we will obtain the $[\lambda^{1,1}]$-component  of the exterior derivative of the Lee form $\theta$.
 \begin{proposition} \label{divergenciaunouno}
 For almost contact metric manifolds of dimension $2n+1$, $n>1$,  we have
 \begin{align*}
   \tfrac{n-2}{2} \textstyle  d \theta_{[\lambda^{1,1}]} ( X,  Y)    = &
     \tfrac{1}{4} \textstyle  \langle d \theta , F \rangle   F(X,Y) 
   - \textstyle  \langle (\nabla^{\Lie{U}(n)}_{e_i} \xi_{(3)})_{ X} Y,  { e_i} \rangle
    + \textstyle   \langle (\nabla^{\Lie{U}(n)}_{e_i} \xi_{(3)})_{ Y}   X, { e_i} \rangle
    \\
 &
  +  \tfrac{n-2}{2} \textstyle  \theta (\xi_{(3)  X} Y- \xi_{(3)  Y} X) 
  - \tfrac{3}{2} \textstyle  \langle \xi_{(1)X} e_i, \xi_{(2)Y} e_i \rangle 
  + \tfrac{3}{2} \textstyle  \langle \xi_{(1)Y} e_i, \xi_{(2)X} e_i \rangle     
\\
&
- \tfrac{1}{n^2} d^*\eta d^*F(\zeta) F(X,Y) 
+ \tfrac{2}{n} d^*\eta (\xi_{(7)X} \eta)(Y) 
- \tfrac{2}{n} d^*F(\zeta)    (\xi_{(8)Y}  \eta) (\varphi X) 
\\
&
+ 2 \langle   \xi_{(7)X}  \zeta, \xi_{(8)Y}  \zeta \rangle -  2 \langle   \xi_{(7)Y}  \zeta, \xi_{(8)X}  \zeta \rangle
\\
&
+ 2 \langle   \xi_{(11)\zeta }  X , \xi_{(10)Y}  \zeta \rangle -  2 \langle   \xi_{(11)\zeta }  Y, \xi_{(10)X}  \zeta \rangle
\end{align*}
and
$
 \textstyle   (d\theta)_{\mathbb{R}} ( X,  Y)  = \tfrac{1}{2n}  \textstyle   \langle d \theta , F \rangle F(X,Y), 
$
where
\begin{gather*}
 \textstyle
  \tfrac14 \langle d \theta , F \rangle 
   =
\tfrac{1}{2n}   d^*\eta d^*F(\zeta) -   \textstyle 
  \langle   \xi_{(7) \varphi e_i }  \zeta, \xi_{(8) e_i}  \zeta \rangle  
  -   \textstyle    \langle \xi_{(11)\zeta }  \varphi e_i  , \xi_{(10)e_i}  \zeta \rangle.
\end{gather*}
 \end{proposition}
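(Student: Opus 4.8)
The plan is to derive Proposition \ref{divergenciaunouno} purely as an algebraic reorganisation of Lemma \ref{lambdaunouno}, by substituting the displayed identity that relates the antisymmetrised $\Lie{U}(n)$-derivative of $(\xi_{(4)e_i}e_i)^\flat$ to $d\theta$ and the torsion terms $(\xi_X\theta)(Y)-(\xi_Y\theta)(X)$. First I would take the identity in Lemma \ref{lambdaunouno}, which is an equation for a two-form in $(X,Y)$ valued in $0$, and isolate the three terms that contain $\nabla^{\Lie{U}(n)}_{\varphi^2 X}\xi_{(4)e_i}e_i$, $\nabla^{\Lie{U}(n)}_{\varphi^2 Y}\xi_{(4)e_i}e_i$ (the $\langle\cdot,Y\rangle$ and $\langle\cdot,X\rangle$ pieces), together with the $F(X,Y)$-coefficient term $(\nabla^{\Lie{U}(n)}_{e_j}(\xi_{(4)e_i}e_i)^\flat)(\varphi e_j)$ and the pair of terms involving $\nabla^{\Lie{U}(n)}_{\varphi X}\xi_{(4)e_i}e_i$ and $\nabla^{\Lie{U}(n)}_{\varphi Y}\xi_{(4)e_i}e_i$. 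Using $(n-1)\theta = -\varphi(d^*F)^\sharp + \nabla_\zeta\eta = 2\,(\xi_{(4)e_i}e_i)^\flat_{\zeta^\perp}$ (up to the precise constant given after \eqref{leeac}), these six curvature-free derivative terms should collapse into the antisymmetrised expression $(\nabla^{\Lie{U}(n)}_X(\xi_{(4)e_i}e_i)^\flat)(Y)-(\nabla^{\Lie{U}(n)}_Y(\xi_{(4)e_i}e_i)^\flat)(X)$ projected onto its $[\lambda^{1,1}]$ and $\mathbb R$ parts; the $\varphi^2$ versus $\varphi$ combination is exactly what picks out these two $\Lie{U}(n)$-types via the projection formulas $2\alpha_{[\lambda^{1,1}]}(X,Y)=\alpha(\varphi^2X,\varphi^2Y)+\alpha(\varphi X,\varphi Y)$ recalled in the Remark.

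Next I would apply the displayed boxed equality
\begin{gather*}
\textstyle (\nabla^{\Lie{U}(n)}_X(\xi_{(4)e_i}e_i)^\flat)(Y)-(\nabla^{\Lie{U}(n)}_Y(\xi_{(4)e_i}e_i)^\flat)(X) = \textstyle\tfrac{n-1}{2}\bigl(d\theta(X,Y)+(\xi_X\theta)(Y)-(\xi_Y\theta)(X)\bigr),
\end{gather*}
to trade the derivative-of-$\xi_{(4)}$ terms for $d\theta$ plus torsion contractions $(\xi_X\theta)(Y)-(\xi_Y\theta)(X)$. Since the structure has components only in $\mathcal C_1,\dots,\mathcal C_{12}$, I would expand $\xi_X\theta$ by noting $\theta\in\eta^\perp$ so only the components of $\xi$ that act nontrivially on $\eta^\perp$ survive; decomposing $\theta$ against the remaining terms and using \eqref{inttorcar} should produce exactly the $\theta(\xi_{(3)X}Y-\xi_{(3)Y}X)$ term displayed in the Proposition (the $\xi_{(4)}$-self-contractions cancel, and the $\mathcal C_1,\mathcal C_2$ contributions to $\xi_X\theta$ vanish on type grounds, leaving $\mathcal C_3$). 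Multiplying through by the appropriate constant ($\tfrac12$ relative to Lemma \ref{lambdaunouno}, since Lemma \ref{lambdaunouno} carries a factor $2$ in front of the $\xi_{(3)}$ terms) then matches every remaining term: the $\langle\xi_{(1)X}e_i,\xi_{(2)Y}e_i\rangle$, the $d^*\eta\, d^*F(\zeta)$, the $d^*\eta(\xi_{(7)X}\eta)(Y)$, the $d^*F(\zeta)(\xi_{(8)Y}\eta)(\varphi X)$ (one must be careful that Lemma \ref{lambdaunouno} has $(\xi_{(8)X}\eta)(\varphi Y)$ and symmetrise/relabel correctly), and the $\xi_{(7)}\xi_{(8)}$, $\xi_{(11)}\xi_{(10)}$ pairings, each halved.

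For the $\mathbb R$-component I would take the same reorganised identity, contract $(X,Y)$ with $F$ (equivalently set $Y=\varphi X$ and sum, or pair with $F$ using \eqref{extendedmetric}), and read off $(d\theta)_{\mathbb R}(X,Y)=\tfrac1{2n}\langle d\theta,F\rangle F(X,Y)$ directly from the projection formula $\alpha_{\mathbb R F}=\tfrac1{2n}\langle\alpha,F\rangle F$; the explicit value of $\tfrac14\langle d\theta,F\rangle$ comes from taking the $F(X,Y)$-coefficient terms of Lemma \ref{lambdaunouno}, namely $-\tfrac{2}{n-1}(\nabla^{\Lie{U}(n)}_{e_j}(\xi_{(4)e_i}e_i)^\flat)(\varphi e_j)$, $-\tfrac{2}{n^2}d^*\eta\,d^*F(\zeta)$, and the traces of the $\xi_{(7)}\xi_{(8)}$ and $\xi_{(11)}\xi_{(10)}$ terms obtained by setting $Y=\varphi e_i$, $X=e_i$ and summing, then using $(\nabla^{\Lie{U}(n)}_{e_j}(\xi_{(4)e_i}e_i)^\flat)(\varphi e_j)=\tfrac{n-1}{2}\langle d\theta,F\rangle$-type contraction together with the fact that $\nabla^{\Lie{U}(n)}$ is metric (so the divergence of the Lee vector contributes only through the trace of $d\theta$, the symmetric part being killed by antisymmetry). \textbf{The main obstacle} I anticipate is bookkeeping the constants and the $\varphi$ versus $\varphi^2$ bookkeeping correctly — in particular ensuring the coefficient $\tfrac{n-2}{n-1}$ in Lemma \ref{lambdaunouno} becomes $\tfrac{n-2}{2}$ in the Proposition after absorbing the factor $\tfrac{n-1}{2}$ from the Lee-form identity — and correctly separating the $[\lambda^{1,1}]$ part from the $\mathbb RF$ part of $d\theta$ without double-counting the trace term; everything else is a term-by-term match against Lemma \ref{lambdaunouno}.
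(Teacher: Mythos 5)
Your proposal is correct and follows essentially the same route as the paper: the paper's (very terse) argument is precisely to substitute the displayed identity $(\nabla^{\Lie{U}(n)}_X(\xi_{(4)e_i}e_i)^\flat)(Y)-(\nabla^{\Lie{U}(n)}_Y(\xi_{(4)e_i}e_i)^\flat)(X)=\tfrac{n-1}{2}\bigl(d\theta(X,Y)+(\xi_X\theta)(Y)-(\xi_Y\theta)(X)\bigr)$ into Lemma \ref{lambdaunouno} and then separate the $[\lambda^{1,1}]$- and $\mathbb{R}F$-parts, exactly as you describe. Your extra remarks (the torsion contraction reducing to the $\xi_{(3)}$ term, the relabelling of the $\xi_{(8)}$ term, the halving of coefficients) are the correct bookkeeping that the paper leaves implicit.
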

 
   The identity in next Lemma is also a consequence of $d^2F=0$.  
\begin{lemma}[\cite{FMC6}] \label{firstident}
 For almost contact metric structures of dimension $2n+1$, $n>1$,  the
following identity is satisfied
\vspace{-1mm}
{\rm  \small
     \begin{align*}
     0= \;&
      3 \textstyle  \langle (\nabla^{\Lie{U}(n)}_{e_i} \xi_{(1)} )_{e_i} X_{}, Y_{} \rangle
   - \textstyle  \langle (\nabla^{\Lie{U}(n)}_{e_i} \xi_{(3)} )_{e_i} X_{}, Y_{} \rangle
   + (n-2) \textstyle   \langle (\nabla^{\Lie{U}(n)}_{e_i} \xi_{(4)} )_{e_i} X_{},Y_{}
   \rangle
  \\
&
  - \textstyle   \langle \xi_{{(3)}X_{}} e_i,  \xi_{{(1)} Y_{}} e_i \rangle
  + \textstyle   \langle  \xi_{{(3)}Y_{}} e_i,  \xi_{{(1)}  X_{} }e_i \rangle
 +  \frac12\textstyle   \langle  \xi_{{(3)}X_{}} e_i,  \xi_{{(2)} Y_{}} e_i \rangle
   - \textstyle   \frac12  \langle  \xi_{{(3)}Y_{}} e_i,  \xi_{{(2)}  X_{}} e_i \rangle
    \\
   &
 \textstyle   - \tfrac{n-5}{n-1}  \textstyle  \langle \xi_{{(1)} \xi_{{(4)}e_i} e_i }X_{}, Y_{} \rangle
    - \tfrac{n-2}{n-1}  \textstyle  \langle  \xi_{{(2)} \xi_{{(4)}e_i} e_i } X_{}, Y_{} \rangle
      +  \textstyle   \langle \xi_{{(3)} \xi_{(4)e_i} e_i } X_{}, Y_{} \rangle
     \\
     &
     + \textstyle  (\xi_{(6)e_i} \eta) (\varphi e_i)  \langle  \xi_ {(11)\zeta}  X_{},  \varphi Y_{} \rangle
     + (n-2) \textstyle  (\xi_{(5)e_i} \eta ) \wedge   (  \xi_{(10) e_i} \eta) 
 (  X_{},  Y_{})
       \\
     &
       + (n-2) \textstyle  (\xi_{(6)e_i} \eta ) \wedge   (  \xi_{(10) e_i} \eta) 
 (  X_{},  Y_{}) 
  - 2 \textstyle   (   \xi_{{(7)} \; e_i} \eta) \wedge (  \xi_{{(9)} \; e_i}
     \eta)(  X_{},  Y_{})
              \\
              &
   - 2 \textstyle  (   \xi_{{(7)} \; e_i} \eta) \wedge (  \xi_{{(10)} \; e_i}   \eta)(  X_{},  Y_{})
  - 2  (   \xi_{{(8)} \; e_i} \eta) \wedge (  \xi_{{(10)} \; e_i}    \eta)(  X_{},  Y_{})
      \\
 &
+ 2\textstyle  (\xi_{(7)X_{}} \eta)(\xi_{(11)\zeta} Y_{})   
- 2\textstyle  (\xi_{(7)Y_{}} \eta)(\xi_{(11)\zeta}X_{}). 
   \end{align*}
   }
  \end{lemma}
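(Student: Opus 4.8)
\textbf{Proof proposal for Lemma~\ref{firstident}.}
The plan is to derive the stated identity from the triviality $d^2 F = 0$ by expressing both $dF$ and $d^2F$ in terms of the intrinsic torsion $\xi = \xi^{\Lie{U}(n)}$ and the minimal connection $\nabla^{\Lie{U}(n)}$, and then extracting the appropriate $\Lie{U}(n)$-component. First I would write $dF = \mathbf{a}(\nabla F)$, where $\mathbf{a}$ is the alternation map, and use $\nabla F = -\xi F$ to obtain $dF$ as a $\xi$-expression; since $\xi$ decomposes into the twelve pieces $\xi_{(1)},\dots,\xi_{(12)}$, this gives $dF$ as a sum of terms built from the components. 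Next I would apply $d$ once more, using $d^2 = 0$, so that $0 = d(\mathbf{a}(\nabla F))$; rewriting the second exterior derivative via the minimal connection produces terms of two shapes: linear ones of the form $(\nabla^{\Lie{U}(n)}_{e_i}\xi_{(j)})_{e_i}$ contracted against $F$ (or against $X,Y$) and quadratic ones of the form $\langle \xi_{(j)\cdot}e_i,\xi_{(k)\cdot}e_i\rangle$ together with the wedge terms $(\xi_{(j)\cdot}\eta)\wedge(\xi_{(k)\cdot}\eta)$ and the mixed terms involving $\xi_{(11)\zeta}$. The contractions over the orthonormal frame $\{e_1,\dots,e_{2n},\zeta\}$ split according to whether the summand involves $\zeta^{\perp}$-directions (where $\varphi$ acts as an almost complex structure) or the $\zeta$-direction, and the characterising identity \eqref{inttorcar} is used repeatedly to move $\varphi$ past $\xi$.

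The key technical point is the \emph{$\Lie{U}(n)$-component selection}. The full equation $d^2F=0$ carries many irreducible $\Lie{U}(n)$-pieces; the identity in the Lemma is precisely the one living in the module where the linear terms $\langle(\nabla^{\Lie{U}(n)}_{e_i}\xi_{(1)})_{e_i}X,Y\rangle$, $\langle(\nabla^{\Lie{U}(n)}_{e_i}\xi_{(3)})_{e_i}X,Y\rangle$, $\langle(\nabla^{\Lie{U}(n)}_{e_i}\xi_{(4)})_{e_i}X,Y\rangle$ are all supported — namely the $\lcf\lambda^{1,0}\rcf$-type part of $\eta^{\perp}\otimes\eta^{\perp}$ coming from the derivative of $dF$, viewed against $X,Y\in\zeta^{\perp}$. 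Concretely, I would project $d(dF)$ onto $\Lambda^2\eta^{\perp}$ and then onto its $[\lambda^{1,1}]$ or, more precisely, the scalar/vector summand isolated by contracting one slot with $\varphi e_i$ and summing, as is done in Lemma~\ref{lambdaunouno} and Proposition~\ref{divergenciaunouno}; the numerical coefficients $3$, $-1$, $n-2$, $-\tfrac{n-5}{n-1}$, $-\tfrac{n-2}{n-1}$ arise from the Clebsch–Gordan multiplicities of $\eta^{\perp}$ inside the various tensor products and from the traces $\langle F,F\rangle = 2n$, $\varphi e_i\otimes\varphi e_i = \mathrm{id}_{\zeta^{\perp}}$, etc. Since this Lemma is cited from \cite{FMC6}, I may simply invoke that reference for the detailed bookkeeping, but I would still indicate the derivation scheme so the reader sees it parallels Lemma~\ref{lambdaunouno}.

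The main obstacle I anticipate is the quadratic part: after substituting $\nabla F = -\xi F$ twice, one gets terms $\xi_{e_i}(\xi_{e_i}F)$ and $\xi_{\xi_{e_i}e_i}F$, and these must be reorganised using \eqref{inttorcar} and the decomposition $\xi = \sum_j \xi_{(j)}$ into exactly the cross-terms $\langle\xi_{(3)X}e_i,\xi_{(1)Y}e_i\rangle$, $\langle\xi_{(3)X}e_i,\xi_{(2)Y}e_i\rangle$, the wedge products $(\xi_{(j)e_i}\eta)\wedge(\xi_{(k)e_i}\eta)$ with the right pairs $(5,10)$, $(6,10)$, $(7,9)$, $(7,10)$, $(8,10)$, and the $\xi_{(11)}$-$\xi_{(10)}$ and $\xi_{(7)}$-$\xi_{(11)}$ mixed terms; keeping track of which pairings survive projection onto the chosen module (many a priori possible pairings, e.g.\ $\langle\xi_{(1)X}e_i,\xi_{(1)Y}e_i\rangle$ or terms in $\mathcal{C}_{12}$, must cancel or land in a complementary module) is the delicate part. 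I would organise this by first treating the $\zeta^{\perp}$-valued part of $\xi$ (the $\mathcal{C}_1\oplus\cdots\oplus\mathcal{C}_4$ and $\mathcal{C}_5\oplus\cdots\oplus\mathcal{C}_{10}$ pieces) separately from the $\mathcal{C}_{11}\oplus\mathcal{C}_{12}$ pieces, using Lemma~\ref{xixizeta1}-type reductions for the latter, and then assembling the two contributions.
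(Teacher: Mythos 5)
Your overall scheme is the same as the one the paper (and \cite{FMC6}, to which it defers) uses: expand $dF$ via $\nabla F=-\xi F$, impose $d^2F=0$, rewrite the resulting second covariant derivatives through the minimal connection $\nabla^{\Lie{U}(n)}$ so that linear terms $(\nabla^{\Lie{U}(n)}_{e_i}\xi_{(j)})_{e_i}$ and quadratic terms in the components of $\xi$ appear, and then extract an irreducible $\Lie{U}(n)$-piece; since the paper itself offers no proof beyond the citation and this remark, invoking \cite{FMC6} for the bookkeeping is acceptable.

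However, there is one concrete misstep in your component selection, and it matters: you propose to project onto the $[\lambda^{1,1}]$-part, ``as is done in Lemma~\ref{lambdaunouno} and Proposition~\ref{divergenciaunouno}'', by contracting one slot with $\varphi e_i$ and summing. That projection is exactly what produces Lemma~\ref{lambdaunouno}, i.e.\ a \emph{different} identity; carrying out the computation you describe would not yield the statement of Lemma~\ref{firstident}. The identity at hand is the $\lcf\lambda^{2,0}\rcf$-component (the skew, anti-Hermitian part on $\zeta^{\perp}$) of the contracted $d^2F=0$ relation: the bilinear forms $\langle(\nabla^{\Lie{U}(n)}_{e_i}\xi_{(1)})_{e_i}X,Y\rangle$, $\langle(\nabla^{\Lie{U}(n)}_{e_i}\xi_{(3)})_{e_i}X,Y\rangle$, $\langle(\nabla^{\Lie{U}(n)}_{e_i}\xi_{(4)})_{e_i}X,Y\rangle$ all lie in $\lcf\lambda^{2,0}\rcf$, not in $[\lambda^{1,1}]$ and not in a ``$\lcf\lambda^{1,0}\rcf$-type part of $\eta^{\perp}\otimes\eta^{\perp}$'' as you write; this is confirmed by \eqref{nablatheta} and by the fact that combining Lemma~\ref{firstident} with \eqref{nablatheta} gives $d\theta_{\lcf\lambda^{2,0}\rcf}$ in Proposition~\ref{divergenciadoscero}, whereas Lemma~\ref{lambdaunouno} gives $d\theta_{[\lambda^{1,1}]}$ in Proposition~\ref{divergenciaunouno}. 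With the target module corrected (and the quadratic terms sorted accordingly, which you rightly identify as the delicate part), the rest of your outline is consistent with the paper's route.
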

In   previous Lemma, if we use  the following identity proved in \cite[p. 450, Lemma 4.4]{GDMC} 
\begin{equation}\label{nablatheta}
\langle (\nabla^{\Lie{U}(n)}_{e_i} \xi_{(4)})_{ e_i} X_{}, Y_{}\rangle =\tfrac12 d \theta_{\lcf \lambda^{2,0}\rcf}(X,Y)  - \langle \xi_{(1) \theta^\sharp} X_{} , Y \rangle  + \tfrac12 \langle \xi_{(2) \theta^\sharp} X,Y \rangle, 
\end{equation}
we will obtain   the $\lcf \lambda^{2,0} \rcf$-component of the exterior derivative of the Lee form $\theta$.
\begin{proposition} \label{divergenciadoscero}
  For almost contact metric manifolds of dimension $2n+1$, $n>1$, the
following identity is satisfied
\begin{equation*}
{\rm  \small
     \begin{array}{rl}
           \tfrac{n-2}{2} \textstyle  d\theta_{\lcf \lambda^{2,0} \rcf} (X,Y) 
     = &
      - 3 \textstyle  \langle (\nabla^{\Lie{U}(n)}_{e_i} \xi_{(1)} )_{e_i} X_{}, Y_{} \rangle
   + \textstyle  \langle (\nabla^{\Lie{U}(n)}_{e_i} \xi_{(3)} )_{e_i} X_{}, Y_{} \rangle
  + \textstyle   \langle \xi_{{(3)}X_{}} e_i,  \xi_{{(1)} Y_{}} e_i \rangle
    \\[2mm]
&
  - \textstyle   \langle  \xi_{{(3)}Y_{}} e_i,  \xi_{{(1)}  X_{} }e_i \rangle
  -  \frac12\textstyle   \langle  \xi_{{(3)}X_{}} e_i,  \xi_{{(2)} Y_{}} e_i \rangle
   + \textstyle  \frac12    \langle  \xi_{{(3)}Y_{}} e_i,  \xi_{{(2)}  X_{}} e_i \rangle
    \\[2mm]
   &
 \textstyle  
  + \tfrac{3(n-3)}{2}  \textstyle  \langle \xi_{{(1)}
 \theta^\sharp }X_{}, Y_{} \rangle
        - \textstyle \frac{n-1}{2}  \langle \xi_{{(3)} 
      \theta^\sharp } X_{}, Y_{} \rangle
     - \textstyle d^*F(\zeta)  
      \langle  \xi_ {(11)\zeta}  X_{},  \varphi Y_{} \rangle
        \\[2mm]
     &
          - \tfrac{n-2}{2n} d^* \eta     (  \xi_{(10)  X_{}} \eta)  (   Y_{})
         + \tfrac{n-2}{n} d^* F (\zeta) \textstyle     (  \xi_{(10) \varphi X} \eta)  ( \varphi   Y_{}) 
   \\[2mm]
     &
  + 2 \textstyle   (   \xi_{{(7)} \; e_i} \eta) \wedge (  \xi_{{(9)} \; e_i}
     \eta)(  X_{},  Y_{})
   + 2 \textstyle   (   \xi_{{(7)} \; e_i} \eta) \wedge (  \xi_{{(10)} \; e_i}   \eta)(  X_{},  Y_{})
      \\[2mm]
     &
  + 2  (   \xi_{{(8)} \; e_i} \eta) \wedge (  \xi_{{(10)} \; e_i}    \eta)(  X_{},  Y_{})
- 2\textstyle  (\xi_{(7)X_{}} \eta)(\xi_{(11)\zeta} Y_{})   
+ 2\textstyle  (\xi_{(7)Y_{}} \eta)(\xi_{(11)\zeta}X_{}). 
   \end{array}}
  \end{equation*}
  \end{proposition}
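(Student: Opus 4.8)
The plan is to derive Proposition \ref{divergenciadoscero} directly from Lemma \ref{firstident} by substituting the identity \eqref{nablatheta} for the term $(n-2)\langle(\nabla^{\Lie{U}(n)}_{e_i}\xi_{(4)})_{e_i}X,Y\rangle$, and then reorganising the resulting expression around the $\lcf\lambda^{2,0}\rcf$-component of $d\theta$. Concretely, I would first multiply \eqref{nablatheta} by $(n-2)$ to get
\[
(n-2)\langle(\nabla^{\Lie{U}(n)}_{e_i}\xi_{(4)})_{e_i}X,Y\rangle=\tfrac{n-2}{2}d\theta_{\lcf\lambda^{2,0}\rcf}(X,Y)-(n-2)\langle\xi_{(1)\theta^\sharp}X,Y\rangle+\tfrac{n-2}{2}\langle\xi_{(2)\theta^\sharp}X,Y\rangle,
\]
then substitute this into the identity of Lemma \ref{firstident} and solve for $\tfrac{n-2}{2}d\theta_{\lcf\lambda^{2,0}\rcf}(X,Y)$. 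Moving everything else to the right-hand side and flipping signs produces the first two lines of the claimed formula immediately (the $-3\langle(\nabla^{\Lie{U}(n)}_{e_i}\xi_{(1)})_{e_i}X,Y\rangle$, the $+\langle(\nabla^{\Lie{U}(n)}_{e_i}\xi_{(3)})_{e_i}X,Y\rangle$, and the four quadratic $\xi_{(1)},\xi_{(2)},\xi_{(3)}$ terms), while the contributions of $\xi_{(4)e_i}e_i$ in Lemma \ref{firstident} must be combined with the $\xi_{(1)\theta^\sharp}$ and $\xi_{(2)\theta^\sharp}$ terms that come from \eqref{nablatheta}.

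The bookkeeping for the $\theta^\sharp$-terms is where I expect the only real work to be. In Lemma \ref{firstident} the three summands $-\tfrac{n-5}{n-1}\langle\xi_{(1)\xi_{(4)e_i}e_i}X,Y\rangle$, $-\tfrac{n-2}{n-1}\langle\xi_{(2)\xi_{(4)e_i}e_i}X,Y\rangle$, $+\langle\xi_{(3)\xi_{(4)e_i}e_i}X,Y\rangle$ involve the vector field $\xi_{(4)e_i}e_i$, which by \eqref{leeac} equals $-\tfrac12\varphi(d^*F)^\sharp+\tfrac12\nabla_\zeta\zeta$; I would rewrite these in terms of $\theta^\sharp$ using the definition $(n-1)\theta=-\varphi(d^*F)^\sharp+\nabla_\zeta\eta$, so that $\xi_{(4)e_i}e_i=\tfrac{n-1}{2}\theta^\sharp$ (note $\theta\in\eta^\perp$ since both $\varphi(d^*F)^\sharp$ and $\nabla_\zeta\zeta$ lie in $\zeta^\perp$; this also means $\xi_{(1)}$ and $\xi_{(3)}$ evaluated at $\theta^\sharp$ are unaffected by the $\zeta$-component issue). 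Then the coefficient of $\langle\xi_{(1)\theta^\sharp}X,Y\rangle$ coming to the right-hand side is $-\tfrac{n-1}{2}\cdot\tfrac{n-5}{n-1}=-\tfrac{n-5}{2}$ moved over as $+\tfrac{n-5}{2}$, and combined with the $+(n-2)$ from \eqref{nablatheta} this must give $\tfrac{3(n-3)}{2}$ — indeed $\tfrac{n-5}{2}+(n-2)=\tfrac{3n-9}{2}=\tfrac{3(n-3)}{2}$, confirming the third line. Similarly the $\xi_{(3)\theta^\sharp}$ coefficient is $-\tfrac{n-1}{2}$ (the $\xi_{(2)\theta^\sharp}$ contributions from \eqref{nablatheta} and from Lemma \ref{firstident} must be checked to cancel, since no $\xi_{(2)\theta^\sharp}$ term appears in the Proposition: $-\tfrac{n-2}{2}$ from rearranging $+\tfrac{n-2}{n-1}\langle\xi_{(2)\xi_{(4)e_i}e_i}X,Y\rangle$ minus $\tfrac{n-2}{2}$ from \eqref{nablatheta}... so in fact one needs $-\tfrac{n-2}{n-1}\cdot\tfrac{n-1}{2}=-\tfrac{n-2}{2}$ moved over as $+\tfrac{n-2}{2}$, cancelling the $-\tfrac{n-2}{2}$ from \eqref{nablatheta} after the sign flip — this cancellation is the delicate check).

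The remaining terms of Lemma \ref{firstident} transfer essentially verbatim after the sign flip: the wedge terms $(n-2)(\xi_{(5)e_i}\eta)\wedge(\xi_{(10)e_i}\eta)$ and $(n-2)(\xi_{(6)e_i}\eta)\wedge(\xi_{(10)e_i}\eta)$ need to be re-expressed as the $d^*\eta$ and $d^*F(\zeta)$ multiples of $(\xi_{(10)X}\eta)(Y)$ and $(\xi_{(10)\varphi X}\eta)(\varphi Y)$ appearing in the Proposition — this uses that $\xi_{(5)e_i}\eta$ and $\xi_{(6)e_i}\eta$ are, up to the scalars $d^*\eta$ and $d^*F(\zeta)$, the tautological $\Lie{U}(n)$-equivariant maps onto $\mathbb{R}\langle\cdot,\cdot\rangle_{|\zeta^\perp}$ and $\mathbb{R}F$ respectively (recall $\mathcal{C}_5\cong\mathbb{R}\langle\cdot,\cdot\rangle_{|\zeta^\perp}$, $\mathcal{C}_6\cong\mathbb{R}F$), together with the trace identities defining $d^*\eta$ and $d^*F(\zeta)$. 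The three $(\xi_{(7)e_i}\eta)\wedge(\xi_{(9)e_i}\eta)$, $(\xi_{(7)e_i}\eta)\wedge(\xi_{(10)e_i}\eta)$, $(\xi_{(8)e_i}\eta)\wedge(\xi_{(10)e_i}\eta)$ wedge terms and the two $(\xi_{(7)X}\eta)(\xi_{(11)\zeta}Y)$ pointwise terms simply change sign, and the single term $(\xi_{(6)e_i}\eta)(\varphi e_i)\langle\xi_{(11)\zeta}X,\varphi Y\rangle$ becomes $-d^*F(\zeta)\langle\xi_{(11)\zeta}X,\varphi Y\rangle$ upon inserting $(\xi_{(6)e_i}\eta)(\varphi e_i)=d^*F(\zeta)$ (again from $\mathcal{C}_6\cong\mathbb{R}F$ and the trace). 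Matching all coefficients then gives the stated formula, so the proof is a substitution followed by careful, but entirely mechanical, collection of like terms; the main obstacle is precisely keeping the $\theta^\sharp$-coefficient arithmetic and the $\mathcal{C}_5,\mathcal{C}_6$ trace-normalisations straight.
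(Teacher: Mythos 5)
Your proposal is correct and is exactly the paper's own (very terse) argument: the paper obtains Proposition \ref{divergenciadoscero} precisely by substituting the identity \eqref{nablatheta} into Lemma \ref{firstident}, using $\xi_{(4)e_i}e_i=\tfrac{n-1}{2}\theta^\sharp$ and the trace normalisations of $\xi_{(5)}\eta$, $\xi_{(6)}\eta$ from $\mathcal C_5\cong\mathbb R\langle\cdot,\cdot\rangle_{|\zeta^\perp}$, $\mathcal C_6\cong\mathbb R F$, and then collecting terms. Your coefficient checks (the $\tfrac{3(n-3)}{2}$ for $\xi_{(1)\theta^\sharp}$, the cancellation of the $\xi_{(2)\theta^\sharp}$ contributions, and the $-\tfrac{n-1}{2}$ for $\xi_{(3)\theta^\sharp}$) are right, so nothing essential is missing.
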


%Next identity will be used later

  Next we give another   consequence of $d^2 F=0$.  
   \begin{lemma}[\cite{FMC6}] \label{etameanid1}
 For almost contact metric manifolds of dimension $2n+1$,  the following
identity is satisfied
  \begin{eqnarray*}
0 & = &
    - \textstyle \langle (\nabla^{\Lie{U}(n)}_{\zeta} \xi_{(4)})_{e_i} { e_i},  X \rangle
 - (n-1)   \textstyle   ((\nabla^{\Lie{U}(n)}_{e_i} \xi_{(5)})_{e_i} {\eta})  (X)
      +  \textstyle    ((\nabla^{\Lie{U}(n)}_{e_i} \xi_{(8)})_{e_i} {\eta}) (X)
      \\
      &&
        - \textstyle     ((\nabla^{\Lie{U}(n)}_{e_i} \xi_{(10)})_{ e_i} {\eta}) (X)
   + \textstyle    \langle (\nabla^{\Lie{U}(n)}_{e_i} \xi_{(11)})_{\zeta}   { e_i} , X \rangle
               -  \textstyle   (\xi_{(8)e_i} {\eta})(\xi_{(3)e_i} X)
               \\
               &&
              -  \textstyle    (\xi_{(7)e_i} {\eta})(\xi_{(3)e_i} X) 
       + \textstyle    (\xi_{(10)e_i} {\eta})(\xi_{(1)X} e_i)
              -	 \tfrac12  \textstyle   (\xi_{(10)e_i} {\eta})(\xi_{(2)X} e_i)
         \\
         &&
           +  \textstyle    \langle  \xi_{(11)\zeta}   {e_i} , \xi_{(1)X}  e_i \rangle 
 - \textstyle   \tfrac12    \langle  \xi_{(11)\zeta}   {e_i} , \xi_{(2)X}  e_i \rangle 
        +  \textstyle     (\xi_{(5)   \xi_{(4)e_i} { e_i}} \eta ) (X)
        \\
        &&
        - \tfrac{1}{n-1} \textstyle    (\xi_{(8)   \xi_{(4)e_i} { e_i}} \eta ) (X)
         + \textstyle    \langle  (\xi_ {(9)\xi_{(4)e_i} {e_i}}  {\eta}) (X) 
   - \textstyle    \langle  (\xi_ {(6)\xi_{(4)e_i} {e_i}}  {\eta}) (X) 
   \\
   &&
     - \textstyle  
      (\xi_{(7)   \xi_{(4)e_i} { e_i}} \eta ) (X)
- (n-1) \textstyle     (\xi_{(5)\xi_\zeta \zeta} \eta)(X)
- \textstyle   ( \xi_{(10) \xi_{\zeta} \zeta} \eta)(X)
     - \textstyle   \langle  \xi_{(11)\zeta}   X , \xi_\zeta \zeta  \rangle.
       \end{eqnarray*}      
\end{lemma}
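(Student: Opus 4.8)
As the preamble to this section indicates, the identity is to be extracted from the trivial relation $d^2F=0$, following the scheme used for almost Hermitian structures in \cite{FMCAS,FMC7} and for almost contact metric structures in \cite{FMC6}. Since $\nabla^{\Lie{U}(n)}$ is a $\Lie{U}(n)\times 1$-connection it preserves $\varphi$ and $\zeta$, hence also $F$ and $\eta$; writing $\nabla=\nabla^{\Lie{U}(n)}-\xi$ this gives $\nabla_XF=-\xi_XF$, where $\xi_X\in\lie{so}(M)$ acts on the two-form $F$ as a derivation. Consequently $dF$ is, up to sign, the alternation of $\xi_{(\cdot)}F$; applying $d$ once more, substituting $\nabla=\nabla^{\Lie{U}(n)}-\xi$ a second time and using $\nabla^{\Lie{U}(n)}F=0$, one expresses $d^2F$ as an explicit four-form whose entries are the first-order terms $(\nabla^{\Lie{U}(n)}_{(\cdot)}\xi)_{(\cdot)}F$ together with the quadratic terms $\xi_{(\cdot)}(\xi_{(\cdot)}F)$. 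The relation $d^2F=0$ is then the vanishing of this four-form.

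To isolate the $\eta^{\perp}$-valued component recorded in the statement, I would contract: for $X\in\mathfrak X(M)$ form $X\mapsto\sum_i(d^2F)(\zeta,e_i,\varphi e_i,X)$, where feeding $\zeta$ into one slot contracts with $\eta$ and summing against $e_i\wedge\varphi e_i$ contracts with $F$; the result is an $\eta^{\perp}$-valued one-form in $X$ which vanishes because $d^2F=0$. Expanding the first-order part of $d^2F$ under this double trace, and using the trace formulas \eqref{leeac} and their $\varphi$-analogue to replace $\sum_i\xi_{e_i}e_i$ and $\sum_i\xi_{e_i}\varphi e_i$ by the explicit vector fields built from $d^*F$, $d^*\eta$ and $\nabla_\zeta\zeta$, produces the terms $\langle(\nabla^{\Lie{U}(n)}_\zeta\xi_{(4)})_{e_i}e_i,X\rangle$, $((\nabla^{\Lie{U}(n)}_{e_i}\xi_{(5)})_{e_i}\eta)(X)$, $((\nabla^{\Lie{U}(n)}_{e_i}\xi_{(8)})_{e_i}\eta)(X)$, $((\nabla^{\Lie{U}(n)}_{e_i}\xi_{(10)})_{e_i}\eta)(X)$ and $\langle(\nabla^{\Lie{U}(n)}_{e_i}\xi_{(11)})_\zeta e_i,X\rangle$ appearing in the identity.

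For the quadratic part I would substitute the full decomposition $\xi=\xi_{(1)}+\dots+\xi_{(12)}$ into $\sum_i\xi_{(\cdot)}(\xi_{(\cdot)}F)(\zeta,e_i,\varphi e_i,X)$ and discard every product of two components whose symmetry type is incompatible with the $\eta^{\perp}$-summand. This is where the structure of the classes of \cite{ChineaGonzalezDavila} is used: which of the $\xi_{(i)X}$ are $\eta$-valued and which take values in $\zeta^{\perp}$; the Hermitian, skew-Hermitian, symmetric or anti-Hermitian character of the bilinear form $(X,Y)\mapsto(\xi_{(i)X}\eta)(Y)$ (so that $\xi_{(5)}$ corresponds to $\langle\cdot,\cdot\rangle_{|\zeta^{\perp}}$, $\xi_{(6)}$ to $F$, $\xi_{(7)}$ to $\lie{su}(n)_a$, $\xi_{(8)}$ to $\lie{su}(n)_s$, $\xi_{(9)}$ to $\real{\sigma^{2,0}}$ and $\xi_{(10)}$ to $\lie{u}(n)^{\perp}_{|\zeta^{\perp}}$); and the pointwise constraint \eqref{inttorcar}. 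Products in which one of the two $\xi$-factors has already been traced over $e_i$ collapse, once more by \eqref{leeac} and its $\varphi$-analogue, into the terms $\xi_{(i)\,\xi_{(4)e_j}e_j}\eta$ and $\langle\xi_{(11)\zeta}X,\xi_\zeta\zeta\rangle$, $(\xi_{(5)\xi_\zeta\zeta}\eta)(X)$, $(\xi_{(10)\xi_\zeta\zeta}\eta)(X)$; the genuinely quadratic remainder supplies the wedge terms $(\xi_{(i)e_i}\eta)\wedge(\xi_{(j)e_i}\eta)(X,\cdot)$ and the pairings $(\xi_{(7)X}\eta)(\xi_{(11)\zeta}Y)$. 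Collecting everything yields the displayed identity.

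The main obstacle is exactly this representation-theoretic bookkeeping: pinning down which of the many products $\langle\xi_{(i)\bullet},\xi_{(j)\bullet}\rangle$ actually survive the projection and with what sign and coefficient, and controlling the $n$-dependent factors (the $n-1$, $\tfrac1{n-1}$ and $\tfrac12$), which arise from the normalisations of the projectors $\xi_{(4)},\xi_{(5)},\xi_{(8)},\dots$ together with traces of $\varphi$ over the $2n$-dimensional distribution $\zeta^{\perp}$. Handling the quadratic term cleanly — recognising that under the double trace $\sum_i\xi_{(\cdot)}(\xi_{(\cdot)}F)(\zeta,e_i,\varphi e_i,\cdot)$ reduces to a combination of trace-vector terms and genuinely quadratic wedge and pairing terms — is the step most prone to sign and coefficient slips, so I would cross-check it against the parallel computations already carried out in \cite{FMC6}.
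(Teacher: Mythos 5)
Your overall strategy coincides with the one the paper indicates: the lemma is stated here with a citation to \cite{FMC6} and no proof is reproduced, the text only saying that it is a consequence of $d^2F=0$, obtained by expressing $d^2F$ through $\nabla^{\Lie{U}(n)}$ and the intrinsic torsion and then extracting a suitable $\Lie{U}(n)$-component. Your choice of contraction, $\sum_i(d^2F)(\zeta,e_i,\varphi e_i,X)$, is indeed the natural way to isolate the unique $\lcf\lambda^{1,0}\rcf$-isotypic (that is, $\eta\wedge F\wedge\lcf\lambda^{1,0}\rcf$) part of the four-form identity, and it is consistent with the first-order terms that actually occur ($\xi_{(4)}$ differentiated along $\zeta$, divergences of $\xi_{(5)},\xi_{(8)},\xi_{(10)}$ and of $\xi_{(11)}$).

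As a proof, however, the proposal stops exactly where the content of the lemma begins. The statement is nothing but the precise list of surviving terms with their signs and $n$-dependent coefficients ($n-1$, $\tfrac{1}{n-1}$, $\tfrac12$), and none of this is computed; you explicitly defer the bookkeeping and propose to cross-check it against \cite{FMC6}, which for a blind proof is circular. Moreover, the quadratic terms you predict are not those of this identity: the wedge terms $(\xi_{(i)e_i}\eta)\wedge(\xi_{(j)e_i}\eta)$ and the pairings $(\xi_{(7)X}\eta)(\xi_{(11)\zeta}Y)$ belong to the $[\lambda^{1,1}]$- and $\lcf\lambda^{2,0}\rcf$-components treated in Lemmas \ref{lambdaunouno} and \ref{firstident}, whereas the quadratic part here consists of cross pairings such as $(\xi_{(8)e_i}\eta)(\xi_{(3)e_i}X)$, $(\xi_{(7)e_i}\eta)(\xi_{(3)e_i}X)$, $(\xi_{(10)e_i}\eta)(\xi_{(1)X}e_i)$, $\langle\xi_{(11)\zeta}e_i,\xi_{(2)X}e_i\rangle$, besides the trace-collapsed terms $\xi_{(i)\,\xi_{(4)e_j}e_j}\eta$, $(\xi_{(5)\xi_\zeta\zeta}\eta)(X)$, $(\xi_{(10)\xi_\zeta\zeta}\eta)(X)$, $\langle\xi_{(11)\zeta}X,\xi_\zeta\zeta\rangle$ that you do list. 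So the plan is the correct one and matches the paper's declared method, but the decisive computation --- carrying out the contraction of both the first-order and the quadratic parts and verifying the coefficients --- is missing, and the partial predictions made would not reproduce the displayed identity without correction.
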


Next by  noting  that  $ (\nabla^{\Lie{U}(n)}_X \xi_{(4)})_{e_i} e_i =  \nabla^{\Lie{U}(n)}_X \xi_{(4) e_i} e_i$ and using the identities   
\begin{gather*}
(\xi_{(5)X} \eta) (Y_{})  =  \tfrac{d^*\eta}{2n}( \langle X_{} , Y_{} \rangle - \eta(X) \eta(Y)), 
\quad (\xi_{(6)X} \eta) (Y)  =  -\tfrac{d^*F(\zeta)}{2n} F( X, Y),
\\
( (\nabla^{\Lie{U}(n)}_X  \xi_{(5)})_Y \eta) (Z_{}) = \tfrac{d(d^*\eta)(X)}{2n}( \langle Y_{} , Z_{} \rangle - \eta(Y) \eta(Z)), \\
\textstyle \langle (\nabla^{\Lie{U}(n)}_{\zeta} \xi_{(4)})_{ e_i} { e_i},  X \rangle = \tfrac{n-1}{2} \left( d \theta (\zeta , X) - \theta ( \xi_{(11)\zeta} X)  - \theta (\xi_{\varphi^2 X} \zeta)\right),
 \end{gather*}
  another version  of the  identity in  previous Lemma  is given  in next Proposition. Such a version relates the exterior derivatives of the Lee form   $\theta$ and the coderivative $d^*\eta$.
\begin{proposition}   \label{divergencia}
For almost contact metric manifolds of dimension $2n+1$,  we have 
  \begin{align*}
 		     \textstyle  \tfrac{n-1}{2} d \theta (\zeta , X)
		     		       = &
          \tfrac{n-1}{2n} d(d^* \eta) (\varphi^2 X) 
         +  \textstyle    ((\nabla^{\Lie{U}(n)}_{e_i} \xi_{(8)})_{e_i} {\eta}) (X)
        - \textstyle      ((\nabla^{\Lie{U}(n)}_{e_i} \xi_{(10)})_{ e_i} {\eta}) (X)
 \\
   &
   + \textstyle    \langle (\nabla^{\Lie{U}(n)}_{e_i} \xi_{(11)})_{\zeta}   { e_i} , X \rangle     
                  -  \textstyle   (\xi_{(7)e_i} {\eta})(\xi_{(3)e_i} X)   
               -  \textstyle   (\xi_{(8)e_i} {\eta})(\xi_{(3)e_i} X)
                                  \\
& 
       + \textstyle    (\xi_{(10)e_i} {\eta})(\xi_{(1)X} e_i)
              -	\textstyle  \tfrac12      (\xi_{(10)e_i} {\eta})(\xi_{(2)X} e_i)
           +  \textstyle    \langle  \xi_{(11)\zeta}   {e_i} , \xi_{(1)X}  e_i \rangle 
           \\
           &
 - \textstyle  \tfrac12    \langle  \xi_{(11)\zeta}   {e_i} , \xi_{(2)X}  e_i \rangle 
        - \tfrac{n}{2} \textstyle    (\xi_{(8) 
        \theta^\sharp } \eta ) (X) 
         + \textstyle    \frac{n-1}{2}\langle  (\xi_ {(10)
           \theta^\sharp }  {\eta}) (X) 
         \\
         &
          +  \textstyle  \tfrac{n-1}{2}  
          \theta( \xi_{(11)\zeta}   X) 
- \tfrac{n-1}{2n} d^*\eta (\xi_\zeta \eta)(X) 
- \textstyle   ( \xi_{(10) \xi_{\zeta} \zeta} \eta)(X)
     - \textstyle   \langle  \xi_{(11)\zeta}   X , \xi_\zeta \zeta  \rangle.
       \end{align*}      
   In particular, if the  structure is  of type $\mathcal{C}_1 \oplus \mathcal{C}_2 \oplus \mathcal{C}_3 \oplus    \mathcal{C}_5    \oplus \mathcal{C}_6 \oplus \mathcal{C}_9 \oplus  \mathcal{C}_{12} $ or $\mathcal{C}_1 \oplus \mathcal{C}_2 \oplus \mathcal{C}_5 \oplus    \mathcal{C}_6    \oplus \mathcal{C}_7 \oplus \mathcal{C}_9 \oplus  \mathcal{C}_{12} $  and  $n>1$,  then  $d (d^* \eta)$ is given  by 
   $d (d^* \eta) = - d^*\eta \,\xi_\zeta \eta + d (d^* \eta)(\zeta) \eta $. Likewise,  for the type 
   $\mathcal{C}_1 \oplus \mathcal{C}_2 \oplus \mathcal{C}_3 \oplus    \mathcal{C}_5     \oplus \mathcal{C}_9 \oplus  \mathcal{C}_{12} $ and $n > 1$,   the one-form $\mathrm{div } (\zeta) \; \eta = - d^*\eta \, \eta$ is closed.
\end{proposition}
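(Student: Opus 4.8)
The plan is to derive the displayed identity by inserting, term by term, the four auxiliary identities stated just before the proposition into the identity of Lemma~\ref{etameanid1}. The elementary facts I would use repeatedly are that $\xi_{(4)e_i}e_i = \tfrac{n-1}{2}\,\theta^{\sharp}$ (this is \eqref{leeac} read together with the definition $(n-1)\theta = -\varphi(d^{*}F)^{\sharp} + \nabla_{\zeta}\eta$), that $(\nabla^{\Lie{U}(n)}_{X}\xi_{(4)})_{e_i}e_i = \nabla^{\Lie{U}(n)}_{X}(\xi_{(4)e_i}e_i)$, and that the minimal connection $\nabla^{\Lie{U}(n)}$ parallelizes $\varphi,\eta,\zeta$, so it commutes with the projections onto the $\mathcal{C}_{i}$ and annihilates $\zeta$. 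Applying the fourth auxiliary identity, the term $-\langle(\nabla^{\Lie{U}(n)}_{\zeta}\xi_{(4)})_{e_i}e_i,X\rangle$ of Lemma~\ref{etameanid1} becomes $-\tfrac{n-1}{2}d\theta(\zeta,X) + \tfrac{n-1}{2}\theta(\xi_{(11)\zeta}X) + \tfrac{n-1}{2}\theta(\xi_{\varphi^{2}X}\zeta)$; transposing the first summand produces the left-hand side $\tfrac{n-1}{2}d\theta(\zeta,X)$, while the second gives the term $\tfrac{n-1}{2}\theta(\xi_{(11)\zeta}X)$ of the statement. Applying the first and third auxiliary identities, $-(n-1)((\nabla^{\Lie{U}(n)}_{e_i}\xi_{(5)})_{e_i}\eta)(X)$ becomes $\tfrac{n-1}{2n}d(d^{*}\eta)(X_{\zeta^{\perp}}) = \tfrac{n-1}{2n}d(d^{*}\eta)(\varphi^{2}X)$ and, using $\eta(\xi_{\zeta}\zeta)=0$, $-(n-1)(\xi_{(5)\xi_{\zeta}\zeta}\eta)(X)$ becomes $-\tfrac{n-1}{2n}d^{*}\eta\,(\xi_{\zeta}\eta)(X)$.

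The remaining combinatorial point is to handle the five ``plug-in'' summands $\xi_{(5)\xi_{(4)e_i}e_i}\eta$, $-\tfrac1{n-1}\xi_{(8)\xi_{(4)e_i}e_i}\eta$, $\xi_{(9)\xi_{(4)e_i}e_i}\eta$, $-\xi_{(6)\xi_{(4)e_i}e_i}\eta$, $-\xi_{(7)\xi_{(4)e_i}e_i}\eta$ of Lemma~\ref{etameanid1} together with the leftover term $\tfrac{n-1}{2}\theta(\xi_{\varphi^{2}X}\zeta)$ above. For this I would substitute $\xi_{(4)e_i}e_i = \tfrac{n-1}{2}\theta^{\sharp}$ into the five summands, and expand $\theta(\xi_{\varphi^{2}X}\zeta) = \sum_{i=5}^{10}(\xi_{(i)\varphi^{2}X}\eta)(\theta^{\sharp})$, using that $(X,Y)\mapsto(\xi_{(i)X}\eta)(Y)$ is a symmetric bilinear form on $\zeta^{\perp}$ for $i\in\{5,8,9\}$ and skew for $i\in\{6,7,10\}$, together with $\varphi^{2}X = -X_{\zeta^{\perp}}$, to rewrite it in terms of $(\xi_{(i)\theta^{\sharp}}\eta)(X)$. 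The $\mathcal{C}_{5},\mathcal{C}_{6},\mathcal{C}_{7},\mathcal{C}_{9}$ contributions then cancel, and the $\mathcal{C}_{8}$- and $\mathcal{C}_{10}$-contributions combine into precisely $-\tfrac n2(\xi_{(8)\theta^{\sharp}}\eta)(X) + \tfrac{n-1}{2}(\xi_{(10)\theta^{\sharp}}\eta)(X)$. All other summands of Lemma~\ref{etameanid1} --- the $\mathcal{C}_{8}$-, $\mathcal{C}_{10}$-, $\mathcal{C}_{11}$-derivative terms and the quadratic terms built from $\xi_{(1)},\xi_{(2)},\xi_{(3)},\xi_{(7)},\xi_{(8)},\xi_{(10)},\xi_{(11)}$ --- already appear in the form given in the statement and are carried over unchanged. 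Collecting everything yields the asserted identity.

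For the ``in particular'' assertions I would specialise. If the structure is of type $\mathcal{C}_{1}\oplus\mathcal{C}_{2}\oplus\mathcal{C}_{3}\oplus\mathcal{C}_{5}\oplus\mathcal{C}_{6}\oplus\mathcal{C}_{9}\oplus\mathcal{C}_{12}$ or $\mathcal{C}_{1}\oplus\mathcal{C}_{2}\oplus\mathcal{C}_{5}\oplus\mathcal{C}_{6}\oplus\mathcal{C}_{7}\oplus\mathcal{C}_{9}\oplus\mathcal{C}_{12}$, then $\xi_{(4)}=0$ --- hence $\theta=0$ --- and $\xi_{(8)}=\xi_{(10)}=\xi_{(11)}=0$; consequently every quadratic term of the identity vanishes (in the first case because $\xi_{(7)}=\xi_{(8)}=\xi_{(10)}=\xi_{(11)}=0$, in the second because $\xi_{(3)}=\xi_{(8)}=\xi_{(10)}=\xi_{(11)}=0$), and the $\mathcal{C}_{8}$-, $\mathcal{C}_{10}$-, $\mathcal{C}_{11}$-derivative terms vanish by $\Lie{U}(n)$-equivariance of $\nabla^{\Lie{U}(n)}$. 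The identity then reduces to $0 = \tfrac{n-1}{2n}d(d^{*}\eta)(\varphi^{2}X) - \tfrac{n-1}{2n}d^{*}\eta\,(\xi_{\zeta}\eta)(X)$; cancelling $\tfrac{n-1}{2n}$ (this needs $n>1$) and writing $\varphi^{2}X = -X + \eta(X)\zeta$ gives $d(d^{*}\eta)(X) = \eta(X)d(d^{*}\eta)(\zeta) - d^{*}\eta\,(\xi_{\zeta}\eta)(X)$, i.e. $d(d^{*}\eta) = -d^{*}\eta\,\xi_{\zeta}\eta + d(d^{*}\eta)(\zeta)\,\eta$. For the subtype $\mathcal{C}_{1}\oplus\mathcal{C}_{2}\oplus\mathcal{C}_{3}\oplus\mathcal{C}_{5}\oplus\mathcal{C}_{9}\oplus\mathcal{C}_{12}$ one moreover has $\xi_{(6)}=\xi_{(11)}=0$, so $\xi_{\zeta}\zeta = \xi_{(12)\zeta}\zeta = -\nabla_{\zeta}\zeta$ by \eqref{leeac}, whence $\xi_{\zeta}\eta = (\xi_{\zeta}\zeta)^{\flat} = -\nabla_{\zeta}\eta$; moreover $\nabla\eta=-\xi\eta$ has components only in $\mathcal{C}_{5},\mathcal{C}_{9}$ (both symmetric) and $\mathcal{C}_{12}$, so $d\eta$, being its skew part, equals the $\mathcal{C}_{12}$-contribution $\eta\wedge\nabla_{\zeta}\eta$. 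Therefore $d(d^{*}\eta\,\eta) = d(d^{*}\eta)\wedge\eta + d^{*}\eta\,d\eta = d^{*}\eta\,\nabla_{\zeta}\eta\wedge\eta + d^{*}\eta\,\eta\wedge\nabla_{\zeta}\eta = 0$, so $\Div(\zeta)\,\eta = -d^{*}\eta\,\eta$ is closed.

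The main obstacle I anticipate is the bookkeeping in the second paragraph: verifying that the $\mathcal{C}_{5},\mathcal{C}_{6},\mathcal{C}_{7},\mathcal{C}_{9}$ pieces of $\theta(\xi_{\varphi^{2}X}\zeta)$ cancel the corresponding $\xi_{(i)\xi_{(4)e_i}e_i}\eta$ terms and that the $\mathcal{C}_{8}$- and $\mathcal{C}_{10}$-coefficients add up to $-\tfrac n2$ and $\tfrac{n-1}{2}$ respectively. This rests on the precise (skew-)symmetry of the bilinear forms $(\xi_{(i)X}\eta)(Y)$ on $\zeta^{\perp}$ and on careful tracking of which $\mathcal{C}_{i}$ is symmetric and which is skew; the rest of the argument is routine substitution and collection.
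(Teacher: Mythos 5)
Your proposal is correct and takes essentially the same route as the paper: Proposition~\ref{divergencia} is obtained there exactly as you do, by substituting the four auxiliary identities (together with $\xi_{(4)e_i}e_i=\tfrac{n-1}{2}\theta^{\sharp}$ and $(\nabla^{\Lie{U}(n)}_{X}\xi_{(4)})_{e_i}e_i=\nabla^{\Lie{U}(n)}_{X}\xi_{(4)e_i}e_i$) into Lemma~\ref{etameanid1}, and your bookkeeping for the five $\xi_{(i)\,\xi_{(4)e_j}e_j}\eta$ terms combined with $\tfrac{n-1}{2}\theta(\xi_{\varphi^{2}X}\zeta)$ does yield precisely the $-\tfrac n2(\xi_{(8)\theta^{\sharp}}\eta)$ and $\tfrac{n-1}{2}(\xi_{(10)\theta^{\sharp}}\eta)$ terms, while your specialisations for the two ``in particular'' statements are also sound. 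One cosmetic remark: since $X_{\zeta^{\perp}}=-\varphi^{2}X$, your intermediate equality $d(d^{*}\eta)(X_{\zeta^{\perp}})=d(d^{*}\eta)(\varphi^{2}X)$ is off by a sign, though the term you ultimately carry, $\tfrac{n-1}{2n}d(d^{*}\eta)(\varphi^{2}X)$, is the correct one coming from $-(n-1)\cdot\tfrac1{2n}d(d^{*}\eta)(X_{\zeta^{\perp}})$.
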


If  we consider the identity $d^2 \eta=0$, we will  obtain an expression for $d(d^*F(\zeta))$.
\begin{lemma} \label{mainid} 
For almost contact metric  manifolds of dimension $2n+1$, the exterior derivative $d(d^*F(\zeta))$ is given by 
\begin{align*}
  \textstyle  \frac{n-1}{2n} d(d^* F(\zeta))(X_{\zeta^\perp})  = & 
     \textstyle    ((\nabla^{\Lie{U}(n)}_{e_i} \xi_{(7)})_{ e_i} \eta))(\varphi X)
   -   \textstyle     ((\nabla^{\Lie{U}(n)}_{e_i} \xi_{(10)})_{ e_i} \eta)(\varphi X)
 -  \textstyle       (\xi_{(7)e_i} \eta)(\xi_{(3)e_i} \varphi  X) 
\\
&
- 2\textstyle        (\xi_{(10) e_i} \eta)( \xi_{(1)\varphi X} e_i )  
-  \textstyle   \frac12     (\xi_{(10) e_i} \eta)( \xi_{(2)\varphi X} e_i ) 
  -\frac{n-1}{2n}     d^* F(\zeta)  \theta( X)   
  \\
  &
-   \textstyle  \tfrac{n-1}{2}  ( \xi_{(7) \theta^\sharp } \eta)(\varphi  X)
+ \textstyle     \frac{n-2}{2} ( \xi_{(10) \theta\sharp } \eta)(\varphi X ) 
+  \textstyle  \frac{n-1}{2n} d^* F(\zeta) ( \xi_{(12)\zeta  }  \eta ) ( X)
\\
&
-  (\xi_{(7)  \xi_{(12)\zeta  } \zeta} \eta ) ( \varphi X )
+  (\xi_{(10)  \xi_{(12)\zeta  } \zeta} \eta ) ( \varphi X ),
\end{align*}
\begin{align*}
 d(d^*F(\zeta))(\zeta) =&   
   \textstyle   \frac{1}{n} d^*\eta d^*F(\zeta)  - \langle d \xi_\zeta \eta , F \rangle      
       + \textstyle  2     (\xi_{(7) e_i} \eta)  (\varphi \xi_{(8)e_i} {\zeta}) 
       + 2      (\xi_{(10) e_i} \eta) (\varphi  \xi_{(11)\zeta} e_i).      
\end{align*}
\end{lemma}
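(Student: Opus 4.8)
The plan is to exploit the trivial identity $d^{2}\eta=0$, exactly as the other identities in this section come from $d^{2}F=0$ or $d^{2}\eta=0$. Recall first that the minimal connection satisfies $\nabla^{\Lie{U}(n)}\zeta=0$, hence $\nabla^{\Lie{U}(n)}\eta=0$, and that it also parallelises $\varphi$, $F$ and the metric; its torsion is $T(X,Y)=\xi_{X}Y-\xi_{Y}X$. Therefore $d\eta(X,Y)=\eta(\xi_{X}Y-\xi_{Y}X)=(\xi_{Y}\eta)(X)-(\xi_{X}\eta)(Y)$, and inspecting the maps $\xi_{(k)}\eta$ one sees that only $\mathcal{C}_{6}$, $\mathcal{C}_{7}$, $\mathcal{C}_{10}$ and $\mathcal{C}_{12}$ contribute to $d\eta$: the classes $\mathcal{C}_{1},\dots,\mathcal{C}_{4}$ and $\mathcal{C}_{11}$ annihilate $\eta$, while $\mathcal{C}_{5}$, $\mathcal{C}_{8}$, $\mathcal{C}_{9}$ produce symmetric tensors. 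Using $(\xi_{(6)X}\eta)(Y)=-\tfrac{d^{*}F(\zeta)}{2n}F(X,Y)$ one finds that the $\mathbb{R}F$-component of $d\eta$ is a constant multiple of $d^{*}F(\zeta)\,F$, the $\eta\wedge\lcf\lambda^{1,0}\rcf$-component equals $\eta\wedge(\zeta\hook d\eta)=-\eta\wedge(\xi_{(12)\zeta}\eta)$, and the $[\lambda_{0}^{1,1}]$- and $\lcf\lambda^{2,0}\rcf$-components carry $\xi_{(7)}$ and $\xi_{(10)}$ respectively.

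Next I would apply $d$ once more and rewrite $d^{2}\eta=0$ through $\nabla^{\Lie{U}(n)}$, using the standard expression of the exterior derivative of a two-form in terms of a connection with torsion. Since $\nabla^{\Lie{U}(n)}\eta=0$ and $\nabla^{\Lie{U}(n)}$ preserves the $\Lie{U}(n)$-types, the covariant derivative $\nabla^{\Lie{U}(n)}(d\eta)$ is expressed purely through $\nabla^{\Lie{U}(n)}\xi$, so $d^{2}\eta=0$ becomes a three-form identity whose terms are of two kinds: linear terms $((\nabla^{\Lie{U}(n)}_{e_{i}}\xi_{(k)})_{e_{i}}\eta)(\cdot)$ with $k\in\{6,7,10,12\}$, and quadratic terms of the shape $(\xi\eta)(\xi\,\cdot)$ coming from the torsion-correction terms $d\eta(T(\cdot,\cdot),\cdot)$. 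The torsion $T$ hides all twelve components of $\xi$, which is precisely why the $\xi_{(1)}$-, $\xi_{(2)}$- and $\xi_{(3)}$-terms appear on the right-hand side of the Lemma.

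To extract the first formula I would contract this three-form identity against $e_{i}$ and $\varphi e_{i}$ in two of its three slots, leaving a free argument $X_{\zeta^{\perp}}$; up to normalisation this is the $\Lie{U}(n)$-equivariant projection $\Lambda^{3}\mathrm{T}^{*}M\to\eta^{\perp}$ adjoint to $\alpha\mapsto\alpha\wedge F$. The $\mathbb{R}F$-part of $d\eta$ then produces, with the correct normalising factor, the left-hand side $\tfrac{n-1}{2n}d(d^{*}F(\zeta))(X_{\zeta^{\perp}})$ together with a multiple of $d^{*}F(\zeta)$ times a contraction of $dF$, while the vectors $\xi_{e_{i}}\varphi e_{i}$ and $\xi_{e_{i}}e_{i}$ generated by the torsion contractions are rewritten through the formulas recalled above — those for $\xi_{(4)e_{i}}\varphi e_{i}$, $\xi_{(6)e_{i}}\varphi e_{i}$, $\xi_{(4)e_{i}}e_{i}$, $\xi_{(5)e_{i}}e_{i}$, $\xi_{(12)e_{i}}e_{i}$, the definition of the Lee form $\theta$, and the identities for $\xi_{(5)}\eta$, $\xi_{(6)}\eta$, $(\nabla^{\Lie{U}(n)}_{X}\xi_{(5)})_{Y}\eta$ and $\langle(\nabla^{\Lie{U}(n)}_{\zeta}\xi_{(4)})_{e_{i}}e_{i},X\rangle$ — which convert the $\mathcal{C}_{4}$-, $\mathcal{C}_{5}$-, $\mathcal{C}_{6}$- and $\mathcal{C}_{12}$-parts into the $\theta$-terms, the $d^{*}F(\zeta)\,\theta$-term and the $\xi_{(12)\zeta}$-terms on the right. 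The second formula is obtained by feeding $\zeta$ into one slot and tracing the other two against $F$: the $\eta\wedge(\zeta\hook d\eta)$-part of $d\eta$ then contributes $-\langle d(\xi_{\zeta}\eta),F\rangle$, the $\mathbb{R}F$-part contributes $\tfrac{n-1}{2n}d(d^{*}F(\zeta))(\zeta)$ together with $\tfrac1n d^{*}\eta\,d^{*}F(\zeta)$ (via $\zeta\hook dF$ and the $\mathcal{C}_{12}$--$\mathcal{C}_{6}$ cross term), and the surviving quadratic terms collapse to the two contributions $(\xi_{(7)e_{i}}\eta)(\varphi\xi_{(8)e_{i}}\zeta)$ and $(\xi_{(10)e_{i}}\eta)(\varphi\xi_{(11)\zeta}e_{i})$.

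The hard part is entirely bookkeeping rather than anything conceptual: tracking all $\Lie{U}(n)$-components of $d\eta$ and of the resulting three-form, controlling the many quadratic cross-terms $\xi_{(i)}\cdot\xi_{(j)}$ that survive the two contractions, and --- most delicately --- getting the combinatorial factors (the powers of $n$) right, since these are generated both by tracing over a $(2n+1)$-frame and by the repeated use of $\varphi^{2}=-I+\eta\otimes\zeta$ when commuting $\varphi$ past contractions. Matching everything to the $\xi_{(i)}$-labelled right-hand sides is where the care is needed, but no idea beyond $d^{2}\eta=0$ together with the auxiliary identities already displayed before the Lemma is required.
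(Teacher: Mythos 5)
Your proposal follows exactly the route the paper takes (and attributes to \cite{FMC6}): write $d\eta$ through the minimal connection and the intrinsic torsion, so that only $\xi_{(6)},\xi_{(7)},\xi_{(10)},\xi_{(12)}$ enter, impose $d^2\eta=0$ expressed via $\nabla^{\Lie{U}(n)}$ and the torsion $\xi_XY-\xi_YX$, and then extract the relevant $\Lie{U}(n)$-components — contracting two slots against $F$ for the first identity and inserting $\zeta$ plus an $F$-trace for the second — using the auxiliary formulas for $\xi_{(5)}\eta$, $\xi_{(6)}\eta$, $\xi_{(4)e_i}e_i$, $\theta$, etc. Your structural bookkeeping (which classes feed $d\eta$, where the $\xi_{(1)},\xi_{(2)},\xi_{(3)}$ cross-terms and the $\theta$- and $\xi_{(12)}$-terms originate, and how the left-hand sides arise from the $\mathbb{R}F$- and $\eta\wedge\lcf\lambda^{1,0}\rcf$-parts) is correct, so the only thing left unverified is the explicit coefficient computation, which the paper likewise does not reproduce.
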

For our purposes, it is interesting to note that
$
  (\nabla^{\Lie{U}(n)}_{Z} \xi_{(6)})_{X} Y  =  \tfrac{1}{2n} d(d^*F(\zeta))(Z) (F(X,Y) \zeta + \eta(Y) \varphi(X)).
$
From this, it is obtained
$
\textstyle  ( (\nabla^{\Lie{U}(n)}_{e_i} \xi_{(6)})_{e_i} \eta)(Z) = - \frac{1}{2n} d(d^*F(\zeta))(\varphi Z).
$
\vspace{1mm}

Also as a consequence of  $d^2 \eta =0$, one has the identities given   in next Lemma already proved   in \cite[Lemma 3.9]{FMC6}.  $(d \xi_\zeta \eta)_V$ will denote the projection of $d \xi_\zeta \eta$ on the $\Lie{U}(n)$-space $V$.
\begin{lemma} %\label{comp12} 
\label{dosdeeta}
The $\Lie{U}(n)$-components of $d \xi_\zeta \eta$ are given by: 

 \noindent $(d \xi_\zeta \eta)_{\mathbb{R} \, F} = \frac1{2n} \langle d \xi_\zeta \eta, F \rangle \; F$, where 
{\small \begin{align*}
\qquad \tfrac12 \langle d \xi_\zeta \eta, F \rangle 
 =& 
       -  d(d^*F(\zeta))(\zeta)  + \textstyle \frac{1}{n} d^*\eta \, d^*F(\zeta)
      + 2 \textstyle   ( \xi_{(7)  e_i} \eta)  (\varphi \xi_{(8)e_i} \zeta)   
        + 2 \textstyle   (\xi_{(10) e_i} \eta) (\varphi \xi_{(11)\zeta} e_i ),  
\end{align*}
\begin{align*}
 (d \xi_{\zeta}  \eta)_{[\lambda^{1,1}] }  (X, Y) =& 
      -  \textstyle  \tfrac{1}{n} d(d^*F(\zeta))(\zeta) F(X, Y)
     +  \textstyle  \tfrac{1}{n^2} d^*\eta d^*F(\zeta) F(X, Y)
     +2((\nabla^{\Lie{U}(n)}_{\zeta} \xi_{(7)})_{X}  \eta) (Y )
    \\
    & - \tfrac{2}{n} d^* \eta (\xi_{(7) X} \eta) (Y)
      +  \textstyle  \tfrac{2}{n} d^*F(\zeta)  (\xi_{(8) X} \eta) (\varphi Y)
      - 2   (\xi_{(7) X} {\eta})  (\xi_{(8) Y} \zeta )  
      \\
      &
       +  2    ( \xi_{(7) Y} {\eta})  (\xi_{(8) X} \zeta) 
        +  2  (\xi_{(9) X} \eta)  (\xi_{(10) Y} {\zeta}) 
     - 2  (\xi_{(9) Y} \eta)  (\xi_{(10) X} {\zeta})
      \\
     &
    + 2 (\xi_{(10)X} \eta) (\xi_{(11)\zeta} Y ) 
    - 2 (\xi_{(10)Y} \eta) (\xi_{(11)\zeta} X ),     
       \end{align*}       
      \begin{align*}
  \hspace{-11mm} 
  (d\xi_{\zeta}  \eta)_{\lcf \lambda^{2,0} \rcf } ( X, Y)  = & 
    2((\nabla^{\Lie{U}(n)}_{\zeta} \xi_{(10)})_{X}  \eta) (Y )
   -   \textstyle  \tfrac2{n} d^*F(\zeta) \langle \xi_{(11)\zeta} X , \varphi Y \rangle
      - 2   (\xi_{ (7) X} \eta)   (\xi_{ (9) Y} \zeta)
      \\
      &
       + 2   (\xi_{ (7) Y} \eta)  ( \xi_{ (9) X} \zeta)
    + 2  (\xi_{(7)X} \eta) (\xi_{(11)\zeta} Y ) 
    - 2  (\xi_{(7)Y} \eta) (\xi_{(11)\zeta} X  )
\\
& - \tfrac{2}{n} d^*\eta (\xi_{(10) X} \eta)(Y)
        + 2  (\xi_{ (8) X} \eta)   (\xi_{ (10) Y} \zeta)  
       - 2   ( \xi_{ (8) Y} \eta)   (\xi_{ (10) X} \zeta),   
   \end{align*} }
 \noindent $(d \xi_{\zeta}  \eta)_{\eta \wedge \lcf \lambda^{1,0} \rcf} = \eta \wedge \zeta \lrcorner d \xi_\zeta \eta$, where  
{\small \begin{align*}
  \zeta \lrcorner d \xi _\zeta \eta (X)  
  =&    ((\nabla^{\Lie{U}(n)}_{\zeta} \xi_{(12)})_\zeta  \eta) (X )  +  (\xi_{(12)\zeta} \eta)  ( \xi_{(11)\zeta} X)  
    -  \textstyle   \tfrac{1}{2n} d^*\eta   (\xi_{(12)\zeta} \eta )  (X) 
    \\
    & 
     - (\xi_{(8) \xi_{(12)\zeta} \zeta} \eta) (X) 
-  (\xi_{(9) \xi_{(12)\zeta} \zeta} \eta) (X)  
 -  \textstyle  \tfrac{1}{2n} d^*F(\zeta) (\xi_{(12)\zeta} \eta )  (\varphi X)  
 \\
 &
 + (\xi_{(7) \xi_{(12)\zeta} \zeta} \eta) (X) 
+  (\xi_{(10) \xi_{(12)\zeta} \zeta} \eta) (X). 
 \end{align*}}
\end{lemma}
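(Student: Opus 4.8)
The plan is to deduce these identities from the trivial relation $d^2\eta=0$; they were established in \cite[Lemma 3.9]{FMC6}, and I sketch the argument. First I would record the preliminaries. Since the minimal connection $\nabla^{\Lie{U}(n)}$ annihilates $\eta$, $\zeta$ and $\varphi$, preserves each $\Lie{U}(n)$-type, and satisfies $\nabla=\nabla^{\Lie{U}(n)}-\xi$, one has $(\nabla_X\eta)(Y)=-\langle\xi_X\zeta,Y\rangle$, hence $d\eta(X,Y)=\langle\xi_Y\zeta,X\rangle-\langle\xi_X\zeta,Y\rangle$ and, contracting with $\zeta$, $\zeta\lrcorner d\eta=-\xi_\zeta\eta$. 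Moreover, among $\mathcal C_1,\dots,\mathcal C_{12}$ only $\mathcal C_{12}$ feeds $\xi_\zeta\zeta$ (the modules $\mathcal C_1,\dots,\mathcal C_{10}$ annihilate $\zeta$ in the first slot, and $\mathcal C_{11}$ is valued in $\lie{u}(n)^{\perp}_{|\zeta^{\perp}}$, which annihilates $\zeta$), so that $\xi_\zeta\eta=(\xi_{(12)\zeta}\zeta)^\flat$ is the one-form parametrising $\mathcal C_{12}\cong\lcf\lambda^{1,0}\rcf$ and $-\eta\wedge\xi_\zeta\eta$ is exactly the $\eta\wedge\lcf\lambda^{1,0}\rcf$-component of $d\eta$, the remaining three $\Lie{U}(n)$-components of $d\eta$ being governed by $\xi_{(6)}$, $\xi_{(7)}$ and $\xi_{(10)}$ (its $\mathbb{R}F$-, $[\lambda^{1,1}_0]$- and $\lcf\lambda^{2,0}\rcf$-parts).

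The decisive step is to contract $d^2\eta=0$ with $\zeta$. By Cartan's formula, $0=\zeta\lrcorner d(d\eta)=\mathcal{L}_{\zeta}(d\eta)-d(\zeta\lrcorner d\eta)=\mathcal{L}_{\zeta}(d\eta)+d(\xi_\zeta\eta)$, whence $d\xi_\zeta\eta=-\mathcal{L}_{\zeta}(d\eta)$. I would then expand the Lie derivative as $\mathcal{L}_{\zeta}(d\eta)=\nabla^{\Lie{U}(n)}_{\zeta}(d\eta)-\xi_\zeta\cdot(d\eta)-\bigl((X,Y)\mapsto d\eta(\xi_X\zeta,Y)+d\eta(X,\xi_Y\zeta)\bigr)$, using $\nabla_X\zeta=-\xi_X\zeta$ and $\nabla_\zeta=\nabla^{\Lie{U}(n)}_\zeta-\xi_\zeta\cdot$. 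Since $\nabla^{\Lie{U}(n)}_\zeta$ respects $\Lie{U}(n)$-type, applying it to the four components of $d\eta$ produces exactly the linear terms of the statement, namely $\nabla^{\Lie{U}(n)}_\zeta\xi_{(7)}$, $\nabla^{\Lie{U}(n)}_\zeta\xi_{(10)}$, $\nabla^{\Lie{U}(n)}_\zeta\xi_{(12)}$ and, from the $\mathbb{R}F$-part, $d(d^*F(\zeta))(\zeta)$ (since $\xi_{(6)}$ corresponds to $d^*F(\zeta)$ through $(\xi_{(6)X}\eta)(Y)=-\tfrac{d^*F(\zeta)}{2n}F(X,Y)$), whereas the other two pieces of $\mathcal{L}_\zeta(d\eta)$ are quadratic in $\xi$. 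It then remains to project $d\xi_\zeta\eta=-\mathcal{L}_\zeta(d\eta)$ onto the summands $\mathbb{R}F$, $[\lambda^{1,1}_0]$, $\lcf\lambda^{2,0}\rcf$, $\eta\wedge\lcf\lambda^{1,0}\rcf$ of $\Lambda^2\mathrm{T}^{*}M$ by the projection formulas recalled in the Remark, and to simplify each projection using the relation \eqref{inttorcar} to commute $\varphi$ past $\xi$, the explicit expressions for $\xi_{(5)}$ and $\xi_{(6)}$, and the (skew-)symmetry of the Hermitian and anti-Hermitian bilinear forms attached to $\mathcal C_5,\dots,\mathcal C_{10}$. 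The last component, $\eta\wedge\lcf\lambda^{1,0}\rcf$, is perhaps most cleanly obtained by computing $\zeta\lrcorner d\xi_\zeta\eta$ directly from $\nabla=\nabla^{\Lie{U}(n)}-\xi$ and $\nabla^{\Lie{U}(n)}\zeta=0$, which yields $(\zeta\lrcorner d\xi_\zeta\eta)(X)=((\nabla^{\Lie{U}(n)}_\zeta\xi_{(12)})_\zeta\eta)(X)-\langle\xi_\zeta(\xi_\zeta\zeta),X\rangle+\langle\xi_X(\xi_\zeta\zeta),\zeta\rangle$, the last term being rewritten via the (skew-)symmetry of $\mathcal C_5,\dots,\mathcal C_{10}$ as a sum of terms $(\xi_{(i)\,\xi_{(12)\zeta}\zeta}\,\eta)(X)$. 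In the $\mathbb{R}F$-part the scalar $\langle d\xi_\zeta\eta,F\rangle$ is then identified with the displayed expression by invoking Lemma \ref{mainid}.

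The hard part will be purely combinatorial: keeping track of which of the twelve components $\xi_{(i)}$ survive each projection and, above all, controlling the signs that appear once the $\varphi$'s introduced by \eqref{inttorcar} meet the Hermitian/anti-Hermitian and symmetric/skew-symmetric character of the various $\mathcal C_i$. The conceptual input, namely $d^2\eta=0$, the type-preservation of $\nabla^{\Lie{U}(n)}$ and the identity $\zeta\lrcorner d\eta=-\xi_\zeta\eta$, is elementary, and the detailed verification is exactly the one carried out in \cite[Lemma 3.9]{FMC6}.
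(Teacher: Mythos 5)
Your proposal is correct and follows essentially the route the paper intends: the paper itself only cites \cite[Lemma 3.9]{FMC6}, and the derivation there is exactly what you describe — the $\zeta$-contraction of $d^2\eta=0$ (equivalently $d\xi_\zeta\eta=-\mathcal{L}_\zeta(d\eta)$ via Cartan's formula), expanded through $\nabla=\nabla^{\Lie{U}(n)}-\xi$ with $\nabla^{\Lie{U}(n)}$ type-preserving, and then projected onto the four $\Lie{U}(n)$-summands of $\Lambda^2\mathrm{T}^*M$, with the quadratic terms sorted by the (skew-)Hermitian symmetries of $\mathcal C_5,\dots,\mathcal C_{10}$, exactly as you verified in detail for the $\eta\wedge\lcf\lambda^{1,0}\rcf$-part. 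The only caveat is your final appeal to Lemma \ref{mainid} for the $\mathbb{R}F$-scalar: your own projection of $-\mathcal{L}_\zeta(d\eta)$ onto $\mathbb{R}F$ already yields the displayed expression (with the factor $\tfrac12$), whereas taking the second identity of Lemma \ref{mainid} literally as printed would give that scalar without the $\tfrac12$, so it is safer to bypass that lemma and read the coefficient off your projection directly.
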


\section{Harmonic almost contact structures}{\indent}
\setcounter{equation}{0} \label{harmonicalmostcontact}
  In this section we will show
conditions relating harmonicity, curvature  and types 
 of almost contact metric structures. Firstly, we recall the following characterization 
for  harmonic almost contact structures given  in \cite{GDMC4}.
\begin{theorem} \label{characharmalmcontact}
If  $(M,  \langle \cdot, \cdot \rangle,\varphi, \zeta )$ is  a
$2n+1$-dimensional  almost contact metric manifold, then the following conditions are equivalent:
 \begin{enumerate}
 \item[{\rm (i)}] The  structure   is harmonic.
%\vspace{1mm}

%\item[{\rm (ii)}]  $\nabla^* \nabla \varphi \in \lie{u}(n) +
%\zeta^{\perp}_c$, where $\zeta^{\perp}_c = \{a \otimes \zeta -
%\eta \otimes a^{\sharp} \,|\, a \in \eta^{\perp} \} \cong
%\eta^{\perp} \wedge \eta$, and  $ \nabla^{*} \nabla \zeta
%=-\xi_{e_i} \xi_{e_i} \zeta$.

  %\item[{\rm (iii)}]
  % $[\nabla^* \nabla \varphi, \varphi] =
  %3 \eta \otimes \nabla^* \nabla \zeta - 3 \nabla^* \nabla \eta \otimes \zeta $, where $[,]$
  %denotes the commutator bracket for endomorphisms, and $ \nabla^{*} \nabla \zeta
 %=-\xi_{e_i} \xi_{e_i} \zeta$. \vspace{1mm}

 %\item[{\rm (iv)}]$\nabla^* \nabla F \in \lie{u}(n) +
%\eta^{\perp} \wedge \eta$, i.e. $\nabla^* \nabla F(\varphi X,
%\varphi Y ) = \nabla^* \nabla F(X_{\zeta^{\perp}},
%Y_{\zeta^{\perp}}  )$, and $\nabla^{*} \nabla \eta = -
%\xi_{e_i}(\xi_{e_i} \eta)$. \vspace{1mm}

%\item[{\rm (v)}] $\nabla^* \nabla F(\varphi X, \varphi Y ) =
%\nabla^* \nabla F( X,  Y ) - 3 \eta \wedge \left( \nabla^* \nabla
%\eta \right) \circ \varphi ( X , Y)$ and $ \nabla^{*} \nabla \eta
%=-\xi_{e_i} (\xi_{e_i} \eta)$.

 %\item[{\rm (vi)}]$\nabla^* \nabla F (X,Y)  =
 %- 4 F( \xi_{e_i}X ,\xi_{e_i}Y) + (
 %\xi_{e_i} \eta) \wedge (
 %\xi_{e_i} \eta) \circ \varphi (X,Y)+ \eta \wedge (\xi_{e_i}  (\xi_{e_i} \eta)) \circ
%\varphi  (X,Y).$

\item[{\rm (ii)}] For all $X, Y \in \mathfrak X ( M)$, one has
$\langle  (\nabla^{\Lie{U}(n)}_{e_i} \xi)_{ e_i}  X_{\zeta^{\perp}} , Y_{\zeta^{\perp}} \rangle
  + \langle \xi_{\xi_{e_i}  e_i}  X_{\zeta^{\perp}} , Y_{\zeta^{\perp}}
  \rangle = 0\,$ and $\,(\nabla^{\Lie{U}(n)}_{e_i} \xi)_{e_i} \zeta +   \xi_{\xi_{e_i} e_i} \zeta
   =0$.
\end{enumerate}
In such case we have $\nabla^{*} \nabla \zeta = -
\xi_{e_i}\xi_{e_i} \zeta$.  In  particular, a structure of type
$\mathcal{C}_{5} \oplus \ldots \oplus \mathcal{C}_{10} \oplus
\mathcal{C}_{12}$ is harmonic if and only if $\nabla^* \nabla \zeta
= \| \nabla \zeta \|^2 \zeta$, that is, the Reeb  vector
field $\zeta$ is harmonic unit vector field {\rm (see
\cite{Wie1,GMS} for this notion)}.
 \end{theorem}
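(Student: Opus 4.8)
The plan is to establish the equivalence (i) $\Leftrightarrow$ (ii) by unwinding the definition of harmonicity from Section 2 — namely $d^*\xi^{\Lie{U}(n)} = 0$, equivalently $(\nabla^{\Lie{U}(n)}_{e_i}\xi)_{e_i} = -\xi_{\xi_{e_i}e_i}$ as an identity of $\lie{m}_\sigma$-valued tensors — and then decomposing the resulting endomorphism-valued equation into $\Lie{U}(n)$-irreducible pieces. The key structural fact is the splitting $\lie{u}(n)^\perp = \lie{u}(n)^\perp_{|\zeta^\perp} \oplus \eta^\perp\wedge\mathbb{R}\eta$ established in Section 3: an element of $\lie{m}_\sigma = \lie{u}(n)^\perp_\sigma$ is determined by its restriction to $\zeta^\perp$ (landing in $\lie{u}(n)^\perp_{|\zeta^\perp}$) together with its action on $\zeta$ (landing in $\zeta^\perp$, since it is skew). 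Thus the single equation $d^*\xi = 0$ is equivalent to the pair of equations obtained by (a) pairing with $X_{\zeta^\perp}, Y_{\zeta^\perp}$ and (b) evaluating on $\zeta$, which is exactly statement (ii). One subtlety to check carefully: that $(\nabla^{\Lie{U}(n)}_{e_i}\xi)_{e_i}$ and $\xi_{\xi_{e_i}e_i}$ both lie in $\lie{m}_\sigma$ — the first because $\nabla^{\Lie{U}(n)}$ is a $\Lie{U}(n)$-connection so differentiating preserves the type of $\xi$, the second because $\xi_Z \in \lie{m}_\sigma$ for every $Z$. Given this, no information is lost in passing from the endomorphism equation to the two scalar equations, and one must also note that the $\lie{m}_\sigma$-component of $\xi_{\xi_{e_i}e_i}$ equals $\xi_{\xi_{e_i}e_i}$ itself, so no extra projection is needed.

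Next I would derive the formula $\nabla^*\nabla\zeta = -\xi_{e_i}\xi_{e_i}\zeta$ in the harmonic case. This is a direct application of equation \eqref{lapstaten} with $\Psi = \zeta$ (a $(1,0)$-tensor stabilised by $\Lie{U}(n)\times 1$): that identity reads $\nabla^*\nabla\zeta = (\nabla^{\Lie{U}(n)}_{e_i}\xi^{\Lie{U}(n)})_{e_i}\zeta + \xi_{\xi_{e_i}e_i}\zeta - \xi_{e_i}(\xi_{e_i}\zeta)$, and the remark following \eqref{lapstaten} already notes that harmonicity collapses the first two terms, leaving $\nabla^*\nabla\zeta = -\xi_{e_i}\xi_{e_i}\zeta$. (Alternatively, one sees directly that the second equation in (ii) is precisely the vanishing of the first two terms acting on $\zeta$.) Here $\xi_{e_i}\zeta = -\nabla_{e_i}\zeta$ from the formula for $\xi^{\Lie{U}(n)}$, so the right-hand side is a genuinely computable expression in $\nabla\zeta$.

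For the final ``in particular'' claim, I would specialise to type $\mathcal C_5\oplus\dots\oplus\mathcal C_{10}\oplus\mathcal C_{12}$, i.e. structures with $\xi$ supported in the summands $\eta^\perp\otimes(\eta^\perp\wedge\eta)$ and $\eta\otimes(\eta^\perp\wedge\eta)$; equivalently $\xi_X Y = (\nabla_X\eta)(Y)\,\zeta - \eta(Y)\,\nabla_X\zeta$ carries only the ``$\zeta$-directional'' information. The point is that for such structures $\xi_Z$ always maps $\zeta^\perp$ into $\mathbb{R}\zeta$ and $\zeta$ into $\zeta^\perp$, so the first equation in (ii) — the one on $\langle\cdot,\cdot\rangle$ restricted to $\zeta^\perp\times\zeta^\perp$ — is automatically satisfied (both bracketed terms vanish identically on $\zeta^\perp$). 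Hence harmonicity reduces to the single equation $(\nabla^{\Lie{U}(n)}_{e_i}\xi)_{e_i}\zeta + \xi_{\xi_{e_i}e_i}\zeta = 0$, which by the displayed Laplacian formula is equivalent to $\nabla^*\nabla\zeta = -\xi_{e_i}\xi_{e_i}\zeta$. It then remains to identify $-\xi_{e_i}\xi_{e_i}\zeta = \|\nabla\zeta\|^2\zeta$ for this class: using $\xi_{e_i}\zeta = -\nabla_{e_i}\zeta \in \zeta^\perp$ and, for vectors in $\zeta^\perp$, $\xi_{e_i}(\text{something in }\zeta^\perp) \in \mathbb{R}\zeta$ with $\langle \xi_{e_i}W, \zeta\rangle = -\langle W, \xi_{e_i}\zeta\rangle = \langle W, \nabla_{e_i}\zeta\rangle$, one gets $\xi_{e_i}\xi_{e_i}\zeta = -\langle\nabla_{e_i}\zeta,\nabla_{e_i}\zeta\rangle\zeta = -\|\nabla\zeta\|^2\zeta$, hence $\nabla^*\nabla\zeta = \|\nabla\zeta\|^2\zeta$, which is exactly the harmonic-unit-vector-field equation of \cite{Wie1,GMS}.

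**The main obstacle** I anticipate is the bookkeeping in the first part: verifying cleanly that the endomorphism-valued equation $d^*\xi = 0$ decomposes with no loss into precisely the two scalar equations of (ii), i.e. that pairing with $\zeta^\perp\times\zeta^\perp$ and evaluating on $\zeta$ together capture the full $\lie{u}(n)^\perp$-valued content — this hinges on correctly using the skew-symmetry of elements of $\lie{so}(M)$ together with the module decomposition of Section 3, and on checking the $\mathcal C_{11}$-type component (the $\eta\otimes\lie{u}(n)^\perp_{|\zeta^\perp}$ piece) is also accounted for by the $\zeta^\perp\times\zeta^\perp$ equation rather than being lost. Everything else is a routine substitution into \eqref{lapstaten}.
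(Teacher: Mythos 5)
Your proposal is correct, and it follows what is essentially the intended argument: the paper itself states this theorem only as a recollection from \cite{GDMC4}, and the proof there is exactly the route you take — splitting the $\lie{u}(n)^\perp$-valued equation $d^*\xi=0$ according to $\lie{u}(n)^\perp=\lie{u}(n)^\perp_{|\zeta^\perp}\oplus\,\eta^\perp\wedge\mathbb{R}\eta$ (restriction to $\zeta^\perp\times\zeta^\perp$ plus evaluation on $\zeta$, using skew-symmetry), and then applying \eqref{lapstaten} to $\Psi=\zeta$ together with $\xi_X\zeta=-\nabla_X\zeta$ and the fact that for type $\mathcal{C}_5\oplus\dots\oplus\mathcal{C}_{10}\oplus\mathcal{C}_{12}$ the tensors $(\nabla^{\Lie{U}(n)}_{e_i}\xi)_{e_i}$ and $\xi_{\xi_{e_i}e_i}$ take values in $\eta^\perp\wedge\mathbb{R}\eta$, so only the $\zeta$-equation survives and $-\xi_{e_i}\xi_{e_i}\zeta=\|\nabla\zeta\|^2\zeta$. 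No gaps.
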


In next results, for certain types of almost contact metric
structures, we will deduce  conditions  characterising harmonic
 structures. Such conditions are mainly given in terms of   $\ast$-Ricci tensor and  components of the intrinsic torsion.
\begin{theorem} \label{harmclasscontact}
For a $2n+1$-dimensional almost contact metric manifold
$(M,\langle \cdot ,\cdot \rangle, \varphi , \zeta)$,  we have:
\begin{enumerate}
\item[{\rm (i)}] If $M$ is
  of type  $\mathcal C_1 \oplus \mathcal C_2 \oplus \mathcal C_4 \oplus  \mathcal C_5 \oplus \mathcal C_6 \oplus \mathcal C_7 \oplus \mathcal  C_8 \oplus \mathcal C_{11} \oplus \mathcal  C_{12} =  \mathcal A\oplus \mathcal C_4  \oplus \mathcal C \oplus  \mathcal E$,  then the structure is harmonic if and
only if 
\begin{equation} \label{acuatrocehor}
{\small \begin{array}{rl}
  \Ricac_{\mbox{\rm \footnotesize alt}}( X_{\zeta^{\perp}} , Y_{\zeta^{\perp}})  = \; 
 &
 d \theta_{\lcf \lambda^{2,0}\rcf} (  X , Y)
  +(n-3) \langle \xi_{(1)\theta }  X , Y
  \rangle 
   + n \langle \xi_{(2)\theta }  X , Y
  \rangle 
  \\[1mm]
  &
  - d^*\eta  \langle \xi_{(11)\zeta}  X , Y \rangle
    - d^*F(\zeta) \langle \xi_{(11) \zeta } \varphi X , Y
  \rangle 
  \\
  &
   + \langle  (\nabla^{\Lie{U}(n)}_{\zeta} \xi_{(11)})_{\zeta}  X , Y \rangle
    + \langle \xi_{(1)\xi_{(12)\zeta}  \zeta}  X , Y
  \rangle
  \\
   &  + \langle \xi_{(2)\xi_{(12)\zeta}  \zeta}  X , Y
  \rangle
   +\tfrac14   \xi_{\zeta}  \eta\wedge \theta  (X , Y)
  \\[0.5mm]
  &
  -\tfrac14   \varphi \xi_{\zeta}  \eta\wedge \varphi \theta  (X , Y)
  \rangle,\end{array}}
  \end{equation} 
for all $X,Y \in \mathfrak
X(M)$,  and 
\begin{align} \label{cinco}
 % \hspace{0.5cm}
\Ricac( \zeta) =
 &
  - (\zeta \lrcorner d \xi_\zeta \eta)^\sharp  
 -\tfrac{n-1}{4n}  d^* \eta  \theta^\sharp 
 -\tfrac{2n-1}{2}  \xi_{(8) \theta^\sharp} \zeta 
 -\tfrac{3(n-1)}{4n}  d^*F(\zeta) \varphi   \theta^\sharp 
 \\
 &
 -\tfrac{2n-3}{2}  \xi_{(7) \theta^\sharp} \zeta 
  +\tfrac{n-1}{n} d^*\eta \xi_{\zeta} \zeta 
- 2  \xi_{(8)\xi_{\zeta} \zeta } \zeta 
 + d^*F(\zeta) \varphi \xi_\zeta \zeta
 - \xi_{(11)\zeta} \xi_{(12)\zeta} \zeta.
    \nonumber
    \end{align}

\item[{\rm (ii)}] If $M$ is
  of type  $\mathcal C_1 \oplus \mathcal C_2 \oplus \mathcal C_4 \oplus \mathcal \mathcal C_9  \oplus  \mathcal C_{10}    \oplus  \mathcal C_{11} \oplus  \mathcal C_{12}   = \mathcal A \oplus \mathcal C_4 \oplus \mathcal D \oplus  \mathcal E$,  then the  structure is harmonic if and
only if 
\begin{align} \label{acuatrodehor}
  \Ricac_{\mbox{\rm \footnotesize alt}}( X_{\zeta^{\perp}} , Y_{\zeta^{\perp}})  = \; 
 &
 d \theta_{\lcf \lambda^{2,0}\rcf} (  X , Y)
  +(n-3) \langle \xi_{(1)\theta^\sharp  }  X , Y
  \rangle 
   + n \langle \xi_{(2)\theta^\sharp  }  X , Y
  \rangle 
   \\
  &
   + \langle  (\nabla^{\Lie{U}(n)}_{\zeta} \xi_{(11)})_{\zeta}  X , Y \rangle
     + \langle \xi_{(1)\xi_{(12)\zeta}  \zeta}  X , Y
  \rangle \nonumber
  %\\
   %&  
   + \langle \xi_{(2)\xi_{(12)\zeta}  \zeta}  X , Y
  \rangle
  \\
  &
   +\tfrac14   \xi_{\zeta}  \eta\wedge \theta  (X , Y)\nonumber
  %\\
 % &
  -\tfrac14   \varphi \xi_{\zeta}  \eta\wedge \varphi \theta  (X , Y), \nonumber
\end{align} 
for all $X,Y \in \mathfrak
X(M)$,  and
\begin{equation} 
\begin{array}{rl}
\Ricac( \zeta) =
 &
   (\zeta \lrcorner d \xi_\zeta \eta)^\sharp  
   -  \xi_{(1)e_i} \xi_{(10) e_i} \zeta 
   -   \xi_{(2)e_i} \xi_{(\mathcal D) e_i} \zeta
      +(n-1)  \xi_{(9)\theta^\sharp} \zeta 
     \\
  &
  +(n-1)  \xi_{(10)\theta^\sharp} \zeta
 + 2 \xi_{(9)\xi_{\zeta} \zeta } \zeta  + \xi_{(11)\zeta} \xi_{(12)\zeta} \zeta.
    \end{array} \label{unocuatrode}
    \end{equation}

\item[{\rm (iii)}] 
If $M$ is
 of type $ \mathcal C_3  \oplus  \mathcal C_4  \oplus  \mathcal C_5  \oplus  \mathcal C_6  \oplus  \mathcal C_7  \oplus  \mathcal C_8  \oplus  \mathcal C_{11}  \oplus  \mathcal C_{12} =\mathcal B \oplus  \mathcal C \oplus \mathcal E$,
then the  structure is harmonic if and only
if
\begin{align*}
 \hspace{0.2cm}  \Ricac_{\mbox{\rm \footnotesize alt}}( X_{\zeta^{\perp}} , Y_{\zeta^{\perp}})  =
 &
    %  -  \tfrac{n-3}2  d \theta_{\lcf \lambda^{2,0}\rcf} ( X_{\zeta^{\perp}} , Y_{\zeta^{\perp}})
       -  (n-1) \langle \xi_{(3) \theta^\sharp }  X , Y\rangle 
    -   \langle  (\nabla^{\Lie{U}(n)}_{\zeta} \xi_{(11)})_{\zeta}  X , Y \rangle
  -    \langle \xi_{(3) \xi_\zeta \zeta }  X , Y\rangle 
  \\
  &
  - \tfrac14 \xi_\zeta \eta \wedge \theta (X , Y)
%  \\
 % &
 +    \tfrac14 \varphi \xi_\zeta \eta \wedge \varphi\theta (X , Y)
   + d^*\eta  \langle \xi_{(11)\zeta}  X , Y \rangle
   \\
   &
  - d^*F(\zeta)  \langle \xi_{(11)\zeta}  \varphi X ,  Y
\rangle,
\end{align*} 
for all $X,Y \in \mathfrak
X(M)$,  and 
\begin{align*}
\Ricac( \zeta) =
 &
  - (\zeta \lrcorner d \xi_\zeta \eta)^\sharp  
   - \langle \xi_{(8)} \zeta, \langle \xi_{(3) \cdot} e_j , \cdot \rangle \rangle e_j  
 - \langle \xi_{(7)} \zeta, \langle \xi_{(3) \cdot} e_j , \cdot \rangle \rangle e_j  
    \\
& 
-\tfrac{n-1}{4n} d^*\eta   \theta^\sharp
-\tfrac{2n-1}{2}  \xi_{(8)\theta^\sharp} \zeta
-\tfrac{3(n-1)}{4n} d^*F(\zeta)  \varphi  \theta^\sharp
  -\tfrac{2n-3}{2}  \xi_{(7)\theta^\sharp} \zeta
  \\
  &
  + \tfrac{n-1}{n} d^*\eta\xi_{\zeta} \zeta - 2\xi_{(8)\xi_{\zeta} \zeta } \zeta 
    + d^*F(\zeta) \varphi    \xi_{\zeta } \zeta - \xi_{(11)\zeta} \xi_{(12)\zeta} \zeta.
     \end{align*}
     
     Note that in this case of harmonic structure  one has
     \begin{align*}
     \langle  (\nabla^{\Lie{U}(n)}_{\zeta} \xi_{(11)})_{\zeta}  X , Y \rangle = &  -  \tfrac{n-1}2  d \theta_{\lcf \lambda^{2,0}\rcf} ( X , Y) -        (n-1) \langle \xi_{(3) \theta^\sharp }  X, Y\rangle
      -    \langle \xi_{(3) \xi_\zeta \zeta }  X , Y\rangle 
      \\
      &
       - \tfrac14 ( \xi_\zeta \eta \wedge \theta -
     \varphi \xi_\zeta \eta \wedge \varphi\theta) (X , Y)
     + d^*\eta  \langle \xi_{(11)\zeta}  X, Y \rangle
     \\
     &
     - d^*F(\zeta)  \langle \xi_{(11)\zeta}  \varphi X ,  Y \rangle 
     - 2 (\xi_{(7)X}\eta) ( \xi_{(11)\zeta } Y)
     \\
     &
       + 2 (\xi_{(7)X}\eta) (  \xi_{(11)\zeta}   Y).
     \end{align*}

     \item[{\rm (iv)}]
 If $M$ is
 of type  $\mathcal C_3 \oplus \mathcal C_4 \oplus \mathcal C_9 \oplus \mathcal C_{10} \oplus \mathcal C_{11} \oplus \mathcal C_{12}  = \mathcal B \oplus  \mathcal D \oplus \mathcal E$,
 then the  structure is harmonic if and only if
\begin{align*}
  \Ricac_{\mbox{\rm \footnotesize alt}}( X_{\zeta^{\perp}} , Y_{\zeta^{\perp}})  =
 &
    %  -  \tfrac{n-3}2  d \theta_{\lcf \lambda^{2,0}\rcf} ( X_{\zeta^{\perp}} , Y_{\zeta^{\perp}})
       -  (n-1) \langle \xi_{(3) \theta^\sharp }  X , Y\rangle 
      - \langle  (\nabla^{\Lie{U}(n)}_{\zeta} \xi_{(11)})_{\zeta}  X , Y \rangle,
\end{align*} 
for all $X,Y \in \mathfrak X(M)$, and 
  \begin{align*}
\Ricac( \zeta) =
 &
 (\zeta \lrcorner d \xi_\zeta\eta)^\sharp 
 + (n-1)  \xi_{(9)\theta^\sharp} \zeta
  + (n-1)  \xi_{(10)\theta^\sharp} \zeta
 + 2 \xi_{(9)\xi_{\zeta} \zeta } \zeta 
 + \xi_{(11)\zeta}  \xi_\zeta \zeta. 
     \end{align*} 
     Note that in this case of harmonic structure  one has
     $$
     \langle  (\nabla^{\Lie{U}(n)}_{\zeta} \xi_{(11)})_{\zeta}  X , Y \rangle =  -  \tfrac{n-1}2  d \theta_{\lcf \lambda^{2,0}\rcf} ( X , Y) -        (n-1) \langle \xi_{(3) \theta^\sharp }  X , Y\rangle.
     $$    
\end{enumerate}
\end{theorem}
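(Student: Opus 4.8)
The plan is to extract, for each of the four listed types, the pair of equations in Theorem \ref{characharmalmcontact}(ii) and rewrite them using the curvature identities of Section \ref{foursection} together with the explicit formulas for $\Ricac_{\mbox{\rm\footnotesize alt}}$ and $\Ricac(\zeta)$ from Lemma \ref{astricciacm1}. Concretely, the harmonicity of the structure is equivalent to the two conditions
\[
\langle (\nabla^{\Lie{U}(n)}_{e_i}\xi)_{e_i} X_{\zeta^\perp}, Y_{\zeta^\perp}\rangle + \langle \xi_{\xi_{e_i} e_i} X_{\zeta^\perp}, Y_{\zeta^\perp}\rangle = 0,
\qquad
(\nabla^{\Lie{U}(n)}_{e_i}\xi)_{e_i}\zeta + \xi_{\xi_{e_i} e_i}\zeta = 0.
\]
The first step is to recognise that the two terms $\langle(\nabla^{\Lie{U}(n)}_{e_i}\xi)_{\varphi e_i}\varphi X_{\zeta^\perp},Y_{\zeta^\perp}\rangle$ and $\langle\xi_{\xi_{e_i}\varphi e_i}\varphi X_{\zeta^\perp},Y_{\zeta^\perp}\rangle$ appearing in Lemma \ref{astricciacm1} are, after the $\Lie{U}(n)$-invariant substitution $e_i\mapsto\varphi e_i$ (valid since $\{\varphi e_i\}$ together with $\zeta$ is again an orthonormal basis of the complexified tangent space), precisely the left-hand sides of the harmonicity conditions paired against $X,Y$ in $\zeta^\perp$ — but with $\varphi e_i$ in place of $e_i$. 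For each of the four types one checks, using the vanishing of the offending components, that $\sum_i(\nabla^{\Lie{U}(n)}_{\varphi e_i}\xi)_{\varphi e_i} = \pm\sum_i(\nabla^{\Lie{U}(n)}_{e_i}\xi)_{e_i}$ modulo the $\zeta$-contributions $\xi_{(11)}$ and $\xi_{(12)}$, and similarly for $\xi_{\xi_{e_i}e_i}$; this is exactly the content of Lemma \ref{xixizeta1}(i)--(iii) at the level of the $\zeta$-component. So the harmonicity equations get transported into statements about $\Ricac_{\mbox{\rm\footnotesize alt}}$ and $\Ricac(\zeta)$ plus correction terms involving only $\xi_{(11)},\xi_{(12)},\theta$ and $d^*\eta,d^*F(\zeta)$.

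The second step is to evaluate those correction terms using the formulas for $\xi_{e_i}e_i$ in \eqref{leeac}: since in each type only $\xi_{(4)},\xi_{(5)},\xi_{(6)},\xi_{(12)}$ can contribute to $\xi_{e_i}e_i$ (and in types (ii),(iv) not even $\xi_{(5)},\xi_{(6)}$), one has $\xi_{e_i}e_i = -\tfrac12\varphi(d^*F)^\sharp - d^*\eta\,\zeta - \tfrac12\nabla_\zeta\zeta = \tfrac{n-1}{2}\theta^\sharp - d^*\eta\,\zeta - \xi_{(12)\zeta}\zeta$ up to the precise splitting used in the statement. Feeding this into $\langle\xi_{\xi_{e_i}e_i}X_{\zeta^\perp},Y_{\zeta^\perp}\rangle$ and into $\xi_{\xi_{e_i}e_i}\zeta$, and decomposing $\xi$ into its components $\xi_{(1)},\xi_{(2)},\xi_{(3)},\xi_{(11)}$ acting on $\theta^\sharp$ or on $\xi_{(12)\zeta}\zeta$, produces exactly the terms $\langle\xi_{(1)\theta^\sharp}X,Y\rangle$, $\langle\xi_{(2)\theta^\sharp}X,Y\rangle$, $\langle\xi_{(3)\theta^\sharp}X,Y\rangle$, $\xi_{(\cdot)\xi_{(12)\zeta}\zeta}$, etc., that appear on the right. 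The remaining $(\nabla^{\Lie{U}(n)}_{e_i}\xi)_{e_i}$ piece acting on $\zeta^\perp$ is then rewritten via the $d^2F=0$ identity of Proposition \ref{divergenciadoscero} (for the $\lcf\lambda^{2,0}\rcf$-part) and via the identity \eqref{nablatheta} relating $(\nabla^{\Lie{U}(n)}_{e_i}\xi_{(4)})_{e_i}$ to $d\theta_{\lcf\lambda^{2,0}\rcf}$; the $\zeta$-valued part is rewritten via the $d^2\eta=0$ identities of Lemma \ref{mainid} and Lemma \ref{dosdeeta}, which is where the term $(\zeta\lrcorner d\xi_\zeta\eta)^\sharp$ enters $\Ricac(\zeta)$. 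For the second $\zeta$-component condition $(\nabla^{\Lie{U}(n)}_{e_i}\xi)_{e_i}\zeta + \xi_{\xi_{e_i}e_i}\zeta=0$ one uses Lemma \ref{xixizeta1} to replace $\varphi\xi_{e_i}\xi_{\varphi e_i}\zeta$ in the formula for $\Ricac(\zeta)$ by $\pm\xi_{e_i}\xi_{e_i}\zeta \mp \xi_{(11)\zeta}\xi_\zeta\zeta \pm\|\nabla\zeta\|^2\zeta$, and then substitutes the harmonicity identity $\nabla^*\nabla\zeta = -\xi_{e_i}\xi_{e_i}\zeta$ from Theorem \ref{characharmalmcontact}; the surviving curvature term is reorganised once more through Lemma \ref{mainid} and Lemma \ref{dosdeeta} into the stated combination of $\xi_{(7)},\xi_{(8)},\xi_{(9)},\xi_{(10)},\xi_{(11)}$ acting on $\theta^\sharp$, $\xi_\zeta\zeta$ and $\xi_{(12)\zeta}\zeta$.

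The main obstacle will be bookkeeping: for each of the four types one must verify which of the twelve components survive (types (i),(iii) keep $\mathcal C$, hence $\xi_{(7)},\xi_{(8)}$ and the $d^*F(\zeta)$-terms, whereas types (ii),(iv) keep $\mathcal D$, hence $\xi_{(9)},\xi_{(10)}$ instead), track the correct signs coming from the Hermitian versus skew-Hermitian behaviour of the bilinear forms $(\xi_\cdot\eta)\cdot$ on $\zeta^\perp$ as dictated by Lemma \ref{xixizeta1}(i) vs. (ii), and confirm that all cross-terms between $\mathcal A\oplus\mathcal B$ and $\mathcal C$ (resp. $\mathcal D$) that cannot be absorbed into the Lee form genuinely cancel — this cancellation is exactly what is guaranteed by Lemmas \ref{firstident}, \ref{etameanid1} and Propositions \ref{divergenciadoscero}, \ref{divergencia}. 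The auxiliary identities displayed in cases (iii) and (iv) for $\langle(\nabla^{\Lie{U}(n)}_\zeta\xi_{(11)})_\zeta X,Y\rangle$ follow a posteriori by comparing the formula just obtained for $\Ricac_{\mbox{\rm\footnotesize alt}}$ with Proposition \ref{divergenciadoscero} specialised to the type in question (in type (iv) the $\mathcal C$-terms are absent, so $d\theta_{\lcf\lambda^{2,0}\rcf}$ is expressed purely through $\xi_{(1)},\xi_{(3)}$ and the asserted simplification is immediate). Once the substitutions are organised type by type, each equivalence is a finite, if lengthy, rearrangement; the sufficiency direction is obtained by reading every step backwards, all the Section \ref{foursection} identities being unconditional.
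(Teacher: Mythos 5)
Your plan reproduces the paper's own route: harmonicity is read off from Theorem \ref{characharmalmcontact} and converted into the stated conditions by feeding the class-specific $\varphi$-(anti)invariance of $\xi$ (and of $\nabla^{\Lie{U}(n)}\xi$, which has the same $\Lie{U}(n)$-type) into Lemma \ref{astricciacm1}, with \eqref{leeac} and \eqref{nablatheta} handling the Lee-form contributions, Lemma \ref{xixizeta1} together with Lemma \ref{dosdeeta} handling the $\zeta$-component (where $(\zeta\lrcorner d\xi_\zeta\eta)^\sharp$ enters), and Lemma \ref{firstident} (equivalently Proposition \ref{divergenciadoscero}) giving the auxiliary $\xi_{(11)}$-identities in (iii)--(iv), exactly as in the paper. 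Only bookkeeping-level slips would need fixing in the write-up: the twisted contraction in Lemma \ref{astricciacm1} is $(\nabla^{\Lie{U}(n)}_{e_i}\xi)_{\varphi e_i}\varphi X$, so the $\varphi$ sits in the torsion slot and the argument (the conversion uses the type relations, not a substitution of the derivative direction $e_i\mapsto\varphi e_i$), and $\xi_{e_i}e_i=\tfrac{n-1}{2}\theta^\sharp-d^*\eta\,\zeta+\xi_{(12)\zeta}\zeta$ since $\xi_{(12)\zeta}\zeta=-\nabla_\zeta\zeta$, opposite in sign to what you wrote.
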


\begin{proof}
For (i),  the tensor $\xi$ for  structures of
type $ \mathcal A \oplus \mathcal C_4 \oplus \mathcal C \oplus
\mathcal E$ is such
that    $\xi_{(\mathcal A)\varphi X} \varphi Y = -
\xi_{(\mathcal A)X} Y$, 
$\xi_{(4)\varphi X} \varphi Y = 
\xi_{(4)X} Y$,
 and $(\xi_{\varphi X}
\eta )(\varphi Y) = (\xi_{X_{\zeta^{\perp}}}\eta)(Y_{\zeta^{\perp}})$ \cite{ChineaGonzalezDavila}.
Also in such a case, one has $\xi_{\xi_{(6)e_i} \varphi e_i} \zeta = - d^*F(\zeta) \xi_\zeta \zeta$ and $\xi_{\xi_{(4)e_i} \varphi e_i} \zeta = -\xi_{\varphi \xi_{(4)e_i}e_i} \zeta  = -\tfrac{n-1}{2}  \xi_{\varphi \theta^\sharp} \zeta$.
Taking all of this into account, if the structure is harmonic, using  Lemma \ref{astricciacm1},  Theorem
\ref{characharmalmcontact} and  \eqref{nablatheta}, 
it is followed the first condition  required in (i).  

The second condition in (i) is derived by making use of Lemma \ref{astricciacm1},  Theorem
\ref{characharmalmcontact}, 
Lemma \ref{xixizeta1}, Lemma \ref{dosdeeta} and the following identities
\begin{align}
\xi_{(\mathcal A) e_i } \xi_{(\mathcal C) e_i } \zeta  =\,&0,\label{aczero}\\
\xi_{(4) e_i } \xi_{(\mathcal C) e_i } \zeta  =\,& \tfrac{n-1}{4n} d^*\eta \theta^\sharp - \tfrac12 \xi_{(8) \theta^\sharp} \zeta - \tfrac{n-1}{4n} d^*F(\zeta) \varphi  \theta^\sharp  + \tfrac12 \xi_{(7) \theta^\sharp} \zeta, \label{idxicuatro} \\ 
\xi_{(5)X} Y =\,& - \tfrac{d^*\eta}{2n} (\langle X, Y \rangle \zeta - \eta(Y) X), \label{idxicinco} 
\\
\xi_{(6)X} Y =\,&  \tfrac{d^*F(\zeta)}{2n} (F (X, Y ) \zeta - \eta(Y) \varphi X). \label{idxiseis} 
\end{align}
%\vspace{1mm}

Conversely, if both conditions are satisfied, using   Lemma \ref{astricciacm1}, Lemma \ref{dosdeeta},
Lemma \ref{xixizeta1} and the  identities \eqref{aczero}-\eqref{idxiseis},  we will  deduce the harmonicity conditions given by 
 Theorem
\ref{characharmalmcontact}. 
\vspace{1mm}

The proof for (ii) is similar. In this case
 one has    $(\xi_{\varphi X} \eta
)(\varphi Y)  = - (\xi_{X_{\zeta^{\perp}}}\eta)(Y_{\zeta^{\perp}})$ and 
$\xi_{\xi_{e_i} \varphi
e_i} \eta = \xi_{\xi_{(4)e_i} \varphi e_i} \zeta = -\xi_{\varphi \xi_{(4)e_i}e_i} \zeta  = -\tfrac{n-1}{2}  \xi_{\varphi \theta^\sharp} \zeta$.
\vspace{1mm}

For (iii),  the intrinsic torsion in this case   is
such that   $\xi_{(\mathcal{B}) \varphi X} \varphi Y =
\xi_{(\mathcal{B})X_{\zeta^{\perp}}} Y_{\zeta^{\perp}}$ and  $(\xi_{\varphi X}
\eta) (\varphi Y) = (\xi_{\zeta^{\perp}} \eta)(Y_{\zeta^{\perp}})$ (see
\cite{ChineaGonzalezDavila}). Therefore, as in the previous cases, the  required identities
in (iii) are  consequences of Lemma \ref{astricciacm1},
 Lemma \ref{dosdeeta}, Theorem \ref{characharmalmcontact},  Lemma \ref{xixizeta1} and identities \eqref{nablatheta}, \eqref{idxicuatro}, \eqref{idxicinco}, \eqref{idxiseis}. Moreover, to deduce an expression for $ \langle  (\nabla^{\Lie{U}(n)}_{\zeta} \xi_{(11)})_{\zeta}  X , Y \rangle$, Lemma \ref{firstident} is used.  The converse is straightforward.
 
 For (iv),   $\xi$  in this case   is
such that     $(\xi_{\varphi X}
\eta) (\varphi Y) = -(\xi_{X_{\zeta^{\perp}}} \eta)(Y_{\zeta^{\perp}})$. Therefore, as in the previous cases, the  required identities
in (iv) are  consequences of Lemma \ref{astricciacm1},
 Lemma \ref{dosdeeta}, Theorem \ref{characharmalmcontact} and  Lemma \ref{xixizeta1}.  Moreover, to deduce an expression for $ \langle  (\nabla^{\Lie{U}(n)}_{\zeta} \xi_{(11)})_{\zeta}  X , Y\rangle$, Lemma \ref{firstident} is used. The converse easily follows by using results above mentioned. 
\end{proof}

Next proposition contains some  particular cases, most of them  of previous Theorem. For proving them,   identities of Section \ref{foursection} are used.
\begin{proposition} $\,$ \label{harmclasscontact2}
For a $2n+1$-dimensional almost contact metric manifold
$(M,\langle \cdot ,\cdot \rangle, \varphi , \zeta)$,  we have:
\begin{enumerate}
\item[{\rm (i)}] If $M$ is
  of type  $\mathcal C_1 \oplus \mathcal C_2 \oplus \mathcal C_5 \oplus \mathcal C_6 \oplus \mathcal C_7 \oplus \mathcal  C_8 \oplus \mathcal C_{11} \oplus \mathcal  C_{12}$,  then the  structure is harmonic if and
only if 
\begin{align*}
\hspace{1cm} \Ricac_{\rm alt}( X_{\zeta^{\perp}} , Y_{\zeta^{\perp}} )
= &
\langle (\nabla^{\Lie{U}(n)}_\zeta \xi_{(11)})_\zeta X ,  Y \rangle 
+ \langle  \xi_{(1)\xi_\zeta \zeta}  X,  Y \rangle
 + \langle  \xi_{(2)\xi_\zeta \zeta}  X ,  Y \rangle
 \\
&
-  d^*\eta (\zeta)\langle   \xi_{(11)\zeta}  X ,  Y \rangle
- d^*F(\zeta)\langle  \xi_{(11)\zeta}  X ,  \varphi  Y \rangle, 
\end{align*}
for all $X,Y \in \mathfrak
X(M)$,  and 
\begin{align*}
\Ricac( \zeta) =
 &
   - (\zeta \lrcorner d \xi_\zeta \eta)^\sharp  
  +\tfrac{n-1}{n} d^*\eta \xi_{\zeta} \zeta 
- 2  \xi_{(8)\xi_{\zeta} \zeta } \zeta 
 + d^*F(\zeta) \varphi \xi_\zeta \zeta
 - \xi_{(11)\zeta} \xi_{(12)\zeta} \zeta.
    \end{align*}
  
\item[{\rm (ii)}] If $M$ is
  of type  $ \mathcal C_1 \oplus \mathcal C_2 \oplus \mathcal C_9 \oplus \mathcal C_{10} \oplus \mathcal C_{11} \oplus \mathcal  C_{12}$,  then the  structure is harmonic if and
only if 
\begin{align*}
\hspace{1cm}  \Ricac_{\mbox{\rm \footnotesize alt}}( X_{\zeta^{\perp}} , Y_{\zeta^{\perp}})  = \; 
 &
    \langle  (\nabla^{\Lie{U}(n)}_{\zeta} \xi_{(11)})_{\zeta}  X , Y \rangle
      + \langle \xi_{(1)\xi_{(12)\zeta}  \zeta}  X , Y
  \rangle
 % \\
  % &  
   + \langle \xi_{(2)\xi_{(12)\zeta}  \zeta}  X , Y
  \rangle, 
\end{align*} 
for all $X,Y \in \mathfrak
X(M)$,  and 
\begin{align*}
\Ricac( \zeta) =
 &
   (\zeta \lrcorner d \xi_\zeta \eta)^\sharp  
   -  \xi_{(1)e_i} \xi_{(10) e_i} \zeta 
   -   \xi_{(2)e_i} \xi_{(\mathcal D) e_i} \zeta 
    % \\
  %&
  % +(n-1)  \xi_{(9)\theta^\sharp} \zeta
  %+(n-1)  \xi_{(10)\theta^\sharp} \zeta
 + 2 \xi_{(9)\xi_{\zeta} \zeta } \zeta  + \xi_{(11)\zeta} \xi_{(12)\zeta} \zeta.
    \end{align*}

 \item[{\rm (iii)}]
  If $M$ is
 of type  $\mathcal C_1 \oplus  \mathcal C_4 \oplus  \mathcal C_5 \oplus  \mathcal C_6 \oplus  \mathcal C_7 \oplus  \mathcal C_8 \oplus  \mathcal C_{11} \oplus  \mathcal C_{12}$, then
the structure is harmonic for $n \neq 5$ if and only if
\begin{align*}
\tfrac{n-5}{n+1}  \Ricac_{\mbox{\rm \footnotesize alt}}( X_{\zeta^{\perp}}, Y_{\zeta^{\perp}})  =
 &
   \textstyle   \langle (\nabla^{\Lie{U}(n)}_{\zeta} \xi_{(11)} )_{\zeta} X, Y \rangle
 \textstyle   
  + (n-3)  \textstyle  \langle \xi_{{(1)} \theta^\sharp }X , Y\rangle
  \\
  &
 +  \textstyle  \langle \xi_{(1) \xi_\zeta \zeta} X ,  Y \rangle         
  +\tfrac{1}4 (\xi_\zeta \eta \wedge \theta - \varphi  \xi_\zeta \eta \wedge \varphi \theta)(X, Y)
  \\
 &
  - \textstyle  d^*\eta  \langle  \xi_ {(11)\zeta}  X,  \varphi Y \rangle  
    - \tfrac{n-3}{n+1} d^*F(\zeta)  \langle  \xi_ {(11)\zeta}  \varphi X,  Y \rangle,
 \end{align*}
 for all $X,Y \in \mathfrak X(M)$, and  it is satisfied the identity \eqref{cinco}.

For $n=5$, the above mentioned type of structure is
harmonic if and only if identities \eqref{acuatrocehor}  and \eqref{cinco} are satisfied for this particular case.

 \item[{\rm (iv)}]
  If $M$ is
 of type  $\mathcal C_1 \oplus  \mathcal C_4 \oplus \mathcal C_{9} \oplus \mathcal C_{10} 
\oplus \mathcal C_{11} \oplus \mathcal C_{12}$, then the  structure is
harmonic for $n\neq 5$ if and only if
\begin{equation*}
{\small \begin{array}{rl}
\tfrac{n-5}{n+1}  \Ricac_{\mbox{\rm \footnotesize alt}}( X_{\zeta^{\perp}}, Y_{\zeta^{\perp}})  =
 &
    \textstyle   \langle (\nabla^{\Lie{U}(n)}_{\zeta} \xi_{(11)} )_{\zeta} X,Y \rangle
 \textstyle  
 +  (n-3) \textstyle  \langle \xi_{{(1)} \theta^\sharp }X, Y \rangle 
 \\
 &    
  + \textstyle  \langle \xi_{(1) \xi_\zeta \zeta} X ,  Y \rangle    
  + \tfrac{1}4 (\xi_\zeta \eta \wedge \theta - \varphi  \xi_\zeta \eta \wedge \varphi \theta)(X, Y),
     \end{array}}
     \end{equation*}
 for all $X,Y \in \mathfrak X(M)$, and it is satisfied the identity \eqref{unocuatrode}.
 
 For $n=5$, the above mentioned type of structure is
harmonic if and only if identities \eqref{acuatrodehor}  and \eqref{unocuatrode} are satisfied for this particular case.

   \item[{\rm (v)}] If $M$ is
 of type  $\mathcal C_2 \oplus \mathcal C_4 \oplus \mathcal C_5 \oplus \mathcal C_6 \oplus\mathcal C_7 \oplus\mathcal C_8 \oplus\mathcal C_{11} \oplus\mathcal C_{12} = \mathcal C_2 \oplus  \mathcal C_4 \oplus \mathcal C
\oplus \mathcal E$,
then the structure is
harmonic for $n\neq 2$ if and only if
\begin{align*}
  \Ricac_{\mbox{\rm \footnotesize alt}}( X_{\zeta^{\perp}} , Y_{\zeta^{\perp}})  =
 &
   \langle  (\nabla^{\Lie{U}(n)}_{\zeta} \xi_{(11)})_{\zeta}  X , Y \rangle
 + n  \textstyle  \langle  \xi_{{(2)} \theta^\sharp } X, Y \rangle 
% \\
 %&
  -  d^* \eta  \langle \xi_{(11) \zeta}  X , Y \rangle
  \\
  &
    -\tfrac{n}{n-2}\,\textstyle d^* F(\zeta) \langle  \xi_ {(11)\zeta}  X,  \varphi Y \rangle 
  -\tfrac{4}{n-2}\textstyle  (\xi_{(7)X} \eta)(\xi_{(11)\zeta} Y)   
  \\
  &
+ \tfrac{4}{n-2}\textstyle  (\xi_{(7)Y} \eta)(\xi_{(11)\zeta}X) 
  +  \langle \xi_{(2)\xi_{\zeta} \zeta}  X , Y \rangle
  \\
  &
 + \tfrac{1}{4} (\xi_\zeta \eta \wedge \theta - \varphi \xi_\zeta \eta \wedge \varphi\theta)( X , Y), 
 \end{align*}
 for all $X,Y \in \mathfrak X(M)$, and it is satisfied the identity \eqref{cinco}.

 For $n=2$, the above mentioned type of  structure is
harmonic if and only if identities \eqref{acuatrocehor}  and \eqref{cinco} are satisfied for this particular case.

 \item[{\rm (vi)}] If $M$ is
 of type  $\mathcal C_2 \oplus  \mathcal C_4 \oplus  \mathcal C_9 \oplus  \mathcal C_{10} \oplus  \mathcal C_{11} \oplus  \mathcal C_{12}    = \mathcal C_2 \oplus  \mathcal C_4 \oplus \mathcal D
\oplus \mathcal E$, then the  structure
is harmonic  for $n \neq 2$ if and only if
 \begin{align*}
\hspace{1cm} \Ricac_{\mbox{\rm \footnotesize alt}}( X_{\zeta^{\perp}} , Y_{\zeta^{\perp}})  =
 &
    \langle  (\nabla^{\Lie{U}(n)}_{\zeta} \xi_{(11)})_{\zeta}  X , Y \rangle 
 + n   \langle \xi_{(2)\theta^\sharp}  X , Y \rangle 
  +  \langle \xi_{(2)\xi_{\zeta} \zeta}  X , Y \rangle
 \\
 &
 %+  \langle \xi_{(4)\xi_{\zeta} \zeta}  X_{\zeta^{\perp}} , Y_{\zeta^{\perp}} \rangle,
 +\tfrac14   (\xi_{\zeta}  \eta\wedge \theta 
  -   \varphi \xi_{\zeta}  \eta\wedge \varphi \theta)  (X , Y),
 \end{align*}
 for all $X,Y \in \mathfrak X(M)$, and it is satisfied the identity \eqref{unocuatrode}.

 For $n=2$, the above mentioned type of structure is
harmonic if and only if identities \eqref{acuatrodehor}  and \eqref{unocuatrode} are satisfied for this particular case.

    \item[{\rm (vii)}]
    If $M$ is of type  $\mathcal C_1 \oplus \mathcal C_5 \oplus\mathcal C_9 \oplus\mathcal C_{11} \oplus\mathcal C_{12}$,
  %  $\mathcal D$, where
% $\mathcal D =\mathcal C_1 \oplus  \mathcal C_5 \oplus \mathcal C_9$,
  %$\mathcal C_1 \oplus  \mathcal C_6 \oplus \mathcal C_8$,
then the  structure is harmonic if and only if
\begin{align*}
  %\Ricac_{\mbox{\rm \footnotesize alt}}( X_{\zeta^{\perp}} , Y_{\zeta^{\perp}})  = 
  0 =
 &
 \langle  (\nabla^{\Lie{U}(n)}_{\zeta} \xi_{(11)})_{\zeta}  X , Y \rangle
+ \langle \xi_{(1)\xi_\zeta \zeta} X , Y \rangle
- d^* \eta \langle \xi_{(11)\zeta} X , Y\rangle,
\end{align*}
 for all $X,Y \in \mathfrak X(M)$, and it is satisfied  the identity 
\begin{align*}
\Ricac( \zeta) =
 &
  \tfrac{1}{n} (d d^*\eta)^\sharp_{\zeta^\perp}
   + (\zeta \lrcorner d \xi_\zeta \eta)^\sharp 
  - \tfrac{n-1}{n} d^*\eta \xi_{\zeta} \zeta 
+ 2  \xi_{(9)\xi_{\zeta} \zeta } \zeta 
 %+ d^*F(\zeta) \varphi \xi_\zeta \zeta
 + \xi_{(11)\zeta} \xi_{(12)\zeta} \zeta.
    \end{align*}
    In particular, if the structure is of type   $\mathcal C_1 \oplus \mathcal C_5 \oplus\mathcal C_9$, the structure is harmonic if and only if $\Ricac (\zeta)=0$. Note that, for such a type, we always have $\Ricac_{\rm alt} ( X_{\zeta^\perp} , Y_{\zeta^\perp}) =0$, for all $X, Y \in \mathfrak X (M)$.
    \end{enumerate}
\end{proposition}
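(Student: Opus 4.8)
The plan is to treat all seven items by the same device used in the proof of Theorem~\ref{harmclasscontact}. By Theorem~\ref{characharmalmcontact}, harmonicity is equivalent to the two tensorial equations
$\langle(\nabla^{\Lie{U}(n)}_{e_i}\xi)_{e_i}X_{\zeta^{\perp}},Y_{\zeta^{\perp}}\rangle+\langle\xi_{\xi_{e_i}e_i}X_{\zeta^{\perp}},Y_{\zeta^{\perp}}\rangle=0$ and $(\nabla^{\Lie{U}(n)}_{e_i}\xi)_{e_i}\zeta+\xi_{\xi_{e_i}e_i}\zeta=0$, and into the first of these I would substitute the expression for $\Ricac_{\mbox{\rm \footnotesize alt}}(X_{\zeta^{\perp}},Y_{\zeta^{\perp}})$ given by Lemma~\ref{astricciacm1}, into the second the expression for $\Ricac(\zeta)$. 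Since $\nabla^{\Lie{U}(n)}\xi$ and $\xi$ have the same $\Lie{U}(n)$-type, for each prescribed class the $\varphi$-compatibility relations of \cite{ChineaGonzalezDavila} (of the form $\xi_{(\mathcal A)\varphi X}\varphi Y=-\xi_{(\mathcal A)X}Y$, $\xi_{(\mathcal B)\varphi X}\varphi Y=\xi_{(\mathcal B)X_{\zeta^{\perp}}}Y_{\zeta^{\perp}}$, $(\xi_{\varphi X}\eta)(\varphi Y)=\pm(\xi_{X_{\zeta^{\perp}}}\eta)(Y_{\zeta^{\perp}})$, etc.) allow the harmonicity equations to be split into their $[\lambda^{1,1}]$-, $\lcf\lambda^{2,0}\rcf$- and $\eta\wedge\lcf\lambda^{1,0}\rcf$-pieces and the surviving terms identified. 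The quadratic corrections $\xi_{\xi_{e_i}e_i}(\cdot)$, $\varphi\xi_{e_i}\xi_{\varphi e_i}\zeta$ and $\xi_{\xi_{e_i}\varphi e_i}\zeta$ are then reduced using \eqref{leeac}, \eqref{nablatheta} and the identities \eqref{aczero}--\eqref{idxiseis} from the proof of Theorem~\ref{harmclasscontact}, together with Lemma~\ref{xixizeta1} for $\varphi\xi_{e_i}\xi_{\varphi e_i}\zeta$ and Lemma~\ref{dosdeeta} for $\zeta\lrcorner d\xi_\zeta\eta$. Items (i) and (ii) are then just the specialisations of Theorem~\ref{harmclasscontact}(i),(iii) and (ii),(iv) obtained by setting the $\mathcal C_4$- (resp. the absent) components to zero, so nothing essentially new is needed there.

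The genuinely new ingredient, and the source of the coefficients $\tfrac{n-5}{n+1}$ in (iii),(iv), of $\tfrac{n}{n-2}$ in (v),(vi), and of the exceptional dimensions $n=5$ and $n=2$, is the use of the integrability identities of Section~\ref{foursection}. After the substitution above, the $\lcf\lambda^{2,0}\rcf$-equation still carries a term $\langle(\nabla^{\Lie{U}(n)}_{e_i}\xi_{(1)})_{e_i}X,Y\rangle$ (in items (iii),(iv)), respectively $\langle(\nabla^{\Lie{U}(n)}_{e_i}\xi_{(2)})_{e_i}X,Y\rangle$ (in (v),(vi)), which is not among the prescribed data; I would eliminate it by invoking Lemma~\ref{firstident}, which for a structure without $\mathcal C_3$ and, after being combined with \eqref{nablatheta}, reduces to a linear relation between $\langle(\nabla^{\Lie{U}(n)}_{e_i}\xi_{(1)})_{e_i}X,Y\rangle$, $d\theta_{\lcf\lambda^{2,0}\rcf}(X,Y)$ and quadratic-in-$\xi$ terms, the coefficient on the $\xi_{(1)}$-term being $\tfrac{n-5}{n-1}$ (and analogously for $\xi_{(2)}$ with a coefficient involving $n-2$, via Lemma~\ref{lambdaunouno}). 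Solving for the unwanted term and reinserting produces exactly the stated multiples of $\Ricac_{\mbox{\rm \footnotesize alt}}$; when the coefficient degenerates ($n=5$, resp. $n=2$) the elimination is impossible and one falls back to the raw characterisations \eqref{acuatrocehor}, \eqref{cinco} and \eqref{acuatrodehor}, \eqref{unocuatrode} of Theorem~\ref{harmclasscontact}, precisely as asserted. For the $\Ricac(\zeta)$-equations the parallel step rewrites $d(d^*F(\zeta))$ and $d(d^*\eta)$ by Lemma~\ref{mainid}, Lemma~\ref{etameanid1} and Proposition~\ref{divergencia}; in item (vii) this is where the term $\tfrac1n(dd^*\eta)^{\sharp}_{\zeta^{\perp}}$ in the formula for $\Ricac(\zeta)$ originates, Proposition~\ref{divergencia} being applicable because $\mathcal C_1\oplus\mathcal C_5\oplus\mathcal C_9\oplus\mathcal C_{11}\oplus\mathcal C_{12}$ is contained in the type $\mathcal C_1\oplus\mathcal C_2\oplus\mathcal C_3\oplus\mathcal C_5\oplus\mathcal C_9\oplus\mathcal C_{12}$ treated there.

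For item (vii) I would also use that the class contains none of $\mathcal C_3$, $\mathcal C_4$, $\mathcal C_6$, so that $\xi_{(3)}=\xi_{(4)}=\xi_{(6)}=0$, $d^*F\equiv 0$, and $(n-1)\theta^{\sharp}=\nabla_\zeta\zeta=-\xi_{(12)\zeta}\zeta$; this collapses nearly all the right-hand sides. The first identity of Lemma~\ref{astricciacm1} then shows that $\Ricac_{\mbox{\rm \footnotesize alt}}(X_{\zeta^{\perp}},Y_{\zeta^{\perp}})=0$ as soon as $\mathcal C_{11}=0$ as well, which gives the last sentence of (vii), and in general it leaves only the single $\lcf\lambda^{2,0}\rcf$-equation displayed there; the $\zeta$-component reduces, via Lemma~\ref{dosdeeta} and Proposition~\ref{divergencia}, to the stated formula for $\Ricac(\zeta)$, whose vanishing is equivalent to harmonicity in the subtype $\mathcal C_1\oplus\mathcal C_5\oplus\mathcal C_9$. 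The converse direction in every item is obtained by running the same chain of substitutions in reverse, which is routine once the identities are assembled.

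The main obstacle is bookkeeping. Lemma~\ref{astricciacm1}, Lemma~\ref{dosdeeta} and the Section~\ref{foursection} identities together contribute a large number of quadratic-in-$\xi$ terms, and one must verify, component by component in each of $[\lambda^{1,1}]$, $\lcf\lambda^{2,0}\rcf$ and $\eta\wedge\lcf\lambda^{1,0}\rcf$, that every term absent from the statement cancels, while tracking the dimension-dependent coefficients precisely enough for the degeneracies at $n=2$ and $n=5$ to surface. The $\varphi$-compatibility relations of the ambient class are the principal tool for these cancellations; once they are applied systematically, each of the seven items reduces to a short manipulation.
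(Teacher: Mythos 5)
Your overall strategy is the one the paper itself follows: items (i)--(ii) are read off as special cases of Theorem~\ref{harmclasscontact}, and in (iii)--(vi) the unwanted divergence terms $\langle(\nabla^{\Lie{U}(n)}_{e_i}\xi_{(1)})_{e_i}X,Y\rangle$, resp.\ $\langle(\nabla^{\Lie{U}(n)}_{e_i}\xi_{(2)})_{e_i}X,Y\rangle$, are eliminated through the $d^2F=0$ identities (Lemma~\ref{firstident} combined with \eqref{nablatheta}, i.e.\ Proposition~\ref{divergenciadoscero}), which is exactly how the coefficients $\tfrac{n-5}{n+1}$, $\tfrac{n}{n-2}$ and the exceptional dimensions $n=5$, $n=2$ arise, with the fallback to \eqref{acuatrocehor}--\eqref{unocuatrode} in the degenerate cases; item (vii) is likewise handled through Lemma~\ref{astricciacm1}, Theorem~\ref{characharmalmcontact}, the $\xi_{(5)}$-divergence formula and Lemma~\ref{dosdeeta}. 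Two of your attributions are off but harmless: items (i),(ii) need only Theorem~\ref{harmclasscontact}(i),(ii) (parts (iii),(iv) concern $\mathcal B$-types and play no r\^ole there), and the $n-2$ degeneracy in (v),(vi) again comes from Proposition~\ref{divergenciadoscero}, not from Lemma~\ref{lambdaunouno}.

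There is, however, one concrete error in your sketch of (vii): you assert $(n-1)\theta^{\sharp}=\nabla_{\zeta}\zeta=-\xi_{(12)\zeta}\zeta$ when $\xi_{(3)}=\xi_{(4)}=\xi_{(6)}=0$. By \eqref{leeac} one has $(n-1)\theta^{\sharp}=2\,\xi_{(4)e_i}e_i=-\varphi(d^*F)^{\sharp}+\nabla_{\zeta}\zeta$, so $\xi_{(4)}=0$ forces $\theta=0$ (the Lee form is equivalent to the $\mathcal C_4$-component), while $\varphi(d^*F)^{\sharp}=\nabla_{\zeta}\zeta$; only $\nabla_{\zeta}\zeta=-\xi_{(12)\zeta}\zeta$ is correct. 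This is not cosmetic: the paper's derivation of (vii) uses precisely $\theta=0$ (together with the vanishing of the other components) in Proposition~\ref{divergenciadoscero} to conclude $(\nabla^{\Lie{U}(n)}_{e_i}\xi_{(1)})_{e_i}=0$, and from there the stated first identity with coefficient $1$ on $\langle\xi_{(1)\xi_{\zeta}\zeta}X,Y\rangle$. If you instead carry a nonzero $\theta$ proportional to $\nabla_{\zeta}\zeta$, Proposition~\ref{divergenciadoscero} yields only a relation between $(\nabla^{\Lie{U}(n)}_{e_i}\xi_{(1)})_{e_i}$ and $\xi_{(1)\theta^{\sharp}}$, and the resulting coefficients in (vii) come out wrong. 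Correcting this single point, your plan reduces to the paper's proof.
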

\begin{proof}
Parts  (i) and (ii) are  merely  particular cases  of Theorem \ref{harmclasscontact} (i) and (ii), respectively.
For part (iii) and $n\neq5$, from properties of the involved components of $\xi$,    Proposition  \ref{divergenciadoscero} and  Theorem \ref{characharmalmcontact},  we deduce that in this case
{\small \begin{align*} 
   \tfrac{n-5}2  d \theta_{\lcf \lambda^{2,0}\rcf} ( X , Y)=&
     3 \langle  (\nabla^{\Lie{U}(n)}_{\zeta} \xi_{(11)})_{\zeta}  X , Y \rangle
  + 3(n-3) \langle \xi_{(1) \theta^\sharp }  X , Y\rangle
%  \\
 % & 
  +3 \langle \xi_{(1)\xi_{(12)\zeta}  \zeta}  X , Y
  \rangle
  \\
  &
  +\tfrac34  \xi_{(12)\zeta}   \eta \wedge \theta   (  X , Y)
  %\\
 % &
-\tfrac34   \varphi \xi_{(12)\zeta}   \eta \wedge \varphi \theta (  X , Y)
  - 3 d^*\eta  \langle \xi_{(11)\zeta}  X , Y \rangle
  \\
  &
    - d^*F(\zeta)  \langle \xi_{(11)\zeta} \varphi X , Y \rangle
   - 2 (\xi_{(7)X} \eta) (\xi_{(11)\zeta} Y)
% \\
 %&
 + 2 (\xi_{(7)Y} \eta) (\xi_{(11)\zeta} X),
  \end{align*}}
  \vspace{-8mm}
  
  {\small
  \begin{align*}
   {\tiny(n-5)} \langle  (\nabla^{\Lie{U}(n)}_{e_i} \xi_{(1)})_{ e_i}  X , Y \rangle = & 
    -(n-2) \langle  (\nabla^{\Lie{U}(n)}_{\zeta} \xi_{(11)})_{\zeta}  X , Y \rangle
  - \tfrac{(n-3)(n+1)}{2} \langle \xi_{(1) \theta^\sharp }  X , Y\rangle 
 \\
 &
  -(n-2) \langle \xi_{(1)\xi_{(12)\zeta}  \zeta}  X , Y
  \rangle
   -\tfrac{n-2}4   \xi_{(12)\zeta}   \eta \wedge \theta  (  X, Y)
   \\
   &
   +\tfrac{n-2}4   \varphi \xi_{(12)\zeta}   \eta \wedge \varphi \theta (  X , Y)
  + (n-2) d^*\eta  \langle \xi_{(11)\zeta}  X , Y \rangle
\\
&
    + d^*F(\zeta)  \langle \xi_{(11)\zeta} \varphi X , Y\rangle
    - 2 (\xi_{(7)X} \eta) (\xi_{(11)\zeta} Y)
    %\\
  %  &
 + 2 (\xi_{(7)Y} \eta) (\xi_{(11)\zeta} X).
  \end{align*}}
Then we take the first identity given in Lemma \ref{astricciacm1} and use properties of the involved components of $\xi$. In the resulting identity we replace  $d \theta_{\lcf \lambda^{2,0}\rcf}$ and $\langle  (\nabla^{\Lie{U}(n)}_{e_i} \xi_{(1)})_{ e_i}  X , Y \rangle$ by the expressions given above. The remaining second  required   identity and the particular case $n=5$ immediately follow from Theorem \ref{harmclasscontact} (i).

For part (iv) and $n\neq5$, from properties of the involved components of $\xi$,    Proposition  \ref{divergenciadoscero}  and  Theorem \ref{characharmalmcontact},  we deduce that in this case
{\small  \begin{align*} 
   \tfrac{n-5}6  d \theta_{\lcf \lambda^{2,0}\rcf} ( X , Y)=&
      \langle  (\nabla^{\Lie{U}(n)}_{\zeta} \xi_{(11)})_{\zeta}  X , Y\rangle
  + (n-3) \langle \xi_{(1) \theta^\sharp }  X , Y\rangle
%  \\
 % & 
  + \langle \xi_{(1)\xi_{(12)\zeta}  \zeta}  X , Y
  \rangle
  \\
  &
  +\tfrac14  \xi_{(12)\zeta}   \eta \wedge \theta   (  X , Y)
%  \\
 % &
-\tfrac14   \varphi \xi_{(12)\zeta}   \eta \wedge \varphi \theta (  X , Y),
  \end{align*}}
  \vspace{-9mm}
  
  {\small
  \begin{align*}
   {\tiny(n-5)} \langle  (\nabla^{\Lie{U}(n)}_{e_i} \xi_{(1)})_{ e_i}  X, Y\rangle = & 
    -(n-2) \langle  (\nabla^{\Lie{U}(n)}_{\zeta} \xi_{(11)})_{\zeta}  X , Y\rangle
  - \tfrac{(n-3)(n+1)}{2} \langle \xi_{(1) \theta^\sharp }  X, Y\rangle 
 \\
 &
  -(n-2) \langle \xi_{(1)\xi_{(12)\zeta}  \zeta}  X, Y
  \rangle
 % \\
 % &
   -\tfrac{n-2}4   \xi_{(12)\zeta}   \eta \wedge \theta  (  X , Y)
   \\
   &
   +\tfrac{n-2}4   \varphi \xi_{(12)\zeta}   \eta \wedge \varphi \theta (  X , Y).
   \end{align*}}
As before, we take the first identity given in Lemma \ref{astricciacm1} and use properties of the involved components of $\xi$. Then  we replace  $d \theta_{\lcf \lambda^{2,0}\rcf}$ and $\langle  (\nabla^{\Lie{U}(n)}_{e_i} \xi_{(1)})_{ e_i}  X , Y \rangle$ by the expressions given above. The remaining second required  identity and the particular case $n=5$ immediately follow from Theorem \ref{harmclasscontact} (ii).

For part (v) and $n\neq2$, from properties of the involved components of $\xi$,    Proposition  \ref{divergenciadoscero} and  Theorem \ref{characharmalmcontact},  we deduce that in this case
 {\small \begin{align*} 
   \tfrac{n-2}2  d \theta_{\lcf \lambda^{2,0}\rcf} ( X , Y)=&
       - d^*F(\zeta)  \langle \xi_{(11)\zeta} \varphi X , Y \rangle
   - 2 (\xi_{(7)X} \eta) (\xi_{(11)\zeta} Y)
 + 2 (\xi_{(7)Y} \eta) (\xi_{(11)\zeta} X),
 \end{align*}}
 \vspace{-8mm}
 
 {\small \begin{align*}
    \langle  (\nabla^{\Lie{U}(n)}_{e_i} \xi_{(2)})_{ e_i}  X , Y \rangle = & 
    - \langle  (\nabla^{\Lie{U}(n)}_{\zeta} \xi_{(11)})_{\zeta}  X , Y \rangle
  - \tfrac{n}{2} \langle \xi_{(2) \theta^\sharp }  X , Y\rangle 
 %\\
 %&
  - \langle \xi_{(2)\xi_{(12)\zeta}  \zeta}  X , Y
  \rangle
  \\
  &
    -\tfrac{1}4   \xi_{(12)\zeta}   \eta \wedge \theta  (  X , Y)
   %\\
   %&
   +\tfrac{1}4   \varphi \xi_{(12)\zeta}   \eta \wedge \varphi \theta (  X , Y)
  +  d^*\eta \langle \xi_{(11)\zeta}  X , Y \rangle
  \\
  &
  + \tfrac{1}{n-2} d^*F(\zeta) \langle \xi_{(11)\zeta} \varphi X , Y \rangle
     + \tfrac{2}{n-2} (\xi_{(7)X} \eta) (\xi_{(11)\zeta} Y)
  %   \\
    % &
 - \tfrac{2}{n-2} (\xi_{(7)Y} \eta) (\xi_{(11)\zeta} X).
   \end{align*}}
As in the previous cases, we take the first identity given in Lemma \ref{astricciacm1} and use properties of the involved components of $\xi$. Then,  in the resulting identity,  we replace  $d \theta_{\lcf \lambda^{2,0}\rcf}$ and $\langle  (\nabla^{\Lie{U}(n)}_{e_i} \xi_{(2)})_{ e_i}  X , Y \rangle$ by the expressions given above. The remaining second required identity  and the particular case $n=2$ immediately follow from Theorem \ref{harmclasscontact} (i).

For part (vi), from properties of the involved components of $\xi$,    Proposition  \ref{divergenciadoscero}  and  Theorem \ref{characharmalmcontact},  we deduce that in this case
$
   \tfrac{n-2}2  d \theta_{\lcf \lambda^{2,0}\rcf} ( X , Y)= 0,
$ and 
 {\small   \begin{align*}
    \langle  (\nabla^{\Lie{U}(n)}_{e_i} \xi_{(2)})_{ e_i}  X , Y \rangle = & 
    - \langle  (\nabla^{\Lie{U}(n)}_{\zeta} \xi_{(11)})_{\zeta}  X , Y \rangle
  - \tfrac{n-}{2} \langle \xi_{(2) \theta^\sharp }  X , Y\rangle 
 %\\
 %&
  - \langle \xi_{(2)\xi_{(12)\zeta}  \zeta}  X , Y
  \rangle
  \\
  &
    -\tfrac{1}4   \xi_{(12)\zeta}   \eta \wedge \theta  (  X , Y)
  % \\
   %&
   +\tfrac{1}4   \varphi \xi_{(12)\zeta}   \eta \wedge \varphi \theta (  X , Y).
   \end{align*}}
As before, we take the first identity given in Lemma \ref{astricciacm1} and use properties of the involved components of $\xi$.   Then  we replace  $d \theta_{\lcf \lambda^{2,0}\rcf}$ and $\langle  (\nabla^{\Lie{U}(n)}_{e_i} \xi_{(2)})_{ e_i}  X , Y \rangle$ by the expressions given above. The remaining second required  identity  and the particular case $n=2$ immediately follow from Theorem \ref{harmclasscontact} (ii).

For part (vii) and $n\neq2$, from properties of the involved components of $\xi$,   and  Theorem \ref{characharmalmcontact},  we deduce 
 $
    \langle  (\nabla^{\Lie{U}(n)}_{e_i} \xi_{(1)})_{ e_i}  X , Y \rangle =  0 
  $ and the first required identity.  The remaining second required  identity follows from  the second  identity given in Lemma \ref{astricciacm1}. Thus,  due to the properties of the components of $\xi$, we firstly obtain 
\begin{align*}
\Ricac( \zeta) =
 &
 (\nabla^{\Lie{U}(n)}_{e_i} \xi_{(5)})_{ e_i} \zeta -  (\nabla^{\Lie{U}(n)}_{e_i} \xi_{(9)})_{ e_i} \zeta - \xi_{(1) e_i } \xi_{(9) e_i} \zeta.
     \end{align*}  
Since,  by one hand, one has 
$$
 (\nabla^{\Lie{U}(n)}_{e_i} \xi_{(5)})_{ e_i} \zeta = \tfrac1{2n} (dd^*\eta)^\sharp - \tfrac1{2n} d d^*\eta(\zeta) \zeta,
$$
 and, by the other hand, it is satisfied  
\begin{align*}
(\nabla^{\Lie{U}(n)}_{e_i} \xi_{(9)})_{ e_i} \zeta  =
 &
 - (\nabla^{\Lie{U}(n)}_{e_i} \xi_{(5)})_{ e_i} \zeta -  (\nabla^{\Lie{U}(n)}_{\zeta} \xi_{(12)})_{ \zeta} \zeta + \tfrac{2n-1}{2n} d^*\eta \xi_{\zeta} \zeta  - \xi_{(9)\xi_{\zeta} \zeta} \zeta, 
     \end{align*}  
      by  Theorem \ref{characharmalmcontact}. 
  Finally, using Lemma \ref{dosdeeta}, we will obtain the second required identity.
  \end{proof}

Next we focus attention on some particular cases where the harmonicity of the structure will  require  only one condition in all dimensions   except for the dimension $5$, i.e. $n=2$. This is possible because it is  used some of the   identities obtained in Section \ref{foursection}. This is one of the applications of such identities. We write such cases as a corollary because they can be considered as consequences of previous Theorem and Proposition.
\begin{corollary} \label{onecondition} For a $2n+1$-dimensional almost contact metric manifold
$(M,\langle \cdot ,\cdot \rangle, \varphi , \zeta)$,  we have:
\begin{enumerate}
 \item[{\rm (i)}] If $M$ is of type 
   $\mathcal C_1 \oplus  \mathcal C_5 \oplus \mathcal C_6 \oplus \mathcal C_7 \oplus \mathcal C_8 \oplus \mathcal C_{12}$, then the structure is harmonic if and only if 
   \begin{align*}
\Ricac( \zeta) =&- (\zeta \lrcorner d \xi_\zeta \eta)^\sharp + \tfrac{n-1}{n} d^*\eta \xi_\zeta \zeta - 2  \xi_{(8)  \xi_\zeta \zeta  } \zeta +  d^*F(\zeta)  \varphi \xi_\zeta \zeta.    
     \end{align*}  
Note that, for such a type,  
$
\Ricac_{\mathrm{alt}}  (X_{\zeta^\perp} , Y_{\zeta^\perp}  )  =   \langle  \xi_{(1)  \xi_\zeta \zeta  } X, Y \rangle$ and $ (\nabla^{\Lie{U}(n)}_{e_i} \xi_{(1)})_{ e_i}  =0$.

 \item[{\rm (ii)}] If $M$ is of type 
   $\mathcal C_1 \oplus  \mathcal C_9 \oplus \mathcal C_{10} \oplus \mathcal C_{12}$, then the structure is harmonic if and only if 
   \begin{align*}
\Ricac( \zeta) =& (\zeta \lrcorner d \xi_\zeta \eta)^\sharp -  \xi_{(1) e_i}  \xi_{(10) e_i} \zeta + 2 \xi_{(9)\xi_\zeta \zeta} \zeta.  
     \end{align*}  
Note that, for such a type,  
$
\Ricac_{\mathrm{alt}}  (X_{\zeta^\perp}  , Y_{\zeta^\perp}  )  =   \langle  \xi_{(1)  \xi_\zeta \zeta  } X , Y \rangle$ and $ (\nabla^{\Lie{U}(n)}_{e_i} \xi_{(1)})_{ e_i}   =0$.

\item[{\rm (iii)}] If $M$ is of type 
   $\mathcal C_3 \oplus  \mathcal C_5 \oplus \mathcal C_6 \oplus \mathcal C_7 \oplus \mathcal C_8 \oplus \mathcal C_{12}$, then the structure is harmonic if and only if 
   \begin{align*}
\Ricac( \zeta) =&- (\zeta \lrcorner d \xi_\zeta \eta)^\sharp   - \langle \xi_{(8)} \zeta, \langle \xi_{(3) \cdot} e_j , \cdot \rangle \rangle e_j  
 - \langle \xi_{(7)} \zeta, \langle \xi_{(3) \cdot} e_j , \cdot \rangle \rangle e_j\\
 &
 + \tfrac{n-1}{n} d^*\eta \xi_\zeta \zeta - 2  \xi_{(8)  \xi_\zeta \zeta  } \zeta +  d^*F(\zeta)  \varphi \xi_\zeta \zeta.    
     \end{align*}  
Note that, for such a type, one  has always
$
\Ricac_{\mathrm{alt}}  (X_{\zeta^{\perp}} , Y_{\zeta^{\perp}} )  =    (\nabla^{\Lie{U}(n)}_{e_i} \xi_{(3)})_{ e_i}  X , Y \rangle  =0$.

 \item[{\rm (iv)}] If $M$ is of type 
   $\mathcal C_3 \oplus  \mathcal C_9 \oplus \mathcal C_{10} \oplus \mathcal C_{12}$, then the structure is harmonic if and only if 
   \begin{align*}
\Ricac( \zeta) =& (\zeta \lrcorner d \xi_\zeta \eta)^\sharp + 2 \xi_{(9)\xi_\zeta \zeta} \zeta.  
     \end{align*}  
Note that, for such a type,  one  has always
$
\Ricac_{\mathrm{alt}}  (X_{\zeta^{\perp}} , Y_{\zeta^{\perp}} )  = \langle  (\nabla^{\Lie{U}(n)}_{e_i} \xi_{(3)})_{ e_i}  X , Y \rangle  =0$.

 \item[{\rm (v)}] For $n \neq 2$, 
     if $M$ is of type 
   $\mathcal C_4 \oplus  \mathcal C_5 \oplus \mathcal C_6 \oplus \mathcal C_7 \oplus \mathcal C_8 \oplus \mathcal C_{12}$, then the structure is harmonic if and only if 
   \begin{align}\label{case456}
\Ricac( \zeta) =&- (\zeta \lrcorner d \xi_\zeta \eta)^\sharp - \tfrac{n-1}{4n} d^*\eta \theta^\sharp - \tfrac{2n-1}{2} \xi_{(8)  \theta^\sharp} \zeta - \tfrac{3(n-1)}{4n} d^*F(\zeta) \varphi  \theta^\sharp \\
& - \tfrac{2n-3}{2} \xi_{(7)  \theta^\sharp} \zeta + \tfrac{n-1}{n} d^*\eta \xi_\zeta \zeta - 2  \xi_{(8)  \xi_\zeta \zeta  } \zeta +  d^*F(\zeta)  \varphi \xi_\zeta \zeta. \nonumber
     \end{align}  
Note that, for such a type and dimension, 
$
\Ricac_{\mathrm{alt}}  (X_{\zeta^{\perp}} , Y_{\zeta^{\perp}} )  =    \langle  (\nabla^{\Lie{U}(n)}_{e_i} \xi_{(4)})_{ e_i}  X , Y \rangle =  d \theta_{\lcf \lambda^{2,0} \rcf } (X,Y) =0.
$
If $n=2$,  the structure is harmonic if and only if it is satisfied \eqref{case456} and 
$d \theta_{\lcf \lambda^{2,0} \rcf } =0$.

 \item[{\rm (vi)}] For $n \neq 2$, if $M$ is of type 
   $\mathcal C_4 \oplus  \mathcal C_9 \oplus \mathcal C_{10}  \oplus \mathcal C_{12}$, then the structure is harmonic if and only if 
   \begin{align}\label{case4910}
\Ricac( \zeta) =& (\zeta \lrcorner d \xi_\zeta \eta)^\sharp  + (n-1) \xi_{(9)  \theta^\sharp} \zeta  + (n-1)  \xi_{(10)  \theta^\sharp} \zeta +   2  \xi_{(9)  \xi_\zeta \zeta  } \zeta. 
     \end{align}  
Note that for such a type and dimension 
$
\Ricac_{\mathrm{alt}}  (X_{\zeta^{\perp}} , Y_{\zeta^{\perp}} )  =    \langle  (\nabla^{\Lie{U}(n)}_{e_i} \xi_{(4)})_{ e_i}  X , Y \rangle =  d \theta_{\lcf \lambda^{2,0} \rcf } (X,Y) =0.
$
If $n=2$,  the structure is harmonic if and only if it is satisfied \eqref{case4910} and 
$d \theta_{\lcf \lambda^{2,0} \rcf } =0$.

 \item[{\rm (vii)}]   If $M$ is of type
   $\mathcal C_1 \oplus  \mathcal C_5 \oplus \mathcal C_6 \oplus  \mathcal C_8 \oplus \mathcal C_9 \oplus    \mathcal C_{12}$,
then the  structure is harmonic if and only if
 \begin{align} 
\Ricac( \zeta) = % \nonumber 
  &
  - 2  (\nabla^{\Lie{U}(n)}_{e_i} \xi_{(9)})_{ e_i} \zeta 
   -  (\zeta \lrcorner d \xi _\zeta \eta)^\sharp     
%   - \tfrac{n-1}{4n}    d^*\eta \theta^\sharp  
 %  - \tfrac{2n-1}{2}      \xi_{(8)\theta^\sharp} \zeta  
%   \\
  % &
   \label{case1568912}
   %- \tfrac{3(n-1)}{4n}    d^*F(\zeta) \varphi\theta^\sharp   
  % - \tfrac{2n-3}{2}      \xi_{(7)\theta^\sharp} \zeta  
     + \tfrac{n-1}{n} d^*\eta \xi_{\zeta}  \zeta   
     -  2    \xi_{(8)\xi_{\zeta}  \zeta} \zeta 
\\
&
+ d^*F(\zeta)  \varphi   \xi_{(12) \zeta } \zeta
  -   2   \xi_{(9)\xi_{\zeta}  \zeta} \zeta.  \nonumber
      \end{align}
Note that,  for such a  type, 
$
%\Ricac_{\mathrm{alt}}  (X_{\zeta^{\perp}} , Y_{\zeta^{\perp}} )  =  
 (\nabla^{\Lie{U}(n)}_{e_i} \xi_{(1)})_{ e_i}  =0.
$

 \item[{\rm (viii)}]   If $M$ is of type
   $\mathcal C_3 \oplus  \mathcal C_5 \oplus \mathcal C_6 \oplus  \mathcal C_8 \oplus \mathcal C_9 \oplus    \mathcal C_{12}$,
then the  structure is harmonic if and only if it is satisfied \eqref{case1568912}.

Note that,  for such a  type, 
$
%\Ricac_{\mathrm{alt}}  (X_{\zeta^{\perp}} , Y_{\zeta^{\perp}} )  =  
 (\nabla^{\Lie{U}(n)}_{e_i} \xi_{(3)})_{ e_i}  =0.
$

   \item[{\rm (ix)}] For $n \neq 2$, 
     if $M$ is of type
   $\mathcal C_4 \oplus  \mathcal C_5 \oplus \mathcal C_6 \oplus  \mathcal C_8 \oplus \mathcal C_9 \oplus    \mathcal C_{12}$,
then the  structure is harmonic if and only if
%\begin{align} \label{case459}
%\Ricac( \zeta) = \tfrac1{n} (d d^*\eta)^\sharp_{\zeta^\perp} + \tfrac{n-1}{4n} d^*\eta \theta^\sharp.
 %\end{align}
 \begin{align} 
\Ricac( \zeta) =  \nonumber 
  &
  - 2  (\nabla^{\Lie{U}(n)}_{e_i} \xi_{(9)})_{ e_i} \zeta 
   -  (\zeta \lrcorner d \xi _\zeta \eta)^\sharp     
   - \tfrac{n-1}{4n}    d^*\eta \theta^\sharp  
   - \tfrac{2n-1}{2}      \xi_{(8)\theta^\sharp} \zeta  
   \\
   &\label{case4568912}
   - \tfrac{3(n-1)}{4n}    d^*F(\zeta) \varphi\theta^\sharp   
  % - \tfrac{2n-3}{2}      \xi_{(7)\theta^\sharp} \zeta  
     + \tfrac{n-1}{n} d^*\eta \xi_{\zeta}  \zeta   -  2    \xi_{(8)\xi_{\zeta}  \zeta} \zeta 
\\
&
+ d^*F(\zeta)  \varphi   \xi_{(12) \zeta } \zeta
  -   2   \xi_{(9)\xi_{\zeta}  \zeta} \zeta.  \nonumber
      \end{align}

Note that,  for such a type and  dimension, 
$
%\Ricac_{\mathrm{alt}}  (X_{\zeta^{\perp}} , Y_{\zeta^{\perp}} )  =  
  \langle  (\nabla^{\Lie{U}(n)}_{e_i} \xi_{(4)})_{ e_i}  X , Y \rangle =  d \theta_{\lcf \lambda^{2,0} \rcf } (X,Y) =0.
$
If $n=2$,  the structure is harmonic if and only if it is satisfied \eqref{case4568912} and 
$d \theta_{\lcf \lambda^{2,0} \rcf }  =0$.
\end{enumerate}
\end{corollary}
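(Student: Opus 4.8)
The plan is to obtain each of the nine cases as a specialization of Theorem~\ref{harmclasscontact} or Proposition~\ref{harmclasscontact2} in which the condition on $\Ricac_{\mathrm{alt}}$ degenerates into an identity that holds a priori, so that harmonicity is characterised by the single displayed equation for $\Ricac(\zeta)$ alone; in the cases containing $\mathcal C_4$ this degeneration is valid only for $n\neq 2$, which produces the stated dichotomy. Concretely, parts~(i)--(vi) are subtypes of the classes treated in Theorem~\ref{harmclasscontact}(i)--(iv) and Proposition~\ref{harmclasscontact2}(i)--(iv), obtained by switching off whichever of the components $\xi_{(1)},\xi_{(2)},\xi_{(3)},\xi_{(10)},\xi_{(11)}$ are absent, while parts~(vii)--(ix) are of the mixed type $\mathcal C_1$ (resp.\ $\mathcal C_3$, $\mathcal C_4$)$\,\oplus\,\mathcal C_5\oplus\mathcal C_6\oplus\mathcal C_8\oplus\mathcal C_9\oplus\mathcal C_{12}$ and are handled by applying the same method directly to Lemma~\ref{astricciacm1} and Theorem~\ref{characharmalmcontact}.

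First I would record, for each prescribed type, the surviving components of $\xi$ together with the parity under $\varphi$ of $(\xi_{\varphi X}\eta)(\varphi Y)$ and of $\xi_{(\mathcal A)}$, $\xi_{(\mathcal B)}$, $\xi_{(4)}$ (as in \cite{ChineaGonzalezDavila}), and substitute $\xi_{(5)},\xi_{(6)}$ via \eqref{idxicinco} and \eqref{idxiseis}. Then most of the right-hand side of the $\Ricac_{\mathrm{alt}}$-equation of the ambient result vanishes, leaving a short identity; to see that this identity holds automatically, I would feed the prescribed type into Lemma~\ref{firstident} and Proposition~\ref{divergenciadoscero} (both consequences of $d^2F=0$). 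For the types in~(i),(iii),(vii),(viii) their right-hand sides involve only absent components, forcing $(\nabla^{\Lie{U}(n)}_{e_i}\xi_{(1)})_{e_i}=0$ (resp.\ $(\nabla^{\Lie{U}(n)}_{e_i}\xi_{(3)})_{e_i}=0$); for the types in~(v),(vi),(ix) the same input gives $\tfrac{n-2}{2}\,d\theta_{\lcf\lambda^{2,0}\rcf}(X,Y)=0$, hence $d\theta_{\lcf\lambda^{2,0}\rcf}=0$ when $n\neq 2$ and, via \eqref{nablatheta}, \eqref{idxicuatro} and the first identity of Lemma~\ref{astricciacm1}, also $\Ricac_{\mathrm{alt}}=0$, whereas for $n=2$ the factor $n-2$ vanishes and $d\theta_{\lcf\lambda^{2,0}\rcf}=0$ must be kept as the extra hypothesis. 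Combining these with Lemma~\ref{astricciacm1} yields the \emph{Note that\dots} assertions and shows that for every type in the list harmonicity is equivalent to the single displayed equation for $\Ricac(\zeta)$.

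It then remains to put that $\zeta$-equation into the compact form stated. For this I would substitute into the second identity of Lemma~\ref{astricciacm1} the explicit formulas $(\nabla^{\Lie{U}(n)}_{e_i}\xi_{(5)})_{e_i}\zeta=\tfrac{1}{2n}(dd^*\eta)^\sharp-\tfrac{1}{2n}dd^*\eta(\zeta)\zeta$, the analogous one for $\xi_{(6)}$, and Theorem~\ref{characharmalmcontact} (which gives $\nabla^*\nabla\zeta=-\xi_{e_i}\xi_{e_i}\zeta$); use Lemma~\ref{xixizeta1} to rewrite $\varphi\xi_{e_i}\xi_{\varphi e_i}\zeta$ in terms of $\xi_{e_i}\xi_{e_i}\zeta$, $\xi_{(11)\zeta}\xi_\zeta\zeta$ and $\|\nabla\zeta\|^2\zeta$; and use Lemma~\ref{dosdeeta} to expand $\zeta\lrcorner d\xi_\zeta\eta$. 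In parts~(vii)--(ix) one also needs the relation between $(\nabla^{\Lie{U}(n)}_{e_i}\xi_{(9)})_{e_i}\zeta$, $(\nabla^{\Lie{U}(n)}_{e_i}\xi_{(5)})_{e_i}\zeta$ and $(\nabla^{\Lie{U}(n)}_{\zeta}\xi_{(12)})_{\zeta}\zeta$ established in the proof of Proposition~\ref{harmclasscontact2}(vii). Collecting the $dd^*\eta$-, $d^*\eta\,\xi_\zeta\zeta$-, $d^*F(\zeta)$- and quadratic $\xi$-terms reproduces \eqref{case456}, \eqref{case4910}, \eqref{case1568912}, \eqref{case4568912} and their analogues; for~(i),(iii),(v) the $\zeta$-part already coincides with \eqref{cinco}, and for~(ii),(iv),(vi) with \eqref{unocuatrode}. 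The converse implications follow by running the same substitutions backwards.

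The main obstacle is purely bookkeeping: keeping track of which of the many terms in Theorem~\ref{harmclasscontact}, Proposition~\ref{harmclasscontact2}, Proposition~\ref{divergenciadoscero} and Lemmas~\ref{firstident},~\ref{astricciacm1},~\ref{dosdeeta} survive for each of the nine restricted types, and then carrying the cancellations in the $\zeta$-equation through to the stated normal form; in particular one must check that the coefficient $n-2$ is genuinely the sole obstruction, so that the single-condition characterisation fails precisely at $n=2$ in parts~(v),(vi),(ix).
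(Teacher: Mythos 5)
Your proposal is correct and follows essentially the same route as the paper: reduce the tangential harmonicity condition of Theorem \ref{characharmalmcontact} to an identity that holds automatically by Proposition \ref{divergenciadoscero} (namely $(\nabla^{\Lie{U}(n)}_{e_i}\xi_{(1)})_{e_i}=0$, $(\nabla^{\Lie{U}(n)}_{e_i}\xi_{(3)})_{e_i}=0$, or $\tfrac{n-2}{2}\,d\theta_{\lcf\lambda^{2,0}\rcf}=0$, whence the $n\neq 2$ caveat in the $\mathcal C_4$ cases), and then identify the surviving condition with the specialized $\Ricac(\zeta)$ equation via Lemma \ref{astricciacm1}, Lemma \ref{xixizeta1} and Lemma \ref{dosdeeta}, bringing in $\xi_{(9)}$ for parts (vii)--(ix). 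The only blemishes are bookkeeping ones — cases (ii) and (iv) should be grouped with (i) and (iii), and the $\zeta$-equations of (iii) and (iv) specialize the formulas of Theorem \ref{harmclasscontact}(iii),(iv) rather than \eqref{cinco} and \eqref{unocuatrode} — neither of which affects the argument.
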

\begin{proof}
For (i), (ii), (iii) and  (iv),  the first required identity  by Theorem \ref{characharmalmcontact} is reduced to   $ (\nabla^{\Lie{U}(n)}_{e_i} \xi_{(1)})_{ e_i}  =0$  in case of  (i) and (ii)  or  $ (\nabla^{\Lie{U}(n)}_{e_i} \xi_{(3)})_{ e_i}  =0$ in case of  (iii) and (iv),  which  follows from Proposition \ref{divergenciadoscero}. The respective required condition is just the second condition of Theorem \ref{harmclasscontact} (i), (ii), (iii) and  (iv), respectively,  applied to these particular cases.

For  (v) and  (vi), the first required  identity  by Theorem \ref{characharmalmcontact} is reduced to   $d \theta_{\lcf \lambda^{2,0}\rcf} =0$,  which  follows from Proposition \ref{divergenciadoscero} when  $n\neq 2$. The respective required condition is just the second condition of Theorem \ref{harmclasscontact} (iii) and  (iv), respectively,  applied to these cases.

The respective proofs for (vii), (viii) and (ix) are similar as for  the previous cases but now we have to take also $\xi_{(9)}$ into account. The respective required identity it follows from the second identities in Theorem \ref{characharmalmcontact} and in Lemma \ref{astricciacm1}.
\end{proof}

In the following corollaries some very special particular cases are displayed.
\begin{corollary}  \label{harmonicalways} For a $2n+1$-dimensional almost contact metric manifold
$(M,\langle \cdot ,\cdot \rangle, \varphi , \zeta)$,  we have:
 \begin{enumerate}
  \item[{\rm (i)}]
  If $M$ is of type 
 $\mathcal C_1 \oplus  \mathcal C_5 \oplus
  \mathcal C_6 \oplus  \mathcal C_7 \oplus  \mathcal C_8 $ or $\mathcal C_3 \oplus  \mathcal C_5 \oplus
  \mathcal C_6  $, the structure is harmonic if and only if 
  $ (d d^*\eta)_{|\zeta^\perp} = d (d^*F(\zeta)) \circ \varphi$. In particular,  structures of types $\mathcal{C}_6$,  $\mathcal C_1 \oplus  \mathcal C_5 $, $\mathcal C_1 \oplus  \mathcal C_7 
   \oplus  \mathcal C_8 $ or $\mathcal C_3 \oplus  \mathcal C_5$ are harmonic.
   
     \item[{\rm (ii)}]
  If $M$ is of type 
 $\mathcal C_1 \oplus  \mathcal C_9 \oplus
  \mathcal C_{10} $ or  $\mathcal C_3 \oplus  \mathcal C_9 \oplus
  \mathcal C_{10} $, the structure is harmonic if and only if $ (\nabla^{\Lie{U}(n)}_{e_i} \xi_{(9)})_{ e_i} \zeta =0$. In particular,  structures of type   $\mathcal C_1 \oplus  \mathcal C_{10}$ or  $\mathcal C_3 \oplus  \mathcal C_{10}$   are harmonic.

   \item[{\rm (iii)}]
  For $n \neq 2$, if $M$ is of type $\mathcal C_4$, then  the structure is harmonic.   
   \item[{\rm (iv)}]
  If $M$ is of type 
 $
  \mathcal C_{12} $, the structure is harmonic if and only if the one-form  $\xi_\zeta \eta$ is closed.
   \end{enumerate}
\end{corollary}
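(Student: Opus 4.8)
The plan is to derive all four items from the more general Theorem~\ref{harmclasscontact}, Proposition~\ref{harmclasscontact2} and Corollary~\ref{onecondition} by specialising to the indicated sub-type --- i.e.\ by setting the extra $\mathcal C_i$-components of $\xi$ equal to zero --- and then collapsing the surviving condition with the explicit formulas \eqref{leeac}, \eqref{nablatheta}, \eqref{idxicinco}, \eqref{idxiseis}, Lemma~\ref{dosdeeta} and the $d^2F=0$, $d^2\eta=0$ identities of Section~\ref{foursection}. For item (i), both types sit inside those of Corollary~\ref{onecondition}(i) and (iii), the extra hypothesis being $\xi_{(12)}=0$ (and, for the second type, also $\xi_{(7)}=\xi_{(8)}=0$); since $\mathcal C_{11}=\mathcal C_{12}=0$ we get $\xi_\zeta=0$, hence $\nabla_\zeta\zeta=0$, and by Lemma~\ref{dosdeeta} every summand of $\zeta\lrcorner d\xi_\zeta\eta$ carries the factor $\xi_{(12)}$, so $\zeta\lrcorner d\xi_\zeta\eta=0$ and the criterion of Corollary~\ref{onecondition}(i)/(iii) collapses to $\Ricac(\zeta)=0$. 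I would then evaluate $\Ricac(\zeta)$ from the second identity of Lemma~\ref{astricciacm1}: using that only $\mathcal C_6$ survives in $\xi_{e_i}\varphi e_i=-d^*F(\zeta)\zeta$, the formulas \eqref{idxicinco}, \eqref{idxiseis} for $\xi_{(5)}$, $\xi_{(6)}$, the contraction identities for $\nabla^{\Lie{U}(n)}\xi_{(5)}$ and $\nabla^{\Lie{U}(n)}\xi_{(6)}$ recorded in Section~\ref{foursection}, and \eqref{inttorcar} to rewrite $\varphi\xi_{e_i}\xi_{\varphi e_i}\zeta$ in terms of $\xi_{e_i}\xi_{e_i}\zeta$, one should find that $\Ricac(\zeta)$ is a fixed nonzero multiple of $(dd^*\eta)^\sharp_{|\zeta^\perp}-\bigl(d(d^*F(\zeta))\circ\varphi\bigr)^\sharp$, which is exactly the stated equivalence. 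For the ``in particular'' list I would check, case by case, that for $\mathcal C_6$, $\mathcal C_1\oplus\mathcal C_5$, $\mathcal C_1\oplus\mathcal C_7\oplus\mathcal C_8$ and $\mathcal C_3\oplus\mathcal C_5$ the Lee form $\theta$ vanishes and the residual $\mathcal C_4,\mathcal C_7,\mathcal C_{12}$-terms on the right-hand sides of Lemma~\ref{mainid} and Proposition~\ref{divergencia} drop out, so that $(dd^*\eta)_{|\zeta^\perp}=d(d^*F(\zeta))\circ\varphi$ holds identically.

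For item (ii) the two types lie inside those of Corollary~\ref{onecondition}(ii) and (iv) with $\xi_{(12)}=0$, so again $\nabla_\zeta\zeta=0$ and $\zeta\lrcorner d\xi_\zeta\eta=0$; since none of $\mathcal C_4,\mathcal C_5,\mathcal C_{12}$ occurs, $\xi_{e_i}e_i=0$ by \eqref{leeac}, and the only surviving condition of Theorem~\ref{characharmalmcontact} is $(\nabla^{\Lie{U}(n)}_{e_i}\xi)_{e_i}\zeta=0$. I would then split this into irreducible components: $(\nabla^{\Lie{U}(n)}_{e_i}\xi_{(1)})_{e_i}\zeta=0$ (resp.\ $(\nabla^{\Lie{U}(n)}_{e_i}\xi_{(3)})_{e_i}\zeta=0$) because these tensors annihilate $\zeta$, while Lemma~\ref{mainid}, with $d^*F(\zeta)=0$ and $\theta=0$ here, expresses $(\nabla^{\Lie{U}(n)}_{e_i}\xi_{(10)})_{e_i}\zeta$ through products of $\xi_{(1)}$ and $\xi_{(10)}$; a short computation then leaves $(\nabla^{\Lie{U}(n)}_{e_i}\xi_{(9)})_{e_i}\zeta=0$ as the single requirement. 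For $\mathcal C_1\oplus\mathcal C_{10}$ and $\mathcal C_3\oplus\mathcal C_{10}$ one has $\xi_{(9)}=0$, so the condition is vacuous and the structure is harmonic.

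For items (iii) and (iv) I would argue directly from Theorem~\ref{characharmalmcontact}. For a pure $\mathcal C_4$ structure $\xi_{(4)X}\in\lie{u}(n)^{\perp}_{|\zeta^{\perp}}$ annihilates $\zeta$, so $\nabla\zeta=0$ and $\nabla^{\Lie{U}(n)}\zeta=0$; hence Lemma~\ref{astricciacm1} gives $\Ricac(\zeta)=0$, \eqref{nablatheta} (with $\xi_{(1)}=\xi_{(2)}=0$) gives $\Ricac_{\mathrm{alt}}(X_{\zeta^{\perp}},Y_{\zeta^{\perp}})=\tfrac12 d\theta_{\lcf\lambda^{2,0}\rcf}(X,Y)$, and the explicit formula for $\xi_{(4)}$ yields $\xi_{(4)\theta^\sharp}=0$, so the $\xi_{\xi_{e_i}e_i}$-term of the criterion also vanishes; the structure is therefore harmonic if and only if $d\theta_{\lcf\lambda^{2,0}\rcf}=0$, which for $n\neq2$ holds automatically by Proposition~\ref{divergenciadoscero} (all intrinsic-torsion terms on its right-hand side being zero), and the degeneracy of the coefficient $\tfrac{n-2}{2}$ is why $n=2$ must be excluded. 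For a pure $\mathcal C_{12}$ structure $\nabla_X\zeta=0$ on $\zeta^{\perp}$ and $\nabla_\zeta\zeta=-\xi_\zeta\zeta$, so by \eqref{leeac} $\xi_{e_i}e_i=\xi_\zeta\zeta$ and $\xi_{\xi_{e_i}e_i}=0$ (since $\xi_\zeta\zeta\in\zeta^{\perp}$); the first condition of Theorem~\ref{characharmalmcontact} is automatic, while its second condition and the computation of $d\xi_\zeta\eta$ via Lemma~\ref{dosdeeta} (which for this type reduces to $d\xi_\zeta\eta=\eta\wedge(\nabla^{\Lie{U}(n)}_\zeta(\xi_\zeta\zeta))^\flat$) both amount to $\nabla^{\Lie{U}(n)}_\zeta(\xi_\zeta\zeta)=0$, so harmonicity is equivalent to $\xi_\zeta\eta$ being closed.

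The step I expect to be the real obstacle is the equivalence in (i): turning the criterion $\Ricac(\zeta)=0$ into the compact identity $(dd^*\eta)_{|\zeta^\perp}=d(d^*F(\zeta))\circ\varphi$, and --- for the sub-cases claimed harmonic with no further hypothesis --- establishing that identity outright; both rest on careful bookkeeping with the $d^2F=0$ and $d^2\eta=0$ identities of Section~\ref{foursection} and on the vanishing of the Lee form for the relevant sub-types.
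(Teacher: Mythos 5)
Items (iii) and (iv) of your plan are fine and coincide with the paper's own argument (Proposition \ref{divergenciadoscero} for pure $\mathcal C_4$, Lemma \ref{dosdeeta} for pure $\mathcal C_{12}$), and your reduction of (i)--(ii) to a single condition via Corollary \ref{onecondition} is legitimate. The gap is exactly at the step you defer to ``bookkeeping''. In (i), for the type $\mathcal C_1\oplus\mathcal C_5\oplus\mathcal C_6\oplus\mathcal C_7\oplus\mathcal C_8$ your evaluation of $\Ricac(\zeta)$ from Lemma \ref{astricciacm1} accounts only for the quadratic terms and for the $\xi_{(5)},\xi_{(6)}$ derivative terms; but the first--order contributions $-\varphi(\nabla^{\Lie{U}(n)}_{e_i}\xi_{(7)})_{\varphi e_i}\zeta$ and $-\varphi(\nabla^{\Lie{U}(n)}_{e_i}\xi_{(8)})_{\varphi e_i}\zeta$ do not vanish. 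By the Hermitian symmetry of the class-$\mathcal C$ part (preserved by $\nabla^{\Lie{U}(n)}$, and $(\nabla^{\Lie{U}(n)}_Z\xi)_\zeta=0$ here) they equal the $e_i$-contractions $((\nabla^{\Lie{U}(n)}_{e_i}\xi_{(7)})_{e_i}\eta)^\sharp$ and $((\nabla^{\Lie{U}(n)}_{e_i}\xi_{(8)})_{e_i}\eta)^\sharp$, and these are pinned down only by Lemma \ref{mainid} and Proposition \ref{divergencia} specialised with $\theta=0$: they give $-\tfrac{n-1}{2n}\,(d(d^*F(\zeta))\circ\varphi)^\sharp$ and $\tfrac{n-1}{2n}\,(dd^*\eta)^\sharp_{|\zeta^\perp}$, so that in total $\Ricac(\zeta)=\tfrac12\bigl[(dd^*\eta)^\sharp_{|\zeta^\perp}-(d(d^*F(\zeta))\circ\varphi)^\sharp\bigr]$. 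With only the tools you list (the $\xi_{(5)},\xi_{(6)}$ contraction identities and \eqref{inttorcar}) the claimed ``fixed nonzero multiple'' is unproven, and one would even get the wrong constant $\tfrac1{2n}$. The paper in fact never passes through $\Ricac(\zeta)$ here: it checks the two conditions of Theorem \ref{characharmalmcontact} directly --- the $\zeta^\perp$-condition is automatic by Proposition \ref{divergenciadoscero}, and the second condition is literally $\sum_{j=5}^{8}((\nabla^{\Lie{U}(n)}_{e_i}\xi_{(j)})_{e_i}\eta)^\sharp=0$, evaluated with the same two identities --- which is shorter than the curvature detour.

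The same missing input occurs in (ii). There the second condition reads $(\nabla^{\Lie{U}(n)}_{e_i}\xi_{(9)})_{e_i}\zeta+(\nabla^{\Lie{U}(n)}_{e_i}\xi_{(10)})_{e_i}\zeta=0$, and to reduce it to $(\nabla^{\Lie{U}(n)}_{e_i}\xi_{(9)})_{e_i}\zeta=0$ you need the $\xi_{(10)}$-divergence to vanish identically. Lemma \ref{mainid} alone gives, for this type, $((\nabla^{\Lie{U}(n)}_{e_i}\xi_{(10)})_{e_i}\eta)(X)=-2(\xi_{(10)e_i}\eta)(\xi_{(1)X}e_i)$, which is not obviously zero; you must combine it with Proposition \ref{divergencia}, which gives the same divergence as $+(\xi_{(10)e_i}\eta)(\xi_{(1)X}e_i)$, forcing both the divergence and the quadratic term to vanish (the paper simply invokes Proposition \ref{divergencia} at this point). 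Finally, once you argue from the second condition directly rather than from Corollary \ref{onecondition}, you should also say why the $\zeta^\perp$-condition of Theorem \ref{characharmalmcontact} holds for these types: it is Proposition \ref{divergenciadoscero} applied to the $\mathcal C_1$ (respectively $\mathcal C_3$) component, and it does not come for free from the hypotheses you state.
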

\begin{proof} 
For (i), the first identity of Theorem \ref{characharmalmcontact} is satisfied in this case because  
Proposition \ref{divergenciadoscero}. The second required identity in such a Theorem is equivalent 
to    $ (d d^*\eta)_{|\zeta^\perp} - d (d^*F(\zeta)) \circ \varphi =0$. This is  due to the identities 
$ (\nabla^{\Lie{U}(n)}_{e_i} \xi_{(5)})_{ e_i}  \eta = \frac{1}{2n} (d d^*\eta)_{|\zeta^\perp}$, 
$ (\nabla^{\Lie{U}(n)}_{e_i} \xi_{(6)})_{ e_i}  \eta = - \frac{1}{2n} (d (d^*F(\zeta)) \circ \varphi$,
Proposition \ref{divergencia}
and Lemma \ref{mainid}. The claims saying to be harmonic are deduced from the fact that $ (d d^*\eta)_{|\zeta^\perp} = d (d^*F(\zeta)) \circ \varphi =0$ in such cases.

For (ii), the first identity of Theorem \ref{characharmalmcontact} is satisfied  by  
Proposition \ref{divergenciadoscero}. The second required identity  is equivalent 
to  $ (\nabla^{\Lie{U}(n)}_{e_i} \xi_{(9)})_{ e_i} \zeta =0$. This is deduced from Proposition \ref{divergencia}.

Parts (iii) and (iv)  follow from  Proposition \ref{divergenciadoscero} and Lemma \ref{dosdeeta}, respectively.
\end{proof}

\begin{corollary}  \label{corharmonic} For a $2n+1$-dimensional almost contact metric manifold
$(M,\langle \cdot ,\cdot \rangle, \varphi , \zeta)$,  we have:
\begin{enumerate}
\item[{\rm (i)}]If the  structure is of type $  \mathcal C_5$,  
$C_{6}$,
 $ \mathcal C_7 \oplus \mathcal C_{8}$,
 or 
 $  \mathcal C_{10}$, then the Reeb vector
field $\zeta$ is  harmonic unit vector field.
 \item[{\rm (ii)}] For a conformally flat
manifold $(M,\langle \cdot ,\cdot \rangle)$ of dimension $2n+1$ with
$n>1$,  if an almost contact  structure compatible with
$\langle \cdot ,\cdot \rangle$ is of type
$ \mathcal C_1 \oplus \mathcal C_2 \oplus \mathcal C_5
\oplus \mathcal C_{6}\oplus \mathcal C_7 \oplus \mathcal C_{8}$, $
 \mathcal
C_1 \oplus \mathcal C_2 \oplus  \mathcal C_9 \oplus  \mathcal
C_{10}$,   $
 \mathcal
C_1 \oplus \mathcal C_5 \oplus  \mathcal C_9$ or $\mathcal C_3 \oplus  \mathcal C_5$,
 then it is harmonic.
 \end{enumerate}
\end{corollary}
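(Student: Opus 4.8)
The plan is to reduce everything to results already in hand: Theorem~\ref{characharmalmcontact}, Proposition~\ref{harmclasscontact2}, Lemma~\ref{astricciacm1}, Corollary~\ref{harmonicalways} and the $d^2F=0$, $d^2\eta=0$ identities of Section~\ref{foursection}.

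For (i), I would first observe that each of $\mathcal C_5$, $\mathcal C_6$, $\mathcal C_7\oplus\mathcal C_8$, $\mathcal C_{10}$ is a subclass of $\mathcal C_5\oplus\dots\oplus\mathcal C_{10}\oplus\mathcal C_{12}$, so by the last assertion of Theorem~\ref{characharmalmcontact} the structure is harmonic if and only if $\zeta$ is a harmonic unit vector field; hence it suffices to check that each such structure is harmonic. But $\mathcal C_5$, $\mathcal C_6$ and $\mathcal C_7\oplus\mathcal C_8$ are subclasses of $\mathcal C_1\oplus\mathcal C_5$, $\mathcal C_6$ and $\mathcal C_1\oplus\mathcal C_7\oplus\mathcal C_8$, which are harmonic by Corollary~\ref{harmonicalways}(i), and $\mathcal C_{10}$ is a subclass of $\mathcal C_1\oplus\mathcal C_{10}$, harmonic by Corollary~\ref{harmonicalways}(ii). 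So (i) is immediate.

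For (ii), $\mathcal C_3\oplus\mathcal C_5$ is harmonic unconditionally by Corollary~\ref{harmonicalways}(i), so nothing is needed there. For the other three types the key remark is that none of them contains $\mathcal C_{11}$ or $\mathcal C_{12}$, so $\xi_{(11)}=\xi_{(12)}=0$, whence by \eqref{leeac} $\nabla_\zeta\zeta=0$ and therefore $\xi_\zeta=0$, $\xi_\zeta\eta=0$, $d\xi_\zeta\eta=0$; moreover for $\mathcal C_1\oplus\mathcal C_2\oplus\mathcal C_9\oplus\mathcal C_{10}$ one also gets $d^*F=d^*\eta=0$, hence $\theta=0$. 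With these substitutions the criteria of Proposition~\ref{harmclasscontact2} collapse. For $\mathcal C_1\oplus\mathcal C_5\oplus\mathcal C_9$, Proposition~\ref{harmclasscontact2}(vii) already records $\Ricac_{\mathrm{alt}}(X_{\zeta^\perp},Y_{\zeta^\perp})=0$ for this type and reduces harmonicity to $\Ricac(\zeta)=0$, which holds by Lemma~\ref{astricciacm1} since $M$ is conformally flat with $n>1$. For $\mathcal C_1\oplus\mathcal C_2\oplus\mathcal C_5\oplus\mathcal C_6\oplus\mathcal C_7\oplus\mathcal C_8$, feeding $\xi_{(11)}=\xi_{(12)}=0$, $\xi_\zeta\zeta=0$, $d\xi_\zeta\eta=0$ into Proposition~\ref{harmclasscontact2}(i) makes every summand on the right of the two characterising identities vanish, so harmonicity is equivalent to $\Ricac_{{\rm alt}|\zeta^\perp}=0$ and $\Ricac(\zeta)=0$, both true by Lemma~\ref{astricciacm1}.

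The only genuinely delicate case is $\mathcal C_1\oplus\mathcal C_2\oplus\mathcal C_9\oplus\mathcal C_{10}$. Plugging $\xi_{(11)}=\xi_{(12)}=0$ into Proposition~\ref{harmclasscontact2}(ii), the first identity reduces to $\Ricac_{{\rm alt}|\zeta^\perp}=0$ (true by Lemma~\ref{astricciacm1}), but the second becomes $\Ricac(\zeta)=-\xi_{(1)e_i}\xi_{(10)e_i}\zeta-\xi_{(2)e_i}\xi_{(\mathcal D)e_i}\zeta$, whose right-hand side is not visibly zero; proving that it vanishes is the hard part. Here I would use that $\theta=0$ reduces the $d^2\eta=0$ identity of Proposition~\ref{divergencia} to an expression for $(\nabla^{\Lie{U}(n)}_{e_i}\xi_{(10)})_{e_i}\zeta$ purely in terms of $\xi_{(10)}$ contracted with $\xi_{(1)}$ and $\xi_{(2)}$; combining this with the formula $\Ricac(\zeta)=-(\nabla^{\Lie{U}(n)}_{e_i}\xi)_{e_i}\zeta-\xi_{(\mathcal A)e_i}\xi_{(\mathcal D)e_i}\zeta$ that Lemma~\ref{astricciacm1} yields for this type (using $\xi_{e_i}\varphi e_i=0$ and the anti-Hermitian symmetry of $\nabla\eta$ on $\zeta^\perp$, which also forces $\xi_{(1)e_i}\xi_{(9)e_i}\zeta=0$ by the standard totally-skew-versus-symmetric contraction), the torsion-squared terms cancel and harmonicity reduces once more to $\Ricac(\zeta)=0$, true by Lemma~\ref{astricciacm1}. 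Tracking the $\Lie{U}(n)$-type signs through this last cancellation is the one place where care is needed; the rest is routine bookkeeping around the cited statements.
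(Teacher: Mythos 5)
Your part (i), and the cases $\mathcal C_1\oplus\mathcal C_2\oplus\mathcal C_5\oplus\mathcal C_6\oplus\mathcal C_7\oplus\mathcal C_8$, $\mathcal C_1\oplus\mathcal C_5\oplus\mathcal C_9$ and $\mathcal C_3\oplus\mathcal C_5$ of part (ii), are correct and follow essentially the paper's own route: Corollary \ref{harmonicalways} plus the last assertion of Theorem \ref{characharmalmcontact} for (i), and the specialisations of Proposition \ref{harmclasscontact2}(i),(vii) and Corollary \ref{harmonicalways}(i) combined with Lemma \ref{astricciacm1} and conformal flatness for those three types.

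The gap is in the type $\mathcal C_1\oplus\mathcal C_2\oplus\mathcal C_9\oplus\mathcal C_{10}$. You correctly reduce harmonicity there to $\Ricac_{\rm alt}(X_{\zeta^\perp},Y_{\zeta^\perp})=0$ together with $\Ricac(\zeta)=-\xi_{(1)e_i}\xi_{(10)e_i}\zeta-\xi_{(2)e_i}\xi_{(\mathcal D)e_i}\zeta$, so after conformal flatness gives $\Ricac(\zeta)=0$ what remains to prove is exactly $\xi_{(1)e_i}\xi_{(10)e_i}\zeta+\xi_{(2)e_i}\xi_{(\mathcal D)e_i}\zeta=0$; your sketch does not establish this. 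Specialising Proposition \ref{divergencia} to this type ($\theta=0$, $d^*\eta=0$, $\xi_\zeta=0$) yields only $\langle(\nabla^{\Lie{U}(n)}_{e_i}\xi_{(10)})_{e_i}\zeta,X\rangle=(\xi_{(10)e_i}\eta)(\xi_{(1)X}e_i)-\tfrac12(\xi_{(10)e_i}\eta)(\xi_{(2)X}e_i)$, and substituting this into $\Ricac(\zeta)=-(\nabla^{\Lie{U}(n)}_{e_i}\xi)_{e_i}\zeta-\xi_{(\mathcal A)e_i}\xi_{(\mathcal D)e_i}\zeta$ does not make the torsion-squared terms cancel: the contraction of $\xi_{(2)}$ with $\xi_{(9)}$ never occurs in that identity, and the two $\xi_{(2)}$--$\xi_{(10)}$ contractions that do occur pair the indices differently, so nothing forces them to annihilate each other. (The $\xi_{(1)}$--$\xi_{(10)}$ term can in fact be removed, but only by also invoking Lemma \ref{mainid} with $d^*F(\zeta)=0$ and comparing it with Proposition \ref{divergencia}; you do not do this, and even then the $\xi_{(2)}$-terms survive.) The observation $\xi_{(1)e_i}\xi_{(9)e_i}\zeta=0$ is correct but is not where the difficulty lies — the paper itself drops that term already in Proposition \ref{harmclasscontact2}(ii), while retaining precisely the terms you would need to kill. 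So your argument stops at the very point the paper's one-line justification ("for those types harmonicity is equivalent to $\Ricac_{\rm alt}=0$ and $\Ricac(\zeta)=0$") leaves implicit: a complete proof must show that the quadratic terms $\xi_{(2)e_i}\xi_{(\mathcal D)e_i}\zeta$ and $\xi_{(1)e_i}\xi_{(10)e_i}\zeta$ vanish for structures of type $\mathcal A\oplus\mathcal D$, and neither your cancellation claim nor the results you cite supply that.
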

\begin{proof}
Part (i) follows from Corollary \ref{corharmonic} and Theorem \ref{characharmalmcontact}.
Part (ii) is a consequence of the fact that, for those types, the harmonicity is equivalent to $\Ricac_{\mathrm{alt}} = 0$ and   
$\Ricac(\zeta)=0$. Such conditions are satisfied due to  the Weyl curvature vanishes (see Lemma \ref{astricciacm1}). 
\end{proof}

%\vspace{1mm}

\subsection{Harmonicity  of almost
contact metric  structures  as a map} \label{subsfiveone}
Now, we focus our attention on studying harmonicity as a map  of almost
contact metric  structures,  $\sigma:M \to \mathcal{SO}(M) /
(\Lie{U}(n) \times 1)$. Results in that direction were already
obtained by Vergara-Díaz and Wood \cite{VergaraWood} and in \cite{GDMC4} for the type $\mathcal{C}_1 \oplus \dots \oplus \mathcal{C}_{10}$. We will
complete such results  for the  general type    $\mathcal{C}_1 \oplus \dots \oplus \mathcal{C}_{12}$. In next Lemma,
$s^{*} = \Ricac (e_i, e_i)$ will denote the {\it $*$-scalar  curvature}. If $\Ricac (X,Y) = \frac{1}{2n}
s^{*} (\langle X,Y \rangle - \eta (X) \eta (Y))$, the almost
contact metric manifold is said to be {\it weakly-ac-Einstein}. If
$s^{*}$ is constant, a weakly-ac-Einstein manifold is called {\it
ac-Einstein}.

In Riemannian geometry, it is satisfied $2 d^* \Ric + d s =0$,
 where $s$ is the scalar curvature. The $*$-analogue in almost
 contact metric geometry does not hold in general.
\begin{lemma} \label{id:genera}
 For almost contact metric manifolds of type
$\mathcal C_1 \oplus \ldots \oplus \mathcal C_{11} \oplus \mathcal C_{12}$, we have
\begin{align*}
%\begin{split}
 d^* (\Ricac)^t(X) + \tfrac12 ds^{*} (X)   = &
         \langle R_{e_i , X} ,    \xi_{\varphi e_i} \varphi\rangle
        -   (n-1) \Ricac ( X,   \theta^\sharp)   + 2 \langle \Ricac, \xi^\flat_X \rangle
        \\
        &
        %\, \quad 
        -  d^* F (\zeta) \Ricac (\zeta , \varphi X) - \Ricac(X, \xi_\zeta \zeta),
           %   \end{split}
\end{align*}
 where $(\Ricac)^t(X,Y) = \Ricac(Y,X)$ and $\xi_X^{\flat} (Y,Z) =
\langle \xi_X Y, Z \rangle$. In particular, if the manifold is
weakly-ac-Einstein, then
\begin{align*}
(n - 1) d s^*  =  \; & 2n  \langle R_{e_i , \cdot \, } ,    \xi_{\varphi e_i} \varphi\rangle
        -  (n-1) s^* \theta
        % +  s^* \xi_\zeta \eta
    %    \\
      %  &
        +  \left( \tfrac{n-1}{n} s^* d^* \eta \ - 
 2  \langle R_{e_i , \zeta} ,    \xi_{\varphi e_i} \varphi\rangle\right)  \eta.
\end{align*}
\end{lemma}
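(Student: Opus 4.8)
The statement is a $*$-analogue of the classical contracted second Bianchi identity $2d^*\Ric + ds = 0$, so the natural starting point is the second Bianchi identity for the Riemann tensor $R$ together with the definition $\Ricac(X,Y) = \langle R_{e_i,X}\varphi e_i, \varphi Y\rangle$. First I would write $d^*(\Ricac)^t(X) = -(\nabla_{e_j}\Ricac)(X, e_j)$ and expand the covariant derivative using the Leibniz rule on the three factors $R$, $\varphi$, $\varphi$ occurring in the definition of $\Ricac$. This produces one term in which the derivative lands on $R$ — to be handled by Bianchi — and two terms in which the derivative lands on a copy of $\varphi$; those latter terms are exactly where the intrinsic torsion enters, since $\nabla\varphi$ is (up to the universal algebraic map) the intrinsic torsion $\xi = \xi^{\Lie{U}(n)}$. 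I would record $\nabla_X\varphi$ in terms of $\xi_X$ and $\varphi$ via the formula for $\xi^{\Lie{U}(n)}_X$ recalled in Section~\ref{sect:almcont}.

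For the Bianchi term, I would contract the second Bianchi identity $(\nabla_{e_j}R)_{e_i,X} + (\nabla_{e_i}R)_{X,e_j} + (\nabla_X R)_{e_j,e_i} = 0$ against $\varphi e_i \otimes \varphi X$ and sum over $i,j$; after the usual pair-symmetry manipulations of $R$ this yields, on one side, a multiple of $ds^*(X)$ plus $d^*$ of a Ricci-type contraction, and on the other side a curvature term of the shape $\langle R_{e_i,X}, \xi_{\varphi e_i}\varphi\rangle$ together with correction terms coming from the fact that $\{e_i\}$ is not $\nabla$-parallel at the point and, more importantly, from the non-symmetry of $\Ricac$. Keeping careful track of symmetrizations is what forces the appearance of $(\Ricac)^t$ and of the skew part implicitly, and of the explicit lower-order pieces $-(n-1)\Ricac(X,\theta^\sharp)$, $2\langle\Ricac,\xi_X^\flat\rangle$, $-d^*F(\zeta)\Ricac(\zeta,\varphi X)$ and $-\Ricac(X,\xi_\zeta\zeta)$. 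Here I would use the identities from Section~\ref{sect:almcont} expressing the contractions $\xi_{e_i}e_i$ and $\xi_{e_i}\varphi e_i$ in terms of $\theta$, $d^*F(\zeta)$, $d^*\eta$ and $\nabla_\zeta\zeta = \xi_\zeta\zeta$; the coefficient $n-1$ and the $\zeta$-components come out precisely through \eqref{leeac} and the companion formula for $\xi_{e_i}\varphi e_i$, together with the defining property $\Ricac(X,\zeta)=0$ and $\Ricac(\varphi X,\varphi Y)=\Ricac(Y_{\zeta^\perp},X_{\zeta^\perp})$.

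For the weakly-ac-Einstein specialization I would substitute $\Ricac(X,Y) = \tfrac1{2n}s^*(\langle X,Y\rangle - \eta(X)\eta(Y))$ into the general identity. Then $(\Ricac)^t = \Ricac$ is symmetric, so $d^*(\Ricac)^t$ collapses to a multiple of $ds^*$ (plus a $\zeta$-term coming from $d^*\eta$, since $\eta$ is not coclosed in general); the term $2\langle\Ricac,\xi_X^\flat\rangle$ vanishes because $\xi_X$ is skew-symmetric while $\Ricac$ is symmetric in this case; $\Ricac(X,\theta^\sharp)$ becomes $\tfrac1{2n}s^*\theta(X)$ since $\theta\perp\eta$; and $\Ricac(X,\xi_\zeta\zeta)$ similarly reduces to a $\tfrac1{2n}s^*\langle\xi_\zeta\zeta,X\rangle$ term, while $\Ricac(\zeta,\varphi X)=0$ kills the $d^*F(\zeta)$ contribution to the $\zeta^\perp$-part. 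Collecting the $\zeta^\perp$- and $\mathbb{R}\eta$-components separately, multiplying through by $2n$, and absorbing $d^*\eta = -\operatorname{div}(\zeta)$ gives the displayed formula with the single curvature term $\langle R_{e_i,\cdot}, \xi_{\varphi e_i}\varphi\rangle$ evaluated on $\zeta^\perp$ and on $\zeta$.

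**Main obstacle.** The delicate point is bookkeeping the non-symmetry of $\Ricac$ throughout the Bianchi contraction: in the classical case one uses $\Ric$ symmetric at two stages, and here each failure of symmetry spawns an extra $\xi$-contraction. Getting the precise coefficients — in particular why the curvature term appears with $\xi_{\varphi e_i}\varphi$ rather than some other contraction, and why exactly the four lower-order correction terms appear with those signs — requires carefully choosing which index to symmetrize at each step and repeatedly invoking $\varphi^2 = -I + \eta\otimes\zeta$ to convert $\varphi$-twisted contractions into the standard ones. I expect this algebraic reconciliation, rather than any conceptual difficulty, to be the bulk of the work; the weakly-ac-Einstein corollary is then a routine substitution once the general identity is in hand.
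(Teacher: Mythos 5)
Your plan follows essentially the same route as the paper: you expand $d^*(\Ricac)^t$ and $ds^{*}$ by the Leibniz rule so that the derivatives hitting $\varphi$ produce intrinsic-torsion terms (via the traced identities such as $\sum_{j}(\nabla_{e_j}\varphi)e_j=(n-1)\varphi\theta^\sharp+d^*F(\zeta)\zeta$ and the contractions $\xi_{e_i}e_i$, $\xi_{e_i}\varphi e_i$), and you invoke the second Bianchi identity so that the $\nabla R$ contributions cancel in the combination $d^*(\Ricac)^t+\tfrac12 ds^{*}$, which is exactly how the paper produces the curvature term $\langle R_{e_i,X},\xi_{\varphi e_i}\varphi\rangle$ and the four lower-order corrections, with the weakly-ac-Einstein case obtained by the same routine substitution. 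Beyond the computational bookkeeping you yourself flag (including the paper's observation that the $\nabla^{\Lie{U}(n)}$-part of $\nabla_{\varphi e_j}\varphi e_i$ drops out as a pairing of skew with symmetric matrices), there is no gap.
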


\begin{proof}
Note that $ (\Ricac)^t(X,Y) = \tfrac12   \langle R_{e_i,\varphi
e_i} Y,  \varphi X\rangle$.  Then, we get
\begin{align*}
d^* (\Ricac)^t(X)  = & - \textstyle \sum_{j=1}^{2n} (\nabla_{e_j} (\Ricac)^t) ( e_j ,X)  - (\nabla_{\zeta} (\Ricac)^t) ( \zeta ,X)
\\
=& \textstyle \sum_{j=1}^{2n} (-\tfrac12  e_j \langle R_{e_i, \varphi e_i} X ,\varphi e_j
\rangle
   + \tfrac12 \langle R_{e_i, \varphi e_i}  \nabla_{e_j} X , \varphi e_j \rangle
   + \tfrac12 \langle R_{e_i,\varphi e_i}   X,   \varphi \nabla_{e_j} e_j \rangle)
   \\
   &
   + \tfrac12 \langle R_{e_i,\varphi e_i}   X,   \varphi \nabla_{\zeta} \zeta \rangle
   %\\
   %&
 %  - \Ricac(X, \xi_\zeta \zeta)
   \end{align*}
   \begin{align*}
   \hspace{1cm} =& \textstyle \sum_{j=1}^{2n} (- \tfrac12  \langle ( \nabla_{e_j} R)_{e_i,\varphi  e_i} X ,\varphi e_j \rangle
      -  \langle R_{\nabla_{e_j} e_i, \varphi e_i}   X , \varphi e_j \rangle
        - \tfrac12 \langle R_{e_i,\varphi e_i}   X,   (\nabla_{e_j}\varphi) e_j
        \rangle)
        \\
        &
        - \Ricac(X, \xi_\zeta \zeta).
\end{align*}
Since  $\textstyle \sum_{j=1}^{2n} (\nabla_{e_j} \varphi)(e_j) = 2 \varphi \xi_{(4) e_j} e_j
+ d^* F (\zeta) \zeta = (n-1) \varphi \theta^\sharp +  d^* F (\zeta) \zeta$, by symmetry properties of $R$, it follows
\begin{align*}
d^* (\Ricac)^t(X)  = & - \tfrac12  \langle ( \nabla_{e_j} R)_{X ,
\varphi e_j}  e_i, \varphi e_i\rangle
      +  \langle R_{X , e_j} e_i ,    \nabla_{\varphi e_j} \varphi e_i) \rangle
        - \tfrac{ n-1}{2} \langle R_{e_i,\varphi e_i}   X,  \varphi \theta^\sharp\rangle
        \\
        & - \tfrac12 d^* F (\zeta) \langle R_{e_i,\varphi e_i}   X,  \zeta
        \rangle  - \Ricac(X, \xi_\zeta \zeta).
\end{align*}

Using second Bianchi's identity and taking into account
$$
 \langle R_{X , e_j} e_i ,    \nabla_{\varphi e_j} \varphi e_i) \rangle =
   \langle R_{X , e_j} e_i ,    \nabla^{\Lie{U}(n)}_{\varphi e_j} \varphi e_i) \rangle
 -  \langle R_{X , e_j} e_i ,    \xi_{\varphi e_j} \varphi e_i)
 \rangle,
$$
we get
\begin{align*}
d^* (\Ricac)^t (X)  = & - \tfrac14  \langle ( \nabla_{X} R)_{e_j,
\varphi e_j }  e_i,\varphi e_i\rangle
      -  \langle R_{X , e_j} ,    \xi_{\varphi e_j} \varphi
      \rangle
      %\\
      %&
        - (n-1) \Ricac ( X,   \theta^\sharp )
        \\
        &
        - d^* F (\zeta) \Ricac (\zeta , \varphi X) 
        - \Ricac(X, \xi_\zeta \zeta).
\end{align*}
Note that
$
 \langle R_{X , e_j} e_i ,    \nabla^{\Lie{U}(n)}_{\varphi e_j} \varphi e_i) \rangle =
  \langle R_{X , e_j} e_i , e_k \rangle
     \langle \nabla^{\Lie{U}(n)}_{\varphi e_j} \varphi e_i , e_k \rangle =0,
$
because it is a scalar product of a skew-symmetric matrix by a
symmetric matrix.
Finally, it is obtained
\begin{eqnarray} \label{uno}
\qquad 2d^* (\Ricac)^t(X) & = & - \tfrac12  \langle ( \nabla_{X}
R)_{e_j, \varphi e_j }  e_i,\varphi e_i\rangle
      - 2 \langle R_{X , e_j} ,    \xi_{\varphi e_j} \varphi
      \rangle \\
      && \nonumber
        -  2(n-1) \Ricac ( X,   \theta^\sharp ) - 2 d^* F (\zeta) \Ricac (\zeta , \varphi X)-2 \Ricac(X, \xi_\zeta \zeta)
.
\end{eqnarray}

Furthermore, ones obtains  $ ds^{*} (X) = \tfrac12
X \langle R_{e_i,\varphi e_i} e_j, \varphi e_j \rangle$. Hence
\begin{eqnarray*}
 ds^{*} (X) & = & \tfrac12   \langle (\nabla_X R)_{e_i,\varphi e_i} e_j, \varphi e_j \rangle
  + 2 \langle R_{e_i, \varphi e_i} e_j, \nabla_X \varphi e_j \rangle.
\end{eqnarray*}
But we also have  that
\begin{eqnarray*}
 \langle R_{e_i, \varphi e_i} e_j, \nabla_X \varphi e_j \rangle & = &  \langle R_{e_i, \varphi e_i} e_j, e_k \rangle \langle
 \nabla^{\Lie{U}(n)}_X \varphi e_j , e_k \rangle - \langle R_{e_i,\varphi e_i} e_j, e_k \rangle \langle
 \xi_X \varphi e_j , e_k \rangle \\
 & = &\textstyle \sum_{j=1}^{2n} (\langle R_{e_i,\varphi e_i} e_j, \varphi \xi_X e_j \rangle + \eta ( \xi_X \varphi e_j )
 \langle R_{e_i,\varphi e_i} \zeta , e_j \rangle)
\\
&=&  
 \textstyle \sum_{j=1}^{2n} (2 \Ricac (e_j , \xi_X e_j) - 2 \eta ( \xi_X \varphi e_j )
\Ricac  (\zeta , \varphi e_j))
  \\
&=&
 2 \textstyle \sum_{j=1}^{2n}  \Ricac  (e_j , \xi_X e_j) + 2    \Ricac  (\zeta , \xi_X \zeta )
\\
&=& 2 \langle \Ricac   , \xi_X  \rangle.
\end{eqnarray*}
Thus,
 \begin{equation} \label{dos}
  ds^{*}  (X)  =  \tfrac12   \langle (\nabla_X
R)_{e_i,\varphi e_i} e_j, \varphi e_j \rangle + 4 \langle \Ricac ,
\xi^\flat_X \rangle.
 \end{equation}
 From \eqref{uno} and \eqref{dos}, the required identity is
 obtained. 
 
 The claim relative to the  case of weakly ac-Einstein follows by a straightforward way. Note that
$
2n \; (d^* \Ricac)^t  = 2n \; d^* \Ricac = - d s^* - s^* \xi_\zeta \eta  + (d s^* (\zeta) 
- s^* d^* \eta) \eta$  in such a case.
\end{proof}
\begin{remark}{\rm It is interesting to compare Lemma \ref{id:genera} with the analogous  result for almost Hermitian geometry given in \cite{GDMC}. Thus, for an almost Hermitian manifold $(M, J, \langle \cdot, \cdot \rangle)$ of dimension $2n$, one has
$$
d^* (\Ricac)^t(X) +\tfrac12  d s^*(X) =  \langle R_{e_i, X} , \xi_{J e_i} J \rangle - (n-1) \Ricac(X, \theta^\sharp)+ 2 \langle  \Ricac , \xi^\flat_X \rangle,
$$
for all $X \in \mathfrak X (M)$. In particular, if the manifold is weakly $*$Einstein, then
$$
(n-1) d s^*  = 2n  \langle R_{e_i, \cdot \,} , \xi_{J e_i} J \rangle - (n-1) \theta.
$$
}
\end{remark}

Next we focus
our attention on  conditions relative to harmonicity as a map of  almost contact metric structures.
\begin{theorem} \label{mapharmcontact}
 For an $2n+1$-dimensional  almost contact metric  manifold $(M,\langle \cdot
,\cdot \rangle, \varphi, \zeta)$,  we have:
\begin{enumerate}
\item[{\rm (i)}] If $M$ is of type  $\mathcal{C}_1 \oplus
\mathcal{C}_2 \oplus \mathcal{C}_4 \oplus \mathcal{C}_5 \oplus \mathcal{C}_6\oplus
\mathcal{C}_7\oplus \mathcal{C}_8 \oplus
\mathcal{C}_{11} \oplus \mathcal{C}_{12}$, then the structure is a
harmonic map  if and only if it is a harmonic structure and, for all $X\in \mathfrak X (M)$,
 \begin{align*}
\quad \;    d^* (\Ricac)^t (X)+ \tfrac12  ds^{*}(X)  = &  \Ric ( X,   \theta^\sharp)  - n \Ricac ( X,   \theta^\sharp)       
            + 2 \langle \Ricac, \xi^\flat_X \rangle
         %   \\
           % &
        \\
       &
      -  d^* F (\zeta) \Ricac (\zeta , \varphi X)          - \Ricac(X, \xi_\zeta \zeta)
   -  \langle R_{\zeta,X}\zeta, \theta^\sharp\rangle 
         \\
        &
         + 3  \textstyle \sum_{i=1}^{2n}  \langle R_{e_i,X} \zeta ,  \xi_{e_i} \zeta \rangle
         +   \langle R_{\zeta,X}, \xi_\zeta \rangle. 
\end{align*}

\item[{\rm (ii)}] If $M$ is of type  $\mathcal{C}_1 \oplus
\mathcal{C}_2 \oplus \mathcal{C}_4 \oplus \mathcal{C}_9 \oplus \mathcal{C}_{10} \oplus
\mathcal{C}_{11} \oplus \mathcal{C}_{12}$, then the  structure is a
harmonic map  if and only if it is a harmonic structure and, for all $X\in \mathfrak X (M)$, 
\begin{align*}
\hspace{1cm}    d^* (\Ricac)^t(X) +\tfrac12  ds^{*}(X)  = &  \Ric ( X,   \theta^\sharp)  - n \Ricac ( X,   \theta^\sharp)  
     + 2 \langle \Ricac, \xi^\flat_X \rangle         - \Ricac(X, \xi_\zeta \zeta)
     \\
     &
     -  \langle R_{\zeta,X}\zeta, \theta^\sharp\rangle
              + \textstyle \sum_{i=1}^{2n}      \langle R_{e_i,X} \zeta ,  \xi_{e_i} \zeta \rangle
       + \langle R_{\zeta , X}  , \xi_{\zeta}
     \rangle.                
\end{align*}

 \item[{\rm (iii)}] If $M$ is of type  $\mathcal{C}_1 \oplus
\mathcal{C}_2 \oplus \mathcal{C}_5\oplus \mathcal{C}_6\oplus
\mathcal{C}_7\oplus \mathcal{C}_8$, then the  structure is a
harmonic map  if and only if  $\Ricac$ is symmetric and $(d^*
\Ricac +\tfrac12  ds^{*})(X) =   3   \langle R_{e_i,X} \zeta, \xi_{e_i} \zeta \rangle$,  for all $X\in \mathfrak X (M)$. Furthermore, one has the particular cases:
\begin{enumerate}
\item[$(\mathrm{a})$] If the manifold is nearly-K-cosymplectic $(\mathcal{C}_1)$, then  structure is a harmonic map. 
\item[$(\mathrm{b})$]
 If  the  manifold is
weakly-$ac$-Einstein,  the  structure is a
harmonic map  if and only if  it is  satisfied
 $
(n-1) d s^{*} = 4n \langle R_{e_i, \cdot \, } \zeta , \xi_{e_i} \zeta\rangle + \left(\tfrac{n-1}{n} s^* d^*\eta +  6 \langle R_{e_i, \zeta\, }\zeta,  \xi_{e_i} \zeta \rangle  \right)\eta. 
 $
  \end{enumerate}
 
 \item[{\rm (iv)}] If $M$ is of type  $\mathcal{C}_1 \oplus
\mathcal{C}_2  \oplus \mathcal{C}_9 \oplus \mathcal{C}_{10} $, then the  structure is a
harmonic map  if and only if it is a harmonic structure and,  for all $X\in \mathfrak X (M)$, 
\begin{align*}
    d^* (\Ricac)^t (X) + \tfrac12  ds^{*}(X)  = &        
             2 \langle \Ricac (\zeta), \xi_X \zeta\rangle   +   \langle R_{e_i,X} \zeta ,  \xi_{e_i} \zeta \rangle.
     \end{align*}
 In particular,  
 if  the  manifold is
weakly-$ac$-Einstein, then the  structure is a
harmonic map  if and only if  it  is a harmonic structure and   satisfied
 $
(n-1) d s^{*} = 2n \langle R_{e_i, \cdot \, } \zeta , \xi_{e_i} \zeta\rangle   -2  \langle R_{e_i, \zeta \, } \zeta , \xi_{e_i} \zeta\rangle  \, \eta. 
 $

%\item[{\rm (v)}] If $M$ is of type  $\mathcal{C}_1 \oplus
%\mathcal{C}_5  \oplus \mathcal{C}_9$, then the almost contact metric structure is a
%harmonic map  if and only if the structure is harmonic   and satisfied
%\begin{align*}
 %\;\;\; \;\; \;  d^* \Ricac (X) + \tfrac12  ds^{*}(X)  = \;&   
 %            2 \langle \Ricac (\zeta), \xi_X \zeta\rangle
 %          +  \langle R_{e_i,X} \zeta, \xi_{(9)e_i} \zeta \rangle
 %        - \tfrac{3}{2n} d^*\eta \Ric(\zeta , X),
%\end{align*}
%for all $X \in \mathfrak X (M)$.

\item[{\rm (v)}] If $M$  is of type 
 $\mathcal{C}_3 \oplus \mathcal{C}_4
\oplus \mathcal{C}_5 \oplus \mathcal{C}_{6} \oplus \mathcal{C}_7 \oplus \mathcal{C}_{8}  \oplus \mathcal{C}_{11} \oplus \mathcal{C}_{12}$,
  then the  structure  is a harmonic map
 if and only if  it is a harmonic structure and, for all $X \in \mathfrak X (M)$,  satisfied
 \begin{align*}
  \;\;\; \;\; \;  \;\;\; \;\; \;  d^* (\Ricac)^t (X) + \tfrac12 ds^{*}(X)  = & -   (n-1)\Ricac(X,\theta^\sharp) 
 + 2 \langle \Ricac , \xi_X\rangle  - d^* F(\zeta) \Ricac (\zeta , \varphi X)
 % - 2    \langle R_{e_i,X}, \xi_{e_i} \zeta \rangle 
  \\
  &
        - \Ricac(X, \xi_\zeta \zeta)
  - \textstyle \sum_{i=1}^{2n}    \langle R_{e_i,X} \zeta, \xi_{e_i} \zeta \rangle
   -             \langle R_{\zeta,X}, \xi_\zeta \rangle.
  \end{align*}

\item[{\rm (vi)}] If $M$  is of type 
 $\mathcal{C}_3 \oplus \mathcal{C}_4
\oplus \mathcal{C}_5 \oplus \mathcal{C}_{6} \oplus \mathcal{C}_7 \oplus \mathcal{C}_{8}  $,
  then the   structure  is a harmonic map
 if and only it is a harmonic  structure  and, for all $X \in \mathfrak X (M)$, it is 
 satisfied
 \begin{align*}
 d^* (\Ricac)^t (X) + \tfrac12 ds^{*}(X)  = & -   (n-1)\Ricac(X,\theta^\sharp) 
 + 2 \langle \Ricac , \xi_X\rangle  % - 2    \langle R_{e_i,X}, \xi_{e_i} \zeta \rangle 
  \\
  & - d^* F(\zeta) \Ricac (\zeta , \varphi X)  -  \langle R_{e_i,X} \zeta, \xi_{e_i} \zeta \rangle.
   %-             \langle R_{\zeta,X}, \xi_\zeta \rangle.
  \end{align*}
 In particular,  
 if  the  manifold is
weakly-$ac$-Einstein, then the structure is a
harmonic map  if and only if  the structure is harmonic and  it is  satisfied
 $$
(n-1) d s^{*} =   -  (n-1) \theta - 2n \langle R_{e_i, \cdot \, } \zeta, \xi_{e_i} \zeta\rangle  + \left( \tfrac{n-1}{n} s^* d^*\eta  + 2 \langle R_{e_i, \zeta} \zeta, \xi_{e_i} \zeta\rangle \right) \,  \eta. 
 $$
%Note that, for $n=1$, a harmonic structure of type $\mathcal C_5 \oplus \mathcal C_6$  on a weakly-$ac%$-Einstein manifold  is a harmonic map.

   \item[{\rm (vii)}] If $M$  is of type 
 $\mathcal{C}_3 \oplus \mathcal{C}_4
\oplus \mathcal{C}_9 \oplus \mathcal{C}_{10} \oplus \mathcal{C}_{11} \oplus \mathcal{C}_{12}$,
  then the   structure  is a harmonic map
 if and only if the structure  is  harmonic  and, for all $X \in \mathfrak X (M)$,
 \begin{align*}
 \;\; \; \;d^* (\Ricac)^t (X) + \tfrac12 ds^{*}(X)  = & -   (n-1)\Ricac(X,\theta^\sharp) + 2 \langle \Ricac , \xi_X\rangle         - \Ricac(X, \xi_\zeta \zeta)
 \\
 &
  - 3 \textstyle \sum_{i=1}^{2n}      \langle R_{e_i,X} \zeta, \xi_{e_i} \zeta  \rangle
   -\langle R_{\zeta, X}, \xi_\zeta \rangle.
  \end{align*}

  \item[{\rm (viii)}] If $M$  is of type 
 $\mathcal{C}_3 \oplus \mathcal{C}_4
\oplus \mathcal{C}_9 \oplus \mathcal{C}_{10}$,
  then the   structure  is a harmonic map
 if and only if  it is a harmonic structure and, for all $X \in \mathfrak X (M)$,
 $$
 \; \; \;d^* (\Ricac)^t (X) + \tfrac12 ds^{*}(X)  = -   (n-1)\Ricac(X,\theta^\sharp) + 2 \langle \Ricac , \xi_X\rangle  -3     \langle R_{e_i,X} \zeta , \xi_{e_i} \zeta  \rangle.
$$

 In particular:
  \begin{enumerate}
  \item[{\rm (a)$^*$}] If $\Ricac$ is symmetric,  then the  structure is a harmonic map if and only if   $\xi_{\theta^\sharp}=0$  and
   $ d^{*} \Ricac + \tfrac12 ds^{*} = -  (n-1) \theta^\sharp   \lrcorner 
  \Ricac - 3    \langle R_{e_i,\cdot \;}\zeta , \xi_{e_i} \zeta   \rangle.$  Furthermore, if the manifold is weakly-$ac$-Einstein, then the  structure
  is a harmonic map if and only if $\xi_{\theta^\sharp}=0$ and
   $
    (n-1)ds^{*} =  - (n-1) s^{*}  \theta - 6n \langle R_{e_i, \,\cdot \,} \zeta , \xi_{e_i} \zeta   \rangle
    + 6 \langle R_{e_i, \,\zeta \, } \zeta , \xi_{e_i} \zeta  \rangle  \eta   .
   $
      
   \item[{\rm (b)$^*$}] If the  structure is of type $\mathcal{C}_3 \oplus \mathcal{C}_9 \oplus \mathcal C_{10}$, then the structure is a
   harmonic map if and only if $\Ricac$ is symmetric and 
   $
    d^* \Ricac + \tfrac12 ds^{*}= - 3    \langle R_{e_i,X}\zeta , \xi_{e_i} \zeta \rangle.
   $
   Furthermore, if the manifold is also weakly-$ac$-Einstein, then
   the  structure is a
   harmonic map if and only if 
   $
   (n-1)ds^{*} =   - 6n \langle R_{e_i, \,\cdot \,} \zeta , \xi_{e_i} \zeta   \rangle
    + 3 \langle R_{e_i, \,\zeta \, }\zeta , \xi_{e_i} \zeta \rangle  \eta.
   $
   
%    \item[{\rm (c)$^*$}] If the almost contact metric structure is of type
%   $\mathcal{C}_4 \oplus \mathcal{C}_9 \oplus \mathcal C_{10}$ and $\Ricac$ is symmetric, then the %structure is a
 %  harmonic map if and only if $\xi_{\theta^\sharp} =0$ and 
  % $$
   % d^* \Ricac + \tfrac12 ds^{*} = -  (n-1) \theta^\sharp   \lrcorner 
  %\Ricac  
   %  - 3   \langle R_{e_i,\, \cdot \,} \zeta ,  \xi_{e_i} \zeta   \rangle .
  %$$
  \end{enumerate}
\end{enumerate}
\end{theorem}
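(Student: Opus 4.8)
The plan is to apply Proposition~\ref{harmmap2}: the structure, regarded as a map $\sigma\colon M\to\mathcal{SO}(M)/(\Lie{U}(n)\times1)$, is a harmonic map if and only if it is a harmonic $G$-structure and the one-form $\nu_\sigma(X)=\langle\xi_{e_i},R_{e_i,X}\rangle$ vanishes. Since the harmonicity of the structure is already characterised, for every type listed here, by Theorem~\ref{characharmalmcontact} together with Theorem~\ref{harmclasscontact} and Proposition~\ref{harmclasscontact2}, the only new work is to compute $\nu_\sigma$ for each type and to rewrite $\nu_\sigma=0$ in terms of $\Ricac$, $\Ric$, the Lee form $\theta$, $\nabla_\zeta\zeta$ and curvature contractions; the individual items then follow by specialising.

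First I would establish a master formula for $\nu_\sigma(X)=\langle\xi_{e_i}e_j,R_{e_i,X}e_j\rangle$. Splitting the inner index $e_j$ into its $\zeta^\perp$-part and the value $\zeta$, and using $\xi_X\zeta=-\nabla_X\zeta$ (equivalently $\nabla^{\Lie{U}(n)}_X\zeta=0$), one isolates the contribution $\sum_{i=1}^{2n}\langle R_{e_i,X}\zeta,\xi_{e_i}\zeta\rangle$ and the term $\langle R_{\zeta,X},\xi_\zeta\rangle$. For the remaining $\zeta^\perp$-to-$\zeta^\perp$ part one uses~\eqref{inttorcar} to replace $\xi_{e_i}\varphi e_j$ by $-\varphi\xi_{e_i}e_j$ modulo $\zeta$, and a change of orthonormal frame $e_i\mapsto\varphi e_i$ within $\zeta^\perp$, in order to bring in the quantity $\langle R_{e_i,X},\xi_{\varphi e_i}\varphi\rangle$ that already appears in Lemma~\ref{id:genera}. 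The stray curvature contractions produced along the way are converted, exactly as in the proof of Lemma~\ref{id:genera} (using $\Ricac(X,Y)=\langle R_{e_i,X}\varphi e_i,\varphi Y\rangle$, $\Ric(X,Y)=\langle R_{e_i,X}e_i,Y\rangle$, both Bianchi identities, and $\sum_{j=1}^{2n}(\nabla_{e_j}\varphi)e_j=(n-1)\varphi\theta^\sharp+d^*F(\zeta)\zeta$), into $\Ric(X,\theta^\sharp)$, $\Ricac(X,\theta^\sharp)$, $\langle\Ricac,\xi_X^\flat\rangle$, $d^*F(\zeta)\Ricac(\zeta,\varphi X)$, $\Ricac(X,\xi_\zeta\zeta)$ and $\langle R_{\zeta,X}\zeta,\theta^\sharp\rangle$. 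Substituting for $\langle R_{e_i,X},\xi_{\varphi e_i}\varphi\rangle$ from Lemma~\ref{id:genera} then gives the master identity, namely that $\nu_\sigma=0$ is equivalent to the displayed expression for $d^*(\Ricac)^t+\tfrac12 ds^*$. Parts (i), (ii), (v) and (vii) — the four ``large'' types built on the block $\mathcal A$ or $\mathcal B$, with or without $\mathcal D$ — are precisely this master identity with the components absent from the type set to zero, the coefficient $3$ versus $1$ in front of $\sum_i\langle R_{e_i,X}\zeta,\xi_{e_i}\zeta\rangle$ being governed by whether $\mathcal D$ occurs, and the $\langle R_{\zeta,X},\xi_\zeta\rangle$, $\Ricac(X,\xi_\zeta\zeta)$ terms surviving because $\mathcal C_{11}\oplus\mathcal C_{12}$ occurs.

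The remaining items are specialisations of the master identity. When the type has no $\mathcal C_{11}\oplus\mathcal C_{12}$ (parts (iii), (vi), (viii)) one has $\xi_\zeta=0$, hence $\nabla_\zeta\zeta=0$, so the $\langle R_{\zeta,X},\xi_\zeta\rangle$ and $\Ricac(X,\xi_\zeta\zeta)$ terms drop; if in addition $\mathcal C_4$ is absent (part (iii)) then by~\eqref{leeac} the Lee form vanishes, $\theta=0$, killing all $\theta$-terms. Moreover for type (iii) the harmonicity equations of Theorem~\ref{characharmalmcontact} reduce — via Lemma~\ref{astricciacm1} and Proposition~\ref{divergenciadoscero} — to $\Ricac$ being symmetric (indeed $\Ricac_{\mathrm{alt}}|_{\zeta^\perp}=0$ and $\Ricac(\zeta)=0$ once harmonic), and then $\langle\Ricac,\xi_X^\flat\rangle=\langle\Ricac_{\mathrm{alt}},\xi_X^\flat\rangle=0$ and $\Ricac(\zeta,\varphi X)=0$, so the master identity becomes the stated scalar equation with $d^*\Ricac$ in place of $d^*(\Ricac)^t$. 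For the weakly-$ac$-Einstein subcases I would insert $\Ricac(X,Y)=\tfrac1{2n}s^*(\langle X,Y\rangle-\eta(X)\eta(Y))$, use the weakly-$ac$-Einstein form $2n\,d^*\Ricac=-ds^*-s^*\xi_\zeta\eta+(ds^*(\zeta)-s^*d^*\eta)\eta$ noted in Lemma~\ref{id:genera}, together with $\langle\Ricac,\xi_X^\flat\rangle=0$ and $\Ricac(X,\theta^\sharp)=\tfrac1{2n}s^*\theta(X)$, and solve for $ds^*$. Part (iii)(a) is immediate: a nearly-K-cosymplectic ($\mathcal C_1$) structure has totally skew-symmetric intrinsic torsion and parallel $\zeta$, hence is harmonic and, by Proposition~\ref{pro:skew}(ii) (or since then $\nu_\sigma=0$ directly), a harmonic map. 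The subcases $(\mathrm{a})^*$, $(\mathrm{b})^*$ of part (viii) follow by imposing symmetry of $\Ricac$, which for those types is equivalent to $\xi_{\theta^\sharp}=0$ together with $(\nabla^{\Lie{U}(n)}_{e_i}\xi_{(3)})_{e_i}=0$, the latter holding automatically by Proposition~\ref{divergenciadoscero}.

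The hard part will be the master computation of $\nu_\sigma$: it is a long but routine bookkeeping argument, running parallel to the proof of Lemma~\ref{id:genera} but carrying the extra $\mathcal C_{11}$- and $\mathcal C_{12}$-contributions. The delicate points are tracking the $\zeta$-components of each $\xi_X$ through~\eqref{inttorcar}, applying the second Bianchi identity correctly to turn the $\langle R_{e_i,X},\xi_{\varphi e_i}\varphi\rangle$-type contractions into the divergences $d^*(\Ricac)^t$ and $ds^*$, and pinning down the exact numerical coefficients — in particular the factor $3$ (respectively $1$, $2$) multiplying $\sum_i\langle R_{e_i,X}\zeta,\xi_{e_i}\zeta\rangle$ and the signs of the $\langle R_{\zeta,X},\xi_\zeta\rangle$ and $\langle R_{\zeta,X}\zeta,\theta^\sharp\rangle$ terms. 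Once that identity is in hand, each of (i)--(viii) is a short deduction using the already established harmonicity criteria.
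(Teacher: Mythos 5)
Your proposal is correct and follows essentially the paper's own route: the paper likewise combines Proposition \ref{harmmap2} with Lemma \ref{id:genera}, rewriting the contraction $\langle R_{e_i,X},\xi_{\varphi e_i}\varphi\rangle$ via the type-dependent behaviour of $\xi$ under $\varphi$ so that $\nu_\sigma(X)=\langle\xi_{e_i},R_{e_i,X}\rangle$ appears, and then obtains (i)--(viii) by specialisation (including the weakly-$ac$-Einstein cases from the second identity of Lemma \ref{id:genera} and the $\mathcal C_1$ case from Proposition \ref{pro:skew}); your ``master formula'' for $\nu_\sigma$ is just this same identity solved in the opposite direction. The only caveats are bookkeeping ones you already flag: the coefficients and signs in front of $\sum_i\langle R_{e_i,X}\zeta,\xi_{e_i}\zeta\rangle$ depend on the $\mathcal A/\mathcal B$ block as well as on $\mathcal C$ versus $\mathcal D$, and part (iv) additionally needs the harmonicity consequence $\Ricac_{\mathrm{alt}}|_{\zeta^\perp}=0$ to replace $\langle\Ricac,\xi^\flat_X\rangle$ by $\langle\Ricac(\zeta),\xi_X\zeta\rangle$, exactly as you did for (iii).
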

\begin{proof} 
For (i),   if the structure  of type $\mathcal{C}_1 \oplus
\mathcal{C}_2 \oplus \mathcal{C}_4 \oplus   \mathcal{C}_5\oplus \mathcal{C}_6\oplus
\mathcal{C}_7\oplus \mathcal{C}_8 \oplus \mathcal{C}_{11} \oplus \mathcal{C}_{12}$, then by properties of the components of $\xi$ the identity in Lemma \ref{id:genera} can be expressed as
\begin{align*}
    d^* (\Ricac)^t(X) +\tfrac12  ds^{*}(X)  = &-\textstyle \sum_{i=1}^{2n+1} \langle R_{e_i , X} , \xi_{e_i} \rangle  +  \Ric ( X,   \theta^\sharp)  - n \Ricac ( X,   \theta^\sharp)  
    \\
    &
    + 3\textstyle  \sum_{i=1}^{2n}   \langle R_{e_i , X} \zeta , \xi_{e_i}\zeta  \rangle   
 %    \\
   %  &
       + \langle R_{\zeta , X}  , \xi_{\zeta}
     \rangle     -  \langle R_{\zeta,X}\zeta, \theta^\sharp\rangle
            + 2 \langle \Ricac, \xi^\flat_X \rangle
            \\
            &
        -  d^* F (\zeta) \Ricac (\zeta , \varphi X)         - \Ricac(X, \xi_\zeta \zeta)
,
\end{align*}
for all $X \in  \mathfrak X(M)$. From this identity and taking Proposition  \ref{harmmap2} into account, part (i) follows.
%\vspace{1mm}

For (ii), in this case, if the structure  of type $\mathcal{C}_1 \oplus
\mathcal{C}_2 \oplus \mathcal{C}_4 \oplus \mathcal{C}_9\oplus
\mathcal{C}_{10} \oplus \mathcal{C}_{11} \oplus \mathcal{C}_{12}$,   the identity in Lemma \ref{id:genera} can be expressed as
\begin{align*}
    d^* (\Ricac)^t (X) +\tfrac12  ds^{*}(X)  = &- \textstyle \sum_{i=1}^{2n+1} \langle R_{e_i , X} , \xi_{e_i} \rangle  +  \Ric ( X,   \theta^\sharp)  - n \Ricac ( X,   \theta^\sharp)  
    \\
   &         - \Ricac(X, \xi_\zeta \zeta)
    + \textstyle \sum_{i=1}^{2n}   \langle R_{e_i , X} \zeta , \xi_{e_i}\zeta 
     \rangle   
     \\
     &
       + \langle R_{\zeta , X}  , \xi_{\zeta}
     \rangle     -  \langle R_{\zeta,X}\zeta, \theta^\sharp\rangle
            + 2 \langle \Ricac, \xi^\flat_X \rangle,
      %  -  d^* F (\zeta) \Ricac (\zeta , \varphi X),
\end{align*}
for all $X \in  \mathfrak X(M)$. From this identity,  using Proposition  \ref{harmmap2}, part (ii) follows.
\vspace{1mm}

For (iii), it follows as particular case of (i) noting that some summands vanish.
The particular case of structure of type $\mathcal{C}_1$ is derived from Corollary \ref{harmonicalways} (i) and Proposition \ref{pro:skew}. The another mentioned particular case,  of weakly ac-Einstein,  follows from the fact that $\Ricac$ is symmetric in such a case and by using the second identity given in Lemma \ref{id:genera}.  

For (iv), we firstly note that if we have a harmonic structure of type  $\mathcal{C}_1 \oplus
\mathcal{C}_2  \oplus \mathcal{C}_9\oplus
\mathcal{C}_{10}$, then $\Ricac_{\mathrm{alt}} (X_{\zeta^\perp}, Y_{\zeta^\perp}) =0$. This implies 
$\langle \Ricac , \xi_{X}
 \rangle = \langle \Ricac(\zeta) , \xi_{X} \zeta
 \rangle$. Then applying  the identity in (ii) to this particular case, it is deduced the required equality in (iv).
 The situation for   weakly ac-Einstein manifolds  is deduced by using the second identity given in Lemma \ref{id:genera}.  

The remaining parts (v), (vi),(vii) and (viii) are particular cases of the first four ones. They   follow by using  similar arguments as the previous ones.
 \end{proof}

\begin{remark} {\rm
See  the analogous result in \cite[Theorem 4.11, page 455]{GDMC}.}
 \end{remark}

\subsection{Harmonicity of the Reeb vector field as unit vector field}

Now we analyze conditions for the harmonicity of the Reeb vector field as unit vector field in case of harmonic structure. From  Theorem \ref{characharmalmcontact} and   \eqref{lapstaten}, the condition  $\nabla^* \nabla \zeta = - \xi_{e_i} \xi_{e_i} \zeta$ is equivalent to the condition $\,(\nabla^{\Lie{U}(n)}_{e_i} \xi)_{e_i} \zeta +   \xi_{\xi_{e_i} e_i} \zeta
   =0$ which is the second condition to characterize the harmonicity of the structure. Thus, if  the almost contact metric structure is of type $\mathcal{C}_5 \oplus \dots \oplus \mathcal{C}_{12}$, then the structure is harmonic if and only if the Reeb vector field is harmonic  unit vector field. In general, this equivalence is not true. However,  in some cases,  harmonicity of the structure implies harmonicity of the Reeb vector field. Results in this regard are the following ones.
\begin{proposition} \label{reebharm}
If we have a manifold with harmonic  almost contact metric structure  of type   $\mathcal{C}_1 \oplus \mathcal{C}_2 \oplus \mathcal{C}_5 \oplus \mathcal{C}_6 \oplus \mathcal{C}_7 \oplus \mathcal{C}_8 \oplus \mathcal{C}_x$ or 
 $\mathcal{C}_3 \oplus \mathcal{C}_4 \oplus \mathcal{C}_9 \oplus \mathcal{C}_{10} \oplus \mathcal{C}_x$ or  
  $\mathcal{C}_1 \oplus \mathcal{C}_5 \oplus \mathcal{C}_6 \oplus \mathcal{C}_7 \oplus \mathcal{C}_8 \oplus \mathcal{C}_9 \oplus \mathcal{C}_x$ or  $\mathcal{C}_1 \oplus \mathcal{C}_2 \oplus \mathcal{C}_3 \oplus \mathcal{C}_4 \oplus  \mathcal{C}_x$ or $\mathcal{C}_3 \oplus \mathcal{C}_5 \oplus \mathcal{C}_6  \oplus \mathcal{C}_{x}$, where $x = 11$ or  $12$, then the Reeb vector field $\zeta$ is harmonic as unit vector field. 
\end{proposition}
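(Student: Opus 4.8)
The plan is to reduce the statement to showing that, for a harmonic structure of any of the five listed types, the vector field $\xi_{e_i}\xi_{e_i}\zeta$ is proportional to $\zeta$. By Theorem~\ref{characharmalmcontact} a harmonic almost contact metric structure satisfies $\nabla^{*}\nabla\zeta = -\xi_{e_i}\xi_{e_i}\zeta$, while, by the characterisation of harmonic unit vector fields recalled there (after \cite{Wie1,GMS}), $\zeta$ is a harmonic unit vector field exactly when $\nabla^{*}\nabla\zeta = \|\nabla\zeta\|^{2}\zeta$. Now $\langle\nabla^{*}\nabla\zeta,\zeta\rangle = \|\nabla\zeta\|^{2}$ for any unit vector field, and equivalently, since $\xi_{X}\zeta = -\nabla_{X}\zeta$ (because $\eta$ is stabilised by $\Lie{U}(n)\times 1$) and each $\xi_{e_i}$ is skew-symmetric, $\langle\xi_{e_i}\xi_{e_i}\zeta,\zeta\rangle = -\sum_i\|\xi_{e_i}\zeta\|^{2} = -\|\nabla\zeta\|^{2}$. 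Hence it suffices to prove $\xi_{e_i}\xi_{e_i}\zeta\in\mathbb{R}\zeta$ for each family; the precise value $-\|\nabla\zeta\|^{2}\zeta$, and thus the harmonicity of $\zeta$ as a unit vector field, then follows automatically by taking the component along $\zeta$.

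For the collinearity I would first observe that every type in the statement is contained in one of the types covered by Lemma~\ref{xixizeta1}(iii): with $\mathcal{A}=\mathcal{C}_1\oplus\mathcal{C}_2$, $\mathcal{B}=\mathcal{C}_3\oplus\mathcal{C}_4$, $\mathcal{C}=\mathcal{C}_5\oplus\dots\oplus\mathcal{C}_8$, $\mathcal{D}=\mathcal{C}_9\oplus\mathcal{C}_{10}$, $\mathcal{E}=\mathcal{C}_{11}\oplus\mathcal{C}_{12}$, one has for $x\in\{11,12\}$ the inclusions $\mathcal{C}_1\oplus\mathcal{C}_2\oplus\mathcal{C}_5\oplus\dots\oplus\mathcal{C}_8\oplus\mathcal{C}_x\subseteq\mathcal{A}\oplus\mathcal{C}\oplus\mathcal{E}$, $\ \mathcal{C}_3\oplus\mathcal{C}_4\oplus\mathcal{C}_9\oplus\mathcal{C}_{10}\oplus\mathcal{C}_x\subseteq\mathcal{B}\oplus\mathcal{D}\oplus\mathcal{E}$, $\ \mathcal{C}_1\oplus\mathcal{C}_5\oplus\dots\oplus\mathcal{C}_8\oplus\mathcal{C}_9\oplus\mathcal{C}_x\subseteq\mathcal{C}_1\oplus\mathcal{C}\oplus\mathcal{C}_9\oplus\mathcal{E}$, $\ \mathcal{C}_1\oplus\mathcal{C}_2\oplus\mathcal{C}_3\oplus\mathcal{C}_4\oplus\mathcal{C}_x\subseteq\mathcal{A}\oplus\mathcal{B}\oplus\mathcal{E}$, and $\mathcal{C}_3\oplus\mathcal{C}_5\oplus\mathcal{C}_6\oplus\mathcal{C}_x\subseteq\mathcal{C}_3\oplus\mathcal{C}_5\oplus\mathcal{C}_6\oplus\mathcal{E}$. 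Therefore Lemma~\ref{xixizeta1}(iii) applies in every case and gives $\xi_{e_i}\xi_{e_i}\zeta = \xi_{(11)\zeta}\xi_{\zeta}\zeta - \|\nabla_{\zeta}\zeta\|^{2}\zeta$. Since $\xi_{(11)\zeta}\in\lie{u}(n)^{\perp}_{|\zeta^{\perp}}$, it is a skew-symmetric endomorphism that kills $\zeta$ and preserves $\zeta^{\perp}$, so $\xi_{(11)\zeta}\xi_{\zeta}\zeta\in\zeta^{\perp}$; the desired collinearity is thus equivalent to $\xi_{(11)\zeta}\xi_{\zeta}\zeta = 0$.

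This last vanishing is exactly where the hypothesis $x\in\{11,12\}$ enters, and it is the only point requiring care. If $x=12$ the class $\mathcal{C}_{11}$ is absent, so $\xi_{(11)}=0$ and the product vanishes trivially. If $x=11$ the class $\mathcal{C}_{12}$ is absent; since the only components of $\xi$ that survive when the first argument is $\zeta$ are $\xi_{(11)}$ and $\xi_{(12)}$, and $\xi_{(11)\zeta}\zeta=0$, we obtain $\xi_{\zeta}\zeta = \xi_{(11)\zeta}\zeta + \xi_{(12)\zeta}\zeta = 0$ (equivalently $\nabla_{\zeta}\zeta = 0$, cf. \eqref{leeac}), whence again $\xi_{(11)\zeta}\xi_{\zeta}\zeta = 0$. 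In either case $\xi_{e_i}\xi_{e_i}\zeta = -\|\nabla_{\zeta}\zeta\|^{2}\zeta\in\mathbb{R}\zeta$, and combining this with the reduction of the first paragraph yields $\nabla^{*}\nabla\zeta = \|\nabla\zeta\|^{2}\zeta$, i.e. $\zeta$ is a harmonic unit vector field. I do not expect a genuine obstacle here: all the analytic input is already packaged in Theorem~\ref{characharmalmcontact} and Lemma~\ref{xixizeta1}, and what is left is the bookkeeping of which of $\mathcal{C}_{11},\mathcal{C}_{12}$ occurs, together with the elementary remark that an element of $\lie{u}(n)^{\perp}_{|\zeta^{\perp}}$ annihilates $\zeta$.
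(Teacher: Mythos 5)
Your proof is correct and follows essentially the same route as the paper, which deduces the result directly from Theorem \ref{characharmalmcontact} and Lemma \ref{xixizeta1}(iii). Your write-up simply makes explicit what the paper leaves implicit, namely that the hypothesis $x\in\{11,12\}$ forces $\xi_{(11)\zeta}\xi_{\zeta}\zeta=0$, so that $-\xi_{e_i}\xi_{e_i}\zeta=\|\nabla\zeta\|^{2}\zeta$.
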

\begin{proof} It is an immediate consequence of Theorem \ref{characharmalmcontact}. 
  and Lemma \ref{xixizeta1}.
\end{proof}

\begin{example}[\bf The hyperbolic space]
{\rm The following example has been already considered in \cite{ChineaGonzalezDavila,GDMC4,FMC6}.  Let $\mathcal{H} = \{ (x_1, \dots , x_{2n+1}) \in \mathbb{R}^{2n+1} \; | \; x_1 >0\}$ be the $(2n+1)$-dimensional hyperbolic space  with the Riemannian metric
  $$
\textstyle  \langle \cdot , \cdot  \rangle = \frac1{c^2x_1^2} \left( dx_1 \otimes dx_1 + \ldots +   dx_{2n+1} \otimes dx_{2n+1} \right).
  $$
With respect to this metric,   $\{ E_1, \ldots , E_{2n+1}\}$ is an orthonormal frame field, where $
E_i = c x_1 \frac{\partial}{\partial x_i}$,  $i=1, \ldots , 2n+1$.  For the Lie brackets, one has $[E_1, E_j] = cE_j$, $j=2 , \ldots , 2n+1$. The remaining Lie brackets relative to this frame are zero.   
The corresponding metrically equivalent coframe is  $\{ e_1, \ldots , e_{2n+1}\}$, where  $e_i = \tfrac{1}{ cx_1} dx_i$. Note that 
$ de_i =  - ce_1  \wedge e_i$,  $i=1, \ldots , 2n+1$.

The almost contact metric structure $(\varphi = \sum_{i,j=1}^{2n+1} \varphi^i_j e_j \otimes E_i, \zeta , \eta, \langle \cdot , \cdot \rangle)$ is considered in \cite{ChineaGonzalezDavila}. The functions $\varphi^i_j$ are constant, $n\geq 2$ and  
$\zeta = \sum_{i=1}^{2n+1}   x_1 k_i \frac{\partial}{\partial x_i} =  \sum_{i=1}^{2n+1} \frac{k_i}{c} E_i$, being $k_i=$ constant  and $k_1^2 + \ldots + k_{2n+1} ^2 = c^2$.
The one-form $\eta$ and the  fundamental form $F$ 
are given by 
$
\textstyle \eta = \sum_{i=1}^{2n+1}   \frac{k_i}{c} e_i,  \; \; F = \sum_{i,j=1}^{2n+1} \varphi^i_j e_i \wedge e_j. 
$
Their exterior derivatives are expressed as
$ 
d \eta = - c e_1 \wedge \eta$,  $dF = - 2 c e_1 \wedge F.
$
Hence $d \eta = \xi_\zeta \eta \wedge \eta \in \lcf \lambda^{1,0} \rcf \cong \mathcal{C}_{12}$, where $\xi_\zeta \eta = k_1 \eta - c e_1$, and $dF = 2 (k_1 \eta - c e_1) \wedge F - 2 k_1 \eta  \wedge F \in \lcf \lambda^{1,0} \rcf \wedge F + \mathbb{R} \eta \wedge F \cong \mathcal{C}_{4} \oplus \mathcal{C}_{5}$. Since $dF_{\lcf \lambda^{1,0} \rcf \wedge F} =  
\theta  \wedge F$ and $dF_{\mathbb{R} \eta  \wedge F} = - \frac{d^*\eta}{n} \eta \wedge F$, one has 
$\langle \cdot \lrcorner dF , F \rangle = 2 (n-1) \theta - 2 d^*\eta \, \eta$. Taking this into account we obtain 
$$
\textstyle 
(n-1) \theta  =    
2 (n-1) \xi_\zeta \eta, \quad d^* \eta = 2n k_1. 
$$ 

Now, by using the Lie brackets described above, one can check that $N_\varphi (E_i , E_j) = 0$. From all of this, for $n>1$,  the structure is of type $\mathcal{C}_4 \oplus \mathcal{C}_5 \oplus \mathcal{C}_{12}$ and, for $n=1$, it  is of type $\mathcal{C}_5 \oplus \mathcal{C}_{12}$.

Note that $ (n-1)  d \theta 
= (n-1) k_1 
\theta \wedge \eta$, 
 $d\xi_\zeta \eta = k_1 \xi_\zeta \eta \wedge \eta$ and  $d(d^*\eta) = 0$.  From  $d\xi_\zeta \eta = k_1 \xi_\zeta \eta \wedge \eta$,  using Lemma  \ref{dosdeeta}, we deduce  $(\nabla^{\Lie{U}(n)}_\zeta \xi)_\zeta \zeta  =0$. 
  
 Particular cases are:
 
\noindent (i) $k_1 =0$ and $n>1$. The structure is of strict  type $\mathcal{C}_4 \oplus \mathcal{C}_{12}$. The one-forms 
$\theta$ and $\xi_\zeta \eta$ are   closed. In fact, $\xi_\zeta \eta = - d (\ln x_1)$. If we do the conformal change of metric $x_1^2 \langle \cdot , \cdot \rangle$, we obtain the flat cosymplectic structure on $\mathcal{H}$ as an open  set of $\mathbb{R}^{2n+1}$ with the Euclidean metric.  

\noindent (ii) $k_1 \neq 0$, $k_1 \neq c$ and $n=1$. The structure is of strict  type $  \mathcal{C}_{5}  \oplus \mathcal{C}_{12}$.

\noindent (iii) $k_1 =0$ and $n=1$. The structure is of strict  type $ \mathcal{C}_{12}$ and $\xi_\zeta \eta$ is    closed.

\noindent (iv) $k_1 =c$.  The structure is of strict  type $\mathcal{C}_5$ with $d^* \eta = 2nc$. 

Considering again the general case, since $\mathcal{H}$ has constant sectional curvarture, one has that it is ac-Einstein with $s^*=-2n c^2$. Therefore, $\Ricac_{\mathrm{alt}} =0$ and $\Ricac(\zeta)=0$ (note also that $d\theta_{\lcf \lambda^{2,0}\rcf} =0$). For having a harmonic structure, by Corollary \ref{onecondition} (v), we need the condition
$
0 = n \, k_1 \, \xi_\zeta \eta.
$
 Therefore, we have:
 \begin{enumerate}
  \item[-] The structure of type $\mathcal{C}_4 \oplus \mathcal C_5 \oplus \mathcal{C}_{12}$, for $k_1\neq 0$, $k_1\neq c$  and $n>1$,  is not  harmonic. Applying in this case the identity  \eqref{lapstaten}, we obtain $\nabla^* \nabla \zeta  = -(2n-1) k_1 \, \xi_\zeta \zeta + ((2n-1) k_1^2 + c^2) \zeta$. Hence the Reeb vector field $\zeta$ is not harmonic unit vector field.
 \item[-] The structure of type $\mathcal{C}_4 \oplus \mathcal{C}_{12}$, for $k_1=0$ and $n>1$, is harmonic. However, when one tries to check the condition in Theorem \ref{mapharmcontact} (v), one obtains   
 $c^2 \xi_\zeta \eta \neq 2n c^2 \xi_\zeta \eta$. Hence the structure is not  harmonic map.   The Reeb vector field $\zeta$ is harmonic unit vector field. In fact, $\nabla^* \nabla \zeta = - \xi_\zeta \xi_\zeta \zeta = \| \xi_\zeta \zeta\|^2 \zeta = c^2 \zeta$, in according with Proposition \ref{reebharm}.

  \item[-] The structure of type $\mathcal{C}_5 \oplus \mathcal{C}_{12}$,  for  $n =1$, $k_1 \neq 0$ and $k_1 \neq c$, is not  harmonic.  Since $\nabla^*\nabla \zeta =  -  k_1 \, \xi_\zeta \zeta + (k_1^2 + c^2) \zeta$, the Reeb vector field $\zeta$ is not harmonic unit vector field.

  \item[-]   The structure of type $\mathcal{C}_{12}$,  for $k_1=0$ and $n=1$, is harmonic. This was expected because  the one-form $\xi_\zeta \eta$ is closed. Since
  $\sum_{i=1}^{3} \langle R_{e_i,\cdot}, \xi_{e_i} \rangle  = \langle R_{\zeta,\cdot}, \xi_\zeta \rangle = - 2 c^2\xi_\zeta \eta \neq 0
$, the structure is not  harmonic map. The Reeb vector field $\zeta$ is a harmonic unit  vector field,             i.e. $\nabla^* \nabla \xi = - \xi_\zeta \xi_\zeta \zeta = \| \xi_\zeta \zeta \|^2 \zeta = c^2 \zeta$. This is agree with  Proposition \ref{reebharm}.

\item[-]   The structure of type $\mathcal{C}_{5}$,  for $k_1=c$, is harmonic. This was   expected. For the condition in Theorem \ref{mapharmcontact} (iii) (b), we have $(n-1) d s^* =0$ and the right side is equal to $4n(n+4) c^3 \,\eta \neq 0$. Hence the structure is not harmonic  map. Of course, we already know $\zeta$ must  be    harmonic unit vector field. In fact, $\nabla^*\nabla \zeta = - \xi_{e_i} \xi_{e_i} \zeta =  2nc^2 \,\zeta$.
 \end{enumerate}
 
 }
\end{example}

\section{Non-existence of certain types of almost contact metric structures
}
In this  section we point out another application of the identities given in Section \ref{foursection} as tools to  prove the non-existence in a proper 
way of certain types of almost contact metric structures. Results in this direction  have been derived in \cite{FMC6}. Here we do a further and more complete analysis.
%\newpage
\begin{theorem} \label{dimensionsuperior}
For a connected almost contact metric manifold of dimension $2n+1$ with $n>2$, we have:
\begin{enumerate}
\item[$(i)$] If the structure is of type $\mathcal C_1 \oplus \mathcal C_2 \oplus \mathcal C_3 \oplus\mathcal C_5 \oplus\mathcal C_6 \oplus\mathcal C_7 \oplus\mathcal C_9 \oplus\mathcal C_{10} \oplus\mathcal C_{12}$ with $(\xi_{(5)}, \xi_{(6)}) \neq (0,0)$, then it is of type $\mathcal C_1 \oplus \mathcal C_2 \oplus \mathcal C_3 \oplus\mathcal C_5  \oplus\mathcal C_7 \oplus\mathcal C_9 \oplus\mathcal C_{10} \oplus\mathcal C_{12}$ or $\mathcal C_1 \oplus \mathcal C_2 \oplus \mathcal C_3  \oplus\mathcal C_6 \oplus\mathcal C_7 \oplus\mathcal C_9 \oplus\mathcal C_{10} \oplus\mathcal C_{12}$.

\item[$(ii)$]
If the structure is of type $\mathcal C_1 \oplus \mathcal C_2 \oplus \mathcal C_3 \oplus\mathcal C_5 \oplus\mathcal C_6 \oplus\mathcal C_8 \oplus\mathcal C_9 \oplus\mathcal C_{10} \oplus\mathcal C_{12}$ with $(\xi_{(5)}, \xi_{(6)}) \neq (0,0)$, then it is of type $\mathcal C_1 \oplus \mathcal C_2 \oplus \mathcal C_3 \oplus\mathcal C_5  \oplus\mathcal C_8 \oplus\mathcal C_9 \oplus\mathcal C_{10} \oplus\mathcal C_{12}$ or $\mathcal C_1 \oplus \mathcal C_2 \oplus \mathcal C_3  \oplus\mathcal C_6 \oplus\mathcal C_8 \oplus\mathcal C_9 \oplus\mathcal C_{10} \oplus\mathcal C_{12}$.

\item[$(iii)$]
If the structure is of type $\mathcal C_1 \oplus \mathcal C_2 \oplus \mathcal C_3 \oplus\mathcal C_5 \oplus\mathcal C_6 \oplus\mathcal C_7 \oplus\mathcal C_9 \oplus\mathcal C_{11} \oplus\mathcal C_{12}$ with $(\xi_{(5)}, \xi_{(6)}) \neq (0,0)$, then it is of type $\mathcal C_1 \oplus \mathcal C_2 \oplus \mathcal C_3 \oplus\mathcal C_5  \oplus\mathcal C_7 \oplus\mathcal C_9 \oplus\mathcal C_{11} \oplus\mathcal C_{12}$ or $\mathcal C_1 \oplus \mathcal C_2 \oplus \mathcal C_3  \oplus\mathcal C_6 \oplus\mathcal C_7 \oplus\mathcal C_9 \oplus\mathcal C_{11} \oplus\mathcal C_{12}$.
\item[$(iv)$]
If the structure is of type $\mathcal C_1 \oplus \mathcal C_2 \oplus \mathcal C_3 \oplus\mathcal C_5 \oplus\mathcal C_6 \oplus\mathcal C_8 \oplus\mathcal C_9 \oplus\mathcal C_{11} \oplus\mathcal C_{12}$ with $(\xi_{(5)}, \xi_{(6)}) \neq (0,0)$, then it is of type $\mathcal C_1 \oplus \mathcal C_2 \oplus \mathcal C_3 \oplus\mathcal C_5  \oplus\mathcal C_8 \oplus\mathcal C_9 \oplus\mathcal C_{11} \oplus\mathcal C_{12}$ or $\mathcal C_1 \oplus \mathcal C_2 \oplus \mathcal C_3  \oplus\mathcal C_6 \oplus\mathcal C_8 \oplus\mathcal C_9 \oplus\mathcal C_{11} \oplus\mathcal C_{12}$.
\item[$(v)$]
If the structure is of type $\mathcal C_1  \oplus\mathcal C_5 \oplus\mathcal C_7   \oplus\mathcal C_9 \oplus\mathcal C_{10} \oplus\mathcal C_{12}$ with $(\xi_{(5)}, \xi_{(7)}) \neq (0,0)$, then it is of type $\mathcal C_1 \oplus \mathcal C_5  \oplus\mathcal C_9  \oplus\mathcal C_{10}  \oplus\mathcal C_{12}$ or $\mathcal C_1 \oplus \mathcal C_7   \oplus\mathcal C_9 \oplus\mathcal C_{10}  \oplus\mathcal C_{12}$.
\item[$(vi)$]
If the structure is of type $\mathcal C_2  \oplus\mathcal C_5 \oplus\mathcal C_7   \oplus\mathcal C_9 \oplus\mathcal C_{10} \oplus\mathcal C_{12}$ with $(\xi_{(5)}, \xi_{(7)}) \neq (0,0)$, then it is of type $\mathcal C_2 \oplus \mathcal C_5  \oplus\mathcal C_9  \oplus\mathcal C_{10}  \oplus\mathcal C_{12}$ or $\mathcal C_2 \oplus \mathcal C_7   \oplus\mathcal C_9 \oplus\mathcal C_{10}  \oplus\mathcal C_{12}$.
\item[$(vii)$]
If the structure is of type $\mathcal C_1  \oplus\mathcal C_5 \oplus\mathcal C_7   \oplus\mathcal C_9 \oplus\mathcal C_{11} \oplus\mathcal C_{12}$ with $(\xi_{(5)}, \xi_{(7)}) \neq (0,0)$, then it is of type $\mathcal C_1 \oplus \mathcal C_5  \oplus\mathcal C_9  \oplus\mathcal C_{11}  \oplus\mathcal C_{12}$ or $\mathcal C_1 \oplus \mathcal C_7   \oplus\mathcal C_9 \oplus\mathcal C_{11}  \oplus\mathcal C_{12}$.
\item[$(viii)$]
If the structure is of type $\mathcal C_2  \oplus\mathcal C_5 \oplus\mathcal C_7   \oplus\mathcal C_9 \oplus\mathcal C_{11} \oplus\mathcal C_{12}$ with $(\xi_{(5)}, \xi_{(7)}) \neq (0,0)$, then it is of type $\mathcal C_2 \oplus \mathcal C_5  \oplus\mathcal C_9  \oplus\mathcal C_{11}  \oplus\mathcal C_{12}$ or $\mathcal C_2 \oplus \mathcal C_7   \oplus\mathcal C_9 \oplus\mathcal C_{11}  \oplus\mathcal C_{12}$.
\item[$(ix)$]
If the structure is of type $\mathcal C_1  \oplus\mathcal C_6 \oplus\mathcal C_8   \oplus\mathcal C_9 \oplus\mathcal C_{10} \oplus\mathcal C_{12}$ with $(\xi_{(6)}, \xi_{(8)}) \neq (0,0)$, then it is of type $\mathcal C_1 \oplus \mathcal C_6  \oplus\mathcal C_9  \oplus\mathcal C_{10}  \oplus\mathcal C_{12}$ or $\mathcal C_1 \oplus \mathcal C_8   \oplus\mathcal C_9 \oplus\mathcal C_{10}  \oplus\mathcal C_{12}$.
\item[$(x)$]
If the structure is of type $\mathcal C_2  \oplus\mathcal C_6 \oplus\mathcal C_8   \oplus\mathcal C_9 \oplus\mathcal C_{10} \oplus\mathcal C_{12}$ with $(\xi_{(6)}, \xi_{(8)}) \neq (0,0)$, then it is of type $\mathcal C_2 \oplus \mathcal C_6  \oplus\mathcal C_9   \oplus\mathcal C_{12}$ or $\mathcal C_2 \oplus \mathcal C_8   \oplus\mathcal C_9 \oplus\mathcal C_{10}  \oplus\mathcal C_{12}$.
\item[$(xi)$]
If the structure is of type $\mathcal C_1  \oplus\mathcal C_6 \oplus\mathcal C_8   \oplus\mathcal C_9 \oplus\mathcal C_{11} \oplus\mathcal C_{12}$ with $(\xi_{(6)}, \xi_{(8)}) \neq (0,0)$, then it is of type $\mathcal C_1 \oplus \mathcal C_6  \oplus\mathcal C_9  \oplus\mathcal C_{11}  \oplus\mathcal C_{12}$ or $\mathcal C_1 \oplus \mathcal C_8   \oplus\mathcal C_9 \oplus\mathcal C_{11}  \oplus\mathcal C_{12}$.
\item[$(xii)$]
If the structure is of type $\mathcal C_2  \oplus\mathcal C_6 \oplus\mathcal C_8   \oplus\mathcal C_9 \oplus\mathcal C_{11} \oplus\mathcal C_{12}$ with $(\xi_{(6)}, \xi_{(8)}) \neq (0,0)$, then it is of type $\mathcal C_2 \oplus \mathcal C_6  \oplus\mathcal C_9    \oplus\mathcal C_{12}$ or $\mathcal C_2 \oplus \mathcal C_8   \oplus\mathcal C_9 \oplus\mathcal C_{11}  \oplus\mathcal C_{12}$.
\item[$(xiii)$]
If the structure is of type $\mathcal C_2  \oplus \mathcal C_5  \oplus \mathcal C_6 \oplus\mathcal C_8   \oplus\mathcal C_9 \oplus\mathcal C_{11} \oplus\mathcal C_{12}$ with $(\xi_{(6)}, \xi_{(11)}) \neq (0,0)$, then it is of type $\mathcal C_2   \oplus\mathcal C_6 
  \oplus\mathcal C_{9}  \oplus\mathcal C_{12}$ or $\mathcal C_2 \oplus \mathcal C_5 \oplus\mathcal C_{8}   \oplus\mathcal C_9 \oplus\mathcal C_{11}  \oplus\mathcal C_{12}$.

\item[$(xiv)$]
If the structure is of type $\mathcal C_2   \oplus \mathcal C_6 \oplus\mathcal C_9   \oplus\mathcal C_{10} \oplus\mathcal C_{11} \oplus\mathcal C_{12}$ with $(\xi_{(6)}, \xi_{(11)}) \neq (0,0)$, then it is of type $\mathcal C_2 \oplus\mathcal C_6 
 \oplus\mathcal C_{9}   \oplus\mathcal C_{12}$ or $\mathcal C_2 \oplus \mathcal C_9 \oplus\mathcal C_{10}    \oplus\mathcal C_{11}  \oplus\mathcal C_{12}$.

\item[$(xv)$]
If the structure is of type $\mathcal C_2  \oplus \mathcal C_5  \oplus \mathcal C_9 \oplus\mathcal C_{10}   \oplus\mathcal C_{11} \oplus \mathcal C_{12}$ with $(\xi_{(5)}, \xi_{(10)}) \neq (0,0)$, then it is of type $\mathcal C_2 \oplus \mathcal C_5  \oplus\mathcal C_9 \oplus
 \mathcal C_{11}  \oplus\mathcal C_{12}$ or $\mathcal C_2 \oplus \mathcal C_9 \oplus\mathcal C_{10}   \oplus\mathcal C_{11}  \oplus\mathcal C_{12}$.
\item[$(xvi)$]
If the structure is of type $\mathcal C_2  \oplus \mathcal C_5  \oplus \mathcal C_7 \oplus\mathcal C_{10}    \oplus\mathcal C_{12}$ with $(\xi_{(5)}, \xi_{(10)}) \neq (0,0)$, then it is of type $\mathcal C_2 \oplus \mathcal C_5  
   \oplus\mathcal C_{12}$ or $\mathcal C_2 \oplus \mathcal C_7 \oplus\mathcal C_{10}     \oplus\mathcal C_{12}$.
\item[$(xvii)$]
If the structure is of type $\mathcal C_2  \oplus \mathcal C_6  \oplus \mathcal C_9 \oplus\mathcal C_{10}    \oplus\mathcal C_{12}$ with $(\xi_{(6)}, \xi_{(10)}) \neq (0,0)$, then it is of type $\mathcal C_2 \oplus \mathcal C_6  \oplus\mathcal C_9
   \oplus\mathcal C_{12}$ or $\mathcal C_2 \oplus \mathcal C_9 \oplus\mathcal C_{10}     \oplus\mathcal C_{12}$.
\item[$(xviii)$]
If the structure is of type $\mathcal C_1  \oplus \mathcal C_2  \oplus \mathcal C_3 \oplus\mathcal C_{5}    \oplus\mathcal C_{6} \oplus\mathcal C_{12}$ with $(\xi_{(5)}, \xi_{(12)}) \neq (0,0)$ and $dd^* \eta$ is proportional  to $\eta$, then it is of type $\mathcal C_1 \oplus \mathcal C_2  \oplus\mathcal C_3
   \oplus\mathcal C_{5} $ or $\mathcal C_1 \oplus \mathcal C_2 \oplus\mathcal C_{3} \oplus\mathcal C_{6}     \oplus\mathcal C_{12}$.
\item[$(xix)$]
If the structure is of type $\mathcal C_1  \oplus \mathcal C_2  \oplus \mathcal C_5 \oplus\mathcal C_{6}    \oplus\mathcal C_{7} \oplus\mathcal C_{12}$ with $(\xi_{(5)}, \xi_{(12)}) \neq (0,0)$ and $dd^* \eta$  is proportional to $\eta$, then it is of type $\mathcal C_1 \oplus \mathcal C_2  \oplus\mathcal C_5 \oplus\mathcal C_{7}$ or $\mathcal C_1 \oplus \mathcal C_2 \oplus\mathcal C_{6} \oplus\mathcal C_{7}     \oplus\mathcal C_{12}$.
\item[$(xx)$]
If the structure is of type $\mathcal C_1  \oplus \mathcal C_2  \oplus \mathcal C_3 \oplus\mathcal C_{4}    \oplus\mathcal C_{5} \oplus \mathcal C_{6} \oplus\mathcal C_{8}\oplus\mathcal C_{9}\oplus\mathcal C_{11}$ with $(\xi_{(4)}, \xi_{(6)}) \neq (0,0)$ and $d(d^*F(\zeta))$  is proportional to $\eta$, then it is of type $\mathcal C_1 \oplus \mathcal C_2  \oplus\mathcal C_3
   \oplus\mathcal C_{4} \oplus\mathcal C_{5} \oplus\mathcal C_{8} \oplus \mathcal C_{9} \oplus\mathcal C_{11} $ or $\mathcal C_1 \oplus \mathcal C_2 \oplus\mathcal C_{3}  \oplus\mathcal C_{6}     \oplus\mathcal C_{7} \oplus\mathcal C_{9} \oplus\mathcal C_{11}$.
\item[$(xxi)$]
If the structure is of type $\mathcal C_1  \oplus \mathcal C_2  \oplus \mathcal C_3 \oplus\mathcal C_{5}    \oplus\mathcal C_{6} \oplus \mathcal C_{8} \oplus\mathcal C_{9}\oplus\mathcal C_{11}\oplus\mathcal C_{12}$ with $(\xi_{(6)}, \xi_{(12)}) \neq (0,0)$ and $d(d^*F(\zeta))$  is proportional  to $\eta$, then it is of type $ \mathcal C_2  
   \oplus\mathcal C_{6}  \oplus \mathcal C_{9} $ or $\mathcal C_1 \oplus \mathcal C_2 \oplus\mathcal C_{3} \oplus\mathcal C_{5} \oplus\mathcal C_{8}     \oplus\mathcal C_{9} \oplus\mathcal C_{11} \oplus\mathcal C_{12}$.
\item[$(xxii)$]
If the structure is of type $\mathcal C_1  \oplus \mathcal C_2  \oplus \mathcal C_3 \oplus\mathcal C_{5}    \oplus\mathcal C_{6} \oplus \mathcal C_{8} \oplus\mathcal C_{9}\oplus\mathcal C_{11}\oplus\mathcal C_{12}$ with $(\xi_{(5)}, \xi_{(6)}) \neq (0,0)$ and $\langle d \xi_\zeta \eta , F \rangle =0$, then it is of type $ \mathcal C_2  
   \oplus\mathcal C_{6}  \oplus \mathcal C_{9} \oplus\mathcal C_{12} $ or $\mathcal C_1 \oplus \mathcal C_2 \oplus\mathcal C_{3} \oplus\mathcal C_{5} \oplus\mathcal C_{8}     \oplus\mathcal C_{9} \oplus\mathcal C_{11} \oplus\mathcal C_{12}$.
\end{enumerate}
\end{theorem}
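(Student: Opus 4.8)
The plan is to establish the twenty-two assertions one at a time, all by the same device: restrict an appropriate identity of Section \ref{foursection} to the prescribed class, observe that almost every summand vanishes because the corresponding component of the intrinsic torsion is absent, and extract from the few surviving terms a pointwise algebraic constraint relating the two distinguished components. The differential identities that carry the argument are Lemma \ref{lambdaunouno} and Lemma \ref{firstident} (both consequences of $d^2F=0$) for the items built on $\mathcal C_5\oplus\mathcal C_6$ together with $\mathcal C_7$ or $\mathcal C_8$, Lemma \ref{etameanid1} and Proposition \ref{divergencia} for the items in which $\mathcal C_{10}$ and $\mathcal C_{11}$ occur simultaneously, and Lemma \ref{mainid} together with Lemma \ref{dosdeeta} for items $(xviii)$--$(xxii)$, where $\xi_{(12)}$ is involved. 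The restriction $n>2$ guarantees that the coefficients $\tfrac{n-2}{n-1}$, $(n-2)$, $\tfrac{n-5}{n-1}$ and the like appearing in front of the relevant terms of those identities are nonzero, so those terms can be isolated; the supplementary hypotheses in $(xviii)$--$(xxii)$ — that $dd^*\eta$ (resp.\ $d(d^*F(\zeta))$) be proportional to $\eta$, resp.\ that $\langle d\xi_\zeta\eta,F\rangle$ vanish — kill precisely the terms carrying $d(d^*\eta)(\varphi^2X)$, $d(d^*F(\zeta))$ and $\langle d\xi_\zeta\eta,F\rangle$ in Proposition \ref{divergencia} and Lemma \ref{mainid} that would otherwise obstruct the conclusion.

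After this specialization each identity collapses to an equation of the shape $B(\xi_{(i)},\xi_{(j)})=0$, where $B$ is a $\Lie{U}(n)$-equivariant bilinear pairing of two of the modules $\mathcal C_1,\dots,\mathcal C_{12}$, built from the contractions $\sum_k\langle\xi_{(\cdot)X}e_k,\xi_{(\cdot)Y}e_k\rangle$, the wedges $(\xi_{(\cdot)e_k}\eta)\wedge(\xi_{(\cdot)e_k}\eta)$ and the pointwise products $(\xi_{(\cdot)X}\eta)(\xi_{(\cdot)\zeta}Y)$ occurring throughout Section \ref{foursection}; when one of the two components is $\mathcal C_5$ or $\mathcal C_6$, the corresponding factor is just the scalar $d^*\eta$ or $d^*F(\zeta)$, since $(\xi_{(5)X}\eta)(Y)=\tfrac{d^*\eta}{2n}(\langle X,Y\rangle-\eta(X)\eta(Y))$ and $(\xi_{(6)X}\eta)(Y)=-\tfrac{d^*F(\zeta)}{2n}F(X,Y)$. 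The decisive point is then a pointwise representation-theoretic fact: on the explicit $\Lie{U}(n)$-models of Section \ref{sect:almcont} ($\eta^{\perp}\cong\lcf\lambda^{1,0}\rcf$, $\lie{u}(n)^{\perp}_{|\zeta^{\perp}}\cong\lcf\lambda^{2,0}\rcf$, $\mathcal C_1\cong\lcf\lambda^{3,0}\rcf$, $\mathcal C_2\cong\lcf A\rcf$, $\mathcal C_3\cong\lcf\lambda^{2,0}_0\rcf$, $\mathcal C_4\cong\lcf\lambda^{1,0}\rcf$, $\mathcal C_7\cong\mathcal C_8\cong[\lambda^{1,1}_0]$, $\mathcal C_9\cong\lcf\sigma^{2,0}\rcf$, $\mathcal C_{10}\cong\mathcal C_{11}\cong\lcf\lambda^{2,0}\rcf$, $\mathcal C_{12}\cong\lcf\lambda^{1,0}\rcf$), each of these pairings is nondegenerate on decomposable elements, i.e.\ $B(u,v)=0$ with $u\neq0$ forces $v=0$. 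Hence at every $m\in M$ one has $\xi_{(i)}(m)=0$ or $\xi_{(j)}(m)=0$; moreover, in the items where the second factor is the scalar $d^*\eta$ or $d^*F(\zeta)$ the same computation forces the auxiliary components ($\mathcal C_8$ and $\mathcal C_{10}$ in $(x)$ and $(xiii)$, $\mathcal C_{10}$ in $(xiv)$--$(xvii)$, $\mathcal C_8$ and $\mathcal C_{11}$ in $(xxi)$--$(xxii)$, and so on) to vanish on the open set where that scalar is nonzero, which accounts for the asymmetric descriptions of the two output classes.

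It remains to pass from this pointwise dichotomy to a global one. Set $A=\{m\in M:\xi_{(i)}(m)=0\}$ and $B=\{m\in M:\xi_{(j)}(m)=0\}$; both are closed and $M=A\cup B$. On the open set $U=M\setminus A$ we already know $\xi_{(j)}\equiv0$; feeding $\xi_{(j)}=0$ back into a second identity of Section \ref{foursection} — one of the divergence-type identities, namely Proposition \ref{divergencia} or Lemma \ref{mainid} in the scalar cases — one finds that the residual divergence of $\xi$ vanishes on $U$, so the surviving data are parallel along the complementary directions and in particular $U$ is also closed. Since $M$ is connected, $U=\emptyset$ or $U=M$, that is, $\xi_{(i)}\equiv0$ on $M$ or $\xi_{(j)}\equiv0$ on $M$, together with the supplementary vanishing noted above; this yields exactly the two alternatives in each statement.

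The main obstacle is the combinatorial bookkeeping in the second paragraph: for each of the twenty-two configurations one must determine precisely which terms of the chosen identity survive after imposing the given class — which requires the exact $\Lie{U}(n)$-type of every component together with the normalizations of $\xi_{(i)}\eta$ and $\xi_{(i)}\zeta$ recorded in Section \ref{sect:almcont} — and then verify that the residual equivariant pairing admits no decomposable vector in its kernel. A secondary but genuine difficulty is the bootstrap of the third paragraph in items $(xviii)$--$(xxii)$, where the consequences of $d^2F=0$ and of $d^2\eta=0$ must be combined and the hypothesis that $dd^*\eta$ or $d(d^*F(\zeta))$ is proportional to $\eta$ is exactly what is needed to close the connectedness argument.
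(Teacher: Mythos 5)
Your overall plan---specialise the identities of Section~\ref{foursection} to the prescribed class and finish by connectedness---is indeed the paper's strategy, but the two steps that carry your argument do not work as written. First, the pointwise dichotomy. In the paper every one of the twenty-two cases reduces, via Proposition~\ref{divergenciaunouno}, Proposition~\ref{divergenciadoscero}, Proposition~\ref{divergencia} or Lemma~\ref{mainid}, to a literal product of a \emph{scalar} with the other distinguished component: $d^*\eta\, d^*F(\zeta)=0$ in (i)--(iv), $d^*\eta\,(\xi_{(7)X}\eta)(Y)=0$ in (v)--(viii), $d^*F(\zeta)\,(\xi_{(8)X}\eta)(Y)=0$ in (ix)--(xii), $d^*F(\zeta)\langle \xi_{(11)\zeta}X,Y\rangle=0$, $d^*F(\zeta)\,(\xi_{(10)X}\eta)(Y)=0$, $d^*\eta\,(\xi_{(12)\zeta}\eta)(X)=0$, $d^*F(\zeta)\,\theta(X)=0$, and so on; since one factor is always the function $d^*\eta$ or $d^*F(\zeta)$ (or the Lee form determining $\xi_{(4)}$), the pointwise alternative is immediate and no representation theory is needed. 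You instead appeal to a blanket principle---that every surviving equivariant pairing ``is nondegenerate on decomposable elements''---which you do not verify and which is false for pairings of the shape $(\xi_{(7)X}\eta)(\xi_{(11)\zeta}Y)$ that do occur in these identities; the case-by-case bookkeeping that would replace this principle is exactly what you defer as ``the main obstacle,'' so the core of the proof is missing. Moreover, the asymmetric conclusions (e.g.\ the loss of $\mathcal C_{10}$ in (x) and (xiv), of $\mathcal C_6$ and $\mathcal C_{12}$ in one branch of (xviii), of almost everything in one branch of (xxi)) are not obtained by ``the same computation'': in the paper they come from feeding the dichotomy into \emph{other} parts of the theorem ((x) uses (xvii), (xii) uses (xiv), (xiii) uses (ii) and (x), (xviii)--(xix) use (i), (xxi) uses (i) and (xxii)), a mechanism absent from your plan.

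Second, the globalisation. Your argument that $U=M\setminus A$ is closed because ``the residual divergence of $\xi$ vanishes on $U$, so the surviving data are parallel along the complementary directions'' is a non sequitur: vanishing of a divergence-type expression does not make any tensor parallel, and no such bootstrap is needed. The correct (and the paper's) argument is elementary: since the product of the two factors vanishes identically while the hypothesis $(\xi_{(i)},\xi_{(j)})\neq(0,0)$ is used pointwise, the zero set of each factor coincides with the set where the other factor is nonzero; hence both zero sets are open, they are disjoint and cover $M$, and connectedness forces one of them to be all of $M$. As it stands, your proposal has genuine gaps both in the pointwise step (unproved nondegeneracy claim, missing cross-references for the asymmetric outputs) and in the passage from the pointwise to the global statement.
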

\begin{proof}
For $(i)$, $(ii)$, $(iii)$ and $(iv)$,  making use of Proposition \ref{divergenciaunouno}, we obtain $d^*\eta d^*F(\zeta)=0$. Since $(d^*\eta, d^*F(\zeta)) \neq (0,0)$, it follows that set $A$ of those points such that $d^*F(\zeta) =0$ coincides with set of points such that $d^*\eta \neq 0$. Hence $A$ is open. Likewise the set $B$ consisting of points such that $d^*\eta=0$ coincides with the set of points such that  $d^*F(\zeta) \neq 0$. Thus $B$ is open. Since the manifold is connected, $A$ or $B$ must be empty. Therefore,  $d^*F(\zeta) =0$  on the whole manifold, or $d^*\eta =0$  on the whole manifold.

For $(v)$, $(vi)$, $(vii)$ and $(viii)$, also  making use of Proposition \ref{divergenciaunouno}, we obtain  $d^*\eta (\xi_{(7)X} \eta)(Y)=0$. Since $(d^*\eta, \xi_{(7)}) \neq (0,0)$,  the proof is derived by a similar  argument as before. 

For $(ix)$, $(x)$, $(xi)$ and $(xii)$, again   making use of Proposition \ref{divergenciaunouno}, $d^*F(\zeta) (\xi_{(8)X} \eta)(Y)=0$ is obtained. Since $(d^*F(\zeta), \xi_{(8)}) \neq (0,0)$,  the proof is similar  as before. Additionally, part $(xvii)$ is used for  $(x)$  and part $(xiv)$ is used for  $(xii)$.

For  $(xiii)$ and $(xix)$, now we    make use of Proposition \ref{divergenciadoscero} and  obtain $d^*F(\zeta) \langle \xi_{(11) \zeta} X , Y \rangle=0$. Since $(d^*F(\zeta), \xi_{(11)}) \neq (0,0)$,  the proof is similar  as before. Additionally, parts $(ii)$ and   $(x)$ are used for  $(xiii)$  and part $(xvii)$ is used for  $(xiv)$.

For  $(xv)$ and $(xvi)$,  by  Proposition \ref{divergenciadoscero}, we obtain $d^*F(\zeta) \langle \xi_{(11) \zeta} X , Y \rangle=0$. Since $(d^*F(\zeta), \xi_{(11)}) \neq (0,0)$,  the proof is similar  as before. Additionally, part $(vi)$ is used for  $(xvi)$.

For   $(xvii)$,  we also    make use of Proposition \ref{divergenciadoscero} and  obtain $d^*F(\zeta) (\xi_{(10)  X} \eta )  ( Y) =0$. Since $(d^*F(\zeta), \xi_{(10)}) \neq (0,0)$,  the proof is similar  as in previous cases.

For   $(xviii)$ and $(xix)$,  we    make use of Proposition \ref{divergencia} and  obtain $d^*\eta  (\xi_{(12) \zeta } \eta )  ( X) =0$. Since $(d^*\eta, \xi_{(12)}) \neq (0,0)$,  the proof is similar  as in previous cases together with the use of $(i)$.

For    $(xxi)$,  we    make use of Lemma \ref{mainid} and  obtain $d^*F(\zeta)  \theta  ( X) =0$. Since $(d^*F(\zeta), \theta) \neq (0,0)$,  the proof is similar  as in previous cases together with the use of $(i)$.

For    $(xxi)$,  we    make use of Lemma \ref{mainid} and  obtain $d^*F(\zeta)  (\xi_{(12) \zeta } \eta )  ( X) =0$. The proof is similar  as in previous cases using  $(d^*F(\zeta), \xi_{(12)})   \neq (0,0)$, $(i)$  and $(xxii)$.

For    $(xxii)$,  this case is essentially a further conclusion that the one obtained in part $(iv)$ and it was already proved in detail in  \cite{FMC6}. 
\end{proof}

\begin{remark}
{\rm For  connected almost contact metric manifolds of dimensión $2n+1$, $n>2$, by Theorem \ref{dimensionsuperior}, we have:
\begin{enumerate}
\item[-]   By part $(i)$, the non-existence  of structures of types $\mathcal{C}_1 \oplus \mathcal{C}_2 \oplus 
\mathcal{C}_3 \oplus \mathcal{C}_5 \oplus \mathcal{C}_6 \oplus \mathcal{C}_7 \oplus \mathcal{C}_9 \oplus \mathcal{C}_{10} \oplus \mathcal{C}_{12}$ with $\xi_{(5)} \neq 0$ and  $\xi_{(6)} \neq 0$ implies that $2^7$ types do not exist.

\item[-]  By part $(ii)$, the non-existence  of structures of types $\mathcal{C}_1 \oplus \mathcal{C}_2 \oplus \mathcal{C}_3 \oplus \mathcal{C}_5 \oplus \mathcal{C}_6 \oplus \mathcal{C}_8 \oplus \mathcal{C}_9 \oplus \mathcal{C}_{10} \oplus \mathcal{C}_{12}$ with $\xi_{(5)} \neq 0$ and  $\xi_{(6)} \neq 0$ implies that another $2^7 - 2^6 =2^6$ types do not exist. 

\item[-]  Part $(iii)$  implies that another $2^7 - 2^6 =2^6$ types do not exist. 

\item[-]  Part $(iv)$  implies that another $2^5$ types do not exist.

\item[-]  Part $(v)$  implies that another $2^4$ types do not exist.

\item[-]  Part $(vi)$  implies that another $2^3$ types do not exist. 

\item[-]  Part $(vii)$  implies that another $2^3$ types do not exist. 

\item[-]  Part $(viii)$  implies that another $2^2$ types do not exist. 

\item[-]  Part $(ix)$  implies that another $2^4$ types do not exist. 

\item[-]  Part $(x)$  implies that another $2^3+2^3$ types do not exist. 

\item[-]  Part $(xi)$  implies that another $2^3$ types do not exist. 

\item[-]  Part $(xii)$  implies that another $2^3+2^3$ types do not exist. 

\item[-]  The non-existence  of  types due to  part $(xiii)$ has already considered in the previous cases. 

\item[-]  Part $(xiv)$  implies that another $2^3$ types do not exist. They are the ones  such that $\xi_{(6)}\neq 0$, $\xi_{(10)}\neq 0$ and $\xi_{(11)}\neq 0$.

\item[-]  Part $(xv)$  implies that another $2^4$ types do not exist. 

\item[-]  Part $(xvi)$  implies that another $2^2$ types do not exist. 

\item[-]  Part $(xvii)$  implies the non-existence of types already considered  in part $(x)$. 

\item[-]  Part $(xxii)$ in  case  $\xi_{(12)}=0$ implies the non-existence of another $2^2$ types. 

\end{enumerate}
All together implies the non-existence of $412$ types on a connected almost contact metric manifold of dimension  $2n+1$ with $n>2$. Therefore, the possible types for  higher dimensions   are $4096 - 412 = 3684$, where the number $4096=2^{12}$ arises at the beginning  by considering the  possible types from algebraic point of view. Later,  due to  geometry, some of these types can not exist on a connected manifold.
  }
\end{remark}

\begin{theorem} \label{dimensioncinco}
For a connected almost contact metric manifold of dimension $5$, we have:
\begin{enumerate}
\item[$(i)$] If the structure is of type $ \mathcal C_2  \oplus\mathcal C_5 \oplus\mathcal C_6 \oplus\mathcal C_7 \oplus\mathcal C_9 \oplus\mathcal C_{10} \oplus\mathcal C_{12}$ with $(\xi_{(5)}, \xi_{(6)}) \neq (0,0)$, then it is of type $  \mathcal C_2  \oplus\mathcal C_5   \oplus\mathcal C_9 \oplus\mathcal C_{10} \oplus\mathcal C_{12}$ or $\mathcal C_2   \oplus\mathcal C_6 \oplus\mathcal C_7 \oplus\mathcal C_9 \oplus\mathcal C_{10} \oplus\mathcal C_{12}$.

\item[$(ii)$]
If the structure is of type $ \mathcal C_2  \oplus \mathcal C_5 \oplus\mathcal C_6 \oplus\mathcal C_8 \oplus\mathcal C_9 \oplus\mathcal C_{10} \oplus\mathcal C_{12}$ with $(\xi_{(5)}, \xi_{(6)}) \neq (0,0)$, then it is of type $ \mathcal C_2 \oplus\mathcal C_5  \oplus\mathcal C_8 \oplus\mathcal C_9 \oplus\mathcal C_{10} \oplus\mathcal C_{12}$ or $ \mathcal C_2   \oplus\mathcal C_6  \oplus\mathcal C_9 \oplus\mathcal C_{10} \oplus\mathcal C_{12}$.

\item[$(iii)$]
If the structure is of type $ \mathcal C_2 \oplus\mathcal C_5 \oplus\mathcal C_6 \oplus\mathcal C_7 \oplus\mathcal C_9 \oplus\mathcal C_{11} \oplus\mathcal C_{12}$ with $(\xi_{(5)}, \xi_{(6)}) \neq (0,0)$, then it is of type  $\mathcal C_2 \oplus\mathcal C_5  \oplus\mathcal C_9 \oplus\mathcal C_{11} \oplus\mathcal C_{12}$ or $ \mathcal C_2   \oplus\mathcal C_6 \oplus\mathcal C_7 \oplus\mathcal C_9 \oplus\mathcal C_{11} \oplus\mathcal C_{12}$.
\item[$(iv)$]
If the structure is of type $ \mathcal C_2 \oplus\mathcal C_5 \oplus\mathcal C_6 \oplus\mathcal C_8 \oplus\mathcal C_9 \oplus\mathcal C_{11} \oplus\mathcal C_{12}$ with $(\xi_{(5)}, \xi_{(6)}) \neq (0,0)$, then it is of type $ \mathcal C_2  \oplus\mathcal C_5  \oplus\mathcal C_8 \oplus\mathcal C_9 \oplus\mathcal C_{11} \oplus\mathcal C_{12}$ or $ \mathcal C_2  \oplus\mathcal C_6  \oplus\mathcal C_9  \oplus\mathcal C_{12}$.

\item[$(v)$]
If the structure is of type $\mathcal C_2   \oplus \mathcal C_6 \oplus\mathcal C_9   \oplus\mathcal C_{10} \oplus\mathcal C_{11} \oplus\mathcal C_{12}$ with $(\xi_{(6)}, \xi_{(11)}) \neq (0,0)$, then it is of type $\mathcal C_2 \oplus\mathcal C_6 
 \oplus\mathcal C_{9} \oplus\mathcal C_{10}  \oplus\mathcal C_{12}$ or $\mathcal C_2 \oplus \mathcal C_9 \oplus\mathcal C_{10}    \oplus\mathcal C_{11}  \oplus\mathcal C_{12}$.

%\item[$(xv)$]
%If the structure is of type $\mathcal C_2  \oplus \mathcal C_5  \oplus \mathcal C_9 \oplus\mathcal C_{10}   %\oplus\mathcal C_{11} \oplus \mathcal C_{12}$ with $(\xi_{(5)}, \xi_{(10)}) \neq (0,0)$, then it is of type $
%\mathcal C_2 \oplus \mathcal C_5  \oplus\mathcal C_9 \oplus
% \mathcal C_{11}  \oplus\mathcal C_{12}$ or $\mathcal C_2 \oplus \mathcal C_9 \oplus\mathcal C_{10}   %\oplus\mathcal C_{11}  \oplus\mathcal C_{12}$.
%\item[$(xvi)$]
%If the structure is of type $\mathcal C_2  \oplus \mathcal C_5  \oplus \mathcal C_7 \oplus\mathcal C_{10}    %\oplus\mathcal C_{12}$ with $(\xi_{(5)}, \xi_{(10)}) \neq (0,0)$, then it is of type $\mathcal C_2 \oplus 
%\mathcal C_5  
%   \oplus\mathcal C_{12}$ or $\mathcal C_2 \oplus \mathcal C_7 \oplus\mathcal C_{10}     \oplus\mathcal %C_{12}$.
%\item[$(xvii)$]
%If the structure is of type $\mathcal C_2  \oplus \mathcal C_6  \oplus \mathcal C_9 \oplus\mathcal C_{10}    %\oplus\mathcal C_{12}$ with $(\xi_{(6)}, \xi_{(10)}) \neq (0,0)$, then it is of type $\mathcal C_2 \oplus 
%\mathcal C_6  \oplus\mathcal C_9
%   \oplus\mathcal C_{12}$ or $\mathcal C_2 \oplus \mathcal C_9 \oplus\mathcal C_{10}     \oplus\mathcal %C_{12}$.
%\item[$(vii)$]
%If the structure is of type $ \mathcal C_2   \oplus\mathcal C_{5}    \oplus\mathcal C_{6} \oplus\mathcal %C_{12}$ with $(\xi_{(5)}, \xi_{(12)}) \neq (0,0)$ and $dd^* \eta$ is proportional to $\eta$ by  a function, %then it is of type $ \mathcal C_2 
 %  \oplus\mathcal C_{5} $ or $ \mathcal C_2  \oplus\mathcal C_{6}     \oplus\mathcal C_{12}$.
\item[$(vi)$]
If the structure is of type $ \mathcal C_2  \oplus \mathcal C_5 \oplus\mathcal C_{6}    \oplus\mathcal C_{7} \oplus\mathcal C_{12}$ with $(\xi_{(5)}, \xi_{(12)}) \neq (0,0)$ and $dd^* \eta$  is proportional to $\eta$, then it is of type $ \mathcal C_2  \oplus\mathcal C_5$ or $ \mathcal C_2 \oplus\mathcal C_{6} \oplus\mathcal C_{7}     \oplus\mathcal C_{12}$.

\item[$(vii)$]
If the structure is of type $ \mathcal C_2   \oplus\mathcal C_{4}    \oplus\mathcal C_{5} \oplus \mathcal C_{6} \oplus\mathcal C_{8}\oplus\mathcal C_{9}\oplus\mathcal C_{11}$ with $(\xi_{(4)}, \xi_{(6)}) \neq (0,0)$ and $d(d^*F(\zeta))$  is proportional to $\eta$, then it is of type $ \mathcal C_2 
   \oplus\mathcal C_{4} \oplus\mathcal C_{5} \oplus\mathcal C_{8} \oplus \mathcal C_{9} \oplus\mathcal C_{11} $ or $ \mathcal C_2   \oplus\mathcal C_{6}     \oplus\mathcal C_{9} $.
\item[$(viii)$]
If the structure is of type $ \mathcal C_2  \oplus\mathcal C_{5}    \oplus\mathcal C_{6} \oplus \mathcal C_{8} \oplus\mathcal C_{9}\oplus\mathcal C_{11}\oplus\mathcal C_{12}$ with $(\xi_{(6)}, \xi_{(12)}) \neq (0,0)$ and $d(d^*F(\zeta))$  is proportional to $\eta$, then it is of type $ \mathcal C_2  
   \oplus\mathcal C_{6}  \oplus \mathcal C_{9} $ or $ \mathcal C_2  \oplus\mathcal C_{5} \oplus\mathcal C_{8}     \oplus\mathcal C_{9} \oplus\mathcal C_{11} \oplus\mathcal C_{12}$.
%\item[$(ix)$]
%If the structure is of type $ \mathcal C_2  \oplus\mathcal C_{5}    \oplus\mathcal C_{6} \oplus \mathcal %C_{8} \oplus\mathcal C_{9}\oplus\mathcal C_{11}\oplus\mathcal C_{12}$ with $(\xi_{(5)}, \xi_{(6)}) \neq 
%(0,0)$ and $\langle d \xi_\zeta \eta , F \rangle =0$, then it is of type $ \mathcal C_2  
%   \oplus\mathcal C_{6}  \oplus \mathcal C_{9} \oplus\mathcal C_{12} $ or $ \mathcal C_2  \oplus\mathcal %C_{5} \oplus\mathcal C_{8}     \oplus\mathcal C_{9} \oplus\mathcal C_{11} \oplus\mathcal C_{12}$.
\end{enumerate}
\end{theorem}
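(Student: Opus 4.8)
The plan is to follow the argument of Theorem \ref{dimensionsuperior}, now with the dimension fixed to $2n+1=5$, i.e.\ $n=2$. In this dimension, as recalled after \eqref{inttorcar}, the modules $\mathcal C_1$ and $\mathcal C_3$ are absent, which is why the types appearing in the statement involve only $\mathcal C_2,\mathcal C_4,\dots,\mathcal C_{12}$. The decisive feature, however, is that the coefficient $n-2$ vanishes: on the one hand this removes the terms $d\theta_{[\lambda^{1,1}]}$ and $d\theta_{\lcf\lambda^{2,0}\rcf}$ from the left-hand sides of Propositions \ref{divergenciaunouno} and \ref{divergenciadoscero}, so that these become \emph{pointwise} identities among the intrinsic torsion components in which no derivative of the Lee form survives; on the other hand, with a vanishing left-hand side, several of the remaining terms reorganise into relations of the form (scalar)$\cdot$(component)$=0$ from which a component can be read off directly --- this is exactly what produces the slightly stronger conclusions peculiar to dimension five (for instance $\mathcal C_7$ disappearing in the first alternative of $(i)$, which does not happen for $n>2$).

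For each item I would proceed in three steps. \emph{Step 1.} Choose the relevant identity of Section \ref{foursection}: Proposition \ref{divergenciaunouno} for $(i)$--$(iv)$, which carries the product $d^*\eta\,d^*F(\zeta)$; Proposition \ref{divergenciadoscero} for $(v)$, which carries $d^*F(\zeta)\,\langle\xi_{(11)\zeta}X,\varphi Y\rangle$; Proposition \ref{divergencia} for $(vi)$, which carries $d(d^*\eta)(\varphi^2X)$ together with $d^*\eta\,(\xi_{(12)\zeta}\eta)(X)$; and Lemma \ref{mainid} for $(vii)$--$(viii)$, which carries $d(d^*F(\zeta))(X_{\zeta^\perp})$ together with $d^*F(\zeta)\,\theta(X)$, resp.\ $d^*F(\zeta)\,(\xi_{(12)\zeta}\eta)(X)$. \emph{Step 2.} Set $n=2$ and substitute the characterising properties of the modules present in the given type --- that is, $\xi_{(j)}=0$ for every $\mathcal C_j$ not occurring, together with the $\varphi$-behaviour of the bilinear forms $(\xi_{(j)\cdot}\eta)\cdot$ recorded in \cite{ChineaGonzalezDavila} and already used in the proof of Theorem \ref{harmclasscontact} --- so that all but one term of the identity cancels. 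For $(vi)$--$(viii)$ one uses in addition the hypothesis that $d(d^*\eta)$, resp.\ $d(d^*F(\zeta))$, is proportional to $\eta$; since $\varphi^2X=-X_{\zeta^\perp}$ this annihilates the second-order term. The outcome of Step 2 is an identity $\Phi\,\Psi=0$ on $M$, where $\{\Phi,\Psi\}$ is $\{d^*\eta,d^*F(\zeta)\}$ in $(i)$--$(iv)$, $\{d^*F(\zeta),\xi_{(11)}\}$ in $(v)$, $\{d^*\eta,\xi_{(12)}\}$ in $(vi)$, $\{d^*F(\zeta),\theta\}$ in $(vii)$, and $\{d^*F(\zeta),\xi_{(12)}\}$ in $(viii)$. \emph{Step 3.} Apply the connectedness argument of Theorem \ref{dimensionsuperior}: under the pointwise hypothesis that $\Phi$ and $\Psi$ do not vanish simultaneously, the set where $\Phi$ vanishes coincides with the open set where $\Psi$ does not vanish, and symmetrically; these two open sets are disjoint and cover $M$, so connectedness forces one of them to be empty, i.e.\ $\Phi\equiv0$ or $\Psi\equiv0$, which yields one of the two claimed smaller types.

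The real work --- and the only genuine obstacle --- is the bookkeeping of Step 2: each Section \ref{foursection} identity is long, and for every one of the eight types one must verify that, after setting $n=2$ and discarding the absent components, the surviving terms really do collapse to the single product $\Phi\Psi$ listed above. In several items this requires feeding one consequence of the identity back into it: for $(i)$, once $d^*\eta\,d^*F(\zeta)=0$ is known, the reduced Proposition \ref{divergenciaunouno}, whose left-hand side now vanishes, forces $\xi_{(7)}=0$ wherever $d^*\eta\neq0$; for $(vi)$ and $(viii)$ one invokes item $(i)$, resp.\ items $(i)$ and $(vii)$ --- exactly as the overlapping items are chained in the proof of Theorem \ref{dimensionsuperior}. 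No analytic input beyond the trivial identities $d^2F=0$, $d^2\eta=0$ and the connectedness of $M$ is needed.
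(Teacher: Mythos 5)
Your proposal follows essentially the same route as the paper, whose proof of Theorem \ref{dimensioncinco} simply specialises the argument of Theorem \ref{dimensionsuperior} to $n=2$: the same identities of Section \ref{foursection} (Propositions \ref{divergenciaunouno}, \ref{divergenciadoscero}, \ref{divergencia} and Lemma \ref{mainid}) reduced by the type hypotheses to a product $\Phi\Psi=0$, followed by the same connectedness argument, with the chaining of items accounting for the extra vanishing components peculiar to dimension five. Your identification of which identity and which pair $\{\Phi,\Psi\}$ goes with each item, and of the role of the vanishing coefficient $n-2$ and the absence of $\mathcal C_1,\mathcal C_3$, agrees with the paper's intended argument, so the proposal is correct.
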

\begin{proof}
It follows in a similar way as the proof for Theorem \ref{dimensionsuperior} in the particular case of $n=2$. Note that for $n=2$ some cases of such a theorem do not appear or are redundant.
\end{proof}

\begin{remark}
{\rm On a connected almost contact metric manifold of dimensión $5$ by using Theorem \ref{dimensioncinco}, we have:
\begin{enumerate}
\item[-]   By part $(i)$, the non-existence  of structures of types $\mathcal{C}_2 \oplus \mathcal{C}_5 \oplus \mathcal{C}_6 \oplus \mathcal{C}_7 \oplus \mathcal{C}_9 \oplus \mathcal{C}_{10} \oplus \mathcal{C}_{12}$ with $\xi_{(5)} \neq 0$ and  $\xi_{(6)} \neq 0$ implies that $2^5$ types do not exist. Also,   if $\xi_{(5)}\neq 0$,   then the type is    $\mathcal{C}_2  \oplus \mathcal{C}_5  \oplus \mathcal{C}_9 \oplus \mathcal{C}_{10} \oplus \mathcal{C}_{12}$. This implies that  another $2^4$ types do no exist.

\item[-] By part $(ii)$, the non-existence  of structures of types $\mathcal{C}_2 \oplus \mathcal{C}_5 \oplus \mathcal{C}_6 \oplus \mathcal{C}_8 \oplus \mathcal{C}_9 \oplus \mathcal{C}_{10} \oplus \mathcal{C}_{12}$ with $\xi_{(5)} \neq 0$ and  $\xi_{(6)} \neq 0$ implies that another $2^5 - 2^4 =2^4$ types do not exist. Also,  because if $\xi_{(6)}\neq 0$ then the type is    $\mathcal{C}_2  \oplus \mathcal{C}_6  \oplus \mathcal{C}_9 \oplus \mathcal{C}_{10} \oplus \mathcal{C}_{12}$,  $2^4$ types do not exist. 
\item[-] Part $(iii)$  implies that another $2^5 - 2^4 =2^4$ types do not exist.  Also, in this case,  if $\xi_{(5)}\neq 0$,   then the type is    $\mathcal{C}_2  \oplus \mathcal{C}_5  \oplus \mathcal{C}_9 \oplus \mathcal{C}_{11} \oplus \mathcal{C}_{12}$. This implies that  another $2^3$ types do no exist which have not been considered yet.  
\item[-] Part $(iv)$  implies that another $2^4 - 2^3 =2^3$ types do not exist. If  
   $\xi_{(6)}\neq 0$, then the type is    $\mathcal{C}_2  \oplus \mathcal{C}_6  \oplus \mathcal{C}_9 \oplus \mathcal{C}_{12}$. This implies that  another $3.2^3=24$ types do not exist.
% \item Part $(v)$ implies the non-existence of another $2^2$ types which have not been considered yet.
 \item[-] Part $(v)$  implies the non-existence of  $2^3$ types which have not been considered yet.
\end{enumerate}
All together implies the non-existence of $144$ types of connected almost contact metric manifold of dimensión $5$. Therefore, the possible types in dimension $5$  are $1024- 144 = 880$, where the number $1024=2^{8}$ arises by considering the possible types from algebraic point of view.

Note that there is not any restriction for dimension $3$. The possible types in this case are as it is expected from algebraic point of view, i.e.  $2^4=16$.
  }
\end{remark}
%\vspace{2mm}

%\vspace{2mm}

\end{document}